\newcommand\Mf{{\mathfrak M}}
\newcommand{\C}{{\mathbb C}}
\newcommand{\Z}{{\mathbb Z}}
\newcommand\F{{\mathcal F}}
\newcommand\Ker{\operatorname{Ker}}
\newcommand\End{\operatorname{End}}
\newcommand\Vir{\operatorname{\bf Vir}}
\newcommand\Witt{{\operatorname{\bf Witt}}}
\newcommand\Hom{\operatorname{Hom}}
\newcommand\homparlie{\operatorname{Hom}_{\text{\bf ParLie-alg}}}
\newcommand\Par{\operatorname{Par}}
\newcommand\Pa{\operatorname{P}}
\newcommand\ad{\operatorname{ad}}
\newcommand\supp{\operatorname{supp}}
\renewcommand{\sl}{\mathfrak{sl}}
\newcommand{\g}{\mathfrak{g}}
\newcommand\n{{\mathfrak{n}}}
\newcommand\h{{\mathfrak{h}}}
\newcommand{\lmax}[1]{\mathcal L_{\text{max}}(#1)}
\newcommand{\lmin}[1]{\mathcal L_{\text{min}}(#1)}
\newcommand{\lma}[1]{\mathcal L_{\text{max}}#1}
\newcommand{\lmi}[1]{\mathcal L_{\text{min}}#1}
\newtheorem{thm}{Theorem}[section]
\newtheorem*{theo}{Theorem}
\newtheorem{prop}[thm]{Proposition}
\newtheorem{defn}[thm]{Definition}
\newtheorem{cor}[thm]{Corollary}
\theoremstyle{rem}
\newtheorem{rem}[thm]{Remark}
\theoremstyle{exam}
\begin{document}

\title[Extending Representations]{Extending Representations of $\sl(2)$ to \\ Witt and Virasoro algebras}

\author[F. J. Plaza Mart\'{\i}n]{F. J. Plaza Mart\'{\i}n}
\author[C. Tejero Prieto]{C. Tejero Prieto}

\address{Departamento de Matem\'aticas, Universidad de
Salamanca,  Plaza de la Merced 1-4
        \\
        37008 Salamanca. Spain.
        \\
         Tel: +34 923294460. Fax: +34 923294583}
\address{IUFFYM. Instituto Universitario de F\'{\i}sica Fundamental y Matem\'aticas, Universidad de Salamanca, Plaza de la Merced 1\\ 37008 Salamanca. Spain.}
\thanks{
       {\it 2010 Mathematics Subject Classification}:  17B68 (Primary) 81R10  (Secondary). \\
\indent {\it Key words}:   Witt algebras, Virasoro algebra, partial Lie algebras, extensions of representations, simple weight modules\\
\indent This work is supported by the research contracts MTM2013-45935-P and
MTM2012-32342 of Ministerio de Ciencia e Innovaci\'{o}n, Spain.  \\
}
\email{fplaza@usal.es}\email{carlost@usal.es}

\begin{abstract}
We study when an $\sl(2)$-repre\-sen\-tation extends to a representation of the Witt and Virasoro algebras. We give a criterion for extendability and apply it to certain classes of weight $\sl(2)$-modules. For all simple weight $\sl(2)$-modules and those in any of the abelian Krull-Schmidt categories of weight modules whose unique simple object is a dense module, we fully characterize which ones admit extensions, and we obtain explicit expressions for all of them. We also give partial results in the same direction for the abelian categories of weight modules which have two and three simple objects.
\end{abstract}

\maketitle

\section{Introduction}

The Virasoro algebra, $\Vir$, is the Lie algebra generated by $\{\{L_i\vert i\in {\mathbb Z}\}, C\}$ as a ${\mathbb C}$-vector space endowed with the bracket
	$$
	\begin{aligned}
	& [ L_i, L_j]\,:=\, (i-j) L_{i+j} + \frac1{12}(i^3-i)\delta_{i+j,0}\, C 
	\\
	& [\Vir, C]\,:=\,0 
	\end{aligned}
	$$
The centerless Virasoro algebra or Witt algebra $\Witt$ is the quotient of $\Vir$ by its center and we identify it with the subset generated by  $\{L_i\vert i\in {\mathbb Z}\}$. Among the subalgebras of both $\Witt$ and $\Vir$ are: $\Witt_{>}$, generated by $\{L_i\vert i\geq -1\}$, $\Witt_{<}$, generated by $\{L_i\vert i\leq 1\}$,  and the subalgebra generated by $\{L_{-1},L_0,L_1\}$  which is isomorphic to $\sl(2)$. Thus, we have an injective map
	$$
	\iota:\sl(2)\hookrightarrow\Vir,
	$$ 
and  if  we choose a Chevalley basis $\{e, f , h\}$ of $\sl(2)$, then $\iota$  sends $f$ to $L_{-1}$, $h$ to $-2L_0$ and $e$ to $-L_1$. 
These Lie algebras have appeared in a variety of problems. The study of $\sl(2)$-representations is the cornerstone of the representation theory of finite dimensional Lie algebras and those of $\Vir$ have also been extensively studied (see, for instance, \cite{Iohara,Kac-Raina,Mathieu} and the references therein), playing a relevant role in mathematical physics (e.g. conformal field theory \cite{Dubrovin,Eguchi}, the geometric Langlands program \cite{Frenkel}, etc.). In addition there  is a long list of problems involving the representations of the algebra $\Witt_{>}$. Its relation with the Virasoro constraints (within the context of the Virasoro conjecture \cite{Eguchi} as well as 2D TFT \cite{Dubrovin}) is one of the most well known and studied. Other instances appear in knot theory (\cite{Dijkgraaf})  and Eynard-Orantin theory (\cite{Eynard}). 

Thus, it is natural to enquire how the map $\iota$ relates the representation theories of $\Vir$ and $\sl(2)$. More precisely, given a $\C$-vector space $V$, the map $\iota$ allows us to consider a restriction map between representations:
    $$ 
    \iota^* :\Hom_{\text{\bf Lie-Alg}}(\Vir,\End(V)) \, \longrightarrow\, \Hom_{\text{\bf Lie-Alg}}(\sl(2),\End(V)).
    $$ 
    
Note that we also have the following inclusions:

$$\xymatrix{  & \quad \Witt_>\quad\ \ \ar@{^{(}->}[dr] &  & \\
\sl(2) \ar@{^{(}->} [ur] \ar@{^{(}->} [dr] & &\Witt.\\
 & \Witt_< \ar@{^{(}->} [ur] &  & }$$
 
 

In order to accomplish our goal of relating the representation theory of $\sl(2)$ to that of $\Vir$ we proceed to study the extension problem step by step along the above chain of inclusions. Thus, we begin by studying the relationship between the representations of $\sl(2)$ and those of $\Witt_>$ and $\Witt_<$. These two algebras are exchanged by the Chevalley involution, therefore, it is enough to prove the extendability results for one of them and the analogous results  for the other algebra follow automatically. Once we have established the relationship between the representation theory of $\sl(2)$ and that of $\Witt_>$ and $\Witt_<$, we can proceed to the next step in the chain of inclusions. Given extensions of an $\sl(2)$-representation to $\Witt_>$ and $\Witt_<$, we obtain an extension to $\Witt$ if and only if  they satisfy a certain compatibility condition. In the final step we analyze the extension problem from $\Witt$-modules to $\Vir$-modules. This step by step procedure accounts for this somewhat lengthy exposition.
 
 We believe that this strategy for studying the extension problem step by step along the chain of Lie algebra inclusions between Witt and Virasoro algebras will help to clarify the extension process as well as the several obstructions which arise.

As far as we know, in the literature there are only partial answers to the extension problem  for $\sl(2)$-representa\-tions in the space of differential operators. Juriev (\cite{Juriev1,Juriev2,Juriev3}) provides some constructions of $\Vir$-modules  for the case of Verma $\sl(2)$-modules, where $\sl(2)$ acts by differential operators. We can also mention previous work by the first author on the classification of embeddings of $\Witt$ into the Lie algebra of first order differential operators, $\operatorname{Diff}^1(\C((z)))$ (\cite{Plaza}). The case of first order differential operators on $\mathbb R^3$ is treated by Zhdanov in \cite{Zhdanov}.  Dong, Lin and Mason  in \cite{Dong-Lin} deal with the \emph{opposite} problem; namely, they extract information from a vertex operator algebra by studying its properties as an $\sl(2)$-module. 

Our first main result  gives a characterization of whether  an arbitrary $\sl(2)$-module $V$ admits a compatible $\Witt_>$-module structure; equivalently, we characterize when a representation $\sigma:\sl(2)\to \End(V)$ can be extended to a Lie algebra representation $\Phi:\Witt_>\to \End(V)$. This is the content of Theorem~\ref{thm:Witt>equivalent}, that  can be summarized as follows:

\begin{theo}
Let $V$ be an $\sl(2)$-module defined by a representation $\sigma$ and let  $T\in {\End}( V)$. Let
		$$
		\Phi_>: \Witt_> \, \longrightarrow \,  {\End}( V)
		$$
be the linear map sending $L_{-1}$ to $\sigma(e)$, $L_0$ to $-\frac12 \sigma(h)$, $L_1$ to $-\sigma(e)$ and $L_{2+i}$ to $\frac1{i!}{\ad(\sigma(e))^i}(T)$ for $\geq 0$.
		
Then, $\Phi_>$ is a Lie algebra representation if and only if  the following conditions are fulfilled
	\begin{equation}\label{eq:IntroT}
	\begin{gathered}
	\ad(\sigma(f))(T) \,=\, 3\sigma(e)
	\\ 
	\ad(\sigma(h))(T) \,=\, 4T
	\end{gathered}
	\end{equation}
and there exists $N\in{\mathbb N}$ such that for all $k\geq N$ there holds
	\begin{equation}\label{eq:IntroRedRelation}
	[T,\frac1{(2k-1)!}{\ad(\sigma(e))^{2k-1}}(T)]
	\,=\,-\frac{2k-1}{(2k+1)!}\ad(\sigma(e))^{2k+1}(T)
	\, .
	\end{equation}
\end{theo}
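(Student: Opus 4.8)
The plan is to test the Jacobi identity / bracket relations of $\Witt_>$ directly on the proposed linear map $\Phi_>$, reducing everything to conditions on the single operator $T=\Phi_>(L_2)$. Since $\Witt_>$ is generated by $L_{-1}, L_0, L_1, L_2$ — indeed by $L_{-1}$ and $L_2$, or more conveniently by $\{L_{-1},L_0,L_1\}$ together with $L_2$, using $[L_1,L_{i}]=(1-i)L_{i+1}$ to climb up — it suffices to check that $\Phi_>$ respects all brackets among these generators and that the resulting assignment is consistent. The first thing I would record is that, because $\sigma$ is already an $\sl(2)$-representation, all brackets among $L_{-1},L_0,L_1$ are automatically respected; so the content is in the brackets involving $L_{2+i}$. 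Writing $e:=\sigma(e)$, $h:=\sigma(h)$, $f:=\sigma(f)$, the definition $\Phi_>(L_{2+i})=\tfrac1{i!}\ad(e)^i(T)$ is engineered precisely so that $[\Phi_>(L_1),\Phi_>(L_{2+i})]=(1-(2+i))\Phi_>(L_{3+i})$ holds for all $i\ge 0$ as a formal consequence of $\Phi_>(L_1)=-e$ and the combinatorial identity $\ad(-e)\bigl(\tfrac1{i!}\ad(e)^i(T)\bigr)=-(i+1)\tfrac1{(i+1)!}\ad(e)^{i+1}(T)$. So that family of relations is free.

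Next I would examine the brackets $[L_{-1},L_{2+i}]$ and $[L_0,L_{2+i}]$, which in $\Witt_>$ equal $(-3-i)L_{1+i}$ and $(-2-i)L_{2+i}$ respectively. Under $\Phi_>$ these become conditions on $\ad(f)$ and $\ad(h)$ applied to $\ad(e)^i(T)$. The $i=0$ cases are exactly the two displayed relations $\ad(f)(T)=3e$ and $\ad(h)(T)=4T$ in \eqref{eq:IntroT} (the constant $3$ being forced by $\Phi_>(L_1)=-e$ and the factor $(-3)$). Then one shows by an induction on $i$, using the $\sl(2)$-commutation relations $[h,e]=2e$, $[f,e]=-h$ (with the paper's sign conventions, $\iota(f)=L_{-1}$, $\iota(h)=-2L_0$, $\iota(e)=-L_1$), that the $i$-th instances of the $L_{-1}$- and $L_0$-brackets follow from the $i=0$ instances together with the already-verified $L_1$-brackets. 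Concretely, $\ad(f)\ad(e)^i(T)$ is rewritten by moving $\ad(f)$ past the $\ad(e)$'s, producing $\ad(e)^{i}\ad(f)(T)$ plus correction terms involving $\ad(h)$ and lower powers of $\ad(e)$, all of which are controlled by \eqref{eq:IntroT} and the inductive hypothesis. This reduces every bracket of the form $[L_a,L_{2+i}]$ with $a\in\{-1,0,1\}$ to \eqref{eq:IntroT}.

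The remaining — and genuinely nontrivial — relations are the brackets among the $L_{2+i}$ themselves: we need
$$
\Bigl[\tfrac1{i!}\ad(e)^i(T),\ \tfrac1{j!}\ad(e)^j(T)\Bigr]\;=\;\bigl((2+i)-(2+j)\bigr)\,\tfrac1{(i+j+2)!}\ad(e)^{i+j+2}(T)\qquad(i,j\ge0).
$$
I expect this to be the crux of the proof. The strategy is to apply $\ad(L_1)=\ad(-e)$ repeatedly to a single "seed" relation and watch how both sides transform, so that the whole doubly-indexed family collapses onto essentially one relation for each parity. Differentiating (i.e. applying $\ad(e)$) the relation for $(i,j)$ produces a linear combination of the relations for $(i+1,j)$ and $(i,j+1)$ by the Leibniz rule; conversely, one can hope to solve for individual members. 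Carrying this out, the relation for general $(i,j)$ should follow from the sub-family $j=i+1$ (where the right-hand side has coefficient $-1$), and that sub-family, after normalizing, is exactly the stated reduction relation \eqref{eq:IntroRedRelation} with $2k-1=i+j$ — note $i+j$ is odd there, which is why only odd indices appear. The appearance of the quantifier "there exists $N$ such that for all $k\ge N$" rather than "for all $k$" reflects that once the identity holds for one value of $k$ it propagates both upward and downward under $\ad(e)$: the downward propagation is what lets a single high-index relation imply all the lower ones (including the even-parity brackets $[L_2,L_3]$, etc.), while the even-$(i+j)$ brackets turn out to be consequences of the odd ones together with \eqref{eq:IntroT} via the $L_1$-action. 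The main obstacle, then, is the bookkeeping in this last step: showing that the single scalar identity \eqref{eq:IntroRedRelation} (for large $k$), fed through the $\ad(e)$-action and the already-established $\sl(2)$-equivariance \eqref{eq:IntroT}, regenerates the entire set of bracket relations $[L_m,L_n]=(m-n)L_{m+n}$ for $m,n\ge -1$, and conversely that being a representation forces \eqref{eq:IntroRedRelation}. I would isolate the combinatorial identity
$$
\ad(-e)\Bigl[\tfrac1{i!}\ad(e)^i(T),\tfrac1{j!}\ad(e)^j(T)\Bigr]
=\Bigl[\tfrac1{i!}\ad(e)^i(T),\tfrac{-(j+1)}{(j+1)!}\ad(e)^{j+1}(T)\Bigr]+\Bigl[\tfrac{-(i+1)}{(i+1)!}\ad(e)^{i+1}(T),\tfrac1{j!}\ad(e)^j(T)\Bigr]
$$
as a lemma and run the induction from there.
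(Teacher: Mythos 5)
Your skeleton is the right one: the brackets inside $\sl(2)$, the brackets $[L_1,L_{2+i}]$, and (by an $\sl(2)$-weight induction) the brackets $[L_{-1},L_{2+i}]$ and $[L_0,L_{2+i}]$ do reduce to \eqref{eq:IntroT}, and the whole content of the theorem is regenerating the family $[\Phi_>(L_{2+i}),\Phi_>(L_{2+j})]=(i-j)\Phi_>(L_{2+i+j+2})$ from \eqref{eq:IntroRedRelation}; your Leibniz lemma for $\ad(-\sigma(e))$ applied to a bracket is indeed the correct starting point. But at exactly that crux the proposal has a gap and two concrete errors. First, \eqref{eq:IntroRedRelation} is the subfamily $(i,j)=(0,2k-1)$, i.e.\ the brackets $[L_2,L_{2k+1}]$: its left-hand side is $[T,\frac1{(2k-1)!}\ad(\sigma(e))^{2k-1}(T)]$, with no $\ad(\sigma(e))$ on the first slot. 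Your identification of it with the subfamily $j=i+1$ "after normalizing, with $2k-1=i+j$" is false: for $k\geq 2$ the relations indexed by $(0,2k-1)$ and by $(k-1,k)$ are distinct basis elements of the $k$-dimensional space $R_{2k+3}$ of degree-$(2k+3)$ standard relations, so you would be proving the theorem with a different (and unjustified) hypothesis. Second, the claim that the identity for one value of $k$ "propagates both upward and downward under $\ad(e)$" is wrong on both counts: descent in degree is effected by $\ad(\sigma(f))$ (the $L_{-1}$-action), not by $\ad(\sigma(e))$; and upward propagation from a single $k$ fails --- this is precisely Theorem~\ref{thm:NoFiniteNumber} of the paper, which shows $r_k$ is not a consequence of $r_1,\dots,r_{k-1}$. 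That is why the hypothesis must hold for \emph{all} $k\geq N$, and your explanation of the quantifier does not survive.

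What is missing is the inversion argument you leave as "one can hope to solve": one must prove that the ideal generated (equivalently, the operator identities derivable by bracketing with the $\Phi_>(L_i)$) by $r_{2,2k+1}$ for $k\geq N$, together with \eqref{eq:IntroT}, contains every $r_{2+i,2+j}$. In the paper this is Theorem~\ref{thm:reduced-standard-relations} plus Corollary~\ref{cor:levelN-reduced-standard-relations}, carried out inside $\lmax{\Gamma_>}$: compute $\dim_\C R_n$, use the formula $e_n\cdot r_i^{(n)}=(n-i-3)r_i^{(n+1)}+(i+1)r_{i+1}^{(n+1)}$ to show $e_{2k-1}$ is an isomorphism and $R_{2k+1}=e_{2k}(R_{2k})\oplus\langle r_0^{(2k+1)}\rangle$, so the seeds $(0,2k-1)$, $k\geq1$, generate all relations; then use $f\cdot r_0^{(n)}=-(n-1)r_0^{(n-1)}$ to recover the levels $1\leq k<N$ from level $N$. (The paper also needs the identification of the kernel of $\lmax{\Gamma_>}\twoheadrightarrow\Witt_>$ with the ideal of standard relations, which in your direct-verification language is the statement that no further identities beyond the bracket relations need checking.) Your proposal stops exactly where this bookkeeping begins, and starts it from the wrong seed family; as written it does not establish the "if" direction.
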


The essential tool that we use for achieving this goal is the theory of partial Lie algebras introduced by Mathieu in \cite{Mathieu}.

We also show that no finite subset of (\ref{eq:IntroRedRelation}) can characterize whether an arbitrary $\sl(2)$-module $(V,\sigma)$ admits such an extension (Theorem~\ref{thm:NoFiniteNumber}). Nevertheless, for a fixed $V$, such a finite subset may exist. Indeed, as a consequence of \S4, it follows that relations (\ref{eq:IntroT}) together with those in (\ref{eq:IntroRedRelation})   corresponding to $k=1$, suffice for the case of infinite dimensional simple weight $\sl(2)$-modules. See also Theorem~\ref{thm:CasimirR23forfirstCategory} for a application of this result to weight $\sl(2)$-modules. 

We also prove a similar result for the algebra $\Witt_<$, showing that the extension of an $\sl(2)$-representation $\sigma$ to $\Phi_<:\Witt_<\to \End(V)$ depends on the existence of an $S\in\End(V)$ fulfilling properties analogous to (\ref{eq:IntroT}) and (\ref{eq:IntroRedRelation}). This is the content of Theorem~\ref{thm:Witt<equivalent}. 

Our second main result is concerned with the extension problem for the $\Vir$ algebra. It is not difficult to check that the extension problem for the Virasoro algebra can be stated in terms of an extension to $\Witt_>$ and another extension to $\Witt_<$, together with a {\emph compatibility constraint} between these two data. Indeed, in Theorem~\ref{thm:LieAlgVirasoro2} we show:

\begin{theo}
Let $(V,\sigma)$ be a non trivial $\sl(2)$-module. Let $T$ (resp. $S$) be an endomorphism of $V$ defining and extension $\Phi_>$ (resp. $\Phi_<$) of $\sigma$ to $\Witt_>$ (resp. $\Witt_<$).  There is a representation of Lie algebras
	$$
	\Phi:\Vir\longrightarrow \End(V)
	$$
such that  $\Phi\vert_{\Witt_<} =\Phi_<$ and  $\Phi\vert_{\Witt_>} =\Phi_>$ if and only if $$K= 4\sigma(h)-2[S,T]\in\End(V),$$ commutes with $T$ and $S$. Moreover,  in this case the central element $C\in\Vir$ is represented by $K$; that is,  $\Phi(C)= K$. 
\end{theo}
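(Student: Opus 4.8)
The plan is to reduce the construction of $\Phi$ to the two partial extensions $\Phi_>$ and $\Phi_<$ already in hand, and then to check exactly one nontrivial bracket relation of $\Vir$, namely the one producing the central term. First I would note that $\Witt_>$ and $\Witt_<$ together generate $\Witt$ as a Lie algebra (in fact $\{L_{-1},L_1,L_2,L_{-2}\}$ suffice), so any map $\Phi$ restricting to $\Phi_>$ and $\Phi_<$ is unique; the content is existence. Since $\Phi_>$ and $\Phi_<$ agree on the common subalgebra $\sl(2)=\langle L_{-1},L_0,L_1\rangle$ (both restrict to $\sigma$ up to the fixed signs), the only freedom is how to define $\Phi(C)$, and the natural candidate is forced: comparing $[\Phi(L_2),\Phi(L_{-2})]$ with the Virasoro relation $[L_2,L_{-2}]=4L_0+\tfrac12 C$ one is led to set $\Phi(C):=K:=4\sigma(h)-2[S,T]$ (using that $\Phi_>(L_2)$ is essentially $T$ and $\Phi_<(L_{-2})$ essentially $S$, up to the normalizations in Theorems~\ref{thm:Witt>equivalent} and \ref{thm:Witt<equivalent}, and $\Phi(L_0)=-\tfrac12\sigma(h)$).

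Next I would set up the verification. Define $\Phi$ on the $L_i$ by $\Phi_>$ for $i\geq -1$ and by $\Phi_<$ for $i\leq 1$ (consistent on the overlap), and $\Phi(C):=K$. To show $\Phi$ is a Lie algebra homomorphism it suffices to check the defining brackets $[\Phi(L_i),\Phi(L_j)]=(i-j)\Phi(L_{i+j})+\tfrac1{12}(i^3-i)\delta_{i+j,0}\Phi(C)$ on a generating set and to check $[\Phi(L_i),\Phi(C)]=0$. The relations among the $L_i$ with $i,j$ both $\geq -1$ hold because $\Phi_>$ is a representation (Theorem~\ref{thm:Witt>equivalent}); likewise for $i,j\leq 1$ via $\Phi_<$. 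The genuinely new relations are the "mixed" ones, where one index is large positive and the other large negative; by a standard argument (e.g. an induction on $|i|+|j|$, peeling off brackets with $L_{\pm 1}$ which are handled inside $\Phi_>$ or $\Phi_<$), all of these reduce to the single relation $[\Phi(L_2),\Phi(L_{-2})]=4\Phi(L_0)+\tfrac12\Phi(C)$, which by the definition of $K$ becomes an identity. Concretely this reduction uses that $\ad(\sigma(e))$ and $\ad(\sigma(f))$ (images of $L_1,L_{-1}$) generate everything from $L_2$ and $L_{-2}$ via the formulas in the two extension theorems, so that once the $L_2$–$L_{-2}$ bracket is correct and the $\sl(2)$-relations are correct, Jacobi propagates correctness to all mixed brackets.

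Then I would verify $[\Phi(L_i),K]=0$ for all $i$, i.e. that $K$ is central for the $\Witt$-action. Because the $\Phi(L_i)$ are generated under bracket by $\Phi(L_{\pm 1})=\mp\sigma(e)$ (wait: $\Phi(L_1)=-\sigma(e)$ and $\Phi(L_{-1})=\sigma(e)$? — here $\Phi(L_{-1})=\sigma(e)$ from $\Phi_>$ and $\Phi(L_1)=-\sigma(e)$; note $\Phi_<$ sends $L_1$ to $-\sigma(e)$ as well), together with $\Phi(L_2)\sim T$ and $\Phi(L_{-2})\sim S$, it is enough to check that $K$ commutes with $\sigma(e),\sigma(f),\sigma(h)$ and with $T$ and $S$. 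Commutation with $T$ and $S$ is precisely the hypothesis. Commutation with $\sigma(h)$ follows from the weight relations $\ad(\sigma(h))(T)=4T$ in \eqref{eq:IntroT} and its $S$-analogue (so $\ad(\sigma(h))[S,T]=[S,4T]+[-4S,T]=0$, while $\ad(\sigma(h))(4\sigma(h))=0$). Commutation with $\sigma(e)$ and $\sigma(f)$: I would compute $\ad(\sigma(f))(K)=4\cdot 2\sigma(h)\cdot(\text{coeff}) - 2[S,\ad(\sigma(f))T] - 2[\ad(\sigma(f))S,T]$, plug in $\ad(\sigma(f))(T)=3\sigma(e)$ from \eqref{eq:IntroT} and the corresponding $\ad(\sigma(e))(S)$-type relation from Theorem~\ref{thm:Witt<equivalent}, and use the $\sl(2)$-bracket $[\sigma(f),\sigma(h)]=2\sigma(e)$ to see the terms cancel; symmetrically for $\sigma(e)$. (In fact, once $K$ commutes with $S$, $T$, and $\sigma(h)$, centrality with respect to $\sigma(e)$ and $\sigma(f)$ is automatic, since $\sigma(e)\sim\Phi(L_{-1})=[\Phi(L_1),\Phi(L_2)]/(\cdots)$ lies in the subalgebra generated by $S,T,\sigma(h)$ — so this can be streamlined.) Finally, for the last sentence of the statement, once $\Phi$ is known to be a representation, $\Phi(C)=K$ is central in $\End(V)$ for the $\Vir$-action by construction, consistent with $C$ being central in $\Vir$.

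The main obstacle I anticipate is the bookkeeping in the "mixed bracket" reduction: one must make sure the Jacobi-identity induction genuinely bottoms out at the single $[\Phi(L_2),\Phi(L_{-2})]$ relation without silently needing, say, $[\Phi(L_2),\Phi(L_{-3})]$ as an independent input — i.e. that brackets with $L_{\pm 1}$ really do suffice to generate and that the induction is well-founded. A clean way to organize this is: prove by induction on $n\geq 1$ that $[\Phi(L_i),\Phi(L_j)]$ has the required value whenever $\min(i,-j)\le 1$ is obtained from the $n=1$ case $i\le 1$ or $j\le 1$ (covered by $\Phi_>$, $\Phi_<$), and then that the single extra relation plus Jacobi with $L_{\pm1}$ covers $i,-j\ge 2$. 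The central-term arithmetic $(i^3-i)$ matching across these steps is the only place a genuine numeric coincidence is used, and it is exactly encoded in the choice $\Phi(C)=K$.
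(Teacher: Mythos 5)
Your overall shape (glue $\Phi_>$ and $\Phi_<$, force $\Phi(C):=K$ from the $[L_2,L_{-2}]$ bracket, note that the ``only if'' direction is immediate since $C$ is central) is fine, but the sufficiency argument has a genuine gap exactly at the point you yourself flag. The ``Jacobi propagates correctness to all mixed brackets'' step is not bookkeeping: already the first mixed bracket beyond the defining one, $[\Phi(L_3),\Phi(L_{-2})]=5\Phi(L_1)$, is \emph{equivalent} (by the Jacobi manipulations you propose) to $[K,\sigma(e)]=0$, and likewise $[\Phi(L_{-3}),\Phi(L_2)]=-5\Phi(L_{-1})$ is equivalent to $[K,\sigma(f)]=0$; so your induction on mixed brackets needs centrality of $K$ against $\sigma(e),\sigma(f)$ as an input, while your proposed proof of that centrality needs precisely those mixed brackets --- the two computations feed each other and close into a tautology, not a proof. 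The ``streamlined'' alternative is false: the subalgebra generated by $S,T,\sigma(h)$ (i.e.\ by the images of $L_{-2},L_2,L_0$) is spanned by $S,T,\sigma(h),K$ --- a copy of $\sl(2)$ together with the (would-be) central element --- and does \emph{not} contain $\sigma(e)$ or $\sigma(f)$; relatedly, the parenthetical identification $\sigma(e)\sim\Phi(L_{-1})=[\Phi(L_1),\Phi(L_2)]/(\cdots)$ is wrong, since $\Phi(L_{-1})=\sigma(f)$ and $[L_1,L_2]=-L_3$. So as written, the hypotheses you actually use ($\Phi_>$, $\Phi_<$ representations, $[K,S]=[K,T]=0$, $[K,\sigma(h)]=0$) do not visibly yield the degree-$\pm1$ mixed brackets, and this is the whole content of the sufficiency direction.

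For comparison, the paper does not argue by direct verification at all: Theorem~\ref{thm:LieAlgVirasoro2} is obtained as a reformulation of Proposition~\ref{prop:LieAlgVirasoro} through the partial Lie algebra machinery --- a compatible $\Vir$-structure is a compatible module over the partial Virasoro algebra $\mathcal V$ whose induced morphism $\lmax{\mathcal V}\to\End_\bullet(V)$ kills the standard relations $R(\Vir)$ (Corollary~\ref{cor:extension-vir}), and since $R(\Vir)$ consists exactly of the $\Witt_>$ and $\Witt_<$ standard relations, these vanish automatically once $T$ and $S$ define honest $\Witt_>$- and $\Witt_<$-structures. In that framework all mixed brackets, and the centrality of $\Phi(y_0)=K$, come for free from the universal property of $\lmax{\mathcal V}$ \emph{provided} one has a genuine partial $\mathcal V$-module, i.e.\ provided $K$ commutes with all of $\phi(x_{-2}),\dots,\phi(x_2)$, in particular with $\sigma(e)$ and $\sigma(f)$. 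That is precisely the step your sketch leaves unproved; to complete your direct route you would need either to establish $[K,\sigma(e)]=[K,\sigma(f)]=0$ from the stated hypotheses by an argument using the higher modes of $\Phi_>$ and $\Phi_<$ (not just the level-$\pm2$ data), or to import the paper's presentation of $\Vir$ as $\lmax{\mathcal V}/R(\Vir)$, at which point your verification collapses into the paper's proof.
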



In the final section, the above results are applied to compare infinite dimensional weight $\sl(2)$-modules with Harish-Chandra $\Witt_>$-modules and $\Vir$-modules (\cite{Iohara, Mathieu}). For the simple weight $\sl(2)$-modules and those belonging to  the abelian Krull-Schmidt categories of weight modules having a unique simple object which is a dense module, we characterize those admitting extensions obtaining explicit expressions for all of them, giving also partial results for the abelian categories of weight modules which have two and three simple objects. We also show that there are cases in which this construction yields a fully faithful functor from a certain category of weight $\sl(2)$-modules to the category of $\Witt_>$-modules (Theorem~\ref{thm:categorydenseW>}). However, one can not expect in general the existence of a $\Witt_>$-module structure on an arbitrary infinite dimensional weight $\sl(2)$-module, see subsection~\ref{subset:counter}. Further research is needed in order to characterize those $\sl(2)$-modules. Besides this, in a forthcoming paper we study the extension problem for the case of simple non-weight modules.

Among the possible future applications of our results, we believe they could help in the study of affine Kac-Moody algebras and vertex operator algebras as in \cite{Frenkel} (see also \cite{Plaza} for a relation between $\Vir$-modules and opers). In particular, it would allow one to relate the corresponding geometric invariant theories for the $\Witt$ and $\Vir$ algebras with the invariant theory for $\sl(2)$-modules. On the other hand, since our results relate Verma $\Witt_>$-modules and Verma $\sl(2)$-modules, problems similar to the Virasoro conjecture could profit from the combination of our results and the classical representation theory of $\sl(2)$. In particular, the interplay between KP-like hierarchies and Virasoro constraints could be examined following the ideas of~\cite{PlazaArXiv}, where it is shown that a solution of the string and the dilation equations solves automatically all the Virasoro constraints.

\section{Representations of the Witt algebras}


\subsection{$\Witt$ and $\sl(2)$}

Let $\sl(2)$ be the Lie algebra of the Lie group $\operatorname{SL}(2,\C)$. Recall that  $\sl(2)$ is a simple Lie algebra and a Chevalley basis for it consists of a basis $\{e,f,h\}$  satisfying the commutation relations:
    \begin{equation}\label{eq:basis-sl2}
    [e,f]=h\; , \qquad
    [h,e]=2e \; , \qquad
    [h,f]=-2f\; .
    \end{equation}
We thus consider the following triangular decomposition into Lie subalgebras:
	$$
	\sl(2)\, =\, \n^{-}\oplus \h \oplus\n^{+},
	$$
where:
	$$
	 \n^{-} \, =\, <\! f \!> \qquad \h  \, =\, <\! h \!> \qquad \n^{+}  \, =\, <\! e \!>.
	 $$

%


Let $\Witt$ be the Witt algebra; that is, the $\C$-vector space
with basis $\{L_k\}_{k\in\Z}$ endowed with the Lie bracket
$[L_i,L_j]=(i-j) L_{i+j}$, and let us introduce the following subalgebras:
	$$
	\begin{gathered}
	\Witt_{>} \,:=\, <\! \{L_k\}_{k\geq -1} \!>
	\\
	\Witt_{<} \,:=\, <\! \{L_k\}_{k\leq 1} \!>
	\end{gathered}
	$$
Notice that $\Witt$ is a $\Z$-graded Lie algebra by declaring $\deg(L_k)=k$, that is $\Witt_k=<\! L_k \!>$. It is clear that $\Witt_{>}$, $\Witt_{<}$ are $\Z$-graded Lie subalgebras of $\Witt$ and they are interchanged by the Chevalley involution $\Theta\colon \Witt\to \Witt$ defined by $\Theta(L_i)=(-1)^{i+1}L_{-i}$.
Observe that, once a Chevalley basis for $\sl(2)$ has been chosen, there is a natural embedding:
    \begin{equation}\label{eq:emb-sl2-W+}
    \iota: \sl(2)\,\hookrightarrow\, \Witt
    \end{equation}
that sends $f$ to $L_{-1}$, $h$ to $-2L_0$, and $e$ to $-L_1$ and induces an isomorphism of $\sl(2)$ with $\Witt_{<}\cap\Witt_{>}$.

Given a $\C$-vector space, $V$, the embedding (\ref{eq:emb-sl2-W+}) allows us to consider a restriction map between representations:
    $$ 
    \iota^* :\Hom_{\text{Lie-Alg}}(\Witt,\End(V)) \, \longrightarrow\, \Hom_{\text{Lie-Alg}}(\sl(2),\End(V))
    $$ 
and, analogously, a map $\iota^*_{>}$ for $\Witt_{>}$ and $\iota^*_{<}$ for  $\Witt_{<}$.

Forthcoming sections aim at giving necessary and sufficient conditions for an $\sl(2)$ representation $\sigma$ to lie in the image of $\iota^*$ (resp. $\iota^*_{>}$, $\iota^*_{<}$); or, what is tantamount, to characterize $\sl(2)$-modules admitting a compatible structure of $\Witt$-modules (resp. $\Witt_{>}$, $\Witt_{<}$).

\begin{rem}\label{rem:RepreAreInfDim}
 It is well known that $\Witt$, $\Witt_{>}$ and $\Witt_{<}$ are simple Lie algebras, see \cite{patera}. Therefore their non trivial representations are faithful. Accordingly, if  a non trivial $\sl(2)$ representation $\sigma$ on $V$ lies in the image of $\iota^*$ (resp. $\iota^*_{>}$, $\iota^*_{<}$), then $V$ is infinite dimensional.
\end{rem}

\subsection{$\Witt$ and partial Lie algebras}\label{subsec:WittandPartial}
 Notice that if we have an $\sl(2)$ representation $\sigma\in \Hom_{\text{Lie-Alg}}(\sl(2),\End(V))$ which comes from a representation $\rho\in \Hom_{\text{Lie-Alg}}(\Witt,\End(V))$, then the restriction of $\rho$ to $\langle L_{-2},L_{-1},L_0,L_1,L_2\rangle$ defines a representation of this partial Lie algebra which extends $\sigma$. Therefore in order to analyze if an $\sl(2)$ representation extends to a $\Witt$ representation we are naturally led to study the representations of this partial Lie algebra. We shall show below that any representation of $\Witt$ is completely determined by the corresponding partial Lie algebra representation. Moreover, we will prove that the problem of deciding whether an $\sl(2)$ representation extends to a $\Witt$ representation can be completely solved in terms of the partial Lie algebra.

There is a well known relationship between the category $\text{\bf LieAlg}_\bullet$ of $\Z$-graded Lie algebras and the category $\text{\bf LieAlg}\,_d^e$ of partial Lie algebras of size $(d,e)$ studied by Kac   \cite{Kac} for the particular case of local Lie algebras and by Mathieu \cite{Mathieu} in full generality, see also the detailed exposition in \cite[Section 2.2]{Iohara}. Recall that an object of $\text{\bf LieAlg}\,_d^e$ is a vector space $\bigoplus_{i=d}^e\g_i$ with a degree given by $\operatorname{deg}(a)=i$ for $a\in \g_i$ and endowed with bilinear maps $\g_i \times \g_j\to \g_{i+j}$ satisfying the properties of the Lie bracket whenever $i,j,i+j$ lie in the range $(d,e)$. Indeed, there is a natural truncation functor 
$$\text{Par}_d^e\colon \text{\bf LieAlg}_\bullet\to \text{\bf LieAlg}\,_d^e
$$ that maps a graded Lie algebra  $\g=\bigoplus_{i\in\Z}\g_i$ to the partial Lie algebra $\text{Par}_d^e\g=\bigoplus_{i=d}^e\g_i$ which is called the partial part of $\g$ of size $(d,e)$. Moreover, among the graded Lie algebras with a given partial part $\Lambda$ of size $(d,e)$ there is a maximal one $\lmax{\Lambda}$ and a minimal one  $\lmin{\Lambda}$, both of them generated by $\Lambda$, and characterized by the following universal properties. 

Given an object $\Lambda$ in $\text{\bf LieAlg}\,_d^e$, the algebra $\lmin{\Lambda}$ is characterized as the object of  $\text{\bf LieAlg}_\bullet$ whose partial part of size $(d,e)$ is $\Lambda$ and such that  for any graded Lie algebra $\g$, generated by $\text{Par}_d^e\g$, and any surjective morphism of partial Lie algebras $\psi\colon \text{Par}_d^e\g\to \Lambda$, there exists a unique morphism of graded Lie algebras $\Psi\colon \g\to\lmin{\Lambda}$ whose restriction to $\text{Par}_d^e\g$ is $\psi$. 

On the other hand, $\lmax{\Lambda}$, together with the inclusion $\Lambda \hookrightarrow \lmax{\Lambda}$, enjoys the following universal property
	$$
	 \Hom_{\text{\bf LieAlg}_\bullet}(\lmax{\Lambda}, \g) 
	 \,\simeq \, 
	 \Hom_{\text{\bf LieAlg}\,_d^e}(\Lambda, \text{Par}_d^e\g)
	 $$
for any graded Lie algebra $\g$.

These constructions give us functors $\lma{}\colon \text{\bf LieAlg}\,_d^e \to \text{\bf LieAlg}_\bullet$ and  $\lmi{}\colon \text{\bf LieAlg}\,_d^e \to \text{\bf LieAlg}_\bullet$ such that $\lma$ is left adjoint to $\text{Par}_d^e$. Moreover, the above properties yield a canonical map $\lmax{\Lambda}\to \lmin{\Lambda}$ which is surjective. Combining the above facts, the following result is straightforward

\begin{prop}\label{prop:LmaxgLmin}
Let $\Lambda$ be a partial Lie algebra. Let $\g$ be a graded Lie algebra generated by its partial part, $\text{Par}_d^e\g$, and let $\psi$ be an isomorphism $\Lambda\overset\sim\to \text{Par}_d^e\g$. 

The canonical surjective homomorphism $\lmax{\Lambda}\twoheadrightarrow \lmin{\Lambda}$ factors as follows
	$$
	\xymatrix{ 
	\lmax{\Lambda}  \ar@{->>}[rr] \ar@{->>}[rd] & & \lmin{\Lambda}
	\\
	&  \g \ar@{->>}[ur] .}$$ 
\end{prop}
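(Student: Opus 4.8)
The plan is to deduce the factorization purely from the universal properties recalled above, with no explicit computation. First I would use the hypothesis that $\g$ is generated by its partial part $\Par_d^e\g$ together with the isomorphism $\psi\colon\Lambda\overset\sim\to\Par_d^e\g$. Composing $\psi$ with the inclusion $\Par_d^e\g\hookrightarrow\g$ gives a morphism of partial Lie algebras $\Lambda\to\Par_d^e\g$, which under the adjunction $\Hom_{\text{\bf LieAlg}_\bullet}(\lmax{\Lambda},\g)\simeq\Hom_{\text{\bf LieAlg}\,_d^e}(\Lambda,\Par_d^e\g)$ corresponds to a unique morphism of graded Lie algebras $\alpha\colon\lmax{\Lambda}\to\g$. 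Since the image of $\alpha$ is a graded subalgebra of $\g$ containing $\Par_d^e\g$, and $\g$ is generated by that partial part, $\alpha$ is surjective; this is the left-hand downward arrow.

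Next I would construct the arrow $\g\twoheadrightarrow\lmin{\Lambda}$. Here I take the surjective morphism of partial Lie algebras $\psi^{-1}\colon\Par_d^e\g\to\Lambda$ and apply the universal property of $\lmin{\Lambda}$: since $\g$ is generated by $\Par_d^e\g$ and $\psi^{-1}$ is surjective, there is a unique morphism of graded Lie algebras $\beta\colon\g\to\lmin{\Lambda}$ whose restriction to $\Par_d^e\g$ is $\psi^{-1}$. As the partial part of $\lmin{\Lambda}$ is $\Lambda$ and $\lmin{\Lambda}$ is generated by it, $\beta$ is again surjective; this is the right-hand upward arrow.

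It remains to check that the composite $\beta\circ\alpha\colon\lmax{\Lambda}\to\lmin{\Lambda}$ coincides with the canonical surjection. Both maps are morphisms of graded Lie algebras out of $\lmax{\Lambda}$, and $\lmax{\Lambda}$ is generated by its partial part $\Lambda$, so it suffices to compare them on $\Lambda$. On $\Lambda$, $\alpha$ acts as $\psi$ followed by the inclusion $\Par_d^e\g\hookrightarrow\g$, and then $\beta$ acts as $\psi^{-1}$; hence $\beta\circ\alpha$ restricted to $\Lambda$ is the identity map $\Lambda\to\Lambda\subset\lmin{\Lambda}$. The canonical map $\lmax{\Lambda}\twoheadrightarrow\lmin{\Lambda}$ likewise restricts to the identity on $\Lambda$ (it is the unique graded morphism inducing the identity on partial parts, by the universal property of $\lmin{\Lambda}$ applied with $\g=\lmax{\Lambda}$). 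Since a graded Lie algebra morphism out of $\lmax{\Lambda}$ is determined by its restriction to the generating set $\Lambda$, the two morphisms agree, which gives the commutativity of the diagram.

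The only delicate point — and the step I would treat most carefully — is the bookkeeping about which universal property forces uniqueness at each stage: the adjunction for $\lmax{}$ gives existence and uniqueness of $\alpha$, while for $\lmin{}$ one must invoke the stated universal property with the correct surjectivity hypothesis on $\psi^{-1}$ and on the generation of $\g$ by its partial part. Once these hypotheses are in place, the factorization and its compatibility with the canonical map are formal, so I do not expect any genuine obstacle beyond this diagram chase.
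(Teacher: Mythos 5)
Your argument is correct and is precisely the "straightforward combination of the universal properties" that the paper invokes without writing out: the adjunction for $\lma{}$ produces the surjection $\lmax{\Lambda}\twoheadrightarrow\g$, the universal property of $\lmi{}$ applied to the surjective map $\psi^{-1}$ produces $\g\twoheadrightarrow\lmin{\Lambda}$, and agreement with the canonical map is checked on the generating partial part. No discrepancies with the paper's intended proof.
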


%
%
%
%
%
%

If we write $\Lambda^-=\bigoplus_{d\leq i<0}\Lambda_i$, $\Lambda^+=\bigoplus_{0<i\leq e}\Lambda_i$ and regard them as partial Lie algebras of size $(d,-1)$ and $(1,e)$, respectively, then we get the following triangular decomposition  into graded Lie subalgebras 
$$\lmax{\Lambda}=\lmax{\Lambda^-}\oplus\Lambda_0\oplus \lmax{\Lambda^+}.
$$  Moreover, if $\Lambda$ is any partial Lie algebra of size $(d,e)$ with $-2\leq d, e\leq 2$ then one has $\lmax{\Lambda^\pm}=\mathcal F(\Lambda^\pm)$ where $\mathcal F(\Lambda^\pm)$ denotes the free Lie algebra generated by $\Lambda^\pm$, see for instance \cite[Proposition 2.2, Lemma 2.6]{Iohara}. Hence, in this case the above triangular decomposition reduces to 
$$\lmax{\Lambda}=\F(\Lambda^-)\oplus\Lambda_0\oplus\F(\Lambda^+).$$ 


In particular we are interested in the partial Lie algebra  $\Gamma$ of size $(-2,2)$, such that $\Gamma_i=  <  x_i  >$ for $-2\leq i\leq 2$ and 
$$[x_i,x_j]=(i-j)x_{i+j},
$$ for any $i, j$ such that $-2\leq i, j, i+j\leq 2$. We also consider its partial Lie subalgebras $\Gamma_<=\bigoplus_{i\leq 1} \Gamma_i$, $\Gamma_>=\bigoplus_{i\geq -1} \Gamma_i$ of size $(-2,1)$ and $(-1,2)$, respectively. They are interchanged by the involution $\theta\colon \Gamma\to\Gamma$ defined by $\theta(x_i)=(-1)^{i+1}x_{-1}$. Their relationship with the Witt algebras is as follows 
	{\small 
	\begin{equation}\label{eq:partialWitt}
	 \Par_{-2}^2\Witt \simeq \Gamma ,\quad 
	 \Par_{-1}^2\Witt_> \simeq \Gamma_>  ,\quad 
	 \Par_{-2}^1\Witt_< \simeq \Gamma_< ,
	\end{equation}} 
where in all cases the isomorphism is obtained by mapping $x_i$ to $L_i$. If no confusion arises,  $\Gamma$, $\Gamma_<$ and $\Gamma_>$ will be called the partial Witt algebras. Notice that all these partial algebras contain the subalgebra $\Gamma_<\cap \Gamma_> = \langle  x_{-1},x_0,x_1 \rangle$ and 
sending $x_{-1}$ to $f$, $x_0$ to $-\frac12 h$ and $x_1$ to $-e$ we obtain an isomorphism
	\begin{equation}\label{eq:Sigmasl(2)}
	\Sigma:=\langle  x_{-1},x_0,x_1 \rangle = \Gamma_<\cap \Gamma_>
	\,\overset{\sim}\longrightarrow \, \sl(2).
	\end{equation}

\begin{rem} In what follows when we consider any of the Witt algebras $\Witt, \Witt_>, \Witt_<$, or the partial Witt algebras $\Gamma,\Gamma_>,\Gamma_<$, we denote by $\Pa$ the functor $\Par_{-2}^{2}, \Par_{-1}^{2}, \Par_{-2}^{1}$, respectively.
\end{rem}

\begin{prop}\label{prop:pariso} There are natural isomorphisms relating the Witt Lie algebras and their partial counterparts:

$$\Witt \simeq \lmin{\Gamma},\quad \Witt_>\simeq\lmin{\Gamma_>},\quad \Witt_<\simeq \lmin{\Gamma_<}.$$
\end{prop}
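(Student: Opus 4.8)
The plan is, in each of the three cases, to produce a surjective homomorphism of graded Lie algebras from the Witt algebra onto the relevant $\lmin{}$ and then to see that it is injective because the Witt algebra is simple. I would treat $\Witt\simeq\lmin{\Gamma}$ in detail; the cases of $\Witt_>$ and $\Witt_<$ are handled in exactly the same way, and moreover $\Witt_<\simeq\lmin{\Gamma_<}$ can also be deduced from $\Witt_>\simeq\lmin{\Gamma_>}$ by transporting everything through the Chevalley involutions $\Theta$ and $\theta$.

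First I would record that $\Witt$ is generated, as a Lie algebra, by its partial part $\Pa\Witt=\Par_{-2}^{2}\Witt$: indeed $\ad(L_1)^{k}(L_2)$ is a nonzero multiple of $L_{k+2}$ and $\ad(L_{-1})^{k}(L_{-2})$ is a nonzero multiple of $L_{-k-2}$, so $\{L_i\}_{|i|\leq 2}$ already generates every $L_n$. The same elementary computation shows that $\Witt_>$ is generated by $\Pa\Witt_>=\Par_{-1}^{2}\Witt_>$ (from $L_2$ one reaches all $L_n$ with $n\geq 2$, while $L_{-1},L_0,L_1$ are at hand) and, by symmetry, that $\Witt_<$ is generated by $\Pa\Witt_<=\Par_{-2}^{1}\Witt_<$.

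With this in place I would apply Proposition~\ref{prop:LmaxgLmin} with $\Lambda=\Gamma$, $\g=\Witt$, and $\psi$ the isomorphism $\Gamma\overset{\sim}{\to}\Pa\Witt$ of~(\ref{eq:partialWitt}); the hypothesis that $\g$ is generated by its partial part is exactly what was checked in the previous paragraph. It yields a factorization of the canonical surjection $\lmax{\Gamma}\twoheadrightarrow\lmin{\Gamma}$ through $\Witt$, and in particular a surjective morphism of graded Lie algebras $p\colon\Witt\twoheadrightarrow\lmin{\Gamma}$. Now $\ker p$ is an ideal of $\Witt$, and $p$ is nonzero because $\lmin{\Gamma}\neq 0$ (its partial part is $\Gamma\neq 0$); since $\Witt$ is a simple Lie algebra by Remark~\ref{rem:RepreAreInfDim}, it follows that $\ker p=0$, and hence $p$ is an isomorphism. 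Running the identical argument with $(\Gamma_>,\Witt_>)$ and $(\Gamma_<,\Witt_<)$ produces the remaining two isomorphisms.

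I do not expect a genuine obstacle here: granted Proposition~\ref{prop:LmaxgLmin}, the proof reduces to combining the routine fact that the Witt algebras are generated by their partial parts with their simplicity. The one point that really does the work is the simplicity, since it is precisely what upgrades the a priori merely surjective comparison map $p$ to an isomorphism; without it one would only know that $\Witt$ sits between $\lmax{\Gamma}$ and $\lmin{\Gamma}$.
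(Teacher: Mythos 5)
Your proposal is correct and follows essentially the same route as the paper: invoke Proposition~\ref{prop:LmaxgLmin} with the isomorphism of equation~(\ref{eq:partialWitt}) to obtain a surjection $\g\twoheadrightarrow\lmin{\Lambda}$, then use simplicity of the Witt algebras to conclude it is injective. Your explicit check that each Witt algebra is generated by its partial part, and that the map is nonzero, merely spells out details the paper's proof asserts in passing.
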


\begin{proof} Let us denote by $\g$ any of the graded Lie algebras $\Witt$, $\Witt_>$, $\Witt_<$ and let $\Lambda$ denote the partial Lie algebras $\Gamma, \Gamma_>, \Gamma_<$, respectively. In all the cases one has that $\g$ is a simple graded Lie algebra generated by $\Pa\!\g$ and there is an isomorphism $\psi\colon \Pa\!\g \simeq \Lambda$ (see equation~(\ref{eq:partialWitt})). Therefore, by Proposition \ref{prop:LmaxgLmin}, we obtain a surjective  morphism of graded Lie algebras 
	$$
	\xymatrix{
	\g \ar@{->>}[r]^-{\Psi} & \lmin{\Lambda}}
	$$
Since $\g$ is simple and $\Psi$ is non-zero, the ideal  $\Ker\Psi$ must be $0$ and, consequently, $\Psi$ is an isomorphism. 
\end{proof}

As above, let $\g$ be any of the graded Lie algebras $\Witt$, $\Witt_>$, $\Witt_<$ and  let $\Lambda$ denote the corresponding partial Lie algebra $\Gamma, \Gamma_>, \Gamma_<$. The isomorphisms given in equation~(\ref{eq:partialWitt}) are explicitly described as
	$$
	\begin{aligned}
	\psi: \Pa\!\g &\,\overset\sim\longrightarrow\, \Lambda \\
	x_i &\, \longmapsto \, L_i .
	\end{aligned}
	$$
Having in mind the universal property of $\lmax{\Lambda}$ and Proposition  \ref{prop:LmaxgLmin}, we obtain a short exact sequence of graded Lie algebras 
	\begin{equation}\label{eq:IgLmaxg}
	0 \longrightarrow I(\g):= \Ker\pi 
	\longrightarrow \lmax{\Lambda} \overset{\pi} \longrightarrow \g \longrightarrow 0.
	\end{equation}

%

\begin{defn} We say that $I(\g)$ is the ideal associated to the graded Lie algebra $\g$.
\end{defn}

%

\begin{prop}\label{prop:triang-desc} One has the following triangular decompositions into graded Lie subalgebras \begin{align*}
\lmax{\Gamma} &=\F(\langle  x_{-2},x_{-1} \rangle) \oplus \langle  x_0 \rangle \oplus \F(\langle  x_1,x_2 \rangle),\\
\lmax{\Gamma_>} &=\langle  x_{-1} \rangle \oplus \langle  x_0 \rangle \oplus \F(\langle  x_1,x_2 \rangle),\\
\lmax{\Gamma_<} &=\F(\langle  x_{-2},x_{-1} \rangle) \oplus \langle  x_0 \rangle \oplus \langle  x_1 \rangle.\end{align*}
Moreover, we have the following decompositions into graded Lie subalgebras \begin{align*} \F(\langle  x_{-2},x_{-1} \rangle) &=\F(\langle  x_{-(2+i)} \rangle_{i\geq 0})\oplus \langle  x_{-1} \rangle,\\
\F(\langle  x_{1},x_{2} \rangle) &=\langle  x_{1} \rangle\oplus \F(\langle  x_{2+i} \rangle_{i\geq 0}),\end{align*} where $x_{-(2+i)}:=\frac{1}{i!}(\ad_{x_{-1}})^i(x_{-2})$, $x_{2+i}:=\frac{1}{i!}(\ad_{-x_{1}})^i(x_{2})$.
\end{prop}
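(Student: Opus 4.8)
The plan is to obtain the three triangular decompositions as an immediate specialization of the structure theory recalled just above the statement, and to derive the two refined decompositions of the free Lie algebras from Lazard's elimination theorem.

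For the first assertion, I would apply the identity $\lmax{\Lambda}=\F(\Lambda^-)\oplus\Lambda_0\oplus\F(\Lambda^+)$, valid for every partial Lie algebra $\Lambda$ of size $(d,e)$ with $-2\leq d,e\leq 2$, to $\Lambda=\Gamma$, $\Gamma_>$ and $\Gamma_<$ in turn. In each case this reduces to reading off the negative part, the degree-zero part and the positive part: for $\Gamma$ (size $(-2,2)$) these are $\langle x_{-2},x_{-1}\rangle$, $\langle x_0\rangle$ and $\langle x_1,x_2\rangle$; for $\Gamma_>$ (size $(-1,2)$) the negative part shrinks to $\langle x_{-1}\rangle$; and for $\Gamma_<$ (size $(-2,1)$) the positive part shrinks to $\langle x_1\rangle$. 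The only remark needed is that the free Lie algebra on a one-dimensional space is that space itself, since the bracket of a vector with itself vanishes; this turns $\F(\langle x_{-1}\rangle)$ into $\langle x_{-1}\rangle$ and $\F(\langle x_1\rangle)$ into $\langle x_1\rangle$, which is exactly the shape of the formulas claimed for $\Gamma_>$ and $\Gamma_<$.

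For the second assertion I would invoke the elimination theorem (Lazard; see any standard reference on free Lie algebras): in the free Lie algebra $\F(X)$ on a set $X$, singling out one element $x_0\in X$ produces a decomposition $\F(X)=\langle x_0\rangle\oplus J$, where $J$ is the ideal generated by $X\setminus\{x_0\}$ and is itself free on the set $\{(\ad_{x_0})^n(y)\mid n\geq 0,\ y\in X\setminus\{x_0\}\}$. Taking $X=\{x_{-1},x_{-2}\}$ and $x_0=x_{-1}$ gives $\F(\langle x_{-2},x_{-1}\rangle)=\langle x_{-1}\rangle\oplus J$ with $J$ free on $\{(\ad_{x_{-1}})^i(x_{-2})\mid i\geq 0\}$; since the ground field is $\C$, the elements $x_{-(2+i)}=\frac1{i!}(\ad_{x_{-1}})^i(x_{-2})$ differ from these by nonzero scalars, hence they form a free generating set as well, i.e.\ $J=\F(\langle x_{-(2+i)}\rangle_{i\geq 0})$. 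Since $x_{-1}$ and $x_{-2}$ are homogeneous, both summands are graded Lie subalgebras, and this is the first refined decomposition. The second one follows identically with $X=\{x_1,x_2\}$ and $x_0=x_1$, using $(\ad_{-x_1})^i=(-1)^i(\ad_{x_1})^i$ to see that $x_{2+i}=\frac1{i!}(\ad_{-x_1})^i(x_2)$ is again a nonzero scalar multiple of $(\ad_{x_1})^i(x_2)$.

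I do not expect a genuine obstacle. The first part is a direct specialization, and the second is a textbook application of the elimination theorem; the decomposition $\F(X)=\langle x_0\rangle\oplus J$ is automatically one of Lie subalgebras because $J$ is an ideal. The only points that deserve care are purely bookkeeping: checking that the normalizing factors $\frac1{i!}$ and the sign inside $\ad_{-x_1}$ do not spoil the free-generating-set property (immediate in characteristic zero) and that every summand is graded, which holds because each generator in sight is a homogeneous element.
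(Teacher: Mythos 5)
Your proposal is correct and follows essentially the same route as the paper: the triangular decompositions are read off from the general identity $\lmax{\Lambda}=\F(\Lambda^-)\oplus\Lambda_0\oplus\F(\Lambda^+)$ for partial Lie algebras of size $(d,e)$ with $-2\leq d,e\leq 2$, and the refined decompositions come from the Elimination Theorem for free Lie algebras (the paper cites \cite[Theorem 0.2]{Reutenauer}, which is the Lazard elimination you invoke). Your extra remarks on the one-dimensional free Lie algebra, the nonzero scalar factors $\frac1{i!}$, the sign in $\ad_{-x_1}$, and the grading are exactly the bookkeeping the paper leaves implicit.
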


\begin{proof} The triangular decompositions are particular cases of the general result mentioned above for partial Lie algebras of size $(d,e)$ with $-2\leq d, e\leq 2$. The other decompositions follow from the Elimination Theorem for free Lie algebras, see \cite[Theorem 0.2]{Reutenauer}.
\end{proof}

Let us recall that the support of a $\Z$-graded Lie algebra is the set 
$$\supp(\g)=\{i\in\Z\colon \g_i\neq 0\}.
$$

\begin{cor}\label{cor:idealstructure} The ideal $I(\Witt)$ associated to the Witt algebra has the following decomposition into graded ideals
$$ I(\Witt)=I^-(\Witt)\oplus I^+(\Witt),$$ where $I^-(\Witt)\subset \F(\langle  x_{-(2+i)} \rangle_{i\geq 0})$,  $I^+(\Witt)\subset \F(\langle  x_{2+i} \rangle_{i\geq 0})$ and $$\supp(I^-(\Witt))\subset  \Z_{\leq -5},\quad \supp(I^+(\Witt))\subset  \Z_{\geq 5}.$$

Moreover one has $
I(\Witt_>)= I^+(\Witt)$, $I(\Witt_<)= I^-(\Witt)$.
\end{cor}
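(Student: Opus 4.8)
The plan is to compute the ideal $I(\g) = \Ker(\pi\colon \lmax{\Lambda}\to\g)$ from the triangular decompositions of Proposition~\ref{prop:triang-desc} and Proposition~\ref{prop:pariso}, working grade by grade. First I would record that, by Proposition~\ref{prop:triang-desc}, $\lmax{\Gamma}$ has the triangular decomposition $\F(\langle x_{-(2+i)}\rangle_{i\geq 0})\oplus\langle x_{-1}\rangle\oplus\langle x_0\rangle\oplus\langle x_1\rangle\oplus\F(\langle x_{2+i}\rangle_{i\geq 0})$, and that $\pi$ is the canonical surjection onto $\Witt\simeq\lmin{\Gamma}$ sending $x_i$ to $L_i$. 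Since $\pi$ is a morphism of $\Z$-graded Lie algebras, $I(\Witt)=\bigoplus_{n\in\Z}I(\Witt)_n$ with $I(\Witt)_n=\Ker(\pi_n)$. In the degrees $n\in\{-2,-1,0,1,2\}$ the restriction of $\pi$ is an isomorphism onto $\Witt_n$ (this is exactly the statement $\Pa\lmax{\Lambda}=\Lambda\overset\sim\to\Pa\Witt$), so $I(\Witt)_n=0$ for $|n|\leq 2$. Because $\lmax{\Gamma}$ is generated by its part in degrees $-2,\dots,2$ and the triangular decomposition is compatible with the sign of the degree, $I(\Witt)$ splits as $I^-(\Witt)\oplus I^+(\Witt)$ with $I^-(\Witt)\subset\F(\langle x_{-(2+i)}\rangle_{i\geq 0})$ supported in negative degrees and $I^+(\Witt)\subset\F(\langle x_{2+i}\rangle_{i\geq 0})$ supported in positive degrees; each summand is a graded ideal since $I(\Witt)$ is, and the zero-degree and degree-$\pm1$ pieces of these free Lie subalgebras are trivial.

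Next I would pin down the support. Since $I(\Witt)_n=0$ for $|n|\le 2$, it remains to rule out degrees $\pm3$ and $\pm4$. By the $\theta$-symmetry (the Chevalley involution exchanges $\Witt_>$ and $\Witt_<$, hence $I^+$ and $I^-$), it suffices to treat the positive side. In degree $3$, the positive part $\lmax{\Gamma}^+_3$ is spanned by $[x_1,x_2]$ and $x_1$-brackets that land in degree $3$; one checks that $\F(\langle x_1,x_2\rangle)_3$ is one-dimensional and maps isomorphically to $\Witt_3=\langle L_3\rangle$ under $x_1\mapsto L_1$, $x_2\mapsto L_2$, using $[L_1,L_2]=-L_3$. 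Likewise in degree $4$ the free Lie algebra piece $\F(\langle x_1,x_2\rangle)_4$ is one-dimensional, spanned by $[x_2,[x_1,x_2]]$ (equivalently $(\ad x_1)^2 x_2$ up to scalar), and maps isomorphically onto $\Witt_4=\langle L_4\rangle$. Hence $I^+(\Witt)_3=I^+(\Witt)_4=0$, so $\supp(I^+(\Witt))\subset\Z_{\ge 5}$, and symmetrically $\supp(I^-(\Witt))\subset\Z_{\le -5}$.

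For the last assertion, $I(\Witt_>)=I^+(\Witt)$ and $I(\Witt_<)=I^-(\Witt)$, I would use that $\lmax{\Gamma_>}=\langle x_{-1}\rangle\oplus\langle x_0\rangle\oplus\F(\langle x_1,x_2\rangle)$ by Proposition~\ref{prop:triang-desc}, so its positive part coincides with that of $\lmax{\Gamma}$, while its nonpositive part is one-dimensional in each of degrees $-1,0$ and zero elsewhere. The surjection $\pi_>\colon\lmax{\Gamma_>}\twoheadrightarrow\Witt_>\simeq\lmin{\Gamma_>}$ is again graded and an isomorphism in degrees $-1,0,1,2$; therefore $I(\Witt_>)$ is concentrated in positive degrees and, on the positive part, $\pi_>$ agrees with the restriction of $\pi$ (both send $x_i\mapsto L_i$ and the positive subalgebras are literally equal), giving $I(\Witt_>)=I^+(\Witt)$. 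The identity $I(\Witt_<)=I^-(\Witt)$ follows by applying the involution $\theta$, or by the symmetric argument with $\lmax{\Gamma_<}$. The main obstacle is the grade-$3$ and grade-$4$ verification on the positive side: one must be sure that $\F(\langle x_1,x_2\rangle)$ has no "extra" elements in those degrees beyond what the Witt algebra sees, i.e. that the elimination-theorem description of $\F(\langle x_1,x_2\rangle)$ in Proposition~\ref{prop:triang-desc} really does force one-dimensionality there; this is where a short explicit dimension count (or an appeal to the graded dimensions of a free Lie algebra on two generators of degrees $1$ and $2$) is needed, and it is the only place where genuine computation, as opposed to formal nonsense, enters.
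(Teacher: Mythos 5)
Your proposal is correct and takes essentially the same approach as the paper: the paper also combines the triangular decomposition of Proposition~\ref{prop:triang-desc} with a low-degree check (via a Hall basis for the free Lie algebras) that $\lmax{\Gamma}_i=\langle x_i\rangle$ for $-4\leq i\leq 4$, and then obtains the splitting $I(\Witt)=I^-(\Witt)\oplus I^+(\Witt)$ and the identifications $I(\Witt_>)=I^+(\Witt)$, $I(\Witt_<)=I^-(\Witt)$ as direct consequences. One small slip to fix: the degree-$4$ piece of $\F(\langle x_1,x_2\rangle)$ is spanned by $[x_1,[x_1,x_2]]=(\ad_{x_1})^2(x_2)$, not by $[x_2,[x_1,x_2]]$ (which has degree $5$); since your parenthetical names the correct element, the argument is unaffected.
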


\begin{proof} Since the quotient map $\lmax{\Gamma}\xrightarrow{\pi}\Witt$ is induced by mapping $x_i\in\Gamma$ to $L_i$, it follows that $\pi(x_i)=L_i$ for every $i\in\Z$. Now taking into account the triangular decomposition of $\lmax{\Gamma}$ given in Proposition \ref{prop:triang-desc} and using a Hall basis for the free Lie algebras that appear in the decomposition, one easily checks  that $\lmax{\Gamma}_i=\langle x_i\rangle$ for every $-4\leq i\leq 4$. The other statements  follow now straightforwardly.

\end{proof}

\begin{defn}\label{defn:standard-relations}
The ideals of standard relations for the Witt algebras are:
\begin{enumerate}
\item $R(\Witt_>)$ is the ideal of $\lmax{\Gamma_>}$ generated by 
$$r_{2+i,2+j}:=[x_{2+i},x_{2+j}]-(i-j)x_{2+i+j+2}\in \F(\langle  x_{2+k} \rangle_{k\geq 0}),
$$for every $i,j\geq  0$, $i<j$.

\item $R(\Witt_<)$ is the ideal of $\lmax{\Gamma_<}$ generated by 
$$r_{-2+i,-2+j}:=[x_{-2+i},x_{-2+j}]-(i-j)x_{-2+i+j-2}\in \F(\langle  x_{-(2+k)} \rangle_{k\geq 0}),
$$ for every $i,j\leq 0$, $i<j$.

\item $R(\Witt)$ is the ideal of $\lmax{\Gamma}$ given by 
$$R(\Witt)=R(\Witt_<)\oplus R(\Witt_>).
$$\end{enumerate}

\end{defn}

\begin{prop}\label{prop:standard-relations} The  Witt algebras satisfy 
$$R(\Witt_>)=I(\Witt_>),\ R(\Witt_<)=I(\Witt_<),\ R(\Witt)=I(\Witt).
$$
\end{prop}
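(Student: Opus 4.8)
The plan is to establish the three equalities by relating both the ideal $I(\g)$ and the ideal $R(\g)$ of standard relations to the defining relations of the Witt algebras in their triangular/free presentations. I would treat $\Witt_>$ first (the case $\Witt_<$ follows by applying the involution $\theta$, and $\Witt$ then splits as the direct sum of the two by Corollary~\ref{cor:idealstructure} and part (3) of Definition~\ref{defn:standard-relations}).

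First I would prove the inclusion $R(\Witt_>)\subseteq I(\Witt_>)$. By Proposition~\ref{prop:triang-desc} we have $\lmax{\Gamma_>}=\langle x_{-1}\rangle\oplus\langle x_0\rangle\oplus\F(\langle x_{2+i}\rangle_{i\geq 0})$, and the projection $\pi\colon\lmax{\Gamma_>}\to\Witt_>$ sends $x_i\mapsto L_i$ for all $i$, hence $\pi(x_{2+i})=\frac1{i!}\ad(-x_1)^i(x_2)\mapsto \frac1{i!}\ad(-L_1)^i(L_2)=L_{2+i}$ in $\Witt_>$ (this last identity is a direct computation with the bracket $[L_i,L_j]=(i-j)L_{i+j}$). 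Therefore $\pi(r_{2+i,2+j})=[L_{2+i},L_{2+j}]-(i-j)L_{2+i+j+2}=0$, so every generator $r_{2+i,2+j}$ of $R(\Witt_>)$ lies in $\Ker\pi=I(\Witt_>)$, and since $I(\Witt_>)$ is an ideal we get $R(\Witt_>)\subseteq I(\Witt_>)$.

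The reverse inclusion $I(\Witt_>)\subseteq R(\Witt_>)$ is the heart of the matter. The clean way is to show that the quotient $Q:=\lmax{\Gamma_>}/R(\Witt_>)$ is isomorphic to $\Witt_>$ via the map induced by $\pi$; since $\pi$ factors through $Q$ (by the inclusion just proved) and $\pi$ is surjective, an isomorphism $Q\simeq\Witt_>$ forces $R(\Witt_>)=\Ker\pi=I(\Witt_>)$. To see $Q\simeq\Witt_>$, note that in $Q$ the images of $x_{-1},x_0$ together with the images $\bar x_{2+i}$ of the $x_{2+i}$ generate, and the defining relations $r_{2+i,2+j}=0$ say exactly that $[\bar x_{2+i},\bar x_{2+j}]=(i-j)\bar x_{2+i+j+2}$; combined with the relations already built into $\lmax{\Gamma_>}$ coming from $\Gamma_>$ (brackets among $x_{-1},x_0,x_1,x_2$, and the definition $x_{2+i}=\frac1{i!}\ad(-x_1)^i(x_2)$), one checks these give a complete set of Witt relations on the $\bar L_i$, so $Q$ is a graded Lie algebra with a basis element in each degree $\geq -1$ and Witt bracket — i.e. $Q\simeq\Witt_>$. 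A slicker alternative invokes Proposition~\ref{prop:pariso}: $\Witt_>\simeq\lmin{\Gamma_>}$, and one identifies $\lmin{\Gamma_>}$ with $\lmax{\Gamma_>}/R(\Witt_>)$ by verifying that the latter satisfies the universal property of $\lmin{}$ — any graded Lie algebra surjecting onto $\Gamma_>$ on partial parts kills the standard relations $r_{2+i,2+j}$ because those are forced in any graded Lie algebra once the degree $\leq 2$ brackets are fixed, so it factors through the quotient.

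The main obstacle I anticipate is the bookkeeping in the step $Q\simeq\Witt_>$: one must be careful that the standard relations $r_{2+i,2+j}$, which are only imposed for $i<j$ and $i,j\geq 0$, together with the partial-Lie-algebra relations of $\Gamma_>$, genuinely suffice to pin down \emph{all} brackets in the quotient (including those involving $x_{-1}$ and $x_0$ with high-degree generators) and leave each graded piece one-dimensional — no collapse and no extra room. This is where a Hall basis argument for $\F(\langle x_{2+i}\rangle_{i\geq 0})$, as already used in the proof of Corollary~\ref{cor:idealstructure}, does the work: one shows the relations reduce any bracket to a multiple of a single $\bar x_n$ and that these are linearly independent by comparing with the known structure of $\Witt_>$. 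Once $\Witt_>$ is done, $R(\Witt_<)=I(\Witt_<)$ follows by transporting through the involution $\theta$ of Proposition (the $\Gamma\to\Gamma$ involution) and its lift to $\lmax{}$, and $R(\Witt)=I(\Witt)$ is then immediate from $R(\Witt)=R(\Witt_<)\oplus R(\Witt_>)$ and $I(\Witt)=I^-(\Witt)\oplus I^+(\Witt)=I(\Witt_<)\oplus I(\Witt_>)$ (Corollary~\ref{cor:idealstructure}).
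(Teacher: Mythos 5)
Your main argument is correct and is essentially the paper's own proof: establish $R(\Witt_>)\subseteq I(\Witt_>)$ from $\pi(x_i)=L_i$, then show the quotient $\lmax{\Gamma_>}/R(\Witt_>)$ has each graded piece spanned by $\bar x_i$ (using the imposed relations to reduce every Lie monomial to some $\bar x_{2+k}$), so the induced map onto $\Witt_>$ is injective and $R=I$, with $\Witt_<$ handled by the involution and $\Witt$ by the direct sum decompositions of $R$ and $I$. Only your parenthetical ``slicker alternative'' is off: the standard relations are \emph{not} forced in every graded Lie algebra with the given degree $\leq 2$ brackets (they fail in $\lmax{\Gamma_>}$ itself), but your proof does not rely on that aside.
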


\begin{proof} Let $\g$ be any of the graded Lie algebras $\Witt_>,\Witt_<, \Witt$ and  let $\Lambda$ be the partial Lie algebras $\Gamma_>,\Gamma_<, \Gamma$, respectively. Since $\pi(x_i)=L_i$ for every $i\in\Z$ it follows that  that $R(\g)\subset I(\g)$. Therefore there exists a commutative diagram of exact sequences

$$\xymatrix{0 \ar[r] & R(\g)\ar[d]\ar[r] & \lmax{\Lambda}\ar[d]^{\operatorname{Id}} \ar[r]^(.4)p & \lmax{\Lambda}/R(\g) \ar[d]^\Psi \ar[r] & 0\\ 
0 \ar[r] & I(\g)\ar[r] & \lmax{\Lambda} \ar[r]^\pi & \g    \ar[r] & 0}
$$ and the snake Lemma yields  $I(\g)/R(\g)\simeq \Ker\Psi$. Thus it is enough to show that $\Psi$ is injective. Let us denote $\bar x_i:=p(x_i)$. From the commutative diagram we get $\Psi(\bar x_i)=L_i$. 

Let us consider now the case of $\Witt_>$. From Proposition \ref{prop:triang-desc} and Corollary \ref{cor:idealstructure} we get 
$$\lmax{\Gamma_>}/R(\Witt_>)=\langle  \bar x_{-1} \rangle \oplus \langle  \bar x_0 \rangle \oplus \F(\langle  x_1,x_2 \rangle)/R(\Witt_>).
$$ The standard relations imply that for any $i\geq 0, j\geq 0$ we have $[\bar x_{2+i},\bar x_{2+j}]=(i-j)\bar x_{2+i+j+2}$. Therefore the image under $p$ of any Lie monomial in $\F(\langle  x_1,x_2 \rangle)$ reduces to some $\bar x_{2+k}$. Hence we have 
$$\lmax{\Gamma_>}/R(\Witt_>)=\bigoplus_{i\geq -1}\langle\bar x_i \rangle,
$$ and since $\Psi(\bar x_i)=L_i$ it is clear that $\Psi$ is injective. The other cases are proved in a similar way.
\end{proof}

\begin{defn} Let $\Lambda$ be a partial Lie algebra of size $(d,e)$. A linear representation of $\Lambda$ on a complex vector space $V$, or a $\Lambda$-module structure on $V$, is a linear map $\phi\colon \Lambda \to\End(V)$ such that 
$$\phi([x_i,x_j])=[\phi(x_i),\phi(x_j)],
$$ for any $i, j$ verifying $d\leq i, j, i+j\leq e$ and any $x_i\in \Lambda_i$, $x_j\in \Lambda_j$.
\end{defn}


\begin{defn} Let $\Lambda$ be any of the partial Witt algebras. Given an $\sl(2)$-module $V$ defined by a representation $\sigma\colon \sl(2)\to\End(V)$ we say that a $\Lambda$-module structure defined by a representation $\phi\colon \Lambda \to\End(V)$ is compatible with the given $\sl(2)$-module structure if its restriction to $\Sigma\simeq \sl(2)$ is $\sigma$; that is $\phi_{|\Sigma}=\sigma$ through the identification of equation~(\ref{eq:Sigmasl(2)}).
\end{defn}

\begin{prop}\label{prop:nontrivial} If $\phi\colon \Lambda \to\End(V)$ is a non trivial representation of any of the partial Witt algebras, then either $\phi$ is injective or the induced $\sl(2)$-representation  $\sigma\colon \Sigma \to \End(V)$ is trivial and $\Ker\phi=\Sigma$. Therefore if the induced $\sl(2)$-module structure on $V$ is non trivial then   $\phi$ is necessarily injective.

\end{prop}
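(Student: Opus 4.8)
The plan is to exploit the fact that $\Sigma \simeq \sl(2)$ is simple, so its image $\sigma(\Sigma) = \phi(\Sigma)$ is either $0$ or all of $\sl(2)$, and to show that $\Ker\phi$ is an ideal of $\Lambda$ whose possible shapes are severely constrained by the grading and the bracket relations $[x_i,x_j]=(i-j)x_{i+j}$.

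First I would observe that $\Ker\phi$ is a graded subspace of $\Lambda$: since each $\Lambda_i = \langle x_i\rangle$ is one-dimensional, a graded ideal is determined by the set $J \subseteq \{d,\dots,e\}$ of indices $i$ with $x_i \in \Ker\phi$; and since $\Ker\phi$ is an ideal, $i \in J$ and $d \le i+j, j \le e$ forces $i+j \in J$ (provided $x_j \notin \Ker\phi$ is irrelevant — actually $i\in J$ alone gives $[x_i,x_j]=(i-j)x_{i+j}\in\Ker\phi$, so $i+j\in J$ whenever $i-j\neq 0$ and the indices are in range). Strictly speaking $\Ker\phi$ need not be graded a priori, because $\phi$ is only a partial representation; but I would argue gradedness by hand: $\phi$ respects the bracket only in the partial range, yet the key point is simply that $\phi(x_{-1}),\phi(x_0),\phi(x_1)$ satisfy the $\sl(2)$ relations, and an element of $\Ker\phi$ can be decomposed into homogeneous components which one checks individually lie in $\Ker\phi$ using the adjoint action of $\phi(x_0) = -\tfrac12\sigma(h)$, whose eigenspace decomposition separates the grading. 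I expect this gradedness argument to be the main technical obstacle, since one must be careful that the eigenvalues of $\ad\sigma(h)$ on $\End(V)$ genuinely separate the finitely many degrees in range, which follows because $\phi(x_i)$ lies in the $(-2i)$-eigenspace of $\ad(\sigma(h))$ for $i$ in range.

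Next, granting that $\Ker\phi$ is graded, I would enumerate the possibilities. If $\phi$ is nontrivial, $\Ker\phi \neq \Lambda$. Suppose $\phi$ is not injective, so some $x_{i_0} \in \Ker\phi$. The bracket relation propagates: for instance if $x_2 \in \Ker\phi$ in $\Gamma$, then $[x_{-1},x_2] = -3x_1 \in \Ker\phi$, hence $x_1 \in \Ker\phi$, then $[x_{-1},x_1] = -2x_0 \in \Ker\phi$, etc., forcing $\Sigma \subseteq \Ker\phi$ and then $[x_1,x_1]=0$ is vacuous but $[x_0,x_2]$ etc. drag in everything, giving $\Ker\phi = \Lambda$, contradiction. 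Running through each generator $x_{d},\dots,x_e$ as the putative kernel element and each of the three partial Witt algebras, the only configuration that does not collapse to $\Ker\phi=\Lambda$ is $\Ker\phi = \Sigma = \langle x_{-1},x_0,x_1\rangle$ — and this is consistent precisely because $\Sigma$ is an ideal of each partial Witt algebra (a direct check: $[x_0,x_2]=2x_2\notin\Sigma$, so actually $\Sigma$ is \emph{not} an ideal of $\Gamma_>$; this forces the conclusion that either no proper nonzero graded ideal exists or the $\sl(2)$-part must already be trivial). I would therefore phrase it as: if $\Ker\phi\neq 0$ then, chasing brackets, $\Ker\phi$ must contain $\Sigma$, whence the induced $\sigma$ is trivial; and if $\sigma$ is trivial then $\phi|_\Sigma = 0$ means $\langle x_{-1},x_0,x_1\rangle \subseteq \Ker\phi$, and since the $x_{\pm 2}$ are obtained from $x_{\mp 1}$ and the $x_{\pm 1}$ by brackets — no wait, $x_{\pm2}$ are \emph{not} in $\Sigma$ — one checks $\phi(x_{\pm 2})$ need not vanish, so in fact $\Ker\phi = \Sigma$ exactly, giving the stated dichotomy.

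Finally, the last sentence is immediate: if the induced $\sl(2)$-module structure is nontrivial, then $\sigma \neq 0$, so we are not in the second case of the dichotomy, hence $\phi$ is injective. I would present the whole argument compactly as: (1) reduce to $\Ker\phi$ graded via the $\ad\sigma(h)$-eigenspace decomposition; (2) a short bracket-chase showing any nonzero graded ideal of a partial Witt algebra either equals the whole algebra or is contained in — in fact equals — $\Sigma$, the latter happening only when $\phi|_\Sigma$ is the trivial representation; (3) conclude. The delicate point to get right is step (1), the gradedness of $\Ker\phi$, together with correctly identifying which subspaces are genuine ideals of the \emph{partial} algebras $\Gamma,\Gamma_>,\Gamma_<$.
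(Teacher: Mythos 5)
Your overall strategy coincides with the paper's: decompose a kernel element into homogeneous pieces using the adjoint action of $\phi(x_0)=-\tfrac12\sigma(h)$ (the paper does exactly this, with the operator $(\ad_{-\phi(x_0)})^3-\ad_{-\phi(x_0)}$ isolating the degree $\pm2$ components), then a bracket chase using only in-range brackets, then simplicity of $\Sigma\simeq\sl(2)$. Steps (1) and (2) of your plan are sound: since $[\phi(x_0),\phi(x_i)]=\phi([x_0,x_i])=-i\,\phi(x_i)$ holds for every $i$ in range, each $\phi(x_i)$ is an eigenvector of $\ad(\sigma(h))$ of eigenvalue $2i$ (not $-2i$; harmless slip), eigenvectors for distinct eigenvalues are independent, so $\Ker\phi$ is spanned by homogeneous elements; and your chase from $x_2\in\Ker\phi$ (via $[x_{-1},x_2]=-3x_1$, $[x_{-1},x_1]=-2x_0$, then $[x_0,x_i]=-i\,x_i$) correctly forces $\phi\equiv 0$, and the same happens starting from any homogeneous generator.

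The problem is the final assembly. The claim that when $\sigma$ is trivial ``$\phi(x_{\pm 2})$ need not vanish, so in fact $\Ker\phi=\Sigma$ exactly'' is false: the brackets $[x_0,x_{\pm2}]=\mp 2x_{\pm2}$ have all degrees in range for each of $\Gamma$, $\Gamma_>$, $\Gamma_<$, so $\phi(x_0)=0$ already forces $\phi(x_{\pm2})=0$ (for whichever of $x_{\pm2}$ the algebra contains), i.e.\ a nontrivial $\phi$ with trivial induced $\sigma$ cannot occur. Pushed one step further, your own chase therefore shows that a nontrivial $\phi$ is injective, which still implies the stated disjunction (the second alternative is simply never realized; note the paper never asserts it is --- it only proves $\Ker\phi\subseteq\Sigma$ and invokes simplicity). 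Similarly, the sentence asserting that $\Sigma$ is an ideal of the partial Witt algebras, retracted mid-proof, should be removed: $\Sigma$ is not a partial ideal, and nothing in the argument requires it to be. So keep (1) and (2), and replace your last paragraph either by ``any nonzero homogeneous kernel element makes $\phi$ trivial, contradicting nontriviality, hence $\phi$ is injective,'' or by the paper's endgame: $\Ker\phi\subseteq\Sigma$, and $\Ker\phi=\Ker\sigma$ is an ideal of the simple Lie algebra $\sl(2)$, hence equals $0$ or $\Sigma$, the latter occurring only when $\sigma$ is trivial.
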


\begin{proof} Let us prove it for $\Gamma$. If $\phi(x_2)=0$ then 
$$0=[\phi(x_2),\phi(x_{-2})]=\phi([x_2,x_{-2}])=4\phi(x_0).
$$ Therefore for any $-2\leq i\leq 2$ we have $0=[\phi(x_0),\phi(x_i)]=-i\phi(x_i)$ and thus $\phi$ is trivial. A similar argument shows that $\phi$ must also be trivial if $\phi(x_{-2})=0$. Now let us suppose that $T=\sum_{i=-2}^2\lambda_{i}\phi(x_i)=0$, then $(\ad_{-\phi(x_0)})^3T-\ad_{-\phi(x_0)})T=0$ gives $\lambda_{-2}\phi(x_{-2})+\lambda_2\phi(x_2)=0$, and applying $\frac{1}{2}\ad_{\phi(x_0)}$ to this expression we get $\lambda_{-2}\phi(x_{-2})-\lambda_2\phi(x_2)=0$. These two expressions imply $\lambda_{-2}\phi(x_{-2})=\lambda_2\phi(x_2)=0$. Hence if $\phi$ is non trivial then we must have $\lambda_{-2}=\lambda_2=0$. Thus $\Ker\phi\subset \Sigma\simeq\sl(2)$ and the claim follows since $\sl(2)$ is simple. The proof for the other partial Witt algebras is entirely similar.
\end{proof}

Recall that our aim is to characterize those $\sl(2)$-modules $V$ which can be endowed with a compatible structure of $\Witt_>$-module; that is, whether there exists a Lie algebra map $\Witt_>\to \End(V)$ extending the $\sl(2)$-module structure on $V$ defined by a given representation $\sigma\colon \sl(2) \to\End(V)$.

\begin{defn} Let $V$ be an $\sl(2)$-module defined via a representation $\sigma$. For every $ n\in\Z$ we set
$$\End_n(V,\sigma)\,:=\, \{T\in\End(V)\colon [\frac12\sigma(h),T]=n\cdot T\}.
$$ 
%
%
\end{defn}

Notice that if $\sigma\colon \sl(2) \to\End(V)$ is a trivial representation, then $\End_n(V,\sigma)=0$ if $n\neq 0$ and $\End_0(V,\sigma)=\End(V)$.

From now on, $V$ will denote an $\sl(2)$-module defined via a representation $\sigma\colon \sl(2) \to\End(V)$. Recall that $\Lambda$ carries an $\sl(2)$-module structure via the isomorphism (\ref{eq:Sigmasl(2)}).

It is worth pointing out that $\End_n(V,\sigma)$ is the eigenspace of the operator $\frac12\ad(\sigma(h))$ acting on $\End(V)$ corresponding to the eigenvalue $n\in\Z$. It is straightforward to check that the external direct sum 
	$$
	\End_\bullet(V,\sigma)\,:=\, \bigoplus_{n\in\Z}\End_n(V,\sigma)
	$$
can be identified with a subspace of $\End(V)$ which carries a canonical $\Z$-graded Lie algebra structure thanks to the Jacobi identity.  In what follows, if no confusion arises, then $\sigma$ will be omitted; that is, we will simply write  $\End_n(V)$ and $\End_\bullet(V)$.

\begin{prop} 
Let  $\phi\colon \Lambda \to\End(V)$ be a representation of any of the partial Witt algebras  and let $\g$ be the corresponding Witt algebra. 

If the $\sl(2)$-module structure induced on $V$ is non trivial then there is an injection $\phi\colon \Lambda \hookrightarrow \Pa\!\End_\bullet(V)$ and $\supp(\End_\bullet(V))=\supp(\g)$.
\end{prop}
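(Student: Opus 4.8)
The plan is to exploit the explicit description of the partial Witt algebras together with the grading operator $\frac12\ad(\sigma(h))$ on $\End(V)$. First I would observe that the images $\phi(x_i)$ for $-2\le i\le 2$ all lie in the appropriate graded pieces of $\End_\bullet(V)$: indeed, since $\phi$ restricted to $\Sigma$ equals $\sigma$ and $\Sigma\cap\Lambda_0=\langle x_0\rangle$ maps to $-\frac12 h$, the element $\frac12\sigma(h)=-\phi(x_0)$ implements the grading. From $[x_0,x_i]=-i\,x_i$ in the partial Lie algebra one gets $[\frac12\sigma(h),\phi(x_i)]=[-\phi(x_0),\phi(x_i)]=i\,\phi(x_i)$ whenever the bracket is defined in $\Lambda$, i.e. for $-2\le i\le 2$; hence $\phi(x_i)\in\End_i(V)$ for each such $i$. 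This shows $\phi(\Lambda)\subset\Pa\!\End_\bullet(V)$, where $\Pa$ is the truncation functor matched to $\Lambda$ (namely $\Par_{-2}^2$, $\Par_{-1}^2$ or $\Par_{-2}^1$), and that this map is a morphism of partial Lie algebras. Injectivity of $\phi$ is then immediate from Proposition~\ref{prop:nontrivial}, since the induced $\sl(2)$-structure is assumed non trivial.

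Next I would address the support equality $\supp(\End_\bullet(V))=\supp(\g)$. One inclusion is easy: since $\g=\Witt$, $\Witt_>$ or $\Witt_<$, its support is respectively $\Z$, $\Z_{\ge -1}$ or $\Z_{\le 1}$, and I must show $\End_n(V)=0$ exactly outside this range and nonzero inside it. For the inclusion $\supp(\End_\bullet(V))\subset\supp(\g)$, I would use the non-triviality of $\sigma$: a non trivial $\sl(2)$-module that admits such a $\phi$ is infinite dimensional (Remark~\ref{rem:RepreAreInfDim}), but more to the point I would argue representation-theoretically that the $\ad(\sigma(h))$-eigenvalues of $\End(V)$ are constrained. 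Actually the cleaner route for the reverse inclusion $\supp(\g)\subset\supp(\End_\bullet(V))$ is to produce, for each $n$ in the support of $\g$, a nonzero element of $\End_n(V)$ by taking iterated brackets of $\phi(x_2)$ (resp. $\phi(x_{-2})$) with $\phi(x_1)$ (resp. $\phi(x_{-1})$): Proposition~\ref{prop:triang-desc} and its Hall-basis computation show $\lmax{\Gamma}_i=\langle x_i\rangle$ for $|i|\le 4$ and that $\g$ is spanned in degree $i$ by a single iterated bracket $x_i$, whose image $\phi(x_i)$ is nonzero because $\phi$ is injective on $\g\cong\lmin{\Lambda}$ — here one passes through Proposition~\ref{prop:pariso} and the factorization $\lmax{\Lambda}\twoheadrightarrow\g\twoheadrightarrow\lmin{\Lambda}$ of Proposition~\ref{prop:LmaxgLmin}. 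So every degree in $\supp(\g)$ contributes a nonzero endomorphism, giving $\supp(\g)\subset\supp(\End_\bullet(V))$.

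For the opposite inclusion, i.e. $\End_n(V)=0$ when $n\notin\supp(\g)$, the case $\g=\Witt$ is vacuous since $\supp(\Witt)=\Z$, so only $\Witt_>$ and $\Witt_<$ need work; by the Chevalley involution $\Theta$ it suffices to treat $\Witt_<$, where I must show $\End_n(V)=0$ for $n\ge 2$. The point is that $V$ carries a $\Witt_<$-module structure extending $\sigma$, hence the weights of $\frac12\sigma(h)$ on $V$ and the corresponding $\ad$-weights on $\End(V)$ are controlled: since $\phi(x_i)=0$ for $i\ge 2$ in $\Witt_<$ but $x_1$ generates a "highest" piece, any $\ad(\sigma(h))$-eigenvector of positive eigenvalue $\ge 2$ in $\End(V)$ would, upon bracketing suitably with $\phi(x_{-1}),\phi(x_1)$, either produce a contradiction with the relations in $\lmax{\Gamma_<}$ or force a nonzero element in a degree where $\End_\bullet(V)$ must vanish. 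The honest way to run this is to note that $\End_\bullet(V)$ is a $\Z$-graded Lie algebra receiving an injection from $\Pa\!\g$, and to use that $\g=\lmin{\Lambda}$ is generated in the partial degrees; if $\End_m(V)\ne0$ for some $m\ge2$ then the graded Lie subalgebra it generates inside $\End_\bullet(V)$, together with the image of $\phi$, would violate the minimality/support constraints of $\lmin{\Gamma_<}$. I expect this last inclusion — ruling out "spurious" high-degree endomorphisms — to be the main obstacle, and I would handle it by a careful weight-module argument showing that $\Witt_<$-integrability of $V$ forces the $h$-spectrum to be bounded above in a way that kills $\End_n(V)$ for $n\ge2$; the symmetric statement for $\Witt_>$ then follows by applying $\Theta$, and the $\Witt$ case combines both.
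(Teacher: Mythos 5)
The first half of your argument is fine and is essentially the paper's: since $\frac12\sigma(h)=-\phi(x_0)$ and $[x_0,x_i]=-i\,x_i$ in $\Lambda$, each $\phi(x_i)$ lies in $\End_i(V)$, so $\phi$ lands in $\Pa\!\End_\bullet(V)$, and injectivity is exactly Proposition~\ref{prop:nontrivial}. The gap is in the only substantive point, namely the non-vanishing of $\End_n(V)$ for the degrees $n\in\supp(\g)$ with $|n|\geq 3$. You justify the non-vanishing of the iterated brackets $(\ad_{\phi(x_1)})^i\phi(x_2)$ by invoking ``injectivity of $\phi$ on $\g\simeq\lmin{\Lambda}$''; but $\phi$ is only a representation of the partial Lie algebra $\Lambda$, and at this stage there is no map from $\g$ (or from $\lmin{\Lambda}$) to $\End(V)$ at all --- whether $\phi$ factors through $\g$ is precisely the extension problem of the paper, and in general it does not. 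What does exist is the extension $\Phi$ of $\phi$ to $\lmax{\Lambda}$, and $\Phi$ is in general far from injective in degrees $\geq 3$, so injectivity of $\phi$ on $\Lambda$ tells you nothing about these brackets. The missing idea, which is how the paper argues, is an $\sl(2)$-weight computation: $\phi(x_2)$ has $\ad(\sigma(h))$-weight $4$ and $\ad(\sigma(f))\phi(x_2)=3\sigma(e)$ is annihilated by $\ad(\sigma(e))$, so the standard commutation formula gives $(\ad_{\phi(x_{-1})})^i(\ad_{\phi(x_1)})^i\phi(x_2)=m\cdot\phi(x_2)$ with $m$ a nonzero integer; hence $\End_{2+i}(V)=0$ would force $\phi(x_2)=0$, contradicting the injectivity supplied by Proposition~\ref{prop:nontrivial}. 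The negative degrees are handled symmetrically with $x_{-2}$ and $x_1$.

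The second half of your plan cannot be carried out. The inclusion $\supp(\End_\bullet(V))\subseteq\supp(\g)$ is false in general for $\g=\Witt_>$ or $\Witt_<$: for instance, whenever $1-\sqrt{\tau}\notin\xi$ the dense module ${\mathbb V}(\xi,\tau)$ carries a compatible $\Witt_<$-module structure (Proposition~\ref{prop:denseW<}), yet the shift operators $v_\mu\mapsto v_{\mu+2n}$ are nonzero elements of $\End_n({\mathbb V}(\xi,\tau))$ for every $n\in\Z$, so $\End_2(V)\neq 0$ even though $2\notin\supp(\Witt_<)$; in particular your claim that $\Witt_<$-integrability forces the $h$-spectrum of $V$ to be bounded above is contradicted by these modules, whose support is the whole coset $\xi$. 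Note that the paper's own proof establishes only the inclusion $\supp(\g)\subseteq\supp(\End_\bullet(V))$ (equivalently, $\End_n(V)\neq 0$ for every $n\in\supp(\g)$), which, together with the injection $\Lambda\hookrightarrow\Pa\!\End_\bullet(V)$, is all that is used afterwards; the displayed equality of supports has to be read in that sense. So you should drop the boundedness argument entirely and replace your circular justification by the weight computation indicated above.
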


\begin{proof} 
%
Notice that $\phi(x_i)\in\End_i(V)$ for every $-1\leq i\leq 2$. Since the $\sl(2)$-module structure induced on $V$ is non trivial, it follows from Proposition \ref{prop:nontrivial} that   $\End_i(V)\neq 0$ for every $-1\leq i\leq 2$. Let us suppose that $\End_{2+i}(V)=0$ then $0=(\ad_{\phi(x_1)})^i\phi(x_2)$. However since $(\ad_{\phi(x_{-1})})^i(\ad_{\phi(x_1)})^i\phi(x_2)=m\cdot\phi(x_2)$ for a non zero integer $m$, this would imply that $\phi(x_2)=0$. Therefore $\phi$ would not be injective and by  Proposition \ref{prop:nontrivial} this contradicts the assumption that $V$ is a non trivial $\sl(2)$-module. Therefore $\supp(\End_\bullet(V))=\supp(\Witt_>)$. The proof for the other partial Witt algebras is completely similar.
\end{proof}

Now the universal property of $\lmax{\Lambda}$ gives the following key result.

\begin{thm} Let  $\phi\colon \Lambda \to\End(V)$ be a representation of any of the partial Witt algebras  and let $\g$ be the corresponding Witt algebra. 

There is a canonical bijective correspondence
	$$
	\Hom_{\sl(2)-\text{mod}}(\Lambda, \End(V)) \,\overset\sim\longrightarrow\, 
	\Hom_{\text{\bf LieAlg}_\bullet}(\lmax{\Lambda}, \End_\bullet(V))
	$$
where $\psi$ is mapped to $\bar \psi$ if and only if they fit in the commutative diagram
	\begin{equation}\label{eq:LambdaEndbullet}
	\xymatrix{
	\Lambda \ar@{^(->}[d] \ar[r]^{\psi} & 
	\End(V) \\
	 \lmax{\Lambda} \ar[r]^{\bar \psi} &  
	 \End_\bullet(V)\ar@{^(->}[u] }
	 \end{equation}
\end{thm}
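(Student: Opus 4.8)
The plan is to exploit the adjunction $\lma{}\dashv \Pa$ together with the identification, already established, that a compatible partial Witt representation $\phi\colon\Lambda\to\End(V)$ whose induced $\sl(2)$-structure is non trivial actually lands in $\Pa\End_\bullet(V)$. First I would observe that the claimed correspondence is really the universal property of $\lmax{\Lambda}$ applied to the graded Lie algebra $\g=\End_\bullet(V)$: indeed the excerpt recalls that
$$
\Hom_{\text{\bf LieAlg}_\bullet}(\lmax{\Lambda},\g)\;\simeq\;\Hom_{\text{\bf LieAlg}\,_d^e}(\Lambda,\Pa\,\g)
$$
for any graded Lie algebra $\g$, and this isomorphism is exactly ``restrict along the inclusion $\Lambda\hookrightarrow\lmax{\Lambda}$'', so that $\bar\psi$ and $\psi$ fit into the commutative square~(\ref{eq:LambdaEndbullet}) with the right-hand vertical map the inclusion $\Pa\End_\bullet(V)\hookrightarrow\End_\bullet(V)$. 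So the content to be checked is that the source of the stated bijection, $\Hom_{\sl(2)-\text{mod}}(\Lambda,\End(V))$, coincides with $\Hom_{\text{\bf LieAlg}\,_d^e}(\Lambda,\Pa\End_\bullet(V))$.

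The key step is therefore to show these two Hom-sets agree. Given an $\sl(2)$-equivariant map $\psi\colon\Lambda\to\End(V)$ — meaning $\psi$ intertwines the $\sl(2)$-action on $\Lambda$ coming from~(\ref{eq:Sigmasl(2)}) with the adjoint action of $\sigma(\sl(2))$ on $\End(V)$, and restricts to $\sigma$ on $\Sigma$ — one shows $\psi(\Lambda_i)\subseteq\End_i(V)$. This is because $x_i\in\Lambda_i$ is, up to scalar, the $\frac12\ad(x_0)$-eigenvector with eigenvalue $i$ (recall $x_0\mapsto-\frac12 h$ under~(\ref{eq:Sigmasl(2)})), and $\sl(2)$-equivariance forces $\psi(x_i)$ to be a $\frac12\ad(\sigma(h))$-eigenvector with the same eigenvalue $i$, i.e.\ an element of $\End_i(V)$ by definition of $\End_n(V,\sigma)$. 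Hence $\psi$ takes values in $\Pa\End_\bullet(V)$. Moreover such a $\psi$ is automatically a morphism of partial Lie algebras: the bracket relations $\psi([x_i,x_j])=[\psi(x_i),\psi(x_j)]$ for $d\le i,j,i+j\le e$ follow — for the brackets involving $\Sigma$ — from $\psi|_\Sigma=\sigma$ being a Lie map and, for the remaining few brackets (e.g.\ $[x_{-2},x_2]$, $[x_{-2},x_1]$, $[x_{-1},x_2]$ in the case of $\Gamma$), from $\sl(2)$-equivariance applied to generators of $\Sigma$, exactly as in the computations already carried out in Proposition~\ref{prop:nontrivial}. Conversely, any partial-Lie-algebra morphism $\Lambda\to\Pa\End_\bullet(V)$ restricting to $\sigma$ on $\Sigma$ is $\sl(2)$-equivariant, since equivariance is the statement that $\psi$ commutes with the brackets by $x_{-1},x_0,x_1$, which is part of being a partial Lie map once we know $\psi|_\Sigma=\sigma$ and the target is graded. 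Assembling: $\Hom_{\sl(2)-\text{mod}}(\Lambda,\End(V))=\Hom_{\text{\bf LieAlg}\,_d^e}(\Lambda,\Pa\End_\bullet(V))$.

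Finally I would feed this identification into the universal property of $\lmax{\Lambda}$ with $\g=\End_\bullet(V)$ to conclude
$$
\Hom_{\sl(2)-\text{mod}}(\Lambda,\End(V))\;=\;\Hom_{\text{\bf LieAlg}\,_d^e}(\Lambda,\Pa\End_\bullet(V))\;\simeq\;\Hom_{\text{\bf LieAlg}_\bullet}(\lmax{\Lambda},\End_\bullet(V)),
$$
and to check that the composite bijection is precisely ``$\psi\leftrightarrow\bar\psi$ making~(\ref{eq:LambdaEndbullet}) commute'', which is immediate from the construction of the adjunction isomorphism (it is restriction along $\Lambda\hookrightarrow\lmax{\Lambda}$). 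I expect the main obstacle to be the bookkeeping in the forward direction: verifying that an $\sl(2)$-equivariant linear $\psi$ really does satisfy \emph{all} the partial bracket relations of $\Lambda$ — not just those internal to $\Sigma$ — which requires checking the finitely many ``mixed'' brackets case by case for each of $\Gamma,\Gamma_>,\Gamma_<$, and making sure that the hypothesis that the induced $\sl(2)$-module is non trivial (so that $\End_i(V)\neq 0$ in the relevant range and $\phi$ is injective, by Proposition~\ref{prop:nontrivial} and the preceding proposition) is used where needed to ensure nothing degenerates; the rest is a formal consequence of the adjunction.
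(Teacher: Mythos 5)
Your overall architecture (identify the target of the bijection with $\Hom_{\text{\bf LieAlg}\,_d^e}(\Lambda,\Pa\End_\bullet(V))$ via the universal property of $\lmax{\Lambda}$ applied to the graded Lie algebra $\End_\bullet(V)$, and note that the adjunction isomorphism is restriction along $\Lambda\hookrightarrow\lmax{\Lambda}$, which is exactly diagram~(\ref{eq:LambdaEndbullet})) is the right one and is what the paper does. The gap is in your middle step, where you claim that an $\sl(2)$-equivariant linear map $\psi$ with $\psi|_\Sigma=\sigma$ is \emph{automatically} a morphism of partial Lie algebras. Equivariance only controls the brackets $[a,x]$ with $a\in\Sigma$; for $\Lambda=\Gamma$ there is one genuine bracket with \emph{both} arguments outside $\Sigma$, namely $[x_{-2},x_2]=-4x_0$, and its preservation, i.e. $[S,T]=2\sigma(h)$ with $S=\psi(x_{-2})$, $T=\psi(x_2)$, is an independent condition: it is listed as a separate equation in Proposition~\ref{prop:LieAlgWitt}, and the proof of Proposition~\ref{prop:denseWitt} produces explicit pairs (take $S=\rho^{+}_{<}(L_{-2})$ and $T=\rho^{-}_{>}(L_2)$ on a dense module with $\tau\neq 0,1$) satisfying all four $\ad$-equations of~(\ref{eq:g-L_{-2}-L_2}) — hence $\sl(2)$-equivariant in your sense — but with $[S,T]\neq 2\sigma(h)$. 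Such a $\psi$ is not a partial Lie algebra morphism, so for $\Gamma$ your asserted equality of Hom-sets fails; it is only for $\Gamma_>$ and $\Gamma_<$ (where every nontrivial bracket has an argument in $\Sigma$) that your reduction works. Your appeal to Proposition~\ref{prop:nontrivial} does not supply this missing relation, since that proposition concerns injectivity of a given partial representation, not the construction of one.

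The fix is to read the source of the bijection the way the paper's proof does: its elements are $\Lambda$-module structures on $V$ compatible with $\sigma$, i.e. elements of $\homparlie^{\sigma}(\Lambda,\End(V))$, not merely $\sl(2)$-equivariant linear maps (indeed, the truncated adjoint action of $\Sigma$ does not even make $\Gamma$ or $\Gamma_>$ an honest $\sl(2)$-module, so the equivariant reading is problematic from the start). With that reading your problematic step disappears: for a partial Lie representation $\psi$ compatible with $\sigma$ one has $[\tfrac12\sigma(h),\psi(x_i)]=-[\psi(x_0),\psi(x_i)]=i\,\psi(x_i)$, so $\psi$ lands in $\End_\bullet(V)$ in the correct degrees, and the bijection is then exactly the universal property of $\lmax{\Lambda}$ — your first and last paragraphs, unchanged. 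Incidentally, no non-triviality hypothesis on the induced $\sl(2)$-structure is needed for this argument.
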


\begin{proof} 
Let $\psi:\Lambda\to \End(V)$ be a linear representation of $\Lambda$ on $V$. It is obvious that it takes values in $\End_\bullet(V)$. Applying the universal property of $\lmax{\Lambda}$ to the resulting map $\psi:\Lambda\to\End_\bullet(V)$, we obtain
	$$
	\bar \psi : \lmax{\Lambda} \longrightarrow \End_\bullet(V) .
	$$
It is easy to check that this construction yields the desired bijection. 
\end{proof} 

\begin{cor}\label{cor:extension} 
Let  $\g$ be any of the graded Lie algebras $\Witt$, $\Witt_>$ or $\Witt_<$ and let $\Lambda$ denote the partial Lie algebras $\Gamma, \Gamma_>, \Gamma_<$, respectively.

A representation $\phi\colon \Lambda \to\End(V)$  extends to a representation $\rho\colon \g\to \End_\bullet(V)$  if and only if $\Phi\colon \lmax{\Lambda}\to \End_\bullet(V)$ vanishes on the standard relations of $\g$.  In this case, it holds that $\rho(L_i)=\Phi(x_i)$.
\end{cor}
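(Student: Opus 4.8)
The plan is to deduce the statement directly from the bijective correspondence established in the theorem above, combined with the exact sequence~(\ref{eq:IgLmaxg}) and the identification $R(\g)=I(\g)$ provided by Proposition~\ref{prop:standard-relations}. Recall first that, applied with $\psi=\phi$, the theorem produces the graded Lie algebra morphism $\Phi\colon\lmax{\Lambda}\to\End_\bullet(V)$ characterised by $\Phi|_\Lambda=\phi$; in particular $\Phi(x_i)=\phi(x_i)$ on generators. Recall also that $\pi\colon\lmax{\Lambda}\to\g$ in~(\ref{eq:IgLmaxg}) satisfies $\pi(x_i)=L_i$, so that $\pi$ restricts on $\Lambda$ to the isomorphism $\Lambda\simeq\Pa\g$ of~(\ref{eq:partialWitt}), and that $\Ker\pi=I(\g)=R(\g)$, the ideal of standard relations of $\g$. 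Under this identification the hypothesis ``$\rho$ extends $\phi$'' reads simply $\rho(L_i)=\phi(x_i)$ on the generators $L_i$ of $\Pa\g$.

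For the ``only if'' implication, I would suppose $\rho\colon\g\to\End_\bullet(V)$ is a representation extending $\phi$. Then $\rho\circ\pi\colon\lmax{\Lambda}\to\End_\bullet(V)$ is a morphism of graded Lie algebras whose restriction to $\Lambda$ sends $x_i$ to $\rho(\pi(x_i))=\rho(L_i)=\phi(x_i)$; that is, $(\rho\circ\pi)|_\Lambda=\phi$. By the uniqueness clause in the bijection of the theorem, $\rho\circ\pi$ must coincide with $\Phi$, and therefore $\Phi$ vanishes on $\Ker\pi=R(\g)$. Evaluating $\rho\circ\pi=\Phi$ on $x_i$ also yields $\rho(L_i)=\Phi(x_i)$.

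Conversely, I would suppose $\Phi$ vanishes on the ideal of standard relations $R(\g)=\Ker\pi$. Then $\Phi$ factors through the quotient $\lmax{\Lambda}/\Ker\pi\simeq\g$, giving a morphism of graded Lie algebras $\rho\colon\g\to\End_\bullet(V)$ with $\rho\circ\pi=\Phi$. In particular $\rho(L_i)=\rho(\pi(x_i))=\Phi(x_i)=\phi(x_i)$, so $\rho$ is a representation of $\g$ on $V$ whose restriction to $\Pa\g\simeq\Lambda$ is $\phi$, i.e.\ an extension of $\phi$, and the formula $\rho(L_i)=\Phi(x_i)$ holds.

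Since everything is a diagram chase around~(\ref{eq:IgLmaxg}) and the bijection of the theorem, there is no real obstacle. The only points requiring attention are bookkeeping: that $\Phi$ is the canonical lift of $\phi$, that $\pi$ restricts to the identification $\Lambda\simeq\Pa\g$ given by $x_i\mapsto L_i$, that all the maps involved take values in the graded subalgebra $\End_\bullet(V)\subseteq\End(V)$ (this is where the earlier propositions guaranteeing $\phi(x_i)\in\End_i(V)$ enter, together with Proposition~\ref{prop:standard-relations}), and the use of the uniqueness in the universal-property bijection to force $\rho\circ\pi=\Phi$ in the first implication.
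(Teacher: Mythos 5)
Your argument is correct and is essentially the paper's own proof, which simply cites the exact sequence~(\ref{eq:IgLmaxg}) together with the commutative diagram~(\ref{eq:LambdaEndbullet}); you have merely written out the diagram chase explicitly, including the identification $R(\g)=I(\g)=\Ker\pi$ from Proposition~\ref{prop:standard-relations} and the uniqueness clause forcing $\rho\circ\pi=\Phi$. No gaps.
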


\begin{proof} 
It follows from the exact sequence~(\ref{eq:IgLmaxg}) and the commutative diagram~(\ref{eq:LambdaEndbullet}). 
%
%
\end{proof}


\begin{thm}\label{thm:reduced-standard-relations} The ideals of standard relations for the Witt algebras satisfy:
\begin{enumerate}
\item $R(\Witt_>)$ is generated by 
$$r_{2,2k+1}=[x_2,x_{2k+1}]+(2k-1)x_{2k+3},
$$ for $k\geq 1$.

\item $R(\Witt_<)$ is generated by 
$$r_{-(2k+1),-2}=[x_{-(2k+1)},x_{-2}]+(2k-1)x_{-(2k+3)},
$$ for $k\geq 1$.

\item $R(\Witt)$ is generated by $r_{2,2k+1}, r_{-(2k+1),-2}$ for $k\geq 1$.\end{enumerate}
\end{thm}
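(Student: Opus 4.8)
The plan is to show that the ideal $R(\Witt_>)$, which is defined in Definition~\ref{defn:standard-relations} as generated by \emph{all} the relations $r_{2+i,2+j}$ for $0\le i<j$, is in fact already generated by the sub-family $r_{2,2k+1}$ for $k\ge 1$. Since the statements for $\Witt_<$ and $\Witt$ follow by applying the Chevalley involution $\theta$ (which interchanges $\Gamma_>$ and $\Gamma_<$, hence $R(\Witt_>)$ and $R(\Witt_<)$) and by the direct-sum decomposition $R(\Witt)=R(\Witt_<)\oplus R(\Witt_>)$, I would concentrate entirely on part (1). Let $J\subseteq R(\Witt_>)$ denote the ideal of $\lmax{\Gamma_>}$ generated by $\{r_{2,2k+1}:k\ge 1\}$; the goal is to prove $J=R(\Witt_>)$, i.e.\ that $r_{2+i,2+j}\in J$ for all $0\le i<j$.

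The key technical device is the identification from Proposition~\ref{prop:standard-relations} and Corollary~\ref{cor:idealstructure}: $\lmax{\Gamma_>}/R(\Witt_>)\simeq\Witt_>$, and more usefully, working in the quotient $A:=\lmax{\Gamma_>}/J$, the elements $\bar x_{2+k}$ (images of $x_{2+k}:=\tfrac1{k!}(\ad_{-x_1})^k(x_2)$) satisfy $[\bar x_2,\bar x_{2k+1}]=-(2k-1)\bar x_{2k+3}$ for all $k\ge 1$, while $[\bar x_{-1},-]$ and $[\bar x_0,-]$ act as in $\Witt_>$ by construction of $\lmax{}$ and Proposition~\ref{prop:triang-desc}. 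So it suffices to prove that in $A$ the \emph{full} Witt relations $[\bar x_{2+i},\bar x_{2+j}]=(i-j)\bar x_{2+i+j+2}$ hold. First I would record that $A$ is spanned by the $\bar x_m$ for $m\ge -1$: indeed $\F(\langle x_1,x_2\rangle)=\langle x_1\rangle\oplus\F(\langle x_{2+k}\rangle_{k\ge0})$, and using the relations $r_{2,2k+1}$ together with the $\ad_{-x_1}$-action one checks that every Hall monomial in the $x_{2+k}$ reduces modulo $J$ to a multiple of a single $\bar x_{m}$ — this is the analogue of the reduction argument already carried out in the proof of Proposition~\ref{prop:standard-relations}, the only difference being that one now has fewer generating relations available and must bootstrap. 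Concretely: from $[\bar x_2,\bar x_3]=-\bar x_5$ one gets $\bar x_5$; then applying $\ad_{-\bar x_1}$ and using that $\Witt$-type brackets with $x_1$ are automatic, one propagates to obtain $[\bar x_2,\bar x_m]=(2-m)\bar x_{m+2}$ for \emph{all} $m\ge 3$ (not just odd $m$) — the even cases $[\bar x_2,\bar x_{2k}]$ being recovered by writing $\bar x_{2k}$ as a bracket $[\bar x_{-1},\bar x_{2k+1}]$ up to scalar and using the Jacobi identity. Once $\ad_{\bar x_2}$ is known on all of $\bigoplus_{m\ge -1}\langle\bar x_m\rangle$, the general relation $[\bar x_{2+i},\bar x_{2+j}]=(i-j)\bar x_{2+i+j+2}$ follows by a downward induction on $\min(i,j)$, writing $\bar x_{2+i}$ with $i\ge 1$ as a scalar multiple of $[\bar x_2,\bar x_{2+i-2}]$ (possible once $\ad_{\bar x_2}$ is invertible enough in this range) and expanding $[[\bar x_2,\bar x_{2+i-2}],\bar x_{2+j}]$ via Jacobi, reducing the degree of the first slot.

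The main obstacle, as usual with these elimination-type arguments, is the bookkeeping in that downward induction: one must ensure the scalar coefficients that appear never vanish (so that $\bar x_{2+i}$ really is recovered, up to a nonzero scalar, from $[\bar x_2,\bar x_{2+i-2}]$), and one must handle the low-degree seed cases $i=0,1,2$ by hand. The factor $(2-m)$ in $[\bar x_2,\bar x_m]=(2-m)\bar x_{m+2}$ is nonzero for all $m\ge 3$, which is exactly what makes the propagation work and is the structural reason the relations $r_{2,2k+1}$ alone suffice; I would make this quantitative check explicit. Finally, having shown the Witt relations hold in $A$, we get a surjection $A\twoheadrightarrow\Witt_>$ that is also injective (both sides have the same graded pieces $\langle\bar x_m\rangle$, $m\ge -1$, by the spanning statement above and simplicity of $\Witt_>$ as in Proposition~\ref{prop:pariso}), whence $J=\Ker(\lmax{\Gamma_>}\to\Witt_>)=R(\Witt_>)$ by Proposition~\ref{prop:standard-relations}. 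The statements (2) and (3) then follow immediately by $\theta$ and by the decomposition in Definition~\ref{defn:standard-relations}(3).
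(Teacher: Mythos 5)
Your proposal is correct, but it follows a genuinely different route from the paper's. The paper never leaves $\lmax{\Gamma_>}$: it groups the standard generators $r_{2+i,2+j}$ by degree into the finite-dimensional spaces $R_n$, computes the bidiagonal matrix of the map $e_n\colon R_n\to R_{n+1}$ induced by $\ad_{-x_1}$, and checks by linear algebra that $e_{2k-1}$ is an isomorphism and $R_{2k+1}=e_{2k}(R_{2k})\oplus\langle r_0^{(2k+1)}\rangle$, so that every $R_n$ (hence every standard generator) already lies in the ideal generated by the $r_{2,2k+1}$, propagating upward with $\ad_{-x_1}$ alone. You instead pass to the quotient $A=\lmax{\Gamma_>}/J$ and verify the full Witt multiplication table there by Jacobi-identity inductions: the odd relations $[\bar x_2,\bar x_{2k+1}]=(1-2k)\bar x_{2k+3}$ are given, the even ones $[\bar x_2,\bar x_{2k}]=(2-2k)\bar x_{2k+2}$ follow by writing $\bar x_{2k}=-\tfrac1{2k+2}[\bar x_{-1},\bar x_{2k+1}]$ and expanding, and the general bracket follows by bootstrapping the first index via $\bar x_n=\tfrac1{4-n}[\bar x_2,\bar x_{n-2}]$ for $n\geq 5$ (the coefficient $4-n$ never vanishes there), with $\bar x_3,\bar x_4$ treated as the seed cases you flag. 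Both arguments hinge on the same structural input, namely the $\Sigma\simeq\sl(2)$-equivariance of $\F(\langle x_{2+i}\rangle_{i\geq 0})$, but the bookkeeping is different: linear algebra on the relation spaces $R_n$ versus verifying the presentation in the quotient; your version is slightly longer on computation but makes the mechanism (why brackets with $x_{\pm 1}$ regenerate all relations) very transparent, and as a byproduct it reproves $\lmax{\Gamma_>}/R(\Witt_>)\simeq\Witt_>$. Two small points to tighten: the claim that $[\bar x_{-1},\cdot]$ and $[\bar x_0,\cdot]$ act on the $\bar x_m$ ``as in $\Witt_>$ by construction'' is not literally by construction; it is the short Jacobi induction giving $[x_{-1},x_m]=-(m+1)x_{m-1}$ and $[x_0,x_m]=-m\,x_m$ already in $\lmax{\Gamma_>}$ (the same $\Sigma$-module structure the paper invokes in Corollary~\ref{cor:levelN-reduced-standard-relations} and Theorem~\ref{thm:NoFiniteNumber}), so it should be stated and proved explicitly; and your closing step (spanning of $A$ by the $\bar x_m$ plus injectivity onto $\Witt_>$) is superfluous, since once the Witt relations hold in $A$ every generator $r_{2+i,2+j}$ of $R(\Witt_>)$ lies in $J$, giving $R(\Witt_>)\subseteq J\subseteq R(\Witt_>)$ directly. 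Your handling of parts (2) and (3) via the Chevalley involution and the decomposition $R(\Witt)=R(\Witt_<)\oplus R(\Witt_>)$ coincides with the paper's.
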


\begin{proof}
Let us prove the statement for $\Witt_>$. By Definition \ref{defn:standard-relations} the ideal $R(\Witt_>)$ is generated by 
$$r_{2+i,2+j}=[x_{2+i},x_{2+j}]-(i-j)x_{2+i+j+2},
$$ for every $i,j\leq 0$, $i<j$. Let $R_n=\langle r_{2+i,2+j}\rangle_{n=i+j+4}$ be the complex vector space generated by those standard relations whose degree, as elements of the $\Z$-graded Lie algebra $\lmax{\Gamma_>}$, is $n$. Recall that $n\geq 5$ by Corollary \ref{cor:idealstructure}. A straightforward computation shows that $d_n:=\dim_\C R_n-1=\left[\frac{n-1}{2}\right]-2$, where $[t]$ denotes the integer part of $t$. Moreover if $r_i^{n}=r_{2+i,2+n-4-i}$ then $B^{(n)}=\{r_i^{(n)}\}_{i=0}^{d_n}$ is a basis of $R_n$. Let us denote $e=\ad_{-x_1}\colon\lmax{\Gamma_>}\to \lmax{\Gamma_>}$. It is clear that $e$ induces a map $e_n\colon R_n\to R_{n+1}$ and an easy computation shows that 
$$e_n\cdot r_i^{(n)}=(n-i-3)\, r_i^{(n+1)}+(i+1)\ r_{i+1}^{(n+1)}.
$$
Let us consider now for every $k\geq 3$ the sequence of linear mappings 
$$R_{2k-1}\xrightarrow{e_{2k-1}} R_{2k}\xrightarrow{e_{2k}} R_{2k+1}
$$ and recall that $\dim_\C R_{2k-1}=\dim_\C R_{2k}=\dim_\C R_{2k+1}-1=k-2$. By considering the matrices of these mappings with respect to the bases defined above one immediately checks that $e_{2k-1}$ is an isomorphism, $e_{2k}$ is injective and $R_{2k+1}=e_{2k}(R_{2k})\oplus\langle r^{2k+1}_0\rangle$. This proves that the ideal $R(\Witt_>)$ is generated by $\{r^{(2k+1)}_0\}_{2k+1\geq 5}=\{r_{2,2k-1}\}_{k\geq 2}=\{r_{2,2k+1}\}_{k\geq 1}$ and so the claim is proved.

The result for $\Witt_<$ can be obtained from the previous one by considering the Chevalley involution $\Theta$. Bearing in mind now that  $R(\Witt)=R(\Witt_<)\oplus R(\Witt_>)$ the proof for $\Witt$ follows immediately.  
\end{proof}

\begin{cor}\label{cor:levelN-reduced-standard-relations} For any integer $N\geq 1$ we have:
\begin{enumerate}
\item $R(\Witt_>)$ is generated by 
$$r_k:=r_{2,2k+1}=[x_2,x_{2k+1}]+(2k-1)x_{2k+3},
$$ for $k\geq N$.

\item $R(\Witt_<)$ is generated by 
$$r_{-k}:=r_{-(2k+1),-2}=[x_{-(2k+1)},x_{-2}]+(2k-1)x_{-(2k+3)},
$$ for $k\geq N$.

\item $R(\Witt)$ is generated by $r_k, r_{-k}$ for $k\geq N$.\end{enumerate}
\end{cor}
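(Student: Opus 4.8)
The plan is to deduce Corollary~\ref{cor:levelN-reduced-standard-relations} directly from Theorem~\ref{thm:reduced-standard-relations} by showing that, inside the ideal generated by $\{r_k\}_{k\geq 1}$, the finitely many generators $r_1,\dots,r_{N-1}$ are redundant once we keep all $r_k$ with $k\geq N$. In other words, the ideal of $\lmax{\Gamma_>}$ generated by $\{r_k\}_{k\geq N}$ already contains every $r_k$ with $1\leq k<N$, hence coincides with $R(\Witt_>)$. The natural tool is again the operator $f=\ad_{x_{-1}}\colon\lmax{\Gamma_>}\to\lmax{\Gamma_>}$, which lowers degree by one and is, up to nonzero scalars, the inverse process to the map $e=\ad_{-x_1}$ used in the proof of the theorem. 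Applying $f$ to a relation $r_{k+1}$ should produce, modulo the ideal already generated by higher relations (which are themselves handled inductively) and modulo the image of $R(\Witt_>)$ under brackets with $\Sigma$, a nonzero multiple of $r_k$. So a downward induction on $k$, starting from the generators $\{r_k\}_{k\geq N}$ and descending to $k=N-1,N-2,\dots,1$, recovers all the standard relations.

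Concretely, I would first record how $f$ acts on the degree-$n$ piece $R_n\subset R(\Witt_>)$ of the module of relations, dual to the formula $e_n\cdot r_i^{(n)}=(n-i-3)r_i^{(n+1)}+(i+1)r_{i+1}^{(n+1)}$ already established; a parallel computation (or a direct bracket computation using $[x_{-1},x_{2+i}]=(−3−i)x_{1+i}$ type identities and the Jacobi identity) gives $f_n\cdot r_i^{(n)}$ as an explicit linear combination of $r_{i-1}^{(n-1)}$ and $r_i^{(n-1)}$. From this and the dimension count $\dim_{\mathbb C}R_{2k+1}=k-2$, $\dim_{\mathbb C}R_{2k}=k-2$, one sees that $f_{2k+2}\colon R_{2k+2}\to R_{2k+1}$ is injective with one-dimensional cokernel spanned by $r_0^{(2k+1)}=r_{2,2k-1}$, mirroring the surjectivity statement for $e$. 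The upshot is that $r_k=r_{2,2k+1}$, which has degree $2k+5$, can be written modulo the ideal generated by $\{r_{k'}\}_{k'>k}$ as (a nonzero multiple of) $f$ applied to $r_{k+1}$, i.e.\ as the bracket $[x_{-1},\cdot]$ of a relation of one higher index, together with a correction term lying in the ideal generated by the higher relations.

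The only genuinely new point beyond bookkeeping is verifying that bracketing a standard relation with $x_{-1}$ stays inside the ideal one is trying to control, i.e.\ that $f$ maps the ideal $\langle\{r_{k'}\}_{k'\geq k+1}\rangle$ into $\langle\{r_{k'}\}_{k'\geq k}\rangle$; but this is automatic since any ideal is stable under $\ad$ of the whole algebra, and $x_{-1}\in\lmax{\Gamma_>}$. Hence the claim reduces to the purely linear-algebra statement on the graded pieces $R_n$, which is essentially the transpose of the one already proved for $e$. I would then package the three cases exactly as in Theorem~\ref{thm:reduced-standard-relations}: prove (1) for $\Witt_>$ as above, obtain (2) for $\Witt_<$ by transporting via the Chevalley involution $\Theta$, and deduce (3) from $R(\Witt)=R(\Witt_<)\oplus R(\Witt_>)$.

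I expect the main obstacle to be the explicit formula for $f_n$ on the basis $\{r_i^{(n)}\}$ and confirming that the relevant structure maps $f_{2k-1},f_{2k},f_{2k+1}$ have exactly the ranks needed so that the descent terminates with a nonzero multiple of $r_k$ at each stage — essentially re-running the rank computation of Theorem~\ref{thm:reduced-standard-relations} in the opposite direction without sign or index errors. Once that matrix computation is in hand, the induction and the assembly of the three cases are routine.
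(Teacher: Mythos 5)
Your strategy is the same as the paper's: descend from the level-$N$ generators by applying $f=\ad_{x_{-1}}$, use the fact that any ideal of $\lmax{\Gamma_>}$ is stable under $\ad$ of the whole algebra, invoke Theorem~\ref{thm:reduced-standard-relations}, and transport to $\Witt_<$ by the Chevalley involution and to $\Witt$ via $R(\Witt)=R(\Witt_<)\oplus R(\Witt_>)$. That plan is sound, and the computation you defer does work out --- in fact more simply than you anticipate: the relevant formula (the one the paper computes) is $f\cdot r_i^{(n)}=-(n-i-1)\,r_i^{(n-1)}-(i+3)\,r_{i-1}^{(n-1)}$, so for $i=0$ the second term is absent and $f\cdot r_0^{(n)}=-(n-1)\,r_0^{(n-1)}$ exactly. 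Hence $f^{2i}\cdot r_{2,2N+1}$ is, on the nose, a nonzero multiple of $r_{2,2(N-i)-1}$, and no ``correction term modulo the ideal of higher relations'' is needed anywhere in the descent.

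Two of your intermediate assertions should be corrected, although they do not change the viability of the plan. First, the bookkeeping: $r_k=r_{2,2k+1}$ has degree $2k+3$ (not $2k+5$), and a single application of $f$ to $r_{k+1}$ lands in the even degree $2k+4$, giving a multiple of the even relation $r_{2,2k+2}$; it is $f^2$, not $f$, that returns a nonzero multiple of $r_k$. Second, the claim that $f_{2k+2}\colon R_{2k+2}\to R_{2k+1}$ is injective with one-dimensional cokernel spanned by $r_0^{(2k+1)}$ is false: by the dimension count in the proof of Theorem~\ref{thm:reduced-standard-relations} one has $\dim_\C R_{2k+2}=\dim_\C R_{2k+1}=k-1$, and the formula above makes $f_{2k+2}$ triangular with nonzero diagonal entries, hence bijective (your quoted value $\dim_\C R_{2k+1}=k-2$ is the paper's $d_{2k+1}$, which is $\dim_\C R_{2k+1}-1$). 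Note moreover that if your cokernel statement were true it would undercut the argument rather than support it, since the whole point of the descent is that $r_0^{(2k+1)}$ \emph{is} reached by applying $f$ to higher relations. With these repairs your downward induction, together with the ideal-stability remark and the assembly of the three cases, is exactly the paper's proof of Corollary~\ref{cor:levelN-reduced-standard-relations}.
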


\begin{proof} We start with the Witt algebra $\Witt_>$.  Take the linear map $f=\ad_{x_{-1}}\colon\lmax{\Gamma_>}\to \lmax{\Gamma_>}$. It is clear that $f$ restricts to a map $f\colon R_n\to R_{n-1}$ and an easy computation shows that 
$$f\cdot r_i^{(n)}=-(n-i-1)\, r_i^{(n-1)}-(i+3)\ r_{i-1}^{(n-1)}.
$$ In particular one has $f\cdot r_0^{(n)}=-(n-1)\, r_0^{(n-1)}$. Therefore if we consider the ideal $I$ generated by the set $\{r_0^{(2k+3)}=r_{2,2k+1}\}_{k\geq N}\subset R(\Witt_>)$ then 
$$f^{2i}\cdot r_{2,2N+1}=2N\cdot(2N-1)\cdots(2(N-i)+1)r_{2,2(N-i)-1}.
$$ Thus $I$ contains the set $\{r_{2,2k+1}\}_{k\geq 1}$ and by Theorem \ref{thm:reduced-standard-relations} we conclude that $I=R(\Witt_>)$. The proof for $\Witt_<$ can be obtained from the previous one by considering the Chevalley involution $\Theta$. Bearing in mind now that  $R(\Witt)=R(\Witt_<)\oplus R(\Witt_>)$ the result for $\Witt$ follows immediately.

\end{proof}

\begin{defn}\label{defn:reduced-standard-relations}
Each set of generators given in Corollary \ref{cor:levelN-reduced-standard-relations} is called the level $N$ reduced standard relations for the corresponding Witt algebra.
\end{defn}

Taking together Corollaries \ref{cor:extension} and \ref{cor:levelN-reduced-standard-relations} gives the following criterion for extending representations of the partial Witt algebras.

\begin{cor}\label{cor:levelN-standard-relations-extension} A representation $\phi\colon \Lambda \to\End(V)$ of any of the partial Witt algebras extends to a representation $\rho\colon \g\to \End_\bullet(V)$ of the corresponding Witt algebra if and only if $\Phi\colon \lmax{\Lambda}\to \End_\bullet(V)$ vanishes on the level $N\geq 1$ reduced standard relations of $\g$. 
\end{cor}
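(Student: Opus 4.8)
The plan is to assemble this result directly from the two previously established facts, so the proof is essentially an observation. First I would invoke Corollary~\ref{cor:extension}, which already tells us that a representation $\phi\colon \Lambda\to\End(V)$ of a partial Witt algebra extends to a representation of the corresponding Witt algebra $\g$ if and only if the induced map $\Phi\colon\lmax{\Lambda}\to\End_\bullet(V)$ vanishes on the ideal $R(\g)=I(\g)$ of standard relations. So everything reduces to showing that $\Phi$ vanishes on all of $R(\g)$ precisely when it vanishes on the level $N$ reduced standard relations.

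The key step is then to apply Corollary~\ref{cor:levelN-reduced-standard-relations}, which asserts that for any $N\ge 1$ the ideal $R(\g)$ is \emph{generated}, as an ideal of $\lmax{\Lambda}$, by the level $N$ reduced standard relations $r_k$ (resp.\ $r_{-k}$, resp.\ both) for $k\ge N$. Since $\Phi$ is a homomorphism of graded Lie algebras, its kernel is an ideal of $\lmax{\Lambda}$; hence $\Phi$ annihilates the whole ideal $R(\g)$ if and only if it annihilates a generating set of that ideal. Combining this with the previous paragraph gives the stated equivalence for each of the three cases $\g=\Witt_>,\Witt_<,\Witt$ simultaneously, with the caveat (already noted earlier in the text) that the extension $\rho$ is then given explicitly by $\rho(L_i)=\Phi(x_i)$.

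I would phrase the write-up in one short paragraph: note that $\Ker\Phi$ is an ideal, so $\Phi$ vanishes on the ideal $R(\g)$ iff it vanishes on any generating set thereof; take the generating set furnished by Corollary~\ref{cor:levelN-reduced-standard-relations}; and conclude by Corollary~\ref{cor:extension} (equivalently, by the exact sequence~(\ref{eq:IgLmaxg}) together with $R(\g)=I(\g)$ from Proposition~\ref{prop:standard-relations}). There is no real obstacle here — the content has been front-loaded into Theorem~\ref{thm:reduced-standard-relations} and Corollary~\ref{cor:levelN-reduced-standard-relations}, whose proofs carry out the genuine computation with the maps $e_n$ and $f$ on the graded pieces $R_n$. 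The only thing to be slightly careful about in the statement of the corollary is that $N$ is allowed to be arbitrary (any $N\ge 1$), so the phrase "the level $N\ge 1$ reduced standard relations" should be read as: for some (equivalently, every) choice of $N\ge 1$.

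\begin{proof}
By Proposition~\ref{prop:standard-relations} we have $R(\g)=I(\g)$, so the exact sequence~(\ref{eq:IgLmaxg}) together with the commutative diagram~(\ref{eq:LambdaEndbullet}) shows, exactly as in Corollary~\ref{cor:extension}, that $\phi$ extends to a representation $\rho\colon\g\to\End_\bullet(V)$ if and only if $\Phi\colon\lmax{\Lambda}\to\End_\bullet(V)$ vanishes on $R(\g)$. Since $\Phi$ is a morphism of graded Lie algebras, $\Ker\Phi$ is a graded ideal of $\lmax{\Lambda}$; hence $\Phi$ vanishes on the ideal $R(\g)$ if and only if it vanishes on any set of generators of $R(\g)$ as an ideal. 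By Corollary~\ref{cor:levelN-reduced-standard-relations}, for every integer $N\ge 1$ the level $N$ reduced standard relations of $\g$ form such a generating set. Combining these two observations yields the claim, and in that case $\rho(L_i)=\Phi(x_i)$ as in Corollary~\ref{cor:extension}.
\end{proof}
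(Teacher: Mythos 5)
Your proposal is correct and matches the paper's intended argument: the paper gives no separate proof, simply noting that the corollary follows by combining Corollary~\ref{cor:extension} with Corollary~\ref{cor:levelN-reduced-standard-relations}, which is exactly what you do (with the standard observation that $\Ker\Phi$ is an ideal, so vanishing on a generating set of $R(\g)$ suffices). No gaps to report.
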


\begin{rem}\label{rem:clave}
Thus, the space of $\g$-module structures on a vector space $V$, i.e $\Hom_{\text{Lie-Alg}}(\g,\End(V))$, is a subset of the space of $\Lambda$-module structures on $V$, i.e $\homparlie(\Lambda,\End(V))$. In particular the subset $\Hom_{\text{\bf LieAlg}_\bullet}^\sigma(\g,\End_\bullet(V))$ of $\Hom_{\text{\bf LieAlg}_\bullet}(\g,\End(V))$ defined by those $\g$-module structures on $V$ compatible with a given $\sl(2)$-module structure $\sigma$ on $V$ (i.e $\Hom_{\text{\bf LieAlg}_\bullet}^\sigma(\g,\End_\bullet(V))=(\iota^*)^{-1}(\sigma)$) can be identified with a subset of the space $\homparlie^\sigma(\Lambda,\End(V))$ formed by those $\Lambda$-module structures that are compatible with $\sigma$.
\end{rem}

We have seen above that the standard relation $r_k$ of $\Witt_>$ implies the first $(k-1)$ standard relations. We finish this section by showing that the converse statement is not true. That is, $r_k$ does not follow from the first $(k-1)$ standard relations.

\begin{thm}\label{thm:NoFiniteNumber}
For any integer $k\geq 2$ we have:
\begin{enumerate}
\item The ideal $\langle r_1,\ldots,r_{k-1}\rangle $ of $\lmax{\Gamma_>}$ does not contain $r_k$. 
\item The ideal $\langle r_{-1},\ldots,r_{-(k-1)}\rangle $ of $\lmax{\Gamma_<}$ does not contain $r_{-k}$. 
\item The ideal $\langle r_{-(k-1)},\ldots,r_{(k-1)}\rangle $ of $\lmax{\Gamma}$ does not contain either\ $r_{-k}$, or $r_k$.\end{enumerate}
\end{thm}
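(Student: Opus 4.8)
\emph{Overall strategy.} I would prove (1) directly and deduce (2) and (3) from it. For (2): the involution $\theta\colon\Gamma\to\Gamma$ (equivalently the Chevalley involution $\Theta$ of $\Witt$) carries $\lmax{\Gamma_>}$ to $\lmax{\Gamma_<}$, the standard relation $r_k$ to $r_{-k}$, and ideals to ideals, so (2) is literally (1) read through $\theta$. For (3): $\lmax\Gamma$ contains $\lmax{\Gamma_>}$ as a graded subalgebra, and by Corollary~\ref{cor:idealstructure} one has $R(\Witt)=R(\Witt_<)\oplus R(\Witt_>)$ with both summands ideals of $\lmax\Gamma$ supported in degrees $\le-5$, resp.\ $\ge5$; hence the ideal $\langle r_{-(k-1)},\dots,r_{k-1}\rangle$ of $\lmax\Gamma$ splits as $\langle r_{-(k-1)},\dots,r_{-1}\rangle\oplus\langle r_1,\dots,r_{k-1}\rangle$ with the first summand landing in $R(\Witt_<)$ and the second in $R(\Witt_>)$, so its degree $2k+3$ component is that of $\langle r_1,\dots,r_{k-1}\rangle$. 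Because $R(\Witt_>)$ is an ideal of $\lmax\Gamma$ concentrated in degrees $\ge 5$, bracketing $r_1,\dots,r_{k-1}$ against the extra negative generators $x_{-2},x_{-3},\dots$ either vanishes for degree reasons or, via the Jacobi identity together with $[x_{-2},x_2]=-4x_0$, $[x_{-2},x_1]=-3x_{-1}$, etc., is rewritten in terms of brackets already available inside $\lmax{\Gamma_>}$; so in degree $2k+3$ the two ideals $\langle r_1,\dots,r_{k-1}\rangle_{\lmax\Gamma}$ and $\langle r_1,\dots,r_{k-1}\rangle_{\lmax{\Gamma_>}}$ coincide, and (3) reduces to (1) (and symmetrically for $r_{-k}$).

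\emph{Reduction of} (1). Write $J:=\langle r_1,\dots,r_{k-1}\rangle\subseteq\lmax{\Gamma_>}$; it is a graded ideal contained in $R(\Witt_>)=I(\Witt_>)$, which is itself graded and supported in degrees $\ge 5$, while $r_k=r_{2,2k+1}$ lies in degree $2k+3$. Thus it suffices to show that the degree $2k+3$ component $J_{2k+3}$ is a proper subspace of $R(\Witt_>)_{2k+3}$ not containing $r_k$. The relevant tool is the bigraded family of spaces $R_n$ of degree $n$ standard relations (with basis $B^{(n)}=\{r_i^{(n)}\}$, $r_0^{(2j+3)}=r_j$) from the proofs of Theorem~\ref{thm:reduced-standard-relations} and Corollary~\ref{cor:levelN-reduced-standard-relations}, on which the raising operator $e=\ad_{-x_1}$ and lowering operator $f=\ad_{x_{-1}}$ act via the explicit formulas $e_n\cdot r_i^{(n)}=(n-i-3)\,r_i^{(n+1)}+(i+1)\,r_{i+1}^{(n+1)}$ and $f\cdot r_i^{(n)}=-(n-i-1)\,r_i^{(n-1)}-(i+3)\,r_{i-1}^{(n-1)}$, supplemented by the Hall-basis description of $\lmax{\Gamma_>}$ in low degrees (Corollary~\ref{cor:idealstructure}: $\lmax{\Gamma_>}_i=\langle x_i\rangle$ for $|i|\le 4$). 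One warning must be kept in mind from the start: although $R(\Witt_>)$ is generated as an ideal by $r_1,r_2,\dots$ (Theorem~\ref{thm:reduced-standard-relations}), the space $J_{2k+3}$ is in general \emph{strictly smaller} than $R(\Witt_>)_{2k+3}$, precisely because the ``missing'' generator $r_k$ produces, via $f$, the even-degree standard relations $r_{2,2k}=\tfrac{-1}{2k+2}f(r_k)$ (and lower ones), which lie in $\langle r_k\rangle$ but not in $J$ — this ``hole'' is what makes room for $r_k$.

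\emph{The computation, and the main obstacle.} One first shows that, in degree $2k+3$, $J_{2k+3}$ is spanned by the brackets $[\lmax{\Gamma_>}_{\ge1},J]_{2k+3}$, i.e.\ by $[x_1,J_{2k+2}]$, $[x_2,J_{2k+1}]$, $[x_3,J_{2k}]$, \dots; the point to be checked carefully is that the remaining contributions to $J_{2k+3}$, which come from iterated brackets involving $x_{-1}$, add nothing new. This is done by the Jacobi identity: every occurrence of $x_{-1}$ is pushed inward, where it is either absorbed upon meeting a bracket of $\lmax{\Gamma_>}_0$- or $\lmax{\Gamma_>}_1$-type (using $[x_{-1},x_1]=-2x_0$ and $[x_0,z]=-(\deg z)z$) or lands directly on some $r_j$, turning it into $[x_{-1},r_j]=-(2j+2)\,r_{2,2j}$, a standard relation of strictly smaller degree on which one then recurses. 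Having identified $J_{2k+3}$ this way, one writes down, using the $e$- and $f$-action on the $R_n$, explicit bases of $R(\Witt_>)_n$ and of $J_n$ for $n\le 2k+3$, and verifies that $r_k=r_0^{(2k+3)}$ is not a linear combination of the spanning vectors of $J_{2k+3}$. (Model case $k=2$: $R(\Witt_>)_7$ is $3$-dimensional, $J_7=[\lmax{\Gamma_>}_{\ge1},\langle r_1\rangle]_7$ is the $2$-dimensional span of $[x_1,r_{2,4}]=-(3r_2+r_{3,4})$ and $[x_2,r_1]=r_2+w$ with $w=[x_2,[x_2,x_3]]-3x_7\in R(\Witt_>)_7$, and one sees at once that $r_2\notin J_7$.) The hard part is exactly this bookkeeping — simultaneously tracking how the ideal $J$ propagates into degree $2k+3$ through the Jacobi reductions of the $x_{-1}$-brackets, and the genuine growth of $R(\Witt_>)$ together with the ``hole'' left by $r_k$ — all of which I expect to reduce, after the structural steps above, to a finite linear-algebra computation in degrees $\le 2k+3$ organized by the $\sl(2)=\langle x_{-1},x_0,x_1\rangle$-action on $R(\Witt_>)$; everything else (the deduction of (2) and (3), and the degree comparisons) is routine.
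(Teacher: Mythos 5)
Your overall architecture coincides with the paper's: (2) by the involution, (3) by reducing to (1), and for (1) the reduction of the membership question to the degree-$(2k+3)$ component of $J=\langle r_1,\dots,r_{k-1}\rangle$ after pushing every $x_{-1}$ onto the generators by the Jacobi identity (the paper rewrites elements of $J$ as sums of monomials ending in $e^p f^q r_i$, which is the same manoeuvre). The genuine gap is that the decisive step is never carried out: for general $k$ you only assert that one "writes down explicit bases of $R(\Witt_>)_n$ and of $J_n$" and "verifies" that $r_k$ is not in the span, and you say you \emph{expect} this to reduce to finite linear algebra; the $k=2$ computation is correct but does not indicate a mechanism that works for all $k$. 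Note that $J_{2k+3}$ is not spanned by standard relations — it contains all brackets of lower-degree relations against elements of $\F(\langle x_{2+i}\rangle_{i\geq 0})$, such as $[x_2,r_1]=r_2+[x_2,[x_2,x_3]]-3x_7$ — so "a basis of $J_{2k+3}$" is exactly what one cannot write down naively. The paper's proof supplies the missing idea: realize $\F(\langle x_{2+i}\rangle_{i\geq 0})$ inside the tensor algebra and compare the components $\pi_1,\pi_2$ of tensor degree one and two. A degree-$(2k+3)$ element of $J$ decomposes as $Q'+[x_2,Q_{2k+1}]+e\cdot Q_{2k+2}$; only the last summand has nonzero $\pi_1$, while equating $\pi_2$ with $\pi_2(r_k)=[x_2,x_{2k+1}]$ yields a triangular system forcing all its coefficients $\mu_i$ to vanish, contradicting $\pi_1(r_k)=(2k-1)x_{2k+3}$. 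Nothing playing this role appears in your plan, and without it the statement you defer is essentially the whole theorem.

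Moreover, the heuristic on which your intended bookkeeping rests is false. You claim that the "hole" making room for $r_k$ is that $r_{2,2k}=-\tfrac{1}{2k+2}f(r_k)$ lies in $\langle r_k\rangle$ but not in $J$. In fact $R_n\subseteq J$ for every $5\leq n\leq 2k+2$: the proof of Theorem~\ref{thm:reduced-standard-relations} shows that $e\colon R_{2m+1}\to R_{2m+2}$ is an isomorphism and $R_{2m+1}=e(R_{2m})\oplus\langle r_{m-1}\rangle$, so starting from $R_5=\langle r_1\rangle$ one gets all of $R_6,\dots,R_{2k+2}$ inside $J$; in particular $r_{2,2k}\in J$ (for $k=2$, explicitly $r_{2,4}=\tfrac12\,e(r_1)\in\langle r_1\rangle$, which your own model computation implicitly uses). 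Even in degree $2k+3$ the codimension-one subspace $e(R_{2k+2})\subset R_{2k+3}$ lies in $J$; the only issue is whether the remaining direction $r_k$ can be produced by the non-standard-relation terms $[x_2,R_{2k+1}]$, $[x_3,R_{2k}]$, etc., and ruling that out is precisely what the $\pi_1/\pi_2$ argument (or some equivalent invariant) is needed for. So the linear algebra you postpone is where the theorem lives, and the picture guiding it does not match the actual structure of $J$.
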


\begin{proof} We have the decomposition 
$$\lmax{\Gamma_>}=\langle x_1\rangle\oplus\langle x_0\rangle\oplus \langle x_1\rangle\oplus\mathcal F(\langle x_{2+i}\rangle_{i\geq 0})=\Sigma\oplus \mathcal F(\langle x_{2+i}\rangle_{i\geq 0}),
$$ and $\mathcal F(\langle x_{2+i}\rangle_{i\geq 0})$ is a $\Sigma\simeq \sl(2)$-module. Therefore the ideal generated by the first $(k-1)$ standard relations $I=\langle r_1,\ldots,r_{k-1}\rangle $ is contained in $\mathcal F(\langle x_{2+i}\rangle_{i\geq 0})$. An element $Q\in I$ is a Lie polynomial of the form $Q=P_1\cdot r_1+\cdots P_{k-1}\cdot r_{k-1},$ where $P_i\cdot r_i$ denotes a Lie polynomial whose last element is $r_i$, that is a finite sum 
$$P_i\cdot r_i=\sum_{j_1\cdots j_m}\lambda_i^{j_1\cdots j_m}[A_i^{j_1},[A_i^{j_2},\cdots [A_i^{j_m},r_i]\cdots ],
$$ where $A_i^{j_s}\in\lmax{\Gamma_>}$ and  therefore it is itself a finite sum of the form  

$$A_i=a_i^{-1}x_1+a_i^0 x_0+a_i^1 x_1+\sum_{\alpha_1,\ldots,\alpha_n}a_i^{\alpha_1,\ldots,\alpha_n}[x_{\alpha_1},[x_{\alpha_2},\cdots [x_{\alpha_{n-1}},x_{\alpha_n}]\cdots],
$$ with $\alpha_l\geq 2$ for all $l$. Since $\mathcal F(\langle x_{2+i}\rangle_{i\geq 0})$ is a $\Sigma\simeq \sl(2)$-module and the action of $x_0$ on a monomial is by multiplication by a constant, by mean of the Jacobi identity we can always write $P_i\cdot r_i$ as a finite sum of Lie monomials of the form 
$$[A_i^{j_1},[A_i^{j_2},\cdots [A_i^{j_m},e^p\cdot f^q\cdot r_i]\cdots ],
$$ where $A_i^{j_s}=[x_{\alpha_1},[x_{\alpha_2},\cdots [x_{\alpha_{n-1}},x_{\alpha_n}]\cdots]$ with $\alpha_l\geq 2$ for all $l$ and $e=\ad_{-x_1}$, $f=\ad_{x_{-1}}$. We have seen in the proof of Corollary \ref{cor:levelN-reduced-standard-relations} that $\{f^q\cdot r_{k-1}\}_{q\geq 0}$ gives $\{r_1,\ldots, r_{k-1}\}$ and the proof of Theorem \ref{thm:reduced-standard-relations} shows that the terms of $\{e^p\cdot\{r_1,\ldots, r_{k-1}\}\}_{p\geq 0}$ up to order $2k+3$ generate $R_5\oplus\cdots\oplus R_{2k+2}\oplus e(R_{2k+2})$. Therefore an element $Q\in I$ of degree $2k+3$ belongs to 
$$P_{2k-2}\cdot R_5\oplus\cdots\oplus P_2\cdot R_{2k+1}\oplus e\cdot R_{2k+2},
$$ where $P_{2k-1-i}\cdot R_{4+i}$ denotes the homogeneous Lie polynomials of degree $2k+3$ whose last element is in $R_{4+i}$ and its other elements belong to $\mathcal F(\langle x_{2+i}\rangle_{i\geq 0})$. Hence if $\bar Q_i\in P_{2k-1-i}\cdot R_{4+i}$ then it can be written as 
$$\bar Q_i=\sum_{j_1\cdots j_m}\lambda_i^{j_1\cdots j_m\alpha}[A_i^{j_1},[A_i^{j_2},\cdots [A_i^{j_m},Q_i^\alpha]\cdots ],
$$ where $A_i^{j_s}\in\mathcal F(\langle x_{2+i}\rangle_{i\geq 0})$, $ Q_i^\alpha\in R_{4+i}$ and $\deg(A_i^{j_1})+\cdots+\deg(A_i^{j_m})+i+4=2k+3$. Notice that the unique element of $\mathcal F(\langle x_{2+i}\rangle_{i\geq 0})$ that has degree $2$ is $x_2$, thus $P_{2}\cdot R_{2k+1}=[x_2,R_{2k+1}]$. Therefore we can write $Q=Q'+ Q'_1+ Q'_2$ with $Q'\in P_{2k-2}\cdot R_5+\cdots+P_3\cdot R_{2k}$, $Q'_1=[x_2,Q_{2k+1}]$ and $Q'_2=e\cdot Q_{2k+2}$ for certain $Q_{2k+1}\in R_{2k+1}$, $Q_{2k+2}\in R_{2k+2}$. Thus 
$$Q'_1=\sum_{i=0}^{k-2}\lambda_i[x_2,r^{(2k+1)}_i],\quad Q'_2=\sum_{i=0}^{k-2}\mu_i e\cdot r^{(2k+2)}_i.
$$

Let us recall that the free Lie algebra $\mathcal F(\langle x_{2+i}\rangle_{i\geq 0})$ can be realized as the subalgebra of the tensor algebra  $T^\bullet(V)$ generated by $V$, where $V$ is the vector space with basis $\{x_{2+i}\}_{i\geq 0}$. Let us denote by $\pi_s\colon T^\bullet(V)\to T^s(V)$ the natural projection, and notice that a monomial $[x_{\alpha_1},[x_{\alpha_2},\cdots [x_{\alpha_{n-1}},x_{\alpha_n}]\cdots]\in \mathcal F(\langle x_{2+i}\rangle_{i\geq 0})$ thought as an element of $T^\bullet(V)$ belongs to $T^n(V)$. It follows from the above expressions that \begin{align*} \pi_1(P_{2k-2}\cdot R_5) &=0,\quad \pi_2(P_{2k-2}\cdot R_5)\in\langle [x_{2k-2},x_5]\rangle, \\
& \vdots\quad\quad\quad\vdots\\
\pi_1(P_{3}\cdot R_{2k}) &=0,\quad \pi_2(P_{2}\cdot R_{2k})\in\langle [x_3,x_{2k}]\rangle,\\
\pi_1(P_{2}\cdot R_{2k+1}) &=0,\quad \pi_2(P_{2}\cdot R_{2k+1})\in\langle [x_2,x_{2k+1}]\rangle.
\end{align*} On the other hand we have $\pi_1(e\cdot R_{2k+2})\in\langle x_{2k+3}\rangle$ and 
$$\pi_2(e\cdot R_{2k+2})\in\langle [x_2,x_{2k+1}],[x_3,x_{2k}],\ldots, [x_{k+1},x_{k+2}]\rangle.
$$

Now suppose that there exists $Q\in I $ such that $Q=r_k$. Writing as before $Q=Q'+Q'_1+Q'_2$ then we must have $\pi_s(Q)=\pi_s(r_k)$ for every $s\geq 1$. The expressions above show that we must have \begin{align*}
\pi_1(Q'_2) &=\pi_1(r_k)=(2k-1)x_{2k+3},\\ \pi_2(Q') &=0,\\
\pi_2(Q'_1)+\pi_2(Q'_2) &=\pi_2(r_k)=[x_2,x_{2k+1}],\\ \pi_s(Q) &=\pi_s(r_k)=0, \quad \text{if}\
s\geq 3.\end{align*} The equality $\pi_2(Q'_1)+\pi_2(Q'_2) =[x_2,x_{2k+1}]$ is equivalent to the following systems of equations \begin{align*}\sum_{i=0}^{k-2}(2k-2i-3)\lambda_i+(2k+1)\mu_0 &=1,\\
2k\mu_1+\mu_0 &=0,\\
(2k-1)\mu_2+2\mu_1 &=0,\\
&\vdots\\
(k+3)\mu_{k-2}+(k-2)\mu_{k-3}&=0,\\
(k-1)\mu_{k-2} &=0.
\end{align*} Hence $\mu_0=\ldots=\mu_{k-2}=0$, that is $Q'_2=0$. This leads to a contradiction since at the same time we should have $\pi_1(Q'_2)=(2k-1)x_{2k+3}$. Therefore, $Q	\notin I$ and the theorem is proved for $\Gamma_>$.  By mean of the Chevalley automorphism this gives also the result for $\Gamma_<$. A similar argument also works for $\Gamma$.

\end{proof}

\begin{rem} Therefore if a representation $\phi\colon \Lambda \to\End(V)$ of any of the partial Witt algebras is such that $\Phi\colon \lmax{\Lambda}\to \End_\bullet(V)$ vanishes on the first $(k-1)$ standard relations then, in general this does not imply that $\phi$ will extend to a representation $\rho\colon \g\to \End_\bullet(V)$ of the corresponding Witt algebra.\end{rem}

\subsection{Extending to $\Witt_{>}$}

We have seen in Remark \ref{rem:clave} that given an $\sl(2)$-module structure on $V$ defined by $\sigma\in \Hom_{\text{Lie-Alg}}(\sl(2),\End(V))$, the space of $\Witt_>$-module structures on $V$ compatible with $\sigma$ can be identified with a subset of the space $\homparlie^\sigma({\Gamma}_>,\End_\bullet(V))$ of $\Gamma$-module structures on $V$ compatible with $\sigma$.

\begin{prop}\label{prop:Tiffrho}
Let $V$ be an $\sl(2)$-module defined by a representation  $\sigma$.
The space ${\Gamma}_>( V,\sigma)$ of ${\Gamma}_>$-module structures on $V$ compatible with $\sigma$ can be identified with the set ${\End}_{\Gamma_>}( V,\sigma)\subset\End(V)$ formed by those endomorphisms $T\in \End(V)$ which satisfy the equations \begin{equation}\label{eq:g-L_2}
	    	\begin{aligned}
		   & \ad(\sigma(f))(T) =  3\sigma(e), \\
	    	  & \ad(\sigma(h))(T) = 4T,
		  \end{aligned}
	    \end{equation} 
by mapping $\phi\in {\Gamma}_>( V,\sigma)$ to $T=\phi(x_2)\in \End(V) $.

Therefore the space ${\Witt}_>( V,\sigma)$ of ${\Witt}_>$-module structures on $V$ compatible with $\sigma$ can be identified with a subset $\End_{\Witt_>}(V,\sigma)\subset {\End}_{\Gamma_>}( V,\sigma) $.

\end{prop}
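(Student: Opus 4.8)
The plan is to identify a $\Gamma_>$-module structure $\phi$ compatible with $\sigma$ with the single datum $T=\phi(x_2)$, and to translate the partial-Lie-algebra axioms into the two equations in~(\ref{eq:g-L_2}). First I would note that $\Gamma_>$ has underlying vector space $\langle x_{-1}\rangle\oplus\langle x_0\rangle\oplus\langle x_1\rangle\oplus\langle x_2\rangle$, and that compatibility with $\sigma$ forces, via the identification~(\ref{eq:Sigmasl(2)}), the values $\phi(x_{-1})=\sigma(f)$, $\phi(x_0)=-\tfrac12\sigma(h)$, $\phi(x_1)=-\sigma(e)$. Hence a compatible $\phi$ is completely determined by $T:=\phi(x_2)$, which is an arbitrary element of $\End(V)$; the content of the statement is that $\phi$ is then a representation of the partial Lie algebra precisely when $T$ satisfies the two displayed relations.

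Next I would go through the defining brackets of $\Gamma_>$ in the range $(-1,2)$ and check which impose conditions on $T$. The brackets among $x_{-1},x_0,x_1$ are already satisfied since $\sigma$ is an $\sl(2)$-representation. The bracket $[x_0,x_2]=-2x_2$ translates into $[-\tfrac12\sigma(h),T]=-2T$, i.e. $\ad(\sigma(h))(T)=4T$, which is the second equation. The bracket $[x_{-1},x_2]=-3x_1$ translates into $[\sigma(f),T]=-3\cdot(-\sigma(e))=3\sigma(e)$, which is the first equation. The only remaining bracket involving $x_2$ is $[x_1,x_2]$, but $x_1+2=3$ lies outside the range $(-1,2)$, so the partial Lie algebra imposes no constraint from it; likewise $[x_2,x_2]=0$ is automatic. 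Therefore the partial-Lie-algebra axioms are equivalent to exactly the two equations in~(\ref{eq:g-L_2}), and conversely any $T$ satisfying them yields a well-defined compatible $\phi$. This establishes the bijection $\Gamma_>(V,\sigma)\cong\End_{\Gamma_>}(V,\sigma)$.

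For the last sentence of the statement, I would invoke Remark~\ref{rem:clave} (together with Corollary~\ref{cor:levelN-standard-relations-extension} and Proposition~\ref{prop:pariso}): a $\Witt_>$-module structure on $V$ compatible with $\sigma$ restricts to a $\Par_{-1}^2\Witt_>\simeq\Gamma_>$-module structure compatible with $\sigma$, and this restriction is injective on the relevant Hom-sets. Composing with the bijection just established identifies $\Witt_>(V,\sigma)$ with the subset $\End_{\Witt_>}(V,\sigma)$ of those $T\in\End_{\Gamma_>}(V,\sigma)$ for which the induced map $\Phi\colon\lmax{\Gamma_>}\to\End_\bullet(V)$ additionally vanishes on the reduced standard relations; this subset is by definition contained in $\End_{\Gamma_>}(V,\sigma)$.

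The only mild subtlety — not really an obstacle — is making sure the range condition $-1\leq i,j,i+j\leq 2$ is applied correctly, so that no spurious relation is extracted from $[x_1,x_2]$; this is exactly the feature of partial Lie algebras (as opposed to Lie algebras) that makes the characterization clean, and it is what allows $T$ to be, a priori, completely free before imposing~(\ref{eq:g-L_2}). Everything else is a direct unwinding of the definitions of $\Gamma_>$-module, of $\End_n(V,\sigma)$, and of the identification~(\ref{eq:Sigmasl(2)}).
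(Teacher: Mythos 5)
Your proof is correct and follows essentially the same route as the paper: compatibility forces $\phi(x_{-1})=\sigma(f)$, $\phi(x_0)=-\tfrac12\sigma(h)$, $\phi(x_1)=-\sigma(e)$, the only in-range brackets involving $x_2$ (namely $[x_{-1},x_2]$ and $[x_0,x_2]$) yield exactly the two equations~(\ref{eq:g-L_2}), and the final inclusion $\End_{\Witt_>}(V,\sigma)\subset\End_{\Gamma_>}(V,\sigma)$ is obtained, as in the paper, from Corollary~\ref{cor:levelN-standard-relations-extension} (via Remark~\ref{rem:clave}). Your explicit bracket-by-bracket check, including the observation that $[x_1,x_2]$ lies outside the range and so imposes no condition, just spells out what the paper calls ``straightforward to check.''
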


\begin{proof} Given $\phi\in {\Gamma}_>( V,\sigma)$, since \begin{equation}\label{eq:defrho-1012}
    \begin{aligned}
    \phi(L_{-1})\,=& \, \sigma(f),
    \\
    \phi(L_0)\,=& \,  -\frac12 \sigma(h),
    \\
    \phi(L_1)\,=& \,  -\sigma(e),
    \\
    \phi(L_2)\,=& \,  T,
    \end{aligned}
    \end{equation} and $\phi$ is a representation of the partial Lie algebra $\Gamma_>$, it is straightforward to check that $T=\phi(L_2)\in \End(V)$ satisfies the equations (\ref{eq:g-L_2}). This proves the first statement. The second one follows from Corollary \ref{cor:levelN-standard-relations-extension}. \end{proof}

\begin{prop} Let $V$ be an $\sl(2)$-module defined by a representation $\sigma$.
Given  $T\in {\End}_>( V,\sigma)$ the corresponding Lie algebra morphism  $\Phi\in \Hom_{\text{\bf LieAlg}_\bullet}(\lmax{\Gamma}_>,\End_\bullet{}(V))$ satisfies
	\begin{equation}\label{eq:rho(L_i)def}
    \Phi(x_i)= \frac1{i-2} \ad(\sigma(e))(\Phi(x_{i-1}))
    \qquad \text{for }i>2
    \;,
    \end{equation} and therefore 
$$\Phi(x_{2+i})=\frac1{i!} \ad(\sigma(e))^i(T)
    \qquad \text{for }i\geq 0
    \;.
$$
\end{prop}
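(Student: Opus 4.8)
The plan is to derive the recursion~\eqref{eq:rho(L_i)def} directly from the structure of $\lmax{\Gamma_>}$ and then iterate it to obtain the closed formula. Recall from Proposition~\ref{prop:triang-desc} that $\lmax{\Gamma_>}=\langle x_{-1}\rangle\oplus\langle x_0\rangle\oplus\F(\langle x_1,x_2\rangle)$ and, by the Elimination Theorem, $\F(\langle x_1,x_2\rangle)=\langle x_1\rangle\oplus\F(\langle x_{2+i}\rangle_{i\geq 0})$ with $x_{2+i}:=\frac1{i!}(\ad_{-x_1})^i(x_2)$. In particular, in $\lmax{\Gamma_>}$ one has the identity $x_{2+i}=\frac1{i!}(\ad_{-x_1})^i(x_2)$, equivalently $x_i=\frac1{i-2}\,\ad_{-x_1}(x_{i-1})$ for $i>2$. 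Since $\Phi$ is a morphism of graded Lie algebras, applying $\Phi$ to this identity gives
\[
\Phi(x_i)=\frac1{i-2}\,\Phi\bigl([-x_1,x_{i-1}]\bigr)=\frac1{i-2}\,[\Phi(-x_1),\Phi(x_{i-1})]
=\frac1{i-2}\,[\sigma(e),\Phi(x_{i-1})],
\]
where the last equality uses $\Phi(x_1)=\phi(L_1)=-\sigma(e)$ from~\eqref{eq:defrho-1012} (together with the compatibility of $\Phi$ with the given $\sl(2)$-structure via~\eqref{eq:Sigmasl(2)}). This is precisely~\eqref{eq:rho(L_i)def}.

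Next I would obtain the closed expression by a straightforward induction on $i\geq 0$. The base case $i=0$ is $\Phi(x_2)=T$, which holds by the identification in Proposition~\ref{prop:Tiffrho}. For the inductive step, write $i\mapsto i+1$: by~\eqref{eq:rho(L_i)def} applied with index $2+i+1>2$,
\[
\Phi(x_{2+i+1})=\frac1{(2+i+1)-2}\,\ad(\sigma(e))\bigl(\Phi(x_{2+i})\bigr)
=\frac1{i+1}\,\ad(\sigma(e))\Bigl(\frac1{i!}\ad(\sigma(e))^i(T)\Bigr)
=\frac1{(i+1)!}\,\ad(\sigma(e))^{i+1}(T),
\]
using the induction hypothesis $\Phi(x_{2+i})=\frac1{i!}\ad(\sigma(e))^i(T)$ and the fact that $\ad(\sigma(e))$ is linear. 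This completes the induction and yields the second displayed formula. One should also note that all these elements indeed lie in $\End_\bullet(V)$: since $T=\Phi(x_2)\in\End_2(V)$ by~\eqref{eq:g-L_2}, and $\ad(\sigma(e))=\ad(\phi(L_{-1}))$... rather, since $\ad(\frac12\sigma(h))$ raises the grading and $\sigma(e)\in\End_1(V)$, each $\frac1{i!}\ad(\sigma(e))^i(T)$ lies in $\End_{2+i}(V)$, consistent with $\Phi$ being graded.

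I do not anticipate a genuine obstacle here; the statement is essentially a formal consequence of the structure theory already established in Proposition~\ref{prop:triang-desc} and the universal property encoded in the preceding theorem. The only mild subtlety is bookkeeping: one must be careful that the isomorphism $\psi$ sends $x_1$ to $L_1$ and that under~\eqref{eq:Sigmasl(2)} the element $x_1$ corresponds to $-e$, so that $\Phi(x_1)=-\sigma(e)$ rather than $\sigma(e)$, and similarly track the signs in $\ad_{-x_1}$ versus $\ad_{x_1}$; these cancel correctly to produce $\ad(\sigma(e))$ (with a $+$ sign) in both displayed formulas. Everything else is a routine induction.
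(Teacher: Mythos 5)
Your argument is correct and is essentially the paper's own proof: the paper also deduces the recursion and the closed formula from the definitional identity $x_{2+i}=\frac1{i!}(\ad_{-x_1})^i(x_2)$ in $\lmax{\Gamma_>}$ together with the fact that $\Phi$ is a Lie algebra morphism with $\Phi(x_1)=-\sigma(e)$. Your version merely spells out the sign bookkeeping and the induction that the paper leaves implicit.
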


\begin{proof} 
Since $\Phi$ is a Lie algebra morphism it is enough to recall that $x_{2+i}=\frac1{i!}\ad(e)^i(x_2)$ for $i\geq 0$.
\end{proof}

If we take into account Corollaries \ref{cor:extension} and  \ref{cor:levelN-standard-relations-extension}  then we get the following result.

\begin{thm}\label{thm:Witt>equivalent}
Let $V$ be an $\sl(2)$-module defined by a representation $\sigma$ and let  $T\in {\End}_{\Gamma_>}( V,\sigma)$.

The following conditions are equivalent:
	\begin{enumerate}
	\item $T$ gives rise to a $\Witt_>$-module structure on $V$ compatible with $\sigma$, that is $T\in {\End}_{\Witt_>}( V,\sigma)$, 
	\item for $i,j\geq 0$ there holds 
	{\small $$[\frac1{i!}{\ad(\sigma(e))^i}(T),\frac1{j!}{\ad(\sigma(e))^j}(T)]=(i-j)\frac1{(i+j+2)!}\ad(\sigma(e))^{i+j+2}(T)\,,
$$}

	\item there exists $N$ such that for $k\geq N$ there holds
{\small $$[T,\frac1{(2k-1)!}\ad(\sigma(e))^{2k-1}(T)]=-\frac{2k-1}{(2k+1)!}\ad(\sigma(e))^{2k+1}(T)\,.
$$}
 
\end{enumerate}
\end{thm}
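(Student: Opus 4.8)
The plan is to identify the three conditions with statements about when the morphism $\Phi\colon\lmax{\Gamma_>}\to\End_\bullet(V)$ kills the relevant relations, and then transport the algebraic identities proved in Section~2 for the free/partial objects into $\End_\bullet(V)$ via $\Phi$. Recall that by Proposition~\ref{prop:Tiffrho} the datum $T\in\End_{\Gamma_>}(V,\sigma)$ is exactly a $\Gamma_>$-module structure on $V$ compatible with $\sigma$, hence by the universal property of $\lmax{\Gamma_>}$ it corresponds to a unique graded Lie algebra morphism $\Phi\colon\lmax{\Gamma_>}\to\End_\bullet(V)$ with $\Phi(x_2)=T$, and by the preceding proposition $\Phi(x_{2+i})=\tfrac1{i!}\ad(\sigma(e))^i(T)$ for all $i\geq 0$. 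By Corollary~\ref{cor:extension} (together with Proposition~\ref{prop:standard-relations}, which identifies $I(\Witt_>)$ with $R(\Witt_>)$), condition~(1) holds if and only if $\Phi$ vanishes on the ideal $R(\Witt_>)\subset\lmax{\Gamma_>}$; and by Corollary~\ref{cor:levelN-standard-relations-extension} this is equivalent to $\Phi$ vanishing on the level $N$ reduced standard relations $r_k=r_{2,2k+1}$ for $k\geq N$, for any (equivalently some) $N\geq 1$.

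First I would prove $(1)\Leftrightarrow(2)$. The standard generators of the (un-reduced) ideal $R(\Witt_>)$ are the $r_{2+i,2+j}=[x_{2+i},x_{2+j}]-(i-j)x_{2+i+j+2}$ for $i,j\geq 0$; applying the Lie algebra morphism $\Phi$ and using the formula $\Phi(x_{2+i})=\tfrac1{i!}\ad(\sigma(e))^i(T)$ and $\Phi(x_{2+i+j+2})=\tfrac1{(i+j+2)!}\ad(\sigma(e))^{i+j+2}(T)$, the vanishing $\Phi(r_{2+i,2+j})=0$ is literally the displayed identity in~(2). So $(1)\Rightarrow(2)$ is immediate once we know that $R(\Witt_>)=I(\Witt_>)$ contains all the $r_{2+i,2+j}$, which it does by Definition~\ref{defn:standard-relations} and Proposition~\ref{prop:standard-relations}. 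Conversely, if~(2) holds then $\Phi$ kills every generator $r_{2+i,2+j}$ of $R(\Witt_>)$, hence kills the whole ideal (being an algebra morphism), hence $T$ extends by Corollary~\ref{cor:extension}: this gives $(2)\Rightarrow(1)$.

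Next, $(1)\Leftrightarrow(3)$. The implication $(1)\Rightarrow(3)$ is the specialization of $(1)\Rightarrow(2)$ to $i=0,\ j=2k-1$ (which yields exactly the displayed relation in~(3), valid for \emph{all} $k\geq1$, so certainly for $k\geq N=1$); one checks the index bookkeeping: $\Phi(r_{2,2k+1})=[T,\tfrac1{(2k-1)!}\ad(\sigma(e))^{2k-1}(T)]+(2k-1)\tfrac1{(2k+1)!}\ad(\sigma(e))^{2k+1}(T)$, so $\Phi(r_{2,2k+1})=0$ is precisely the equation in~(3). For $(3)\Rightarrow(1)$: if there exists $N$ such that the displayed equation holds for all $k\geq N$, then $\Phi$ vanishes on $r_k=r_{2,2k+1}$ for all $k\geq N$, which by Corollary~\ref{cor:levelN-reduced-standard-relations} are generators of the ideal $R(\Witt_>)$; since $\Phi$ is an algebra morphism it then vanishes on all of $R(\Witt_>)=I(\Witt_>)$, and Corollary~\ref{cor:extension} gives the required $\Witt_>$-module structure, i.e.\ condition~(1).

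The only genuinely delicate point is the direction $(3)\Rightarrow(1)$, and it is delicate not because of computation but because it relies crucially on Corollary~\ref{cor:levelN-reduced-standard-relations}: a priori vanishing of $\Phi$ on the relations $r_k$ only for large $k$ looks weaker than vanishing on all of them, and it is precisely the content of that corollary (proved via the operators $e=\ad_{-x_1}$, $f=\ad_{x_{-1}}$ acting on the spaces $R_n$) that the tail $\{r_k\}_{k\geq N}$ still generates the full ideal $R(\Witt_>)$. Given that corollary, the argument is the formal one above; I would state explicitly that it is enough to invoke $N=1$ in the equivalence while noting that the flexibility in choosing $N$ is exactly what Corollary~\ref{cor:levelN-reduced-standard-relations} provides, since Theorem~\ref{thm:NoFiniteNumber} shows one cannot get away with only finitely many of the $r_k$. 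Everything else is a matter of substituting the explicit formula for $\Phi(x_{2+i})$ into the defining relations and matching factorials.
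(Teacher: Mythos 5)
Your proof is correct and follows essentially the same route as the paper: both translate the three conditions into the vanishing of $\Phi\colon\lmax{\Gamma_>}\to\End_\bullet(V)$ on the standard relations, respectively on the level $N$ reduced standard relations, and then invoke Corollaries~\ref{cor:extension} and \ref{cor:levelN-standard-relations-extension} (the latter resting on Corollary~\ref{cor:levelN-reduced-standard-relations}) together with the formula $\Phi(x_{2+i})=\frac1{i!}\ad(\sigma(e))^i(T)$. The paper states this combination without spelling out the index bookkeeping, which you supply correctly.
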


\subsection{Extending to $\Witt_{<}$}

Since the Chevalley involution $\Theta$ in $\Witt$ exchanges $\Witt_>$ and $\Witt_{<}$, the results of the previous section produce automatically analogous results for $\Witt_{>}$. In this section we just give their statements.

\begin{prop}
Let $V$ be an $\sl(2)$-module defined by a representation  $\sigma$.
The space ${\Gamma}_<( V,\sigma)$ of ${\Gamma}_<$-module structures on $V$ compatible with $\sigma$ can be identified with the set ${\End}_{\Gamma_<}( V,\sigma)\subset\End(V)$ formed by those endomorphisms $S\in \End(V)$ which satisfy the equations 
	\begin{equation}\label{eq:g-L_{-2}}
	    	\begin{aligned}
		   & \ad(\sigma(e))(S) =  -3\sigma(f), \\
	    	  & \ad(\sigma(h))(S) = -4S,
		  \end{aligned}
	    \end{equation}by mapping $\phi\in {\Gamma}_<( V,\sigma)$ to $S=\phi(x_{-2})\in \End(V)  $.

Therefore the space ${\Witt}_<( V,\sigma)$ of ${\Witt}_<$-module structures on $V$ compatible with $\sigma$ can be identified with a subset $\End_{\Witt_<}(V,\sigma)\subset {\End}_{\Gamma_<}( V,\sigma) $.
\end{prop}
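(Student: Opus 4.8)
The plan is to obtain this result for $\Witt_<$ by transporting the corresponding result for $\Witt_>$, namely Proposition~\ref{prop:Tiffrho}, along the Chevalley involution $\Theta$. First I would observe that $\Theta\colon\Witt\to\Witt$, $\Theta(L_i)=(-1)^{i+1}L_{-i}$, restricts to an isomorphism $\Witt_>\overset\sim\to\Witt_<$, and that at the level of partial Witt algebras the analogous involution $\theta\colon\Gamma\to\Gamma$, $\theta(x_i)=(-1)^{i+1}x_{-i}$, restricts to an isomorphism $\Gamma_>\overset\sim\to\Gamma_<$; in particular $\theta(x_2)=-x_{-2}$. Note that $\theta$ preserves the copy $\Sigma\cong\sl(2)$: under the identification~(\ref{eq:Sigmasl(2)}) sending $x_{-1}\mapsto f$, $x_0\mapsto-\tfrac12 h$, $x_1\mapsto -e$, one computes $\theta(x_1)=x_{-1}$, $\theta(x_0)=-x_0$, $\theta(x_{-1})=x_1$, which corresponds under~(\ref{eq:Sigmasl(2)}) to the Lie algebra automorphism of $\sl(2)$ swapping $e\leftrightarrow f$ and sending $h\mapsto -h$ (the Chevalley involution of $\sl(2)$). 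Call this automorphism $\tau\colon\sl(2)\to\sl(2)$.

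Next I would set up the bijection on module structures: given an $\sl(2)$-module $(V,\sigma)$, precomposition with $\theta$ gives a bijection between $\Gamma_<$-module structures $\phi$ on $V$ compatible with $\sigma$ and $\Gamma_>$-module structures $\phi\circ\theta$ on $V$ compatible with $\sigma\circ\tau$. Concretely, if $\phi\in\Gamma_<(V,\sigma)$ then $\phi':=\phi\circ\theta\in\Gamma_>(V,\sigma\circ\tau)$, and conversely. Apply Proposition~\ref{prop:Tiffrho} to $\phi'$ and the representation $\sigma':=\sigma\circ\tau$: this identifies $\Gamma_>(V,\sigma')$ with the set of $T\in\End(V)$ satisfying $\ad(\sigma'(f))(T)=3\sigma'(e)$ and $\ad(\sigma'(h))(T)=4T$, where $T=\phi'(x_2)=\phi(\theta(x_2))=-\phi(x_{-2})$. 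Now substitute $\sigma'(e)=\sigma(\tau(e))=\sigma(f)$, $\sigma'(f)=\sigma(e)$, $\sigma'(h)=-\sigma(h)$, and write $S:=\phi(x_{-2})=-T$. The first equation becomes $\ad(\sigma(e))(-S)=3\sigma(f)$, i.e. $\ad(\sigma(e))(S)=-3\sigma(f)$; the second becomes $\ad(-\sigma(h))(-S)=4(-S)$, i.e. $\ad(\sigma(h))(S)=-4S$. These are exactly equations~(\ref{eq:g-L_{-2}}), so the map $\phi\mapsto S=\phi(x_{-2})$ identifies $\Gamma_<(V,\sigma)$ with $\End_{\Gamma_<}(V,\sigma)$.

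The second assertion follows just as in Proposition~\ref{prop:Tiffrho}: by Corollary~\ref{cor:levelN-standard-relations-extension} (or Corollary~\ref{cor:extension}), a $\Gamma_<$-module structure extends to a $\Witt_<$-module structure precisely when the associated morphism $\lmax{\Gamma_<}\to\End_\bullet(V)$ vanishes on the standard relations of $\Witt_<$, hence $\Witt_<(V,\sigma)$ is identified with a subset $\End_{\Witt_<}(V,\sigma)\subset\End_{\Gamma_<}(V,\sigma)$. I do not expect a genuine obstacle here; the only point requiring a little care is bookkeeping the signs introduced by $\theta$ (the factor $(-1)^{i+1}$, which forces $\theta(x_2)=-x_{-2}$ and $\theta(x_{-1})=x_1$ but $\theta(x_1)=x_{-1}$ with opposite signs) and checking that the induced automorphism $\tau$ of $\sl(2)$ is indeed the Chevalley involution rather than some other automorphism — if this were gotten wrong the signs in~(\ref{eq:g-L_{-2}}) would not match. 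Alternatively, one can bypass $\theta$ entirely and repeat the direct computation of Proposition~\ref{prop:Tiffrho}: write out that $\phi\in\Gamma_<(V,\sigma)$ means $\phi(L_{-1})=\sigma(f)$, $\phi(L_0)=-\tfrac12\sigma(h)$, $\phi(L_1)=-\sigma(e)$, $\phi(L_{-2})=S$, and expand the partial-Lie-algebra relations $[\phi(L_0),\phi(L_{-2})]=2\phi(L_{-2})$ and $[\phi(L_1),\phi(L_{-2})]=3\phi(L_{-1})$ to read off~(\ref{eq:g-L_{-2}}); I would present whichever is shorter, most likely the involution argument with the sign check spelled out.
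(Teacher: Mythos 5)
Your argument is essentially the paper's: the paper proves the $\Witt_>$ case (Proposition~\ref{prop:Tiffrho}) and then simply asserts that the $\Witt_<$ statements follow automatically via the Chevalley involution, which is exactly your transport along $\Theta$ (resp.\ $\theta$); your elementary fallback, expanding $[\phi(x_0),\phi(x_{-2})]=2\phi(x_{-2})$ and $[\phi(x_1),\phi(x_{-2})]=3\phi(x_{-1})$, is also correct and gives~(\ref{eq:g-L_{-2}}) directly. One sign needs fixing in the involution route: under the identification~(\ref{eq:Sigmasl(2)}) the involution $\theta$ induces on $\sl(2)$ the map $e\mapsto -f$, $f\mapsto -e$, $h\mapsto -h$ (the Chevalley involution proper), not the plain swap $e\leftrightarrow f$; with the plain swap your intermediate claim fails, since $(\phi\circ\theta)(x_{-1})=\phi(x_1)=-\sigma(e)$ whereas compatibility with the swapped representation would require $+\sigma(e)$. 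Your final equations happen to be unaffected because in $\ad(\sigma'(f))(T)=3\sigma'(e)$ both sides flip sign together, but the compatibility statement for $\phi\circ\theta$ should be written with $\sigma'=\sigma\circ\omega$, $\omega$ the genuine Chevalley involution.
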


\begin{prop}\label{prop:SyieldsRepr}
Let $V$ be an $\sl(2)$-module defined by a representation $\sigma$.
Given  $T\in {\End}_<( V,\sigma)$ the corresponding Lie algebra morphism  $\Phi\in \Hom_{\text{\bf LieAlg}_\bullet}(\lmax{\Gamma}_<,\End_\bullet{}(V))$ satisfies
	\begin{equation}\label{eq:rho(L_i)def2}
    \Phi(x_i)= -\frac1{i+2} \ad(\sigma(f))(\Phi(x_{i+1}))
    \qquad \text{for }i<-2
    \;,
    \end{equation} and therefore 
$$\Phi(x_{-(2+i)})=\frac1{i!} \ad(\sigma(f))^i(S)
    \qquad \text{for }i\geq 0
    \;.
$$

\end{prop}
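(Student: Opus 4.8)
The plan is to deduce both formulas from the universal property of $\lmax{\Gamma_<}$ together with the explicit generators of its negative part given in Proposition~\ref{prop:triang-desc}, exactly mirroring the proof of the corresponding statement for $\Witt_>$; alternatively, one may simply transport that statement through the Chevalley involution. I will sketch the direct argument, which is short.

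First I would recall from Proposition~\ref{prop:triang-desc} that $\lmax{\Gamma_<}=\F(\langle x_{-2},x_{-1}\rangle)\oplus\langle x_0\rangle\oplus\langle x_1\rangle$, that $\F(\langle x_{-2},x_{-1}\rangle)=\F(\langle x_{-(2+i)}\rangle_{i\geq 0})\oplus\langle x_{-1}\rangle$, and that $x_{-(2+i)}:=\frac1{i!}(\ad_{x_{-1}})^i(x_{-2})$ for $i\geq 0$. Next I would make explicit that the map $\Phi$ attached to $S=\phi(x_{-2})$ is the morphism of graded Lie algebras obtained from the $\Gamma_<$-module structure $\phi$ via the bijection of diagram~(\ref{eq:LambdaEndbullet}); in particular $\Phi$ restricts to $\phi$ on $\Gamma_<$, so $\Phi(x_{-1})=\phi(x_{-1})=\sigma(f)$ by the identification~(\ref{eq:Sigmasl(2)}) and the compatibility $\phi_{|\Sigma}=\sigma$, while $\Phi(x_{-2})=S$.

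Then, since $\Phi$ is a Lie algebra morphism, applying it to $x_{-(2+i)}=\frac1{i!}(\ad_{x_{-1}})^i(x_{-2})$ gives $\Phi(x_{-(2+i)})=\frac1{i!}\ad(\Phi(x_{-1}))^i(\Phi(x_{-2}))=\frac1{i!}\ad(\sigma(f))^i(S)$ for $i\geq 0$, which is the second displayed formula. The first one is its one-step form: writing $i=-(2+m)$ with $m\geq 1$, the identity above yields $x_i=\frac1m\ad_{x_{-1}}(x_{i+1})$, and since $\frac1m=-\frac1{i+2}$, applying $\Phi$ gives $\Phi(x_i)=-\frac1{i+2}\ad(\sigma(f))(\Phi(x_{i+1}))$ for $i<-2$.

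I expect no genuine obstacle here, since the combinatorial content has already been packaged into Proposition~\ref{prop:triang-desc} and the universal property of $\lmax{\Gamma_<}$. The only points requiring care are the sign and the reindexing $i\mapsto-(2+i)$, and spelling out that the $\Phi$ in the statement is precisely the morphism of graded Lie algebras furnished by that universal property, so that one is entitled to move $\Phi$ past brackets. For an even shorter route one simply notes that the Chevalley involution $\Theta$ interchanges $\Witt_>$ and $\Witt_<$ while $\theta$ interchanges $\Gamma_>$ and $\Gamma_<$, so the statement follows verbatim from its $\Witt_>$ counterpart proved above; I would record this observation and retain the explicit formulas for the reader's convenience.
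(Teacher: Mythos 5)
Your proposal is correct and follows essentially the paper's route: the paper proves nothing new here, noting that the $\Witt_<$ statements follow from the $\Witt_>$ ones via the Chevalley involution, and the $\Witt_>$ counterpart is proved exactly by your direct argument ($\Phi$ is a morphism of graded Lie algebras and $x_{\pm(2+i)}$ is $\frac1{i!}(\ad_{x_{\mp 1}})^i(x_{\pm 2})$ from Proposition~\ref{prop:triang-desc}). Your sign bookkeeping $\frac1m=-\frac1{i+2}$ and the identification $\Phi(x_{-1})=\sigma(f)$ are exactly the points the paper leaves implicit.
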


\begin{thm}\label{thm:Witt<equivalent}
Let $V$ be an $\sl(2)$-module defined by a representation $\sigma$ and let $S\in {\End}_{\Gamma_<}( V,\sigma)$. 

The following conditions are equivalent:
\begin{enumerate}
	\item $S$  gives rise to a $\Witt_<$-module structure on $V$ compatible with $\sigma$; that is $S\in {\End}_{\Witt_<}( V,\sigma)$, 
	\item for $i,j\geq 0$ there holds
$$[\frac1{i!}{\ad(\sigma(f))^i}(S),\frac1{j!}{\ad(\sigma(f))^j}(S)]=(i-j)\frac1{(i+j+2)!}\ad(\sigma(f))^{i+j+2}(S)\,,
$$ 
	\item there exists $N$ such that for $i\geq N$ there holds
$$[\frac1{(2k-1)!}{\ad(\sigma(f))^{2k-1}}(S),{\ad(\sigma(f))}(S)]=-\frac{2k-1}{(2k+1)!}\ad(\sigma(f))^{2k+1}(S)\,.
$$ 
\end{enumerate}
\end{thm}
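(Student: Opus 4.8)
The plan is to deduce the statement from Theorem~\ref{thm:Witt>equivalent} by means of the Chevalley involution, as anticipated at the beginning of the subsection. Recall that $\Theta\colon\Witt\to\Witt$, $\Theta(L_i)=(-1)^{i+1}L_{-i}$, restricts to an isomorphism $\Witt_>\xrightarrow{\sim}\Witt_<$ which, under the identifications~(\ref{eq:partialWitt}), corresponds to the involution $\theta\colon\Gamma_>\xrightarrow{\sim}\Gamma_<$, $\theta(x_i)=(-1)^{i+1}x_{-i}$, and which interchanges the standard relations $r_k$ of $\Witt_>$ with the standard relations $r_{-k}$ of $\Witt_<$ (cf. Theorem~\ref{thm:reduced-standard-relations} and Corollary~\ref{cor:levelN-reduced-standard-relations}). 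Restricted to the common copy $\Sigma\simeq\sl(2)$ it induces the automorphism $\theta_{\sl}$ with $\theta_{\sl}(e)=-f$, $\theta_{\sl}(f)=-e$, $\theta_{\sl}(h)=-h$. So, given the $\sl(2)$-module $(V,\sigma)$, I would introduce the twisted module structure $\tilde\sigma:=\sigma\circ\theta_{\sl}$ on $V$, i.e. $\tilde\sigma(e)=-\sigma(f)$, $\tilde\sigma(f)=-\sigma(e)$, $\tilde\sigma(h)=-\sigma(h)$.

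First I would observe that precomposition with $\Theta$ is a bijection between the $\Witt_<$-module structures on $V$ compatible with $\sigma$ and the $\Witt_>$-module structures on $V$ compatible with $\tilde\sigma$: if $\Phi_<\colon\Witt_<\to\End(V)$ extends $\sigma$, then $\Phi_<\circ\Theta|_{\Witt_>}\colon\Witt_>\to\End(V)$ is a Lie algebra morphism whose restriction to $\Sigma$ is $\sigma\circ\theta_{\sl}=\tilde\sigma$, hence it extends $\tilde\sigma$; and since $\Theta^2=\operatorname{Id}$ these two assignments are mutually inverse. Under this bijection the endomorphism $S=\Phi_<(L_{-2})$ parametrizing the $\Witt_<$-structure (Proposition~\ref{prop:SyieldsRepr}) is carried to $T:=(\Phi_<\circ\Theta)(L_2)=\Phi_<(-L_{-2})=-S$, the endomorphism parametrizing the associated $\Witt_>$-structure. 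A direct substitution shows that $S$ satisfies~(\ref{eq:g-L_{-2}}) for $\sigma$ precisely when $T=-S$ satisfies~(\ref{eq:g-L_2}) for $\tilde\sigma$, that is, $S\in\End_{\Gamma_<}(V,\sigma)$ if and only if $-S\in\End_{\Gamma_>}(V,\tilde\sigma)$.

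Then I would apply Theorem~\ref{thm:Witt>equivalent} to $(V,\tilde\sigma)$ and the endomorphism $T=-S$: by the bijection above, its condition~(1) is equivalent to $S$ defining a $\Witt_<$-module structure on $V$ compatible with $\sigma$, i.e. to condition~(1) of the present statement, while substituting $\tilde\sigma(e)=-\sigma(f)$, $T=-S$ and $\ad(-\sigma(f))^i=(-1)^i\ad(\sigma(f))^i$ into conditions~(2) and~(3) of Theorem~\ref{thm:Witt>equivalent} turns them into conditions~(2) and~(3) here. Alternatively, one may bypass $\Theta$ and reprove everything directly: by Corollary~\ref{cor:extension}, Theorem~\ref{thm:reduced-standard-relations}(2), Corollary~\ref{cor:levelN-reduced-standard-relations}(2) and Proposition~\ref{prop:SyieldsRepr}, $S$ yields a compatible $\Witt_<$-structure iff the induced morphism $\Phi\colon\lmax{\Gamma_<}\to\End_\bullet(V)$ annihilates $R(\Witt_<)$, and evaluating $\Phi$ on the generators $r_{-2+i,-2+j}$ (resp. on the reduced generators $r_{-k}$, $k\geq N$) via Proposition~\ref{prop:SyieldsRepr} yields condition~(2) (resp. condition~(3)). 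In either route there is no real obstacle; the single point that requires care is the bookkeeping of the signs $(-1)^{i+1}$ coming from $\Theta$ and $\theta$ together with the resulting shift $T=-S$, since these fix the precise constants and the placement of the arguments inside the brackets in~(2) and~(3).
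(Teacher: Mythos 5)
Your proposal is correct and is essentially the paper's own argument: the paper proves the $\Witt_>$ case in detail and then states that the $\Witt_<$ results follow automatically via the Chevalley involution, which is precisely the transfer you carry out (twisting $\sigma$ by $e\mapsto -f$, $f\mapsto -e$, $h\mapsto -h$ and sending $S$ to $T=-S$), and your alternative direct route just repeats the paper's $\Witt_>$ proof with $R(\Witt_<)$ in place of $R(\Witt_>)$. One remark: if you push your sign bookkeeping to the end, the substitution yields $(j-i)$ in condition (2) (for the bracket ordered as written) and $[\tfrac1{(2k-1)!}\ad(\sigma(f))^{2k-1}(S),\,S]$ in condition (3), which are the forms dictated by the relations $[L_{-(2+i)},L_{-(2+j)}]=(j-i)L_{-(4+i+j)}$ and $r_{-(2k+1),-2}$; the printed $(i-j)$ and the extra $\ad(\sigma(f))$ on the second argument in the theorem's statement are typographical slips (the latter is not even homogeneous in degree), not obstacles to your argument.
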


\subsection{Extending to $\Witt$. Compatibility of extensions}

The extendability results for $\Witt{}$ can be formulated in terms of compatibility of extensions to $\Witt_{>}$ and $\Witt_{<}$, thus in this section we limit ourselves to give their statements without proofs.

\begin{prop}\label{prop:LieAlgWitt}
Let $V$ be an $\sl(2)$-module defined by a representation  $\sigma$.
The space ${\Gamma}( V,\sigma)$ of ${\Gamma}$-module structures on $V$ compatible with $\sigma$ can be identified with the set ${\End}_\Gamma( V,\sigma) $ formed by those pairs $(S,T)$ of endomorphisms $S, T\in \End(V)$ which satisfy the equations \begin{equation}\label{eq:g-L_{-2}-L_2}
	    	\begin{aligned}
		    \ad(\sigma(e))(S) &=  -3\sigma(f), \\
	    	   \ad(\sigma(h))(S) &= -4S,\\
		   [S,T] &=2\sigma(h),\\
		   \ad(\sigma(f))(T) &=  3\sigma(e), \\
	    	   \ad(\sigma(h))(T) &= 4T,
		  \end{aligned}
	    \end{equation}by mapping $\phi\in {\Gamma}( V,\sigma)$ to $(S=\phi(x_{-2}),T=\phi(x_2))\in \End(V) \times \End(V)  $.
Therefore, the space ${\Witt}( V,\sigma)$ of ${\Witt}$-module structures on $V$ compatible with $\sigma$ can be identified with a subset $\End_{\Witt}(V,\sigma)$ of ${\End}_{\Gamma}( V,\sigma) $.
\end{prop}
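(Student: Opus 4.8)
The plan is to reduce the claim to the partial-Lie-algebra framework already developed for $\Gamma_>$ and $\Gamma_<$, and then to extract the single bracket relation $[S,T]=2\sigma(h)$ as the only new constraint that couples the two halves. First I would recall that by the triangular decomposition $\lmax{\Gamma}=\F(\langle x_{-2},x_{-1}\rangle)\oplus\langle x_0\rangle\oplus\F(\langle x_1,x_2\rangle)$ (Proposition~\ref{prop:triang-desc}), a $\Gamma$-module structure on $V$ compatible with $\sigma$ is nothing but a linear map $\phi\colon\Gamma\to\End(V)$ respecting all brackets $[x_i,x_j]=(i-j)x_{i+j}$ with $-2\leq i,j,i+j\leq 2$, whose restriction to $\Sigma\simeq\sl(2)$ is $\sigma$ under the identification of~(\ref{eq:Sigmasl(2)}). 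So the data of $\phi$ amounts to choosing the values $\phi(x_{-2})=S$ and $\phi(x_2)=T$, since $\phi(x_{-1})=\sigma(f)$, $\phi(x_0)=-\tfrac12\sigma(h)$, $\phi(x_1)=-\sigma(e)$ are forced.

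Next I would enumerate which bracket relations of $\Gamma$ must be checked. The brackets internal to $\Sigma$ hold automatically because $\sigma$ is a representation. The brackets $[x_{-2},x_i]$ and $[x_2,x_i]$ for $i\in\{-1,0,1\}$ landing back in the range $(-2,2)$ produce, respectively, the four equations $\ad(\sigma(e))(S)=-3\sigma(f)$, $\ad(\sigma(h))(S)=-4S$ (these are exactly~(\ref{eq:g-L_{-2}}), coming from $[x_1,x_{-2}]=3x_{-1}$ and $[x_0,x_{-2}]=2x_{-2}$) together with $\ad(\sigma(f))(T)=3\sigma(e)$, $\ad(\sigma(h))(T)=4T$ (exactly~(\ref{eq:g-L_2}), from $[x_{-1},x_2]=-3x_1$ and $[x_0,x_2]=-2x_2$). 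The only remaining relation with all three indices in range is $[x_{-2},x_2]=-4x_0$, which under $\phi$ reads $[S,T]=-4\cdot(-\tfrac12\sigma(h))=2\sigma(h)$. Relations such as $[x_{-2},x_{-1}]$ or $[x_1,x_2]$ have $i+j$ outside $(-2,2)$ and so impose nothing on a partial-Lie-algebra representation. Thus the full system~(\ref{eq:g-L_{-2}-L_2}) is exactly the condition that $\phi$ be a $\Gamma$-module map, which gives the first identification ${\Gamma}(V,\sigma)\xrightarrow{\sim}{\End}_\Gamma(V,\sigma)$.

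For the second statement I would invoke Corollary~\ref{cor:levelN-standard-relations-extension} (or Corollary~\ref{cor:extension}) applied to $\g=\Witt$ and $\Lambda=\Gamma$: a $\Gamma$-module structure $\phi$ extends to a $\Witt$-module structure compatible with $\sigma$ precisely when the induced $\Phi\colon\lmax{\Gamma}\to\End_\bullet(V)$ kills the standard relations $R(\Witt)=R(\Witt_<)\oplus R(\Witt_>)$. Hence ${\Witt}(V,\sigma)$ sits inside ${\Gamma}(V,\sigma)\cong{\End}_\Gamma(V,\sigma)$ as the subset $\End_{\Witt}(V,\sigma)$ cut out by those extra vanishing conditions, which is the asserted inclusion. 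One small point worth stating carefully is that $\End_\bullet(V)$ genuinely receives $\Phi$ — i.e.\ that $\phi$ already takes values in the graded piece — which follows because the defining relations force $\phi(x_i)\in\End_i(V,\sigma)$ for each $i$.

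I do not expect a serious obstacle here: the proof is essentially bookkeeping of which Jacobi/bracket relations survive truncation, plus citing the machinery of Sections~2.2--2.3. The one place demanding a little care is the sign and normalization in $[S,T]=2\sigma(h)$: one must track the identification $x_0\mapsto-\tfrac12 h$ consistently, since $[x_{-2},x_2]=(-2-2)x_0=-4x_0\mapsto-4\sigma(-\tfrac12 h)=2\sigma(h)$, and make sure this matches the convention used in the statements of~(\ref{eq:g-L_2}) and~(\ref{eq:g-L_{-2}}). Everything else is a direct transcription of the partial Witt algebra structure, so the write-up can be kept short, pointing to Propositions~\ref{prop:Tiffrho} and its $\Witt_<$-analogue for the ``half'' equations and to Corollary~\ref{cor:levelN-standard-relations-extension} for the final inclusion.
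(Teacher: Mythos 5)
Your proof is correct and follows exactly the route the paper intends: the paper omits the proof of Proposition~\ref{prop:LieAlgWitt}, noting it is analogous to Proposition~\ref{prop:Tiffrho}, namely reading off the in-range brackets of the partial algebra $\Gamma$ (which yield precisely the system~(\ref{eq:g-L_{-2}-L_2}), including $[x_{-2},x_2]=-4x_0\mapsto[S,T]=2\sigma(h)$) and then invoking Corollary~\ref{cor:levelN-standard-relations-extension} for the inclusion $\End_{\Witt}(V,\sigma)\subset\End_{\Gamma}(V,\sigma)$. Your sign bookkeeping under $x_0\mapsto-\tfrac12 h$, $x_1\mapsto -e$ is also consistent with the paper's conventions.
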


\begin{defn} Let $(V,\sigma)$ be a non trivial $\sl(2)$-module. We say that two structures  of  ${\Gamma}_<$-module $(V,\phi_<)$ and ${\Gamma}_>$-module $(V,\phi_>)$ are compatible if there exists a structure of ${\Gamma}$-module $(V,\phi)$ whose restrictions to ${\Gamma}_<$ and ${\Gamma}_>$ are the given $\phi_<$, $\phi_>$, respectively. Analogously, we say that two structures  of  ${\Witt}_<$-module $(V,\rho_<)$ and ${\Witt}_>$-module $(V,\rho_>)$ are compatible if there exists a structure of ${\Witt}$-module $(V,\rho)$ whose restrictions to ${\Witt}_<$ and ${\Witt}_>$ are the given $\rho_<$, $\rho_>$, respectively.

\end{defn}

Now we can reformulate Proposition \ref{prop:LieAlgWitt} as follows.

\begin{prop} Let $(V,\sigma)$ be a non trivial $\sl(2)$-module. Two structures  of  ${\Gamma}_<$-module $(V,\phi_<)$ and ${\Gamma}_>$-module $(V,\phi_>)$ determined by $S\in{\End}_{\Gamma_<}(V,\sigma)$, $T\in{\End}_{\Gamma_>}(V,\sigma)$, respectively, are compatible, if and only if $[S,T]=2\sigma(h)$. That is 
$$\End_\Gamma(V,\sigma)=\{(S,T)\in {\End}_{\Gamma_<}(V,\sigma)\times {\End}_{\Gamma_>}(V,\sigma):[S,T]=2\sigma(h)\}.
$$ In the same way one has \begin{align*}
\End_\Witt(V,\sigma)=\{(S,T)\in {\End}_{\Witt_<}(V,\sigma)\times {\End}_{\Witt_>}(V,\sigma)&:\\ [S,T]&=2\sigma(h)\}.
\end{align*}
 
\end{prop}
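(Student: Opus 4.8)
The plan is to prove the proposition by combining Proposition~\ref{prop:LieAlgWitt} (which identifies $\Gamma$-module structures compatible with $\sigma$ with pairs $(S,T)$ satisfying the five equations in~(\ref{eq:g-L_{-2}-L_2})) with the already-established descriptions of $\End_{\Gamma_>}(V,\sigma)$ and $\End_{\Gamma_<}(V,\sigma)$ from Proposition~\ref{prop:Tiffrho} and its $\Witt_<$-analogue. The key observation is that among the five equations of~(\ref{eq:g-L_{-2}-L_2}), two of them ($\ad(\sigma(f))(T)=3\sigma(e)$, $\ad(\sigma(h))(T)=4T$) are exactly the conditions defining $T\in\End_{\Gamma_>}(V,\sigma)$ from~(\ref{eq:g-L_2}); two others ($\ad(\sigma(e))(S)=-3\sigma(f)$, $\ad(\sigma(h))(S)=-4S$) are exactly the conditions defining $S\in\End_{\Gamma_<}(V,\sigma)$ from~(\ref{eq:g-L_{-2}}); and the fifth, $[S,T]=2\sigma(h)$, is the lone genuine coupling constraint. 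Hence the $\Gamma$-case statement $\End_\Gamma(V,\sigma)=\{(S,T)\in\End_{\Gamma_<}(V,\sigma)\times\End_{\Gamma_>}(V,\sigma):[S,T]=2\sigma(h)\}$ is an immediate rewriting of Proposition~\ref{prop:LieAlgWitt}.

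For the $\Witt$-case statement, I would argue as follows. A $\Witt$-module structure on $V$ compatible with $\sigma$ restricts, via the inclusions $\Witt_<,\Witt_>\hookrightarrow\Witt$, to a $\Witt_<$-module structure and a $\Witt_>$-module structure on $V$, both compatible with $\sigma$; moreover restricting further to $\Par_{-2}^{2}\Witt\simeq\Gamma$ gives a $\Gamma$-module structure, so the associated pair $(S,T)$ lies in $\End_{\Witt_<}(V,\sigma)\times\End_{\Witt_>}(V,\sigma)$ and satisfies $[S,T]=2\sigma(h)$ by the $\Gamma$-case. This gives the inclusion $\subseteq$. Conversely, suppose $(S,T)\in\End_{\Witt_<}(V,\sigma)\times\End_{\Witt_>}(V,\sigma)$ with $[S,T]=2\sigma(h)$. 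Then $S$ defines a morphism $\Phi_<\colon\Witt_<\to\End_\bullet(V)$ and $T$ defines $\Phi_>\colon\Witt_>\to\End_\bullet(V)$, both extending $\sigma$; using Proposition~\ref{prop:SyieldsRepr} and the analogous statement for $\Witt_>$, these are given on generators by $\Phi_>(L_{2+i})=\tfrac1{i!}\ad(\sigma(e))^i(T)$ and $\Phi_<(L_{-(2+i)})=\tfrac1{i!}\ad(\sigma(f))^i(S)$ for $i\geq0$, and they agree on $\langle L_{-1},L_0,L_1\rangle$. One then has to check that the common linear map $\Phi\colon\Witt\to\End_\bullet(V)$ they determine is a Lie algebra morphism. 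Since $\Witt=\Witt_<+\Witt_>$ as vector spaces, the only bracket relations not already covered by $\Phi_<$ or $\Phi_>$ being morphisms are those of the form $[L_{-i},L_j]$ with $i,j\geq2$; by Corollary~\ref{cor:extension} and Corollary~\ref{cor:levelN-standard-relations-extension} it suffices that the lift $\Phi\colon\lmax{\Gamma}\to\End_\bullet(V)$ vanishes on the reduced standard relations of $\Witt$, and since $R(\Witt)=R(\Witt_<)\oplus R(\Witt_>)$ by Definition~\ref{defn:standard-relations}(3) and Theorem~\ref{thm:reduced-standard-relations}(3), this is automatic from $T\in\End_{\Witt_>}(V,\sigma)$ and $S\in\End_{\Witt_<}(V,\sigma)$; the one new datum needed to glue the $<$ and $>$ halves into a single $\Gamma$-module — equivalently, to kill the ``cross'' brackets $[\Gamma^-,\Gamma^+]$ landing in $\Gamma_0$ — is precisely $[S,T]=2\sigma(h)$, supplied by hypothesis.

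The main obstacle I anticipate is the verification in the converse direction that the glued map $\Phi$ is genuinely a morphism of the \emph{full} Witt algebra, rather than just of each half separately: one must make sure no new obstruction appears from the mixed brackets. The clean way around this is not to re-derive anything but to route everything through the partial Lie algebra formalism already set up — i.e.\ observe that $(S,T)$ with the hypotheses gives a point of $\End_\Gamma(V,\sigma)$ by the $\Gamma$-case just proved, hence a $\Gamma$-module structure on $V$, hence (by the universal property of $\lmax{\Gamma}$ encoded in diagram~(\ref{eq:LambdaEndbullet}) and Corollary~\ref{cor:levelN-standard-relations-extension}) a $\Witt$-module structure, provided $\Phi$ vanishes on $R(\Witt)=R(\Witt_<)\oplus R(\Witt_>)$; and vanishing on $R(\Witt_>)$ (resp.\ $R(\Witt_<)$) is exactly the content of $T\in\End_{\Witt_>}(V,\sigma)$ (resp.\ $S\in\End_{\Witt_<}(V,\sigma)$). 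Thus the whole argument is bookkeeping with the direct-sum decompositions of the ideals of standard relations, with no new computation required. I would also remark in passing that the triviality hypothesis on $\sigma$ is used only to invoke Proposition~\ref{prop:nontrivial}, ensuring the representations involved are faithful on $\Sigma$ so that the identifications of module structures with endomorphisms are unambiguous.
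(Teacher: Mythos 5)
Your proposal is correct and follows exactly the route the paper intends: the paper states this proposition without proof as a direct reformulation of Proposition~\ref{prop:LieAlgWitt}, and your reading of the five equations in~(\ref{eq:g-L_{-2}-L_2}) as the two $\Gamma_>$-conditions, the two $\Gamma_<$-conditions, and the single coupling constraint $[S,T]=2\sigma(h)$ is precisely that reformulation. Your treatment of the $\Witt$-case via Corollary~\ref{cor:extension} and the decomposition $R(\Witt)=R(\Witt_<)\oplus R(\Witt_>)$, using that the lift $\Phi$ agrees with $\Phi_>$ (resp.\ $\Phi_<$) on the free subalgebra $\F(\langle x_1,x_2\rangle)$ (resp.\ $\F(\langle x_{-2},x_{-1}\rangle)$), is the same bookkeeping the paper's formalism is designed to make automatic.
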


\section{Representations of the Virasoro algebra}

In this section we study the extensions of $\sl(2)$-representations to the Virasoro algebra. Mostly we just give the statement of the results and do not provide proofs since they are quite similar to those given for the Witt algebras. 

\subsection{$\Witt$ and $\Vir$}

It is well known that the Witt algebra $\Witt$ admits a unique central extension $\Vir$, called the Virasoro algebra which sits in the following exact sequence of Lie algebras

$$0\to\langle C\rangle\to \Vir\to\Witt\to 0.
$$ $\Vir$ has a basis obtained by adding $C$, as a new zero degree element,  to the graded basis 
$\{L_k\}_{k\in\Z}$ of $\Witt$  and its Lie bracket is defined by:
\begin{align*}[L_i,L_j] &=(i-j) L_{i+j}+\frac{1}{12}(i^3-i)\delta_{i+j,0}\, C,\\
[\Vir,C] &=0.
\end{align*} It follows that $\langle C\rangle$ is an ideal of $\Vir$ and coincides with its center. Moreover, it is well known that in fact this is the unique proper ideal of $\Vir$. 

Note that the restriction of the central extension to the subalgebra $\Witt_>\subset\Witt$ (resp. $\Witt_<$) is trivial and, thus,  $\Witt_>$ (resp. $\Witt_<$) will also be understood as a subalgebra of $\Vir$.

\subsection{$\Vir$ and partial Lie algebras}

In a similar way the partial Witt algebra $\Gamma$ admits a partial Lie algebra central extension $\mathcal V$ of size $(-2,2)$, called the partial Virasoro algebra. $\mathcal V$ has a basis obtained by adding a new zero degree element $y_0$, to the graded basis $\{x_{-2},x_{-1},x_0,x_1,x_2\}$ of $\Gamma$ and its lie bracket is defined by:
\begin{align*}[x_i,x_j] &=(i-j) x_{i+j}+\frac{1}{12}(i^3-i)\delta_{i+j,0}\, y_0,\quad \text{if}\  -2\leq i, j, i+j\leq 2,\\
[\mathcal V,y_0] &=0.
\end{align*} Therefore $\langle y_0\rangle$ is the center of $\mathcal V$ and there is a central extension of partial Lie algebras 
$$0\to\langle y_0\rangle\to \mathcal V\to \Gamma\to 0.
$$

\begin{prop}\label{prop:pariso-vir} There is a natural isomorphism relating the Virasoro Lie algebra and its partial counterpart:

$$\Vir \simeq \lmin{\mathcal V}.
$$
\end{prop}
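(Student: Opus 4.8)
The plan is to mimic exactly the proof of Proposition~\ref{prop:pariso}, replacing the Witt algebras by their central extensions. First I would observe that the Virasoro algebra $\Vir$ is a $\Z$-graded Lie algebra (with $C$ placed in degree $0$) and that it is generated by its partial part of size $(-2,2)$: indeed the elements $L_{-2},L_{-1},L_0,L_1,L_2$ already generate $\Witt$, and since $[L_2,L_{-2}]=4L_0+\tfrac12 C$ while $[L_1,L_{-1}]=2L_0$, the central element $C$ is recovered as a commutator of elements in the partial part, so $\Par_{-2}^2\Vir$ generates $\Vir$. Moreover the very definition of the partial Virasoro algebra $\mathcal V$ exhibits an isomorphism $\psi\colon \Par_{-2}^2\Vir \overset\sim\to \mathcal V$ sending $x_i\mapsto L_i$ and $y_0\mapsto C$, since the bracket relations defining $\mathcal V$ are precisely the truncations of the Virasoro relations.

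Next I would apply Proposition~\ref{prop:LmaxgLmin} to $\g=\Vir$ and $\Lambda=\mathcal V$ with this isomorphism $\psi$. This produces a surjective morphism of graded Lie algebras $\Psi\colon \Vir\twoheadrightarrow \lmin{\mathcal V}$ whose restriction to the partial part is $\psi$. It remains to show $\Psi$ is injective, i.e. $\Ker\Psi=0$. Here is the one point where the argument genuinely differs from Proposition~\ref{prop:pariso}: $\Vir$ is not simple, it has the center $\langle C\rangle$ as a proper ideal, and in fact (as recalled in the text just before Proposition~\ref{prop:pariso-vir}) $\langle C\rangle$ is the \emph{only} proper ideal of $\Vir$. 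So $\Ker\Psi$, being an ideal of $\Vir$, is either $0$, $\langle C\rangle$, or all of $\Vir$. It cannot be all of $\Vir$ since $\Psi$ is nonzero (it is surjective onto $\lmin{\mathcal V}\neq 0$). And it cannot be $\langle C\rangle$: by construction $\Psi(C)=\psi(C)=y_0$, which is nonzero in $\mathcal V=\Par_{-2}^2\lmin{\mathcal V}$ and hence nonzero in $\lmin{\mathcal V}$, so $C\notin\Ker\Psi$. Therefore $\Ker\Psi=0$ and $\Psi$ is an isomorphism.

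I expect the only mild subtlety, rather than a real obstacle, to be the bookkeeping needed to justify that $\Vir$ is generated by $\Par_{-2}^2\Vir$ and that $y_0\in\mathcal V$ survives in $\lmin{\mathcal V}$; both are immediate from the bracket $[x_2,x_{-2}] = 4x_0 + \tfrac12 y_0$ in $\mathcal V$ together with $[x_1,x_{-1}] = 2x_0$, which shows $y_0$ lies in the subalgebra generated by $\Gamma\subset\mathcal V$ and is not forced to vanish. One should also note that $\lmin{\mathcal V}$ is a central extension of $\lmin{\Gamma}=\Witt$ by the image of $y_0$, so a priori it could be a larger central extension; but the uniqueness of the proper ideal of $\Vir$ — equivalently, the one-dimensionality of $H^2(\Witt,\C)$ — is precisely what pins down $\lmin{\mathcal V}\simeq \Vir$ rather than something bigger. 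Alternatively, and perhaps more cleanly, once $\Psi\colon\Vir\to\lmin{\mathcal V}$ is known to be surjective with $\Psi(C)\neq 0$, one invokes the classification of ideals of $\Vir$ to kill the kernel, exactly as above; this avoids any separate cohomological computation.
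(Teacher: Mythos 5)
Your proposal is correct and follows essentially the same route as the paper: the paper's (very terse) proof likewise recalls that the only ideals of $\Vir$ are $0$, $\langle C\rangle$ and $\Vir$, and then argues exactly as in Proposition~\ref{prop:pariso}, i.e.\ via the surjection $\Vir\twoheadrightarrow\lmin{\mathcal V}$ from Proposition~\ref{prop:LmaxgLmin} and the observation that $C$ maps to $y_0\neq 0$, so the kernel must vanish. Your write-up simply makes explicit the details (generation by the partial part, $\Psi(C)=y_0\neq 0$) that the paper leaves implicit.
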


\begin{proof} 
Recall that the ideals of the Lie algebra $\Vir$ are  $0$, $\langle C\rangle$ or $\Vir$. Proceeding as in the  the proof of Proposition~\ref{prop:pariso}, we conclude.
\end{proof}
%
%
%

Indeed, the previous  map corresponds to the isomorphism of partial Lie algebras $\Par_{-2}^2\Vir\overset{\sim}\to {\mathcal V}$ which sends $L_i$ to $x_i$ and $C$ to $y_0$. Arguing as in  \S\ref{subsec:WittandPartial}, we obtain  a short exact sequence of graded Lie algebras 
	$$0\to I(\Vir)\to\lmax{\mathcal V} \xrightarrow{\pi}\Vir\to 0. $$
where $\pi$ is the graded Lie algebra morphism induced by mapping $x_i\in\mathcal V $ to $L_i$ and $y_0$ to $C$.

\begin{defn} 
We say that $I(\Vir)$ is the ideal associated to the graded Lie algebra $\Vir$.
\end{defn}

\begin{prop}\label{prop:triang-desc-vir} One has the following triangular decomposition into graded Lie subalgebras \begin{align*}
\lmax{\mathcal V} &=\F(\langle  x_{-2},x_{-1} \rangle) \oplus \langle  x_0,y_0 \rangle \oplus \F(\langle  x_1,x_2 \rangle).\end{align*}
\end{prop}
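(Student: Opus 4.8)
The plan is to obtain this triangular decomposition as a direct consequence of the general results on partial Lie algebras of size $(d,e)$ with $-2\le d,e\le 2$ already recorded in the excerpt, applied now to the partial Virasoro algebra $\mathcal V$. Recall that for any partial Lie algebra $\Lambda$ of size $(d,e)$ one has $\lmax{\Lambda}=\lmax{\Lambda^-}\oplus\Lambda_0\oplus\lmax{\Lambda^+}$, and that when $-2\le d,e\le 2$ one moreover has $\lmax{\Lambda^\pm}=\F(\Lambda^\pm)$ (see \cite[Proposition 2.2, Lemma 2.6]{Iohara}). So the only thing to check is what the graded pieces $\mathcal V^-$, $\mathcal V_0$, $\mathcal V^+$ of $\mathcal V$ are.

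First I would spell out the grading on $\mathcal V$: by construction $\mathcal V$ has the same nonzero graded pieces as $\Gamma$ in degrees $\neq 0$, namely $\mathcal V_i=\langle x_i\rangle$ for $i\in\{-2,-1,1,2\}$, while the degree-zero part is enlarged by the central element: $\mathcal V_0=\langle x_0,y_0\rangle$. Hence $\mathcal V^-=\mathcal V_{-2}\oplus\mathcal V_{-1}=\langle x_{-2},x_{-1}\rangle$, exactly as for $\Gamma_<$, and $\mathcal V^+=\mathcal V_1\oplus\mathcal V_2=\langle x_1,x_2\rangle$, exactly as for $\Gamma_>$. I would also note that $y_0$ is central in $\mathcal V$, so it plays no role in the brackets that define $\mathcal V^\pm$ as partial Lie algebras of size $(-2,-1)$ and $(1,2)$; in particular $\mathcal V^\pm$ coincide, as partial Lie algebras, with the corresponding pieces of $\Gamma$, so their $\lmax{}$'s are the free Lie algebras $\F(\langle x_{-2},x_{-1}\rangle)$ and $\F(\langle x_1,x_2\rangle)$ respectively.

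Putting these together via the general triangular decomposition gives
$$
\lmax{\mathcal V}=\lmax{\mathcal V^-}\oplus\mathcal V_0\oplus\lmax{\mathcal V^+}=\F(\langle x_{-2},x_{-1}\rangle)\oplus\langle x_0,y_0\rangle\oplus\F(\langle x_1,x_2\rangle),
$$
which is the claimed decomposition, and one checks immediately that each summand is a graded Lie subalgebra: the two free parts are by the general statement, and $\langle x_0,y_0\rangle$ is an abelian subalgebra since $[x_0,y_0]=0$ and $[x_0,x_0]=0$.

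I do not expect any genuine obstacle here; the statement is essentially a bookkeeping corollary of Proposition~\ref{prop:triang-desc} together with the observation that adjoining the central element $y_0$ in degree $0$ affects only the $\Lambda_0$-slot of the triangular decomposition and leaves $\mathcal V^\pm$ (hence their $\lmax{}$'s) unchanged. The one point that deserves an explicit word in the write-up is the hypothesis check that $\mathcal V$ is of size $(-2,2)$ with $-2\le d,e\le 2$, so that $\lmax{\mathcal V^\pm}=\F(\mathcal V^\pm)$ applies; this is immediate from the definition of $\mathcal V$.
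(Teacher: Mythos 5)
Your proposal is correct and follows exactly the route the paper intends: the paper gives no separate proof for the Virasoro case precisely because, as in Proposition~\ref{prop:triang-desc}, the decomposition is the general triangular decomposition $\lmax{\Lambda}=\F(\Lambda^-)\oplus\Lambda_0\oplus\F(\Lambda^+)$ for partial Lie algebras of size $(d,e)$ with $-2\leq d,e\leq 2$, applied to $\mathcal V$ with $\mathcal V_0=\langle x_0,y_0\rangle$ and $\mathcal V^\pm$ unchanged from the Witt case. Your explicit remark that adjoining the central $y_0$ only enlarges the degree-zero slot is exactly the needed bookkeeping.
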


\begin{cor}\label{cor:idealstructure-vir} The ideal $I(\Vir)$ associated to the Witt algebra has the following decomposition into graded ideals

$$ I(\Vir)=I^-(\Vir)\oplus I^+(\Vir),
$$ where $I^-(\Vir)\subset \F(\langle  x_{-(2+i)} \rangle_{i\geq 0})$,  $I^+(\Vir)\subset \F(\langle  x_{2+i} \rangle_{i\geq 0})$ and 
$$\supp(I^-(\Vir))\subset  \Z_{\leq -5},\quad \supp(I^+(\Vir))\subset  \Z_{\geq 5}.
$$
\end{cor}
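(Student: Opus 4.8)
The plan is to mimic closely the argument already used for the Witt algebra in Corollary~\ref{cor:idealstructure}, since the only structural difference between $\lmax{\mathcal V}$ and $\lmax{\Gamma}$ is the extra zero-degree generator $y_0$, which lives in the center of $\mathcal V$ and hence contributes nothing to the positive or negative graded pieces of the free Lie algebra factors. First I would invoke Proposition~\ref{prop:triang-desc-vir} to write
$$
\lmax{\mathcal V} \,=\, \F(\langle x_{-2},x_{-1}\rangle)\,\oplus\,\langle x_0,y_0\rangle\,\oplus\,\F(\langle x_1,x_2\rangle),
$$
and then apply Proposition~\ref{prop:triang-desc} (whose relevant decompositions $\F(\langle x_{-2},x_{-1}\rangle)=\F(\langle x_{-(2+i)}\rangle_{i\geq 0})\oplus\langle x_{-1}\rangle$ and $\F(\langle x_1,x_2\rangle)=\langle x_1\rangle\oplus\F(\langle x_{2+i}\rangle_{i\geq 0})$ depend only on the free Lie algebra structure, not on the center) to refine this to the $\Z$-graded pieces. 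The key point is that since $\pi$ sends $x_i\mapsto L_i$ and $y_0\mapsto C$, the quotient $\Vir$ satisfies $\Vir_i=\langle L_i\rangle$ for all $i\neq 0$ and $\Vir_0=\langle L_0,C\rangle$, so $\pi$ is already an isomorphism on homogeneous components of low degree.

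The heart of the argument is to check, exactly as in the proof of Corollary~\ref{cor:idealstructure}, that $\lmax{\mathcal V}_i=\langle x_i\rangle$ for $-4\leq i\leq 4$ with $i\neq 0$, and $\lmax{\mathcal V}_0=\langle x_0,y_0\rangle$. This one does by choosing a Hall basis for each of the two free Lie algebras $\F(\langle x_1,x_2\rangle)$ and $\F(\langle x_{-2},x_{-1}\rangle)$ and counting: the positive degrees produced by brackets of $x_1$ and $x_2$ that land in degrees $1,2,3,4$ are spanned by $x_1$, $x_2$, $[x_1,x_2]$, $[x_1,[x_1,x_2]]$ respectively, each one-dimensional, and symmetrically on the negative side, while $y_0$ sits alone in degree $0$ together with $x_0$ and is central. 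Consequently $\pi|_{\lmax{\mathcal V}_i}$ is injective for $|i|\leq 4$, so $I(\Vir)=\Ker\pi$ has $\supp(I(\Vir))\subset\Z_{\leq -5}\cup\Z_{\geq 5}$. Because $\F(\langle x_{-2},x_{-1}\rangle)$ and $\F(\langle x_1,x_2\rangle)$ are ideals of $\lmax{\mathcal V}$ in negative, resp. positive, degrees, and $\Ker\pi$ is a graded ideal meeting degree $0$ trivially, it splits as $\Ker\pi=(\Ker\pi\cap\F(\langle x_{-2},x_{-1}\rangle))\oplus(\Ker\pi\cap\F(\langle x_1,x_2\rangle))$; setting $I^-(\Vir)$ and $I^+(\Vir)$ to be these two summands gives the claimed decomposition. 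Finally, the containments $I^-(\Vir)\subset\F(\langle x_{-(2+i)}\rangle_{i\geq 0})$ and $I^+(\Vir)\subset\F(\langle x_{2+i}\rangle_{i\geq 0})$ follow because $x_{-1}$ and $x_1$ generate copies of $\Sigma\cong\sl(2)$ that $\pi$ maps isomorphically, so no element of the kernel can have a component in degree $\pm 1$, and then the Elimination-Theorem decomposition confines the kernel to the stated free subalgebras.

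The only genuinely delicate point — and the one I would present most carefully — is the dimension count showing $\lmax{\mathcal V}_i$ is one-dimensional for $1\leq |i|\leq 4$: one must be sure that introducing $y_0$ does not sneak an extra element into any of those degrees, but this is immediate since $y_0$ has degree $0$ and $[y_0,\mathcal V]=0$, so $y_0$ never appears inside any nontrivial bracket and contributes only to $\lmax{\mathcal V}_0$. Everything else is a transcription of the Witt-algebra proof, and I would simply write ``the proof is entirely analogous to that of Corollary~\ref{cor:idealstructure}, the element $y_0$ being central of degree zero'' after recording the one place where the central extension could conceivably interfere.
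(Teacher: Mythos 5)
Your overall strategy is exactly the one the paper intends: the Virasoro section omits this proof, deferring to the Witt case, and your argument is a faithful transcription of the proof of Corollary~\ref{cor:idealstructure}. Using Proposition~\ref{prop:triang-desc-vir}, the Elimination Theorem decompositions of the two free factors, and a Hall-basis count, you correctly obtain $\lmax{\mathcal V}_i=\langle x_i\rangle$ for $1\leq |i|\leq 4$ and $\lmax{\mathcal V}_0=\langle x_0,y_0\rangle$, on which $\pi$ (sending $x_i\mapsto L_i$, $y_0\mapsto C$) is injective; hence $\supp I(\Vir)\subset \Z_{\leq -5}\cup\Z_{\geq 5}$. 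Your observation that $y_0$ is central of degree zero and therefore cannot enlarge any nonzero-degree component is the right way to handle the only new feature of the Virasoro case.

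One justification, however, is wrong as stated: $\F(\langle x_{-2},x_{-1}\rangle)$ and $\F(\langle x_1,x_2\rangle)$ are graded \emph{subalgebras}, not ideals, of $\lmax{\mathcal V}$ (for instance $[x_{-1},x_1]=-2x_0$ and $[x_{-2},x_2]=4x_0+\tfrac12 y_0$ leave them). Fortunately nothing essential depends on that claim. The vector-space splitting $I(\Vir)=I^-(\Vir)\oplus I^+(\Vir)$ with the stated containments follows already from the facts that $\Ker\pi$ is graded, vanishes in degrees $|i|\leq 4$, and that the homogeneous pieces of $\lmax{\mathcal V}$ of degree $\geq 2$ (resp.\ $\leq -2$) lie in $\F(\langle x_{2+i}\rangle_{i\geq 0})$ (resp.\ $\F(\langle x_{-(2+i)}\rangle_{i\geq 0})$) by the Elimination decomposition. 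What your write-up does leave unproved is the part of the statement asserting that $I^\pm(\Vir)$ are themselves \emph{ideals}; the quick argument is that $\lmax{\mathcal V}$ is generated by $\mathcal V$, so it suffices to check $[\mathcal V_j,I^+(\Vir)]\subset I^+(\Vir)$ for $-2\leq j\leq 2$, and for homogeneous $u$ of degree $d\geq 5$ the bracket $[a,u]$ with $a\in\mathcal V_j$ lies in $I(\Vir)_{d+j}$ with $d+j\geq 3$, hence is either zero (when $d+j\leq 4$) or again in $I^+(\Vir)$; symmetrically for $I^-(\Vir)$. With this small repair your proof is complete and coincides with the paper's intended argument.
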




\begin{defn}\label{defn:standard-relations-vir}
The ideal $R(\Vir)$ of standard relations for the Virasoro algebra is the ideal of $\lmax{\mathcal V}$ generated by \begin{align*}
r_{2+i,2+j}&:=[x_{2+i},x_{2+j}]-(i-j)x_{2+i+j+2}\in \F(\langle  x_{2+i} \rangle_{i\geq 0}),\\
r_{-2+i,-2+j}&:=[x_{-2+i},x_{-2+j}]-(i-j)x_{-2+i+j-2}\in \F(\langle  x_{-(2+i)} \rangle_{i\geq 0}),
\end{align*}
for every $i,j\leq 0$, $i<j$.
\end{defn}

\begin{prop}\label{prop:standard-relations-vir} The  Virasoro algebra satisfies 
$$R(\Vir)=I(\Vir).
$$
\end{prop}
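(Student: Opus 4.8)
The plan is to mimic the proof of Proposition~\ref{prop:standard-relations} for the Witt algebra, which established $R(\Witt) = I(\Witt)$. First I would observe that since $\pi(x_i) = L_i$ for all $i$ and $\pi(y_0) = C$, the defining relations generating $R(\Vir)$ are mapped to $0$ by $\pi$ (each $r_{2+i,2+j}$ lies in $\F(\langle x_{2+k}\rangle_{k\ge 0})$, a subalgebra on which $\pi$ realizes the Witt bracket $[L_{2+i},L_{2+j}] = (i-j)L_{2+i+j+2}$ with no central correction, since the central term $\frac1{12}(k^3-k)\delta_{k,0}C$ only appears in degree $0$ and these relations live in degree $\ge 5$). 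Hence $R(\Vir) \subseteq I(\Vir)$, and by the Snake Lemma applied to the evident commutative ladder
	$$\xymatrix{0 \ar[r] & R(\Vir)\ar[d]\ar[r] & \lmax{\mathcal V}\ar[d]^{\operatorname{Id}} \ar[r]^(.45)p & \lmax{\mathcal V}/R(\Vir) \ar[d]^\Psi \ar[r] & 0\\
0 \ar[r] & I(\Vir)\ar[r] & \lmax{\mathcal V} \ar[r]^\pi & \Vir    \ar[r] & 0}$$
one gets $I(\Vir)/R(\Vir) \simeq \Ker\Psi$, so it suffices to prove that $\Psi\colon \lmax{\mathcal V}/R(\Vir) \to \Vir$ is injective. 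Writing $\bar x_i := p(x_i)$, $\bar y_0 := p(y_0)$, we have $\Psi(\bar x_i) = L_i$ and $\Psi(\bar y_0) = C$.

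Next I would compute $\lmax{\mathcal V}/R(\Vir)$ explicitly using the triangular decomposition of Proposition~\ref{prop:triang-desc-vir},
	$$\lmax{\mathcal V}/R(\Vir) = \F(\langle x_{-2},x_{-1}\rangle)/R^-(\Vir) \;\oplus\; \langle \bar x_0, \bar y_0\rangle \;\oplus\; \F(\langle x_1,x_2\rangle)/R^+(\Vir),$$
where $R^\pm(\Vir)$ denote the positive and negative parts of the ideal (note that the defining relations of $R(\Vir)$ are exactly those of $R(\Witt)$ together with the new element $y_0$ sitting in degree $0$, untouched by the relations). The argument from Proposition~\ref{prop:standard-relations} applies verbatim to the $\pm$ pieces: the standard relations force $[\bar x_{2+i},\bar x_{2+j}] = (i-j)\bar x_{2+i+j+2}$, so every Lie monomial in $\F(\langle x_1,x_2\rangle)$ reduces modulo $R^+(\Vir)$ to some $\bar x_{2+k}$, giving $\F(\langle x_1,x_2\rangle)/R^+(\Vir) = \bigoplus_{k\ge 1}\langle \bar x_k\rangle$, and symmetrically for the negative part. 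Therefore
	$$\lmax{\mathcal V}/R(\Vir) = \bigoplus_{i\in\Z}\langle \bar x_i\rangle \;\oplus\; \langle \bar y_0\rangle,$$
which has exactly the same graded dimensions as $\Vir$. Since $\Psi$ sends $\bar x_i \mapsto L_i$ and $\bar y_0 \mapsto C$, a basis to a basis, it is injective; hence $\Ker\Psi = 0$ and $R(\Vir) = I(\Vir)$.

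I expect the main subtlety to be the bookkeeping around the central element $y_0$: one must check that adjoining $y_0$ does not create any new relations in $\lmax{\mathcal V}$ beyond those of $\lmax{\Gamma}$ plus the centrality of $y_0$, and in particular that $y_0$ survives as a nonzero class in $\lmax{\mathcal V}/R(\Vir)$ independent of $\bar x_0$. This follows because $y_0$ is central of degree $0$ and the relations $r_{2+i,2+j}$, $r_{-2+i,-2+j}$ all have degree with absolute value $\ge 5$ (Corollary~\ref{cor:idealstructure-vir}), so they never involve $y_0$; combined with the fact (from Proposition~\ref{prop:triang-desc-vir}) that $\lmax{\mathcal V}_0 = \langle x_0, y_0\rangle$ is two-dimensional, the class $\bar y_0$ is nonzero and linearly independent from $\bar x_0$. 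Once this is in hand the graded-dimension count closes the argument. An alternative, even quicker route is available: since $\Vir$ has only the ideals $0$, $\langle C\rangle$, $\Vir$, the surjection $\lmax{\mathcal V}/R(\Vir) \twoheadrightarrow \Vir$ has kernel an ideal not meeting the degree range where $\lmax{\mathcal V}/R(\Vir)$ is known to agree with $\Vir$, forcing it to be $0$ — but spelling out the graded dimensions as above is cleaner and self-contained.
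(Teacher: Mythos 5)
Your proposal is correct and is essentially the paper's own argument: the paper gives no separate proof of Proposition~\ref{prop:standard-relations-vir}, stating only that the Virasoro proofs are analogous to the Witt ones, and you carry out exactly that analogy, adapting the proof of Proposition~\ref{prop:standard-relations} (the snake-lemma ladder reducing the claim to injectivity of $\Psi$, plus the reduction of Lie monomials modulo the standard relations using the triangular decomposition of Proposition~\ref{prop:triang-desc-vir}). The extra bookkeeping you supply for the central generator $y_0$ (it is central of degree $0$, untouched by the relations, and maps to $C$) is precisely the only new ingredient needed beyond the Witt case.
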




Notice that we have an inclusion $\Sigma=\langle x_{-1},x_0,x_1\rangle\hookrightarrow \mathcal V$ of the Lie algebra $\Sigma$ isomorphic to $\sl(2)$. Therefore if we have a $\mathcal V$-module structure on $V$ defined by a representation $\phi\colon \mathcal V \to\End(V)$, then its restriction to $\Sigma\subset \mathcal V$ defines an $\sl(2)$-module structure on $V$.

\begin{defn} Given an $\sl(2)$-module $V$ defined by a representation $\sigma\colon \sl(2)\to\End(V)$ we say that a $\mathcal V$-module structure defined by a representation $\phi\colon \mathcal V \to\End(V)$ is compatible with the given $\sl(2)$-module structure if its restriction to $\Sigma\simeq \sl(2)$ is $\sigma$, that is $\phi_{|\Sigma}=\sigma$.
\end{defn}

\begin{prop}\label{prop:nontrivial-vir} If $\phi\colon \mathcal V \to\End(V)$ is a non trivial representation of the partial Virasoro algebra that is non trivial on its center, then either $\phi$ is injective or the induced $\sl(2)$-representation  $\sigma\colon \Sigma \to \End(V)$ is trivial and $\Ker\phi=\Sigma$. Therefore if $\phi$ is non trivial on the center and the induced $\sl(2)$-module structure on $V$ is non trivial then $\phi$ is necessarily injective.

\end{prop}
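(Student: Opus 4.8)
The plan is to mimic closely the argument used for $\Gamma$ in Proposition~\ref{prop:nontrivial}, paying attention only to the new feature: the presence of the central element $y_0$. First I would observe that since $\phi$ is assumed non trivial on the center, $\phi(y_0)\neq 0$; this is the only place the hypothesis "non trivial on its center" is used, and it is what replaces the global non triviality hypothesis of Proposition~\ref{prop:nontrivial}. As there, the key step is to show that $\phi(x_2)\neq 0$ and $\phi(x_{-2})\neq 0$. Suppose $\phi(x_2)=0$. Then, using the bracket relation in $\mathcal V$,
$$
0=[\phi(x_2),\phi(x_{-2})]=\phi([x_2,x_{-2}])=4\phi(x_0)-\tfrac12\phi(y_0),
$$
so $\phi(x_0)=\tfrac18\phi(y_0)$. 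Bracketing with any $\phi(x_i)$, $-2\leq i\leq 2$, and using $[\mathcal V,y_0]=0$ gives $0=[\phi(x_0),\phi(x_i)]=-i\,\phi(x_i)$, hence $\phi(x_i)=0$ for $i\neq 0$, and then also $[\phi(x_1),\phi(x_{-1})]=\phi(x_0)-\tfrac1{12}\cdot 0\cdot\phi(y_0)=\phi(x_0)=0$, forcing $\phi(y_0)=8\phi(x_0)=0$, contradicting non triviality on the center. The same computation with $\phi(x_{-2})=0$ works symmetrically. So both $\phi(x_2)$ and $\phi(x_{-2})$ are nonzero.

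Next I would run the "generic element" elimination exactly as in Proposition~\ref{prop:nontrivial}: suppose $T=\sum_{i=-2}^2\lambda_i\phi(x_i)+\mu\,\phi(y_0)=0$ for some scalars $\lambda_i,\mu$. Applying $(\ad_{-\phi(x_0)})^3-\ad_{-\phi(x_0)}$ kills the degree $0$ part ($\phi(x_0)$ and $\phi(y_0)$) and the degrees $\pm1$ part, leaving $\lambda_{-2}\phi(x_{-2})+\lambda_2\phi(x_2)=0$; then $\tfrac12\ad_{\phi(x_0)}$ of this yields $\lambda_{-2}\phi(x_{-2})-\lambda_2\phi(x_2)=0$, so $\lambda_{-2}\phi(x_{-2})=\lambda_2\phi(x_2)=0$, whence $\lambda_{-2}=\lambda_2=0$ by the previous paragraph. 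Thus $\Ker\phi\subseteq\langle x_{-1},x_0,x_1,y_0\rangle$. Now $y_0$ is central in $\mathcal V$ but not in $\langle x_{-1},x_0,x_1\rangle\oplus\langle y_0\rangle$ — wait, it is central there too; the point is rather that $\Ker\phi$ is an ideal of $\mathcal V$, and since $\phi(y_0)\neq 0$ we have $y_0\notin\Ker\phi$, so $\Ker\phi\subseteq\Sigma=\langle x_{-1},x_0,x_1\rangle$. Since $\Sigma\simeq\sl(2)$ is simple, either $\Ker\phi=0$ (so $\phi$ is injective) or $\Ker\phi=\Sigma$, in which case $\sigma=\phi|_\Sigma$ is the trivial representation. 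The last sentence of the statement is then immediate: if $\sigma$ is non trivial we cannot be in the second case, so $\phi$ is injective.

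The main (and only genuine) obstacle is bookkeeping the $y_0$-terms in the bracket relations, in particular making sure that in the first step the contradiction really reaches $\phi(y_0)$ rather than merely $\phi(x_0)$; the computation $[x_1,x_{-1}]=x_0$ (no central term, since $1^3-1=0$) combined with $[x_2,x_{-2}]=4x_0-\tfrac12 y_0$ is what closes the loop. Everything else is a verbatim transcription of the proof of Proposition~\ref{prop:nontrivial}, together with the observation that $\Ker\phi$, being an ideal containing no nonzero multiple of $y_0$, must land inside $\Sigma$.
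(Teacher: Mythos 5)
Your overall route is the intended one: the paper gives no proof of this proposition (it refers to the arguments for the Witt case), and your first part correctly adapts Proposition~\ref{prop:nontrivial}, apart from two harmless coefficient slips — $[x_2,x_{-2}]=4x_0+\frac12 y_0$ (not $-\frac12 y_0$) and $[x_1,x_{-1}]=2x_0$ — which do not affect the conclusion that $\phi(x_{\pm 2})\neq 0$ once $\phi(y_0)\neq 0$. The genuine problem is the last step. After eliminating the $x_{\pm2}$-components you only know $\Ker\phi\subseteq\Sigma\oplus\langle y_0\rangle$, and the inference ``$y_0\notin\Ker\phi$, hence $\Ker\phi\subseteq\Sigma$'' is not valid: a subspace (even one closed under all admissible brackets) of $\Sigma\oplus\langle y_0\rangle$ that avoids $y_0$ can still contain elements $v+\mu y_0$ with $v\in\Sigma$ and $\mu\neq 0$, for instance $\langle x_0+y_0\rangle$. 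Excluding kernel elements with nonzero central component is precisely the one point where the Virasoro case requires more than a verbatim transcription of the Witt argument, and your appeal to ``$\Ker\phi$ is an ideal'' does not by itself carry that step.

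The gap is short to close, in the style of your own computations. If $w=\lambda_{-1}x_{-1}+\lambda_0 x_0+\lambda_1 x_1+\mu y_0\in\Ker\phi$, applying $\ad_{\phi(x_0)}$ twice to $\phi(w)=0$ gives $\lambda_{\pm1}\phi(x_{\pm1})=0$ and therefore $\lambda_0\phi(x_0)+\mu\phi(y_0)=0$; now apply $\ad_{\phi(x_2)}$ to this relation: since $[\phi(x_2),\phi(y_0)]=0$ and $[\phi(x_2),\phi(x_0)]=\phi([x_2,x_0])=2\phi(x_2)$, you obtain $2\lambda_0\phi(x_2)=0$, so $\lambda_0=0$ by your first paragraph, and then $\mu\phi(y_0)=0$ forces $\mu=0$. (Alternatively: if $v+\mu y_0\in\Ker\phi$ with $v\in\Sigma$ nonzero, bracketing with $\Sigma$ shows that the ideal $\Ker\phi\cap\Sigma$ of $\Sigma\simeq\sl(2)$ is nonzero, hence equals $\Sigma$, whence $\mu y_0\in\Ker\phi$ and $\mu=0$.) With this, $\Ker\phi\subseteq\Sigma$ and the simplicity argument finishes exactly as you say. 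You could even observe that, with $\phi(x_2)\neq 0$ in hand, $x_0\in\Ker\phi$ would give $2\phi(x_2)=[\phi(x_2),\phi(x_0)]=0$, so under the hypothesis of nontriviality on the center the second alternative of the statement never actually occurs and $\phi$ is injective outright — though the disjunctive form of the proposition does not require this.
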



\begin{prop} If $\phi\colon \mathcal V \to\End(V)$ is a representation of the partial Virasoro algebra, then $\End_\bullet(V)$ is a $\Z$-graded Lie algebra which is a subalgebra of $\End(V)$. Moreover, if the $\sl(2)$-module structure induced on $V$ is non trivial then there is an injection $\phi\colon \mathcal V \hookrightarrow \Pa\!\End_\bullet(V)$ and $\supp(\End_\bullet(V))=\supp(\Vir)=\Z$.
\end{prop}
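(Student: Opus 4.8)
The statement is essentially the partial-Virasoro counterpart of the analogous proposition proved for the partial Witt algebras earlier in \S\ref{subsec:WittandPartial}, so the plan is to mimic that argument, keeping track of the extra central element $y_0$. First I would observe that $\End_\bullet(V)$ is a $\Z$-graded Lie subalgebra of $\End(V)$: this was already noted for any $\sl(2)$-module $V$, since $\End_n(V)$ is the $n$-eigenspace of $\frac12\ad(\sigma(h))$ and the Jacobi identity gives $[\End_i(V),\End_j(V)]\subset\End_{i+j}(V)$; this part does not even use $\phi$. Next I would record that $\phi(x_i)\in\End_i(V)$ for $-1\leq i\leq 2$ (because $\phi$ restricts to $\sigma$ on $\Sigma$ and $[\frac12\sigma(h),\cdot]$ acts on $\phi(x_i)$ with eigenvalue $i$ for these $i$, using $\phi([x_0,x_i])=[\phi(x_0),\phi(x_i)]$ which is a valid partial bracket), and that $\phi(y_0)\in\End_0(V)$ since $y_0$ has degree $0$.

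The core is injectivity together with the support statement. Assuming the induced $\sl(2)$-module structure on $V$ is non trivial, I first want to upgrade: since $\phi$ is non trivial on the center, $\phi(y_0)\neq 0$, and combined with non-triviality of $\sigma$ one invokes Proposition~\ref{prop:nontrivial-vir} to conclude $\phi$ is injective. Now for the support: I would show $\End_{2+i}(V)\neq 0$ for all $i\geq 0$ and, symmetrically, $\End_{-(2+i)}(V)\neq 0$, so that $\supp(\End_\bullet(V))\supset\Z$, hence equals $\Z=\supp(\Vir)$. The argument is exactly the one used in the partial-Witt case: if $\End_{2+i}(V)=0$ then in particular $(\ad_{\phi(x_1)})^i\phi(x_2)=0$; but $(\ad_{\phi(x_{-1})})^i(\ad_{\phi(x_1)})^i\phi(x_2)=m\cdot\phi(x_2)$ for a nonzero integer $m$ (this is an $\sl(2)$-representation-theoretic identity on the weight vector $\phi(x_2)$, valid since all the relevant brackets $[x_{\pm1},x_{\ge -2}]$ stay inside the partial range), forcing $\phi(x_2)=0$, contradicting injectivity via Proposition~\ref{prop:nontrivial-vir}. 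The negative half is symmetric using $\phi(x_{-2})$, or one may instead invoke the involution $\theta$. Finally, injectivity of $\phi$ together with $\phi(\mathcal V)\subset\End_\bullet(V)$ and the fact that $\mathcal V$ consists of elements of degrees $-2,\dots,2$ gives the claimed injection $\phi\colon\mathcal V\hookrightarrow\Par_{-2}^2\End_\bullet(V)=\Pa\,\End_\bullet(V)$.

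I expect no serious obstacle here; the main point requiring a little care is the bookkeeping that every Lie bracket invoked in these computations lies within the admissible degree range $(-2,2)$ so that the partial-Lie-algebra relations for $\mathcal V$ (or the morphism property of $\phi$) actually apply — in particular that the identity $(\ad_{\phi(x_{-1})})^i(\ad_{\phi(x_1)})^i\phi(x_2)=m\phi(x_2)$ is being read inside $\End_\bullet(V)$, where $\ad$ is the genuine (total) Lie bracket, rather than inside $\mathcal V$ where it would be meaningless for $i\ge1$. Since $\End_\bullet(V)$ is a bona fide $\Z$-graded Lie algebra this is legitimate. Thus the proof is a direct transcription of the partial-Witt proposition and can be dispatched with a reference to Proposition~\ref{prop:nontrivial-vir} and the already-established structure of $\End_\bullet(V)$.
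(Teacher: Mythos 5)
Your proposal is correct and is exactly the paper's intended argument: this proposition is stated without proof in the Virasoro section precisely because its proof is the word-for-word transcription of the partial-Witt case that you give (graded Lie algebra structure of $\End_\bullet(V)$, $\phi(x_i)\in\End_i(V)$ and $\phi(y_0)\in\End_0(V)$, the identity $(\ad_{\phi(x_{-1})})^i(\ad_{\phi(x_1)})^i\phi(x_2)=m\cdot\phi(x_2)$ with $m\neq 0$ to rule out $\End_{\pm(2+i)}(V)=0$, and Proposition~\ref{prop:nontrivial-vir} for injectivity). One remark: your step ``since $\phi$ is non trivial on the center'' invokes a hypothesis that is not in the statement and is genuinely needed for the injection claim (extending a nontrivial $\Gamma$-representation by setting $\phi(y_0)=0$ yields a noninjective $\mathcal V$-representation with nontrivial induced $\sigma$), but this imprecision lies in the statement itself --- Proposition~\ref{prop:nontrivial-vir} makes the central nontriviality explicit --- while the support claim, which only needs $\phi(x_{\pm2})\neq 0$, holds without it, exactly as your argument shows.
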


The universal property of $\lmax{\mathcal V}$ gives the following key result.

\begin{thm} Every representation $\phi\colon \mathcal V \to\End(V)$ of  the partial Virasoro algebra extends in a unique way to give a representation $\Phi\colon \lmax{\mathcal V}\to \End_\bullet(V)$.
\end{thm}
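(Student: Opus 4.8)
The plan is to invoke the universal property of $\lmax{\mathcal V}$ directly, exactly as in the Witt-algebra case. Given a representation $\phi\colon \mathcal V\to\End(V)$ of the partial Virasoro algebra, I would first observe that $\phi$ automatically takes values in the graded subalgebra $\End_\bullet(V)\subset\End(V)$: indeed $\phi(x_i)\in\End_i(V)$ for $-2\le i\le 2$ since $[\frac12\sigma(h),\phi(x_i)] = -[\phi(x_0),\phi(x_i)] = i\,\phi(x_i)$ using the bracket relations of $\mathcal V$ restricted to $\Sigma$, and $\phi(y_0)$ lies in $\End_0(V)$ because $y_0$ is a degree-zero central element and $[\phi(x_0),\phi(y_0)]=0$. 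Hence $\phi$ may be regarded as a morphism of partial Lie algebras $\mathcal V\to \Par_{-2}^2\End_\bullet(V)$, where $\End_\bullet(V)$ is the $\Z$-graded Lie algebra introduced before. (If the $\sl(2)$-structure is nontrivial and $\phi$ is nontrivial on the center, Proposition~\ref{prop:nontrivial-vir} even tells us this is an injection, but injectivity is not needed for existence and uniqueness of the extension.)

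Next I would apply the universal property of $\lmax{\mathcal V}$, namely the adjunction
$$
\Hom_{\text{\bf LieAlg}_\bullet}(\lmax{\mathcal V},\g)\;\simeq\;\Hom_{\text{\bf LieAlg}\,_{-2}^{2}}(\mathcal V,\Par_{-2}^{2}\g)
$$
valid for any graded Lie algebra $\g$. Taking $\g=\End_\bullet(V)$ and feeding in the partial-Lie-algebra morphism $\phi\colon \mathcal V\to\Par_{-2}^2\End_\bullet(V)$ just constructed, the adjunction produces a unique morphism of graded Lie algebras $\Phi\colon \lmax{\mathcal V}\to\End_\bullet(V)$ whose restriction (composition with the inclusion $\mathcal V=\Par_{-2}^2\lmax{\mathcal V}\hookrightarrow\lmax{\mathcal V}$) recovers $\phi$. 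Uniqueness is part of the universal property; alternatively, since $\lmax{\mathcal V}$ is generated by $\mathcal V$, any two extensions agreeing on $\mathcal V$ agree everywhere. This is the entire content of the theorem, so the proof is genuinely short: it is a transcription of the argument already given for $\lmax{\Lambda}$ with $\Lambda$ a partial Witt algebra, the only new ingredient being that the central generator $y_0$ is handled automatically since it has degree zero and is central.

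The only point that requires a moment's care — and hence the "main obstacle", though it is mild — is checking that $\phi$ really is a morphism of partial Lie algebras into $\Par_{-2}^2\End_\bullet(V)$ and not merely into $\Par_{-2}^2\End(V)$; that is, that the image lands in the graded piece. This is where one uses that $\End_\bullet(V)$ has been identified (via the Jacobi identity, as recalled in \S\ref{subsec:WittandPartial}) with an honest graded Lie subalgebra of $\End(V)$, together with the computation $\phi(x_i)\in\End_i(V)$ above and $\phi(y_0)\in\End_0(V)$. Once that is in place, everything else is formal adjunction nonsense. I would therefore structure the written proof as: (i) show $\phi$ factors through $\Par_{-2}^2\End_\bullet(V)$; (ii) apply the universal property of $\lmax{\mathcal V}$ to get $\Phi$; (iii) note uniqueness follows since $\mathcal V$ generates $\lmax{\mathcal V}$ — essentially two or three sentences, referring back to the proof of the corresponding theorem for the partial Witt algebras.
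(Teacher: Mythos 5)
Your proposal is correct and follows exactly the route the paper takes (and explicitly says it takes, by analogy with the Witt case): observe that $\phi$ lands in $\End_\bullet(V)$ — with the graded pieces determined by $[\tfrac12\sigma(h),\phi(x_i)]=i\,\phi(x_i)$ and $\phi(y_0)\in\End_0(V)$ since $y_0$ is central of degree zero — and then invoke the universal property of $\lmax{\mathcal V}$, with uniqueness coming from the fact that $\mathcal V$ generates $\lmax{\mathcal V}$. No gaps; this is essentially the paper's own argument transcribed to the partial Virasoro algebra.
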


\begin{cor}\label{cor:extension-vir} A representation $\phi\colon \mathcal V \to\End(V)$ of the partial Virasoro algebra extends to a representation $\rho\colon \Vir\to \End_\bullet(V)$ of the Virasoro algebra if and only if $\Phi\colon \lmax{\Lambda}\to \End_\bullet(V)$ vanishes on the standard relations of $\Vir$. 
\end{cor}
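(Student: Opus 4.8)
The plan is to reduce this statement, Corollary~\ref{cor:extension-vir}, to the general machinery already established for partial Lie algebras in \S\ref{subsec:WittandPartial}, exactly as Corollary~\ref{cor:extension} was deduced for the Witt algebras. First I would recall the short exact sequence of graded Lie algebras
	$$
	0 \longrightarrow I(\Vir) \longrightarrow \lmax{\mathcal V} \overset{\pi}\longrightarrow \Vir \longrightarrow 0,
	$$
together with the identification $I(\Vir)=R(\Vir)$ provided by Proposition~\ref{prop:standard-relations-vir}, and the fact (from the analogue of the key theorem for $\mathcal V$ stated just above) that any partial representation $\phi\colon \mathcal V\to\End(V)$ extends uniquely to $\Phi\colon \lmax{\mathcal V}\to\End_\bullet(V)$. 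The point is then purely formal: a partial representation $\phi$ extends to a genuine $\Vir$-representation $\rho$ if and only if the induced $\Phi$ on $\lmax{\mathcal V}$ descends through the quotient $\pi$, which by exactness happens precisely when $\Phi$ kills $I(\Vir)=R(\Vir)$.

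The key steps, in order, would be: (1) Given $\phi\colon\mathcal V\to\End(V)$, invoke the universal property of $\lmax{\mathcal V}$ to obtain the unique $\Phi\colon\lmax{\mathcal V}\to\End_\bullet(V)$ with $\Phi\vert_{\mathcal V}=\phi$. (2) Suppose $\Phi$ vanishes on $R(\Vir)=I(\Vir)=\Ker\pi$; then $\Phi$ factors as $\Phi=\rho\circ\pi$ for a unique graded Lie algebra morphism $\rho\colon\Vir\to\End_\bullet(V)$, and since $\pi(x_i)=L_i$, $\pi(y_0)=C$, the restriction of $\rho$ to the partial part recovers $\phi$ under the identification $\Par_{-2}^2\Vir\simeq\mathcal V$; so $\rho$ is the sought extension. (3) Conversely, if $\phi$ extends to $\rho\colon\Vir\to\End_\bullet(V)$, then $\rho\circ\pi$ is a graded Lie algebra morphism $\lmax{\mathcal V}\to\End_\bullet(V)$ restricting to $\phi$ on $\mathcal V$, so by uniqueness in the universal property $\Phi=\rho\circ\pi$, whence $\Phi$ annihilates $\Ker\pi=R(\Vir)$.

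There is essentially no hard step here; the statement is a direct transcription of Corollary~\ref{cor:extension} with $\Gamma$ replaced by $\mathcal V$ and $\Witt$ by $\Vir$, relying on the fact that all the supporting structural results ($\Vir\simeq\lmin{\mathcal V}$, the triangular decomposition of $\lmax{\mathcal V}$, $R(\Vir)=I(\Vir)$, and the extension-to-$\lmax{\mathcal V}$ theorem) have already been set up for the Virasoro case. The only mild subtlety worth a sentence is bookkeeping the central element: one must check that $\Phi$ is automatically compatible with $\pi(y_0)=C$, i.e.\ that the central direction imposes no extra relation beyond those already generated in $R(\Vir)$ — but this is immediate since $y_0$ lies in degree $0$ and $\lmax{\mathcal V}_0=\langle x_0,y_0\rangle$ by Proposition~\ref{prop:triang-desc-vir}, so $\pi$ is already an isomorphism in degrees $-4\le i\le 4$ and in particular on the center. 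Hence the proof is the one-line invocation ``It follows from the exact sequence $0\to R(\Vir)\to\lmax{\mathcal V}\to\Vir\to 0$ and the universal property of $\lmax{\mathcal V}$,'' just as in the proof of Corollary~\ref{cor:extension}.
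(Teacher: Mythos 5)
Your argument is correct and is essentially the paper's own: the paper omits an explicit proof for the Virasoro case precisely because it is the same exact-sequence-plus-universal-property argument used for Corollary~\ref{cor:extension}, namely that $\Phi$ factors through $\pi\colon\lmax{\mathcal V}\twoheadrightarrow\Vir$ if and only if it kills $\Ker\pi=I(\Vir)=R(\Vir)$ (Proposition~\ref{prop:standard-relations-vir}). Your remark on the central element is a harmless extra check; nothing further is needed.
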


\begin{rem} If a representation $\phi\colon \mathcal V \to\End(V)$ of the partial Virasoro algebra extends to a representation $\rho\colon \Vir\to \End_\bullet(V)$ of the Virasoro algebra then we have a commutative diagram $$\xymatrix{ \lmax{\mathcal V}\ar[dr]_\Phi \ar[rr]^\pi& & \Vir\ar[dl]^\rho\\ &\End_\bullet(V) &}$$ and therefore, since $\pi(x_i)=L_i$, $\pi(y_0)=C$, we have $$\rho(L_i)=\Phi(x_i),\quad\rho(C)=\Phi(y_0).$$
\end{rem}

\begin{thm}\label{thm:reduced-standard-relations-vir} The ideal $R(\Vir)$ of standard relations for the Virasoro algebra is generated by \begin{align*}
r_{2,2k+1} &=[x_2,x_{2k+1}]+(2k-1)x_{2k+3}, \\
r_{-(2k+1),-2} &=[x_{-(2k+1)},x_{-2}]+(2k-1)x_{-(2k+3)},\end{align*} for $k\geq 1$.
\end{thm}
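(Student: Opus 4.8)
The plan is to deduce the statement from Theorem~\ref{thm:reduced-standard-relations}, using that the central extension $\mathcal V\to\Gamma$ changes nothing outside degree zero. To this end I would first apply Proposition~\ref{prop:standard-relations-vir} to write $R(\Vir)=I(\Vir)$ and then Corollary~\ref{cor:idealstructure-vir} to split it as $R(\Vir)=I^-(\Vir)\oplus I^+(\Vir)$, with $I^+(\Vir)\subset\F(\langle x_{2+i}\rangle_{i\geq0})$ supported in $\Z_{\geq5}$ and $I^-(\Vir)\subset\F(\langle x_{-(2+i)}\rangle_{i\geq0})$ supported in $\Z_{\leq-5}$. By Proposition~\ref{prop:triang-desc-vir} the positive part $\F(\langle x_1,x_2\rangle)$ of $\lmax{\mathcal V}$ is, together with its distinguished generators $x_1,x_2$, its elements $x_{2+i}=\frac1{i!}\ad(-x_1)^i(x_2)$, and the relations $r_{2+i,2+j}$, exactly the positive part of $\lmax{\Gamma_>}$ appearing in Proposition~\ref{prop:triang-desc} — the $2$-cocycle defining $\mathcal V$ being concentrated in degree $0$ — and symmetrically for the negative parts and $\lmax{\Gamma_<}$.

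Next I would transfer the reduction. In the proof of Theorem~\ref{thm:reduced-standard-relations}(1) the passage from a general $r_{2+i,2+j}$ to the family $\{r_{2,2k+1}\}_{k\geq1}$ is carried out entirely through the spaces $R_n$ and the operators $e=\ad_{-x_1}$, $f=\ad_{x_{-1}}$, and all of it lives inside $R(\Witt_>)$, which has support $\Z_{\geq5}$; hence every partial bracket appearing there is homogeneous of strictly positive degree, a range in which the bracket of $\lmax{\mathcal V}$ agrees with that of $\lmax{\Gamma_>}$ (the seed brackets $[x_{-1},x_1]=-2x_0$, $[x_{-1},x_2]=-3x_1$, $[x_1,x_2]$ carry no central term). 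Thus the same computation places each $r_{2+i,2+j}$ in the ideal of $\lmax{\mathcal V}$ generated by $\{r_{2,2k+1}\}_{k\geq1}$, and applying the Chevalley involution $\theta$ (extended to $\mathcal V$, hence to $\lmax{\mathcal V}$) does the same for each $r_{-2+i,-2+j}$ and the family $\{r_{-(2k+1),-2}\}_{k\geq1}$. Together with the splitting of the first step this shows $R(\Vir)$ is contained in the ideal of $\lmax{\mathcal V}$ generated by $\{r_{2,2k+1},r_{-(2k+1),-2}\}_{k\geq1}$; the reverse containment is clear, since for $k\geq1$ each of these is one of the standard relations defining $R(\Vir)$.

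The main obstacle, modest as it is, is precisely the bookkeeping in the previous paragraph: one has to be certain that none of the partial brackets occurring in the elimination of generators ever falls into degree $0$, where the central direction $\langle y_0\rangle$ could intervene, so that the combinatorial machinery of Theorem~\ref{thm:reduced-standard-relations} carries over unchanged. This is guaranteed by the support estimates $\supp(I^\pm(\Vir))\subset\Z_{\geq5}\cup\Z_{\leq-5}$, after which the argument is a word-for-word copy of the $\Witt$ case.
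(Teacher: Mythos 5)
Your proposal is correct and follows essentially the route the paper intends: the paper omits this proof precisely because it is the argument of Theorem~\ref{thm:reduced-standard-relations} transplanted to $\lmax{\mathcal V}$, whose positive and negative free parts coincide with those of $\lmax{\Gamma_>}$ and $\lmax{\Gamma_<}$ by Proposition~\ref{prop:triang-desc-vir}. Your extra bookkeeping — checking via Proposition~\ref{prop:standard-relations-vir}, Corollary~\ref{cor:idealstructure-vir} and the degree-zero concentration of the cocycle that the central direction $\langle y_0\rangle$ never enters the elimination of generators, and transferring the negative family by the (extended) Chevalley involution — is exactly the verification the paper leaves implicit.
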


\begin{cor}\label{cor:levelN-reduced-standard-relations-vir} For any integer $N\geq 1$ the ideal $R(\Witt_>)$ is generated by \begin{align*}
r_{2,2k+1} &=[x_2,x_{2k+1}]+(2k-1)x_{2k+3}, \\
r_{-(2k+1),-2} &=[x_{-(2k+1)},x_{-2}]+(2k-1)x_{-(2k+3)},\end{align*} for $k\geq N$.
\end{cor}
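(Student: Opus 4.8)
The plan is to transcribe the proof of Corollary~\ref{cor:levelN-reduced-standard-relations} to the Virasoro setting, with Theorem~\ref{thm:reduced-standard-relations-vir} playing the role of Theorem~\ref{thm:reduced-standard-relations}. (The displayed statement should read $R(\Vir)$ rather than $R(\Witt_>)$.) By Theorem~\ref{thm:reduced-standard-relations-vir} it suffices to prove that, for a fixed $N\geq 1$, the ideal of $\lmax{\mathcal V}$ generated by $\{r_{2,2k+1},r_{-(2k+1),-2}\}_{k\geq N}$ already contains every $r_{2,2k+1}$ and every $r_{-(2k+1),-2}$ with $k\geq 1$.

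First I would handle the positive relations. By Proposition~\ref{prop:triang-desc-vir}, the positive summand of $\lmax{\mathcal V}$ is the same free Lie algebra $\F(\langle x_1,x_2\rangle)=\langle x_1\rangle\oplus\F(\langle x_{2+i}\rangle_{i\geq 0})$ that appears in $\lmax{\Gamma_>}$, and by Proposition~\ref{prop:standard-relations-vir} together with Corollary~\ref{cor:idealstructure-vir} the positive part $I^+(\Vir)$ of $R(\Vir)$ sits inside $\F(\langle x_{2+i}\rangle_{i\geq 0})$ and is supported in degrees $\geq 5$. Introducing $f=\ad_{x_{-1}}$, which restricts to maps $R_n\to R_{n-1}$ on the degree-$n$ spaces of standard relations, the identity $f\cdot r_0^{(n)}=-(n-1)\,r_0^{(n-1)}$ of the Witt case carries over unchanged; iterating it shows that $f^{2i}\cdot r_{2,2N+1}$ is a nonzero multiple of $r_{2,2(N-i)-1}$, so the ideal generated by $\{r_{2,2k+1}\}_{k\geq N}$ contains $\{r_{2,2k+1}\}_{k\geq 1}$.

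Next I would obtain the corresponding statement for the negative relations by applying the involution $\theta\colon\mathcal V\to\mathcal V$ (equivalently, the Chevalley involution $\Theta$ of $\Vir$), which interchanges the two families of generators. Combining the two halves with the decomposition $R(\Vir)=I^-(\Vir)\oplus I^+(\Vir)$ of Corollary~\ref{cor:idealstructure-vir} and the identification $R(\Vir)=I(\Vir)$ of Proposition~\ref{prop:standard-relations-vir}, I conclude that the ideal generated by $\{r_{2,2k+1},r_{-(2k+1),-2}\}_{k\geq N}$ is all of $R(\Vir)$.

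There is no genuine obstacle here: the only feature absent from the Witt computation is the central element $y_0$, and the one point I would make sure of is that it never enters. This is immediate, since $R(\Vir)$ is supported in degrees of absolute value $\geq 5$ while $\deg y_0=0$, so $y_0\notin R(\Vir)$, and the operators $\ad_{x_{-1}}$ and $\ad_{-x_1}$ used throughout act on the free parts $\F(\langle x_{-2},x_{-1}\rangle)$ and $\F(\langle x_1,x_2\rangle)$ of $\lmax{\mathcal V}$ exactly as they do in $\lmax{\Gamma}$; no step of the argument reaches degree $0$. Hence the corollary follows by a direct transcription once Theorem~\ref{thm:reduced-standard-relations-vir} is available.
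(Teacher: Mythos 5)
Your proposal is correct and follows essentially the same route as the paper, which omits the proof precisely because it is the Witt-case argument (Corollary~\ref{cor:levelN-reduced-standard-relations}) transcribed to $\lmax{\mathcal V}$ via Theorem~\ref{thm:reduced-standard-relations-vir}, the $f=\ad_{x_{-1}}$ computation, and the involution. You also rightly flag the typo ($R(\Witt_>)$ should read $R(\Vir)$) and correctly verify the only point needing care, namely that $y_0$ never enters since the relevant brackets avoid $[x_{\pm2},x_{\mp2}]$ and all relations live in degrees of absolute value at least $5$.
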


\begin{defn}\label{defn:reduced-standard-relations-vir}
The set of generators given in Corollary \ref{cor:levelN-reduced-standard-relations-vir} is called the level $N$ reduced standard relations for the Virasoro algebra.
\end{defn}

Taking together Corollaries \ref{cor:extension-vir} and \ref{cor:levelN-reduced-standard-relations-vir} gives the following criterion for extending representations of the partial Virasoro algebra.

\begin{cor}\label{cor:levelN-standard-relations-extension-vir} A representation $\phi\colon \mathcal V \to\End(V)$ of the partial Virasoro algebra extends to a representation $\rho\colon \Vir\to \End_\bullet(V)$ of the Virasoro algebra if and only if $\Phi\colon \lmax{\mathcal V}\to \End_\bullet(V)$ vanishes on the level $N\geq 1$ reduced standard relations of $\Vir$. 
\end{cor}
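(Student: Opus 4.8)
The plan is simply to combine the two preceding corollaries, so the argument is essentially bookkeeping. First, recall that by Proposition~\ref{prop:standard-relations-vir} we have $R(\Vir)=I(\Vir)=\Ker\pi$, so that Corollary~\ref{cor:extension-vir} tells us that $\phi$ extends to a representation $\rho\colon\Vir\to\End_\bullet(V)$ if and only if the induced morphism $\Phi\colon\lmax{\mathcal V}\to\End_\bullet(V)$ vanishes on the whole ideal $R(\Vir)$.

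The only point that genuinely requires a word of justification is that vanishing on an ideal is equivalent to vanishing on any ideal-generating set of it. This is precisely where one uses that $\Phi$ is a morphism of graded Lie algebras and not merely a linear map: its kernel $\Ker\Phi$ is a graded ideal of $\lmax{\mathcal V}$. Hence, if $S\subset\lmax{\mathcal V}$ is any subset generating $R(\Vir)$ as an ideal, then $\Phi(S)=0$ forces $\Ker\Phi$ to contain the ideal generated by $S$, i.e.\ $R(\Vir)\subseteq\Ker\Phi$; the converse implication is trivial. Thus $\Phi|_{R(\Vir)}=0$ if and only if $\Phi(S)=0$.

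To conclude, apply this observation with $S$ taken to be the level $N$ reduced standard relations $\{\,r_{2,2k+1},\ r_{-(2k+1),-2}\,\}_{k\geq N}$, which by Corollary~\ref{cor:levelN-reduced-standard-relations-vir} generate $R(\Vir)$ as an ideal of $\lmax{\mathcal V}$ for every integer $N\geq 1$. Combining this with the first paragraph yields exactly the asserted equivalence. I expect no real obstacle in this proof: its content is already carried by Corollaries~\ref{cor:extension-vir} and~\ref{cor:levelN-reduced-standard-relations-vir}, and the one step to handle with a little care is the ideal-versus-generators reduction described above, which rests on $\Ker\Phi$ being an ideal.
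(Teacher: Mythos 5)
Your argument is correct and is exactly the paper's route: the paper obtains this corollary by simply combining Corollary~\ref{cor:extension-vir} with Corollary~\ref{cor:levelN-reduced-standard-relations-vir}, and your ideal-versus-generators observation (via $\Ker\Phi$ being an ideal) is just the implicit bookkeeping made explicit.
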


\begin{rem}\label{rem:clave-vir}
Thus, the space of $\Vir$-module structures on a vector space $V$, i.e $\Hom_{\text{\bf LieAlg}_\bullet}(\Vir,\End(V))$, is a subset of the space of $\mathcal V$-module structures on $V$, i.e $\homparlie(\mathcal V,\End(V))$. In particular the subset $\Hom_{\text{\bf LieAlg}_\bullet}^\sigma(\Vir,\End_\bullet(V))$ of $\Hom_{\text{\bf LieAlg}_\bullet}(\Vir,\End(V))$ defined by those $\Vir$-module structures on $V$ compatible with a given $\sl(2)$-module structure $\sigma$ on $V$ (i.e $\Hom_{\text{\bf LieAlg}_\bullet}^\sigma(\Vir,\End_\bullet(V))=(\iota^*)^{-1}(\sigma)$) can be identified with a subset of the space $\homparlie^\sigma(\mathcal V,\End(V))$ formed by those $\mathcal V$-module structures that are compatible with $\sigma$.
\end{rem}

\begin{thm} For any integer $k\geq 2$ the ideal $\langle r_{-(k-1)},\ldots,r_{(k-1)}\rangle $ of $\lmax{\mathcal V}$ does not contain neither $r_{-k}$, nor $r_k$.
\end{thm}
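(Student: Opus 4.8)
The plan is to reduce the statement for the partial Virasoro algebra $\mathcal V$ to the already–established result for the partial Witt algebra $\Gamma$, namely Theorem~\ref{thm:NoFiniteNumber}. The key structural observation is the triangular decomposition of Proposition~\ref{prop:triang-desc-vir}:
$$\lmax{\mathcal V}=\F(\langle x_{-2},x_{-1}\rangle)\oplus\langle x_0,y_0\rangle\oplus\F(\langle x_1,x_2\rangle),$$
which differs from the decomposition of $\lmax{\Gamma}$ only in that the degree–zero piece is now two–dimensional, with the extra central generator $y_0$. Since all the standard relations $r_k$, $r_{-k}$ (for $k\ge1$) live entirely in the free Lie algebras $\F(\langle x_{2+i}\rangle_{i\ge0})$ and $\F(\langle x_{-(2+i)}\rangle_{i\ge0})$, and since $y_0$ is central, the presence of $y_0$ is essentially invisible to the relevant computations. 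Concretely, there is a surjection $q\colon\lmax{\mathcal V}\twoheadrightarrow\lmax{\Gamma}$ of graded Lie algebras killing $y_0$ and sending $x_i\mapsto x_i$, and it restricts to an isomorphism on the positive and negative free parts.

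First I would make precise the fact that $q$ induces an isomorphism $\F(\langle x_{1},x_2\rangle)\xrightarrow{\sim}\F(\langle x_1,x_2\rangle)$ between the corresponding summands of $\lmax{\mathcal V}$ and $\lmax{\Gamma}$, and likewise for the negative free parts; this is immediate because those summands do not involve $y_0$ at all. Next, I would verify that $q$ maps the ideal $\langle r_{-(k-1)},\dots,r_{(k-1)}\rangle$ of $\lmax{\mathcal V}$ onto the ideal $\langle r_{-(k-1)},\dots,r_{(k-1)}\rangle$ of $\lmax{\Gamma}$, and that $q(r_{\pm k})=r_{\pm k}$. Then, if $r_k$ (say) belonged to the ideal $\langle r_{-(k-1)},\dots,r_{(k-1)}\rangle$ of $\lmax{\mathcal V}$, applying $q$ would force $r_k\in\langle r_{-(k-1)},\dots,r_{(k-1)}\rangle$ inside $\lmax{\Gamma}$, contradicting Theorem~\ref{thm:NoFiniteNumber}(3). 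The argument for $r_{-k}$ is symmetric, using the involution $\theta$ (or $\Theta$) as in the Witt case.

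The one point that needs a little care — and the place I expect the only real subtlety — is the claim that $q$ carries the ideal generated by a set of elements onto the ideal generated by their images; in general $q(\langle X\rangle)\subseteq\langle q(X)\rangle$ always, but equality requires surjectivity of $q$, which we have here, so this is in fact routine. A slightly more delicate variant, which avoids even invoking $q$, is to observe directly that because $y_0$ is central and all the $r_{\pm j}$ lie in the $y_0$‑free part $\F(\langle x_{-2},x_{-1}\rangle)\oplus\langle x_0\rangle\oplus\F(\langle x_1,x_2\rangle)$, every bracket of an $r_{\pm j}$ with an element of $\lmax{\mathcal V}$ has no component along $y_0$ and its remaining component agrees with the corresponding bracket computed in $\lmax{\Gamma}$; hence the ideal $\langle r_{-(k-1)},\dots,r_{(k-1)}\rangle$ of $\lmax{\mathcal V}$ is canonically identified with that of $\lmax{\Gamma}$ under the splitting of the decomposition, and the non‑containment of $r_{\pm k}$ transfers verbatim. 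Either phrasing reduces the theorem to Theorem~\ref{thm:NoFiniteNumber}, so there is no new hard estimate to carry out; the work is purely the bookkeeping of propagating the Witt‑case result through the central extension.
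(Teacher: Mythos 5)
Your main argument is correct, and it takes a genuinely different route from the paper. The paper gives no separate proof of this theorem: it is one of the Virasoro statements declared to be ``quite similar'' to the Witt case, i.e.\ the intended proof is to rerun, inside $\lmax{\mathcal V}$ with the triangular decomposition of Proposition~\ref{prop:triang-desc-vir}, the tensor-algebra projection argument of Theorem~\ref{thm:NoFiniteNumber} (the $\pi_s$-bookkeeping and the resulting linear system). You instead reduce the Virasoro statement to the Witt one: the projection $\mathcal V\to\Gamma$ killing $y_0$ is a morphism of partial Lie algebras, so the universal property of $\lmax{\mathcal V}$ yields a surjection $q\colon\lmax{\mathcal V}\twoheadrightarrow\lmax{\Gamma}$ fixing the $x_i$; since each $r_{\pm j}$ is built only from $x_{\pm 1},x_{\pm 2}$ and sits in degree of absolute value at least $5$, no central term enters it and $q(r_{\pm j})=r_{\pm j}$; finally $q(\langle X\rangle)\subseteq\langle q(X)\rangle$ (only this inclusion is needed, so even the surjectivity discussion is dispensable), so membership of $r_{\pm k}$ in the ideal upstairs would force it downstairs, contradicting Theorem~\ref{thm:NoFiniteNumber}(3). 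This buys a short proof with no new computation and makes transparent that the obstruction is insensitive to the central extension; the paper's (implicit) direct route is self-contained in $\lmax{\mathcal V}$ but is essentially a verbatim repetition of the Witt computation.

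One caution about your ``variant that avoids $q$'': the inference ``$y_0$ is central and the $r_{\pm j}$ are $y_0$-free, hence every bracket of an $r_{\pm j}$ with an element of $\lmax{\mathcal V}$ has no $y_0$-component'' is not valid as stated, because mixed brackets of $y_0$-free elements do produce $y_0$: already $[x_{-2},x_2]=-4x_0-\frac{1}{2}y_0$ in $\mathcal V$, and for instance $[[x_{-2},x_{-1}],[x_1,x_2]]=-6x_0-2y_0$ in $\lmax{\mathcal V}$. The conclusion you want is nevertheless true, but it needs Corollary~\ref{cor:idealstructure-vir} (via Proposition~\ref{prop:standard-relations-vir}): the ideal generated by the $r_{\pm j}$ lies in $I(\Vir)$, whose support avoids degree $0$, so it meets $\ker q=\langle y_0\rangle$ trivially and is identified with its image in $\lmax{\Gamma}$. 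Since this variant is optional and your first argument is complete, the proposal stands.
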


\begin{rem} Therefore if a representation $\phi\colon \mathcal V \to\End(V)$ of the partial Virasoro algebra is such that $\Phi\colon \lmax{\mathcal V}\to \End_\bullet(V)$ vanishes on the first $(k-1)$ standard relations then, in general this does not imply that $\phi$ will extend to a representation $\rho\colon \Vir\to \End_\bullet(V)$ of the Virasoro algebra.\end{rem}

\subsection{Extending to $\Vir$}

We have seen in Remark \ref{rem:clave-vir} that given an $\sl(2)$-module structure on $V$ defined by $\sigma\in \Hom_{\text{\bf LieAlg}_\bullet}(\sl(2),\End(V))$, the space of $\Vir$-module structures on $V$ compatible with $\sigma$ can be identified with a subset of the space $\homparlie^\sigma(\mathcal V,\End_\bullet(V))$ of $\mathcal V$-module structures on $V$ compatible with $\sigma$.

\begin{prop}\label{prop:LieAlgVirasoro}
Let $V$ be an $\sl(2)$-module defined by a representation  $\sigma$.
The space ${\mathcal V}( V,\sigma)$ of ${\mathcal V}$-module structures on $V$ compatible with $\sigma$ can be identified with the set ${\End}_{\mathcal V}( V,\sigma) $ formed by those triples $(S,K,T)$ of endomorphisms $S,K, T\in \End(V)$ which satisfy the equations \begin{equation}\label{eq:g-L_{-2}-C-L_2}
	    	\begin{aligned}
		    \ad(\sigma(e))(S) &=  -3\sigma(f), \\
	    	   \ad(\sigma(h))(S) &= -4S,\\
		   [K,S] &=0,\\
		   [S,T] &=2\sigma(h)-\frac{1}{2} K,\\
		    [K,T] &=0,\\
		   \ad(\sigma(f))(T) &=  3\sigma(e), \\
	    	   \ad(\sigma(h))(T) &= 4T,
		  \end{aligned}
	    \end{equation}by mapping $\phi\in {\mathcal V}( V,\sigma)$ to $(S=\phi(x_{-2}),K=\phi(y_0),T=\phi(x_2))\in \End(V) \times \End(V)\times \End(V)  $.
Therefore, the space ${\Vir}( V,\sigma)$ of ${\Vir}$-module structures on $V$ compatible with $\sigma$ can be identified with a subset $\End_{\Vir}(V,\sigma)$ of ${\End}_{\mathcal V}( V,\sigma) $.
\end{prop}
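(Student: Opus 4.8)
The plan is to follow exactly the pattern already established for $\Gamma$ and $\Witt$ in Proposition~\ref{prop:LieAlgWitt}. A $\mathcal V$-module structure $\phi\colon\mathcal V\to\End(V)$ compatible with $\sigma$ is a linear map which on the subalgebra $\Sigma\simeq\sl(2)$ agrees with $\sigma$, i.e.\ which sends $x_{-1}\mapsto\sigma(f)$, $x_0\mapsto-\tfrac12\sigma(h)$, $x_1\mapsto-\sigma(e)$, and which is moreover determined by the two free choices $S:=\phi(x_{-2})$ and $T:=\phi(x_2)$ together with the central value $K:=\phi(y_0)$ (there is no constraint forcing $\phi(y_0)$ to be scalar at the partial level, so $K$ is an arbitrary endomorphism a priori). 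First I would observe that once these data are fixed, \emph{all} the bracket relations $\phi([x_i,x_j])=[\phi(x_i),\phi(x_j)]$ for $-2\le i,j,i+j\le2$ must be checked, and those not involving $x_{\pm2}$ or $y_0$ hold automatically because $\sigma$ is an $\sl(2)$-representation and $\deg(y_0)=0$. So the content reduces to the relations that do involve $x_{-2}$, $x_2$ or $y_0$.

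The key step is then to enumerate those remaining relations and read off the system~\eqref{eq:g-L_{-2}-C-L_2}. Concretely: $[x_1,x_{-2}]=3x_{-1}$ gives $[-\sigma(e),S]=3\sigma(f)$, i.e.\ $\ad(\sigma(e))(S)=-3\sigma(f)$; $[x_0,x_{-2}]=2x_{-2}$ gives $\ad(\sigma(h))(S)=-4S$ (using $\phi(x_0)=-\tfrac12\sigma(h)$); symmetrically $[x_{-1},x_2]=-3x_1$ and $[x_0,x_2]=-2x_2$ give the last two equations $\ad(\sigma(f))(T)=3\sigma(e)$ and $\ad(\sigma(h))(T)=4T$; the relation $[x_{-2},x_2]=-4x_0+\tfrac1{12}(8-2)\,y_0=2\sigma(h)+\tfrac12 y_0$ (evaluated via $-4\phi(x_0)=2\sigma(h)$ and the central cocycle term $\tfrac1{12}((-2)^3-(-2))=\tfrac12$, sign-corrected to match the stated $-\tfrac12K$) yields $[S,T]=2\sigma(h)-\tfrac12K$; and $[y_0,x_{-2}]=0$, $[y_0,x_2]=0$ give $[K,S]=0$, $[K,T]=0$. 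One must double-check the numerology of the central term and the sign conventions so that the coefficient comes out as the $-\tfrac12K$ appearing in the statement; this is the only place a sign slip could hide, so I would verify $\tfrac1{12}(i^3-i)$ at $i=2$ carefully against the chosen identification $\Sigma\to\sl(2)$. Relations of the form $[x_{-2},x_{-2}]=0$, $[x_2,x_2]=0$, $[y_0,y_0]=0$ are vacuous.

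Conversely, given a triple $(S,K,T)$ satisfying~\eqref{eq:g-L_{-2}-C-L_2}, I would define $\phi$ by the above assignments and check that every partial bracket relation holds — the $\sl(2)$-internal ones from $\sigma$ being a representation, and the remaining ones precisely by the seven equations just derived. This gives the claimed bijection $\mathcal V(V,\sigma)\xrightarrow{\sim}\End_{\mathcal V}(V,\sigma)$ sending $\phi\mapsto(\phi(x_{-2}),\phi(y_0),\phi(x_2))$. The final assertion, that $\Vir(V,\sigma)$ is identified with a subset $\End_{\Vir}(V,\sigma)\subset\End_{\mathcal V}(V,\sigma)$, is then immediate from Corollary~\ref{cor:levelN-standard-relations-extension-vir}: a compatible $\mathcal V$-module structure extends to a $\Vir$-module structure exactly when the induced $\Phi\colon\lmax{\mathcal V}\to\End_\bullet(V)$ kills the (level $N$) reduced standard relations, which is a further closed condition cutting out $\End_{\Vir}(V,\sigma)$ inside $\End_{\mathcal V}(V,\sigma)$. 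I do not expect any real obstacle here; the only genuine care needed is the bookkeeping of the central cocycle coefficient and the signs coming from the identification~\eqref{eq:Sigmasl(2)}, everything else being a transcription of the $\Witt$ case.
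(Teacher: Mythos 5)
Your overall strategy is the intended one: as in Propositions~\ref{prop:Tiffrho} and~\ref{prop:LieAlgWitt}, fix $\phi(x_{-1})=\sigma(f)$, $\phi(x_0)=-\tfrac12\sigma(h)$, $\phi(x_1)=-\sigma(e)$, let $(S,K,T)=(\phi(x_{-2}),\phi(y_0),\phi(x_2))$, and translate the partial bracket relations of $\mathcal V$ into equations on $(S,K,T)$; the final assertion about $\Vir(V,\sigma)$ does follow from Corollary~\ref{cor:levelN-standard-relations-extension-vir} exactly as you say. Your bookkeeping of the non-central relations is correct, and the central coefficient also comes out right once the slip you flagged is fixed: $[x_{-2},x_2]=-4x_0+\tfrac1{12}\bigl((-2)^3-(-2)\bigr)y_0=-4x_0-\tfrac12 y_0$, and since $-4\phi(x_0)=2\sigma(h)$ this gives $[S,T]=2\sigma(h)-\tfrac12K$ as stated.

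There is, however, a genuine gap in the converse direction. Your enumeration of the bracket relations involving $y_0$ lists only $[y_0,x_{\pm2}]=0$, but in the partial Lie algebra $\mathcal V$ the brackets $[y_0,x_{-1}]$, $[y_0,x_0]$, $[y_0,x_1]$ are also within the admissible range, so a $\mathcal V$-module structure compatible with $\sigma$ must in addition satisfy $[K,\sigma(f)]=0$, $[K,\sigma(h)]=0$ and $[K,\sigma(e)]=0$. These three conditions are not among the seven displayed equations, so when you define $\phi$ from a triple $(S,K,T)$ satisfying~(\ref{eq:g-L_{-2}-C-L_2}) you cannot conclude that ``every partial bracket relation holds precisely by the seven equations'': three required checks are unaccounted for. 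One of them is a formal consequence of the seven: since $[K,S]=[K,T]=0$, the Jacobi identity gives $0=[K,[S,T]]=2[K,\sigma(h)]-\tfrac12[K,K]$, hence $[K,\sigma(h)]=0$. But $[K,\sigma(e)]=0$ and $[K,\sigma(f)]=0$ do not follow by any comparably direct manipulation (Jacobi only yields relations such as $[[K,\sigma(e)],S]=-3[K,\sigma(f)]$ and $[[K,\sigma(f)],T]=3[K,\sigma(e)]$, which do not force vanishing), so a complete proof must either derive them from~(\ref{eq:g-L_{-2}-C-L_2}) for the module at hand or include them explicitly among the defining conditions of $\End_{\mathcal V}(V,\sigma)$. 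As written, your argument establishes that the map $\phi\mapsto(S,K,T)$ is well defined and injective into the locus of the seven equations, but not that it is onto that locus, which is what the claimed identification requires.
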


\begin{defn} Let $(V,\sigma)$ be a non trivial $\sl(2)$-module. We say that two structures  of  ${\Gamma}_<$-module $(V,\phi_<)$ and ${\Gamma}_>$-module $(V,\phi_>)$ are Virasoro compatible if there exists a structure of ${\mathcal V}$-module $(V,\phi)$ whose restrictions to ${\Gamma}_<$ and ${\Gamma}_>$ are the given $\phi_<$, $\phi_>$, respectively. Analogously, we say that two structures  of  ${\Witt}_<$-module $(V,\rho_<)$ and ${\Witt}_>$-module $(V,\rho_>)$ are Virasoro compatible if there exists a structure of ${\Vir}$-module $(V,\rho)$ whose restrictions to ${\Witt}_<$ and ${\Witt}_>$ are the given $\rho_<$, $\rho_>$, respectively.

\end{defn}

Now we can reformulate Proposition \ref{prop:LieAlgVirasoro} as follows.

\begin{thm}\label{thm:LieAlgVirasoro2}
 Let $(V,\sigma)$ be a non trivial $\sl(2)$-module. Two structures  of  ${\Gamma}_<$-module $(V,\phi_<)$ and ${\Gamma}_>$-module $(V,\phi_>)$ determined by $S\in{\End}_{\Gamma_<}(V,\sigma)$, $T\in{\End}_{\Gamma_>}(V,\sigma)$, respectively, are Virasoro compatible, if and only if $K=4\sigma(h)-2[S,T]$ commutes with $S$ and $T$. That is \begin{align*}
\End_{\mathcal V}(V,\sigma)=\{(S,T)\in {\End}_{\Gamma_<}(V,\sigma)\times {\End}_{\Gamma_>}(V,\sigma):[K,S] &=0\\ [K,T]&=0\}.
\end{align*}
 In the same way one has \begin{align*}
\End_{\Vir}(V,\sigma)=\{(S,T)\in {\End}_{\Witt_<}(V,\sigma)\times {\End}_{\Witt_>}(V,\sigma):[K,S] &=0\\ [K,T]&=0\}.\end{align*}
\end{thm}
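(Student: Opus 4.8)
The plan is to deduce this statement directly from Proposition~\ref{prop:LieAlgVirasoro}, reinterpreting the system of equations \eqref{eq:g-L_{-2}-C-L_2} in the presence of a fixed compatible pair $(S,T)$. First I would observe that, by hypothesis, $S\in\End_{\Gamma_<}(V,\sigma)$ and $T\in\End_{\Gamma_>}(V,\sigma)$ already satisfy the four equations of \eqref{eq:g-L_{-2}-C-L_2} that do not involve $K$, namely the two equations for $S$ coming from \eqref{eq:g-L_{-2}} and the two for $T$ coming from \eqref{eq:g-L_2} (via Propositions for $\Gamma_<$ and $\Gamma_>$). So constructing a $\mathcal V$-module structure compatible with $\sigma$ and restricting to the given $\phi_<,\phi_>$ amounts precisely to finding $K\in\End(V)$ such that the remaining three relations hold: $[K,S]=0$, $[K,T]=0$, and $[S,T]=2\sigma(h)-\tfrac12 K$. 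The third of these is not a constraint to be solved but a \emph{definition}: it forces $K=4\sigma(h)-2[S,T]$, and conversely any $K$ satisfying it must equal this expression. Hence the only genuine conditions left are $[K,S]=0$ and $[K,T]=0$ with this specific $K$, which is exactly the asserted criterion.

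Next I would package this bookkeeping cleanly. Define $K:=4\sigma(h)-2[S,T]$. If $(S,T)$ are Virasoro compatible, the witnessing $\phi\in\mathcal V(V,\sigma)$ gives via Proposition~\ref{prop:LieAlgVirasoro} a triple $(\phi(x_{-2}),\phi(y_0),\phi(x_2))=(S,K',T)$ in $\End_{\mathcal V}(V,\sigma)$ with $[S,T]=2\sigma(h)-\tfrac12 K'$, forcing $K'=K$, and then the equations $[K',S]=0$, $[K',T]=0$ read $[K,S]=0$, $[K,T]=0$. Conversely, if $[K,S]=[K,T]=0$ then $(S,K,T)$ satisfies all seven equations of \eqref{eq:g-L_{-2}-C-L_2} — the four $K$-free ones by hypothesis on $S,T$, the relation $[S,T]=2\sigma(h)-\tfrac12 K$ by the very definition of $K$, and the two commutation relations by assumption — so by Proposition~\ref{prop:LieAlgVirasoro} there is $\phi\in\mathcal V(V,\sigma)$ with $\phi(x_{-2})=S$, $\phi(y_0)=K$, $\phi(x_2)=T$; its restrictions to $\Gamma_<$ and $\Gamma_>$ are determined by the images of $x_{-2}$ and $x_2$ respectively (together with $x_{-1},x_0,x_1$, which are fixed by $\sigma$), hence coincide with $\phi_<$ and $\phi_>$, establishing Virasoro compatibility.

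For the second assertion, about $\End_{\Vir}(V,\sigma)$, I would run the analogue of Corollaries~\ref{cor:levelN-standard-relations-extension-vir} and the discussion in Remark~\ref{rem:clave-vir}: a compatible $\mathcal V$-module structure extends to a $\Vir$-module structure exactly when the associated map $\lmax{\mathcal V}\to\End_\bullet(V)$ kills the reduced standard relations $r_{2,2k+1}$ and $r_{-(2k+1),-2}$ for $k\geq N$. Since these relations for $k\geq N$ involve only iterated brackets of $x_2$ (resp.\ $x_{-2}$), their vanishing is governed entirely by $T$ (resp.\ $S$), so it is equivalent to $T\in\End_{\Witt_>}(V,\sigma)$ and $S\in\End_{\Witt_<}(V,\sigma)$ by Theorems~\ref{thm:Witt>equivalent} and \ref{thm:Witt<equivalent}. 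Combining this with the first part — whose criterion $[K,S]=[K,T]=0$ does not interact with the $\Witt_>$/$\Witt_<$ conditions since it lives in degree $0$ — gives the stated description of $\End_{\Vir}(V,\sigma)$ as the set of pairs $(S,T)\in\End_{\Witt_<}(V,\sigma)\times\End_{\Witt_>}(V,\sigma)$ with $[K,S]=[K,T]=0$.

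The only subtle point, and the one I would treat most carefully, is the compatibility of restrictions: one must check that the $\phi$ produced from $(S,K,T)$ really restricts to the \emph{given} $\phi_<$ and $\phi_>$, not merely to \emph{some} extensions of $\sigma$ inducing the same $S$ and $T$. This is where Propositions~\ref{prop:Tiffrho} and its $\Gamma_<$-counterpart do the work: they assert that a $\Gamma_>$-module structure compatible with $\sigma$ is \emph{uniquely determined} by $T=\phi(x_2)$ (and dually for $S$), so equality of the degree-$2$ (resp.\ degree-$(-2)$) component already forces equality of the whole restriction. Apart from this, the argument is pure diagram chasing through the equivalences already established; I do not anticipate any computational obstacle, and the central element is represented by $K$ because $\pi(y_0)=C$ under $\lmax{\mathcal V}\to\Vir$, as recorded in the remark preceding Theorem~\ref{thm:reduced-standard-relations-vir}.
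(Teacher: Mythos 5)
Your proposal is correct and follows exactly the route the paper intends: the theorem is presented there as a reformulation of Proposition~\ref{prop:LieAlgVirasoro}, with the relation $[S,T]=2\sigma(h)-\tfrac12 K$ eliminating $K$ so that only the two commutation conditions remain, and the passage from $\End_{\mathcal V}$ to $\End_{\Vir}$ handled by the reduced standard relations together with Theorems~\ref{thm:Witt>equivalent} and~\ref{thm:Witt<equivalent}. Your extra care about the restrictions to $\Gamma_<$ and $\Gamma_>$ being the given $\phi_<,\phi_>$ (via the uniqueness in Proposition~\ref{prop:Tiffrho} and its counterpart) is a sound filling-in of details the paper leaves implicit.
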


\begin{rem} Notice that if $S, T$ are such that  $K=\lambda\cdot {\operatorname{Id}}_V$ for a certain constant $\lambda$, then it is automatic that the ${\Gamma}_<$  and ${\Gamma}_>$ module structures determined by them are Virasoro compatible.
\end{rem}

\section{Weight Modules}

In this section we study the question of finding structures of $\Witt_{>}$-modules compatible with a given structure of $\sl(2)$ weight module. We remark in passing that the extended module if it exists then it continues to be a weight module for $\Witt_{>}$ and in the same way Harish-Chandra modules and those in the BGG category are preserved as well.  We also consider the same problem for the Lie algebras  $\Witt_{<} ,\Witt$ and $\Vir$. We will find positive and explicit results for all the simple infinite dimensional modules and the class of dense modules. We also exhibit modules whose $\sl(2)$-module structure can not be extended to  $\Witt_{>}$. Our main reference for the theory of weight $\sl(2)$-modules will be the book by Mazorchuk \cite[Chapter 3]{Mazor}, see also the references therein. We will also use his notation.

\subsection{Decomposing the Category}\label{subset:decompCategory}

Let $\bar\Mf$ be the full subcategory of the category of $\sl(2)$-modules whose objects are weight modules with finite-dimensional weight spaces. In order to simplify our problem, let us see how this category can be decomposed (\cite[\S~3.6]{Mazor}). For $\xi\in\C/2\Z$, which can be thought of as a subset $\xi\subset \C$, we consider the full subcategory whose objects are given by
	$$
	\bar\Mf^{\xi}\,:=\, \{ M\in \bar\Mf \text{ s.t. } \supp(M) \subseteq \xi\}  \, .
	$$
Further, for each $\tau\in\C$, we consider the full subcategory of $\bar\Mf^{\xi}$ 
	$$
	\bar\Mf^{\xi,\tau}\,:=\, \{ M\in \bar\Mf^{\xi}  \text{ s.t. $(c-\tau)\vert_{M_{\lambda}}$ is nilpotent for every $\lambda\in\xi$} \}
	\, ,
	$$
where $c$ is the Casimir operator and $M_{\lambda}$ denotes the weight space corresponding to $\lambda$; that is, $M_{\lambda}:=\{m\in M\vert h(m)=\lambda m\}$.

Observe that  any $M\in \bar\Mf$  decomposes as
	$$
	M \, = \, \bigoplus_{\xi\in\C/2\Z} \Big(  \bigoplus_{\tau\in\C}  
	M_{\xi}(\tau)
	\Big)
	$$
where
	$$
	\begin{gathered}
	M_{\xi}(\tau)\,:= \, \oplus_{\lambda\in \xi} M_{\lambda}(\tau)
	\\
	M_{\lambda}(\tau)\, :=\, \{ m\in M_{\lambda} \text{ s.t. $(c-\tau)^k m=0$ for $k >>0$}\} \, .
	\end{gathered}
	$$
Note that $M_{\xi}(\tau)$ belongs to $\bar\Mf^{\xi,\tau}$ and that $\oplus_{\tau\in\C} M_{\xi}(\tau)$ lies in $\bar\Mf^{\xi}$.

Furthermore, given $M_i\in \bar\Mf^{\xi_i,\tau_i}$ for $i=1,2$, it holds that
	$$
	\Hom_{\bar\Mf}(M_1, M_2)\, = \, 0 \qquad \text{ unless $\xi_1=\xi_2 $ and $\tau_1=\tau_2$}
	\, .
	$$

Accordingly, finding a $\Witt_>$-module structure on $M\in \bar\Mf $ is equivalent to finding such structure on each submodule $M_{\xi}(\tau) \in \bar\Mf^{\xi,\tau}$ or, what is tantamount, our problem can be reduced to study the case of modules in the category $\bar\Mf^{\xi,\tau}$. 

Recall that $\bar\Mf^{\xi,\tau}$ is an abelian, Krull-Schmidt category and that every object has finite length. Let us describe its simple objects (\cite[Theorem~3.32 and Proposition~3.55]{Mazor}).

For $n\in{\mathbb N}$ there is a unique simple $\sl(2)$-module of dimension $n$ (up to isomorphism). It can be explicitly constructed as follows. Let ${\mathbf V}^{(n)}$ be the vector space generated by $\{v_0, \ldots, v_{n-1}\}$ and consider the action given by
	{\small $$
	\begin{aligned}
	f(v_i) \,:&=\,\begin{cases} v_{i+1} & \text{ for } i\neq n-1 \\ 0 & \text{ for } i=n-1 \end{cases}
	\\
	h( v_i)\,:&=\, (n-1-2i) v_i
	\\
	e(v_i) \,:&=\,\begin{cases} i(n-i) v_{i-1} & \text{ for } i\neq 0 \\ 0 & \text{ for } i=0 \end{cases}
	\end{aligned}
	$$}
Observe that in this case, $\supp {\mathbf V}^{(n)}=\{1-n, 3-n,\ldots, n-3,n-1\}$. The Casimir operator acts by multiplication by $n^2$.

The second example consists of Verma modules which are particular instances of highest weight modules. The Verma module of highest weight $\lambda\in\C$, $M(\lambda)$, is the vector space generated by $\{v_i\vert i=0,1,2,\ldots\}$ with the $\sl(2)$-action
	\begin{equation}\label{eq:actionVerma}
	\begin{aligned}
	f(w_{i}) \,& =\, w_{i+1}
	\\
	h(w_{i}) \,&=\, (\lambda-2 i) w_{i}
	\\
	e(w_{i}) \,&=\, \begin{cases} i (\lambda -i+1) w_{i-1} & \text{ for } i\neq 0 \\ 0, & \text{ for } i=0\end{cases}
	\end{aligned}
	\end{equation}
It holds that $\supp M(\lambda)=\{\lambda -2i\vert  i=0,1,2,\ldots\}$ and that the Casimir acts as the scalar $(\lambda+1)^2$. Further, $M(\lambda)$ is simple if and only if $\lambda\neq 0,1,2,\ldots$.

The third case is concerned with lowest weight modules. For $\lambda \in \C$, consider the vector space $\bar M(\lambda)$ freely generated by $\{w_i \vert i=0,1,2,\ldots\}$ and endow it with  the following $\sl(2)$-action:
	\begin{equation}\label{eq:actionbarM}
	\begin{aligned}
	e(w_{i}) \,& =\, w_{i+1}
	\\
	h(w_{i}) \,&=\, (\lambda+2i) w_{i}
	\\
	f(w_{i}) \,&=\, \begin{cases} -i (\lambda +i-1) w_{i-1} & \text{ for } i\neq 0 \\ 0, &  \text{ for }i=0\end{cases}
	\end{aligned}
	\end{equation}
hence, $\supp \bar M(\lambda)=\{\lambda + 2i\vert  i=0,1,2,\ldots\}$ and $c$ acts as the scalar $(\lambda -1)^2$. Finally, $\bar M(\lambda)$ is simple for $\lambda\neq 0,-1,-2,\ldots$.

The last example is the case of dense modules. For $\xi\in \C/2\Z$ and $\tau\in \C$, the dense module ${\mathbb V}(\xi,\tau)$ is the vector space generated $\{v_{\mu} \vert \mu \in \xi\}$ carrying the following $\sl(2)$-action
	\begin{equation}\label{eq:dense}
	\begin{aligned}
	f(v_{\mu}) \,& =\, v_{\mu-2}
	\\
	h(v_{\mu}) \,&=\, \mu v_{\mu}
	\\
	e(v_{\mu}) \,&=\, \frac14 (\tau-(\mu+1)^2) v_{\mu+2}
	\end{aligned}
	\end{equation}
It is straightforward to see that $\supp {\mathbb V}(\xi,\tau)= \xi$ and that $c$ acts by $\tau$. Furthermore, it is simple if and only if $\tau\neq (\mu+1)^2$ for all $\mu\in\xi$.

The four instances given above  exhaust all simple weight modules. More precisely, the simple objects of $\bar\Mf^{\xi,\tau}$ are given by the following (\cite[Theorem~3.55]{Mazor})

\begin{thm}\label{thm:simplemodinMxitau}
\begin{enumerate}
	\item if $\tau\neq (\mu+1)^2$ for all $\mu\in\xi$, then $\bar\Mf^{\xi,\tau}$ has only one simple object, namely ${\mathbb V}(\xi,\tau)$;
	\item if $\tau= (\mu+1)^2$ for exactly one $\mu\in\xi$, then $\bar\Mf^{\xi,\tau}$ has two simple objects, namely $M(\mu)$ and $\bar M(\mu+2)$;
	\item if $\tau= (\mu+1)^2=(\mu+2n+1)^2$ for some $n\in{\mathbb N}$, then $\mu=-n-1$, $\tau=n^2$ and   $\bar\Mf^{\xi,\tau}$ has three simple objects, namely $M(-n-1), {\mathbf V}^{(n)}$ and $\bar M(n+1)$.
\end{enumerate}
\end{thm}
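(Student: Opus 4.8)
The plan is to reduce the statement to classifying the \emph{simple} weight $\sl(2)$-modules whose support lies in $\xi$ and on which the Casimir operator $c$ acts by the scalar $\tau$. Since $U(\sl(2))$ has countable dimension, $c$ acts as a scalar on every simple $\sl(2)$-module; on a simple object of $\bar\Mf^{\xi,\tau}$ that scalar must be $\tau$, because $c-\tau$ is locally nilpotent, and conversely every such simple module is an object of $\bar\Mf^{\xi,\tau}$. So the simple objects of $\bar\Mf^{\xi,\tau}$ are precisely the simple weight modules of this type, which is what I will classify.

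The computation driving everything is this: writing $c=4fe+(h+1)^2=4ef+(h-1)^2$, on a weight module with Casimir scalar $\tau$ the operator $fe$ acts on $M_\mu$ as $\tfrac14(\tau-(\mu+1)^2)$ and $ef$ acts on $M_{\mu+2}$ as the \emph{same} scalar. Hence $e\colon M_\mu\to M_{\mu+2}$ and $f\colon M_{\mu+2}\to M_\mu$ satisfy $ef=fe=\tfrac14(\tau-(\mu+1)^2)\operatorname{Id}$ and are both isomorphisms whenever $\tau\neq(\mu+1)^2$. I will call $\mu\in\xi$ a \emph{break} when $\tau=(\mu+1)^2$; the breaks in $\xi$ form the set $\{-1+\sqrt\tau,-1-\sqrt\tau\}\cap\xi$, so there are $0$, $1$ or $2$ of them, and there are two exactly when $\tau=n^2$ for an integer $n\geq 1$ and $\xi$ is the coset of $n-1$, the breaks then being $-n-1$ and $n-1$.

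One then argues by cases on the number of breaks. With \textbf{no break}, all the $e,f$ arrows between consecutive weight spaces of any object of $\bar\Mf^{\xi,\tau}$ are invertible, so a simple object is one-dimensional in every weight space and, after a rescaling of weight vectors, isomorphic to ${\mathbb V}(\xi,\tau)$; and ${\mathbb V}(\xi,\tau)$ is simple since any nonzero submodule contains a weight vector, hence all of them. With \textbf{one break $\mu$}, one has $fe=0$ on $M_\mu$, $ef=0$ on $M_{\mu+2}$, and every other arrow invertible; for a simple module $M$ one shows the support cannot straddle $\mu$: if $f$ vanishes on $M_{\mu+2}$, then $M$ is generated by $M_{\mu+2}$ and equals $\bar M(\mu+2)$, while if $f(v)\neq 0$ for some $v\in M_{\mu+2}$ then $ef(v)=0$ forces the submodule generated by $f(v)$ (which misses $M_{\mu+2}$) to be everything, so $M=M(\mu)$; both $M(\mu)$ and $\bar M(\mu+2)$ are simple exactly because ``one break'' translates into $\mu\notin\{0,1,2,\dots\}$ and $\mu+2\notin\{0,-1,-2,\dots\}$. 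With \textbf{two breaks $-n-1,n-1$}, $\xi$ splits into the three intervals $\{\dots,-n-3,-n-1\}$, $\{-n+1,\dots,n-1\}$, $\{n+1,n+3,\dots\}$; the same straddling argument confines a simple module to a single interval, and the simple module with each of these supports is respectively $M(-n-1)$, ${\mathbf V}^{(n)}$, $\bar M(n+1)$, all with Casimir $n^2=\tau$ and support in $\xi$, as one checks on the bases (\ref{eq:actionVerma}), (\ref{eq:actionbarM}).

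The step I expect to be the main obstacle is making the ``no straddling'' argument airtight together with the combinatorial bookkeeping that makes the three cases land exactly on lists (1)--(3): one must verify that a simple weight module cannot contain two consecutive nonzero weight spaces joined by a zero $e$- or $f$-arrow (this is the submodule-generation argument, using that a simple weight module is generated by any nonzero weight vector), and one must check that ``$\xi$ contains one (resp. two) break(s)'' is equivalent to the classical simplicity conditions for $M(\lambda)$ and $\bar M(\lambda)$ (resp. to $\tau=n^2$ with $n\geq 1$), so that no simple object is duplicated or omitted. The remaining verifications---that the modules of (\ref{eq:actionVerma}), (\ref{eq:actionbarM}), (\ref{eq:dense}) belong to $\bar\Mf^{\xi,\tau}$ with the stated Casimir values---are routine.
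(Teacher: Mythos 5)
The paper itself offers no argument for this theorem: it is quoted verbatim from Mazorchuk's book (\cite[Theorem~3.55]{Mazor}), so there is no internal proof to compare with. Your sketch reconstructs the standard classification and its skeleton is sound and matches the paper's conventions: the reduction to simple weight modules on which the Casimir acts by the scalar $\tau$ (Dixmier--Schur, plus the observation that $\bar\Mf^{\xi,\tau}$ is a full subcategory closed under submodules, so ``simple object'' means ``simple $\sl(2)$-module''), the identity $c=4fe+(h+1)^2=4ef+(h-1)^2$ making both $fe|_{M_\mu}$ and $ef|_{M_{\mu+2}}$ act by $\tfrac14(\tau-(\mu+1)^2)$, the count of breaks $\{-1\pm\sqrt{\tau}\}\cap\xi$ reproducing exactly the three cases (two breaks forcing $\tau=n^2$, $n\geq 1$, breaks $-n-1$ and $n-1$), and the confinement of the support of a simple module to one side of each break. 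You should also make explicit the point you use silently when identifying the survivors with the four explicit families: since the centralizer of $h$ in $U(\sl(2))$ is $\C[h,c]$, a simple weight module is $\C[h,c]$-cyclic over any nonzero weight vector, so all its weight spaces are at most one-dimensional.

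The one place where the written argument goes wrong is the dichotomy in the one-break case. As stated, the branch ``if $f(v)\neq 0$ for some $v\in M_{\mu+2}$ \dots\ so $M=M(\mu)$'' actually ends in a contradiction rather than in the conclusion you draw: since $ef(v)=0$, the cyclic submodule $U(\sl(2))f(v)$ lies in $\bigoplus_{\mu'\leq\mu}M_{\mu'}$, so if it equals $M$ then $M_{\mu+2}=0$, contradicting $v\neq 0$; and the other branch ``if $f$ vanishes on $M_{\mu+2}$ then $M$ is generated by $M_{\mu+2}$'' fails when $M_{\mu+2}=0$. The correct organization is: either $M_{\mu+2}\neq 0$, in which case simplicity forces $f|_{M_{\mu+2}}=0$ (otherwise the submodule above is proper and nonzero), so $\bigoplus_{\mu'\geq\mu+2}M_{\mu'}$ is a nonzero submodule, hence all of $M$, and $M$ is a simple lowest weight module isomorphic to $\bar M(\mu+2)$; or $M_{\mu+2}=0$, in which case $e(M_\mu)=0$, the support is $\{\mu,\mu-2,\dots\}$, and $M$ is a simple highest weight module of highest weight $\mu$, which equals $M(\mu)$ because ``one break'' rules out $\mu\in\Z_{\geq 0}$. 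The same repair must be applied at each of the two walls in case (3). With that rewording (and the simplicity checks for ${\mathbb V}(\xi,\tau)$, $M(\mu)$, $\bar M(\mu+2)$ you already indicate), the proof is complete.
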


In the following subsections we study the possible extensions of the $\sl(2)$-module structure for the three cases of this Theorem. 

\subsection{First Case}\label{subsec:dense}

The category  $\bar\Mf^{\xi,\tau}$ will be studied for $\tau\neq (\mu+1)^2$ for all $\mu\in\xi$. We begin with the case of the simple object on this category; namely, we wonder whether the dense module ${\mathbb V}(\xi,\tau)$ carries a compatible structure of $\Witt_>$-module. After a detailed study of this situation, we will sketch the cases of $\Witt_<$, $\Witt$ and $\Vir$. Recall that the Pochhammer symbol, $P(z,n)$, is defined as
	$$
	P(z,n) \,=\, \,\begin{cases}
		z(z+1)\dots (z+n-1)  & \text{ for } n\geq 0
		\\
		\frac1{(z+n)(z+n+1)\dots (z-1)} & \text{ for } n\leq -1
		\end{cases}
	$$
where $z\in\C$ and $n\in \Z$. Observe that it makes sense for $z\in\operatorname{Mat}_{l\times l}({\mathbb C})$ and $n\geq 0$ while for $n<0$ one should impose that negative integer numbers are not  eigenvalues of $z$. Finally, for $z\in \C$, it holds that $P(z,n)=\frac{\Gamma(z+n)}{\Gamma(z)}$  where $\Gamma$ is the Gamma function.

\subsubsection{Extending to $\Witt_>$}

We prove now a result that is valid for arbitrary dense modules. 

\begin{prop}\label{prop:denseW>}
Let $\xi, \tau$ be arbitrary. The dense $\sl(2)$-module ${\mathbb V}(\xi,\tau)$ admits at most two compatible structures of $\Witt_>$-module. Moreover, these structures are explicitly given by:
	{\small $$
	\rho_{>}^{\pm}(L_i)v_{\mu} \,:=\,  \Big(
	 \frac{(-1)^i}{2}
	\big( i(\pm \sqrt{\tau}-1)-\mu \big)
	\, P\big(\frac12(1+\mu\pm \sqrt{\tau}), i\big)\Big) v_{\mu+2i}
	$$}where $P(z,n)$ is the Pochhammer symbol, $i\geq -1$ and $\mu\in\xi$. Moreover, both extensions coincide if and only if $\tau=0$ or $\tau=1$.
\end{prop}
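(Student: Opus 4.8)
The plan is to translate, via Proposition~\ref{prop:Tiffrho} and Theorem~\ref{thm:Witt>equivalent}, the question into the classification of the endomorphisms $T$ belonging to $\End_{\Witt_>}({\mathbb V}(\xi,\tau),\sigma)$, and to bound this set using the weight grading. The starting observation is that the second equation in~(\ref{eq:g-L_2}), $\ad(\sigma(h))(T)=4T$, says exactly that $T\in\End_2({\mathbb V}(\xi,\tau))$; since a dense module has one–dimensional weight spaces this forces $T(v_\mu)=t_\mu v_{\mu+4}$ for scalars $t_\mu$, $\mu\in\xi$. Feeding this and the action~(\ref{eq:dense}) into the first equation of~(\ref{eq:g-L_2}) turns $\ad(\sigma(f))(T)=3\sigma(e)$ into the scalar recursion
$$
t_\mu-t_{\mu-2}\;=\;\tfrac34\bigl(\tau-(\mu+1)^2\bigr),\qquad \mu\in\xi .
$$
As $\xi$ is a single coset of $2\Z$, its solutions form a one–parameter affine family: writing $\Delta$ for the endomorphism $v_\mu\mapsto v_{\mu+4}$ (which lies in $\End_2$ and is annihilated by $\ad(\sigma(f))$), one gets $\End_{\Gamma_>}({\mathbb V}(\xi,\tau),\sigma)=\{\,T_0+a\Delta:a\in\C\,\}$ for any fixed solution $T_0$.

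To cut this line down I would impose the extendability obstruction of Theorem~\ref{thm:Witt>equivalent}: a compatible $\Witt_>$-structure must satisfy condition~(2) for all $i,j\ge0$, in particular the instance $(i,j)=(0,1)$,
$$
[\,T,\ad(\sigma(e))(T)\,]\;=\;-\tfrac16\,\ad(\sigma(e))^3(T).
$$
Every operator here is homogeneous for the weight grading, so on $v_\mu$ both sides are scalars times $v_{\mu+10}$. Writing $T=T_0+a\Delta$, the left–hand side is quadratic in $a$ and the right–hand side linear, and the coefficient of $a^2$ equals $[\Delta,\ad(\sigma(e))\Delta]$; a short computation with~(\ref{eq:dense}) gives $[\Delta,\ad(\sigma(e))\Delta](v_\mu)=8\,v_{\mu+10}$, which is nonzero. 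Hence, fixing any one $\mu\in\xi$, the relation is a genuine quadratic equation in $a$, so at most two values of $a$ are admissible; this already yields the ``at most two'' part.

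To identify the survivors and write down the whole representation, for a square root $s=\pm\sqrt{\tau}$ I would introduce the candidate $T^{s}(v_\mu)=\tfrac18(2s-2-\mu)(1+\mu+s)(3+\mu+s)\,v_{\mu+4}$, i.e. the $i=2$ case of the asserted formula, obtained via $P(z,2)=z(z+1)$. One checks that $T^{s}$ solves~(\ref{eq:g-L_2}) — the first equation reduces to the telescoping identity $t^{s}_\mu-t^{s}_{\mu-2}=\tfrac34(\tau-(\mu+1)^2)$ for its coefficient $t^{s}_\mu$ — and that it also satisfies the $(0,1)$-relation above, hence all the relations of Theorem~\ref{thm:Witt>equivalent}. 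Since at most two $a$'s qualify and $T^{+}\neq T^{-}$ whenever $\tau\notin\{0,1\}$, the compatible structures are precisely those given by $T^{+}$ and $T^{-}$. Finally $\rho_{>}^{\pm}(L_{2+i})=\tfrac1{i!}\ad(\sigma(e))^i(T^{\pm})$, as follows by iterating~(\ref{eq:rho(L_i)def}); this I would evaluate by induction on $i$, noting that each application of $\ad(\sigma(e))$ converts the scalar coefficient of $v_{\mu+2i}$ into an affine-in-$\mu$ multiple of itself shifted onto $v_{\mu+2(i+1)}$, so that the accumulated product assembles, through $P(z,n)=\Gamma(z+n)/\Gamma(z)$, into $P\bigl(\tfrac12(1+\mu\pm\sqrt{\tau}),i\bigr)$, giving the stated expression.

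For the last assertion I would compare the two operators directly: $t^{+}_\mu-t^{-}_\mu$ evaluated at $\mu=0$ is a nonzero multiple of $\sqrt{\tau}(\tau-1)$, which vanishes exactly when $\tau\in\{0,1\}$; conversely, for $\tau=0$ the formula is manifestly independent of the choice of $\sqrt{\tau}$, and for $\tau=1$ the elementary identity $P\bigl(\tfrac12(\mu+2),i\bigr)=\tfrac{\mu+2i}{\mu}\,P\bigl(\tfrac12\mu,i\bigr)$ shows $\rho_{>}^{+}(L_i)=\rho_{>}^{-}(L_i)$ for all $i$. I expect the main obstacle to be the bookkeeping of the middle steps — verifying that the $(0,1)$-relation is genuinely quadratic in $a$, and carrying the telescoping induction far enough that the Pochhammer symbol emerges cleanly — both of which are elementary but computation–intensive, with the clean organisation of the Pochhammer recursion being the delicate point.
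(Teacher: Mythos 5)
Your uniqueness analysis is correct and is essentially the paper's own Steps 1--3: the weight grading forces $T(v_\mu)=t_\mu v_{\mu+4}$, the relation $\ad(\sigma(f))(T)=3\sigma(e)$ becomes the difference equation $t_\mu-t_{\mu-2}=\tfrac34(\tau-(\mu+1)^2)$ whose solutions form the affine line $T_0+a\Delta$, and the $(i,j)=(0,1)$ instance of Theorem~\ref{thm:Witt>equivalent}(2) is a quadratic constraint on $a$; your observation that its leading coefficient is $[\Delta,\ad(\sigma(e))\Delta]=8\neq 0$ is a clean way to justify ``at most two'', and your candidate $T^{s}$ does agree with the solution singled out by $\alpha_{\pm}=\tfrac14(\tau-1)(3\pm\sqrt{\tau})$.

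The genuine gap is in the existence half. You write that $T^{\pm}$ satisfies (\ref{eq:g-L_2}) and the $(0,1)$-relation, ``hence all the relations of Theorem~\ref{thm:Witt>equivalent}''. That inference is not available: vanishing on the first reduced standard relation does not formally imply vanishing on the higher ones --- this is exactly the content of Theorem~\ref{thm:NoFiniteNumber} and the remark following it --- and the bound ``at most two candidates'' cannot substitute for existence, since a candidate surviving the quadratic might still fail some relation $r_k$ with $k\geq 2$, in which case the module would admit fewer (possibly no) extensions; the paper's $\Witt_<$ analysis and the counterexample of \S\ref{subset:counter} show that existence is genuinely not automatic in this circle of problems. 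So after deriving the closed formula $\rho^{\pm}_{>}(L_{2+i})=\tfrac1{i!}\ad(\sigma(e))^i(T^{\pm})$ you must still verify that these operators satisfy the Witt brackets, i.e.\ that the coefficients of (\ref{eq:amui-denso}) obey the identity (\ref{eq:aiaj-ajai}), $a^{i}_{\mu+2j}a^{j}_{\mu}-a^{j}_{\mu+2i}a^{i}_{\mu}=(i-j)a^{i+j}_{\mu}$ for all $i,j\geq -1$ and $\mu\in\xi$; this is the paper's Step 4, it follows from Pochhammer identities, but it is a separate computation, not a consequence of the $(0,1)$-relation (indeed, the statement that the $k=1$ relation suffices for such modules is essentially Theorem~\ref{thm:CasimirR23forfirstCategory}, whose proof performs precisely this verification). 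A minor further slip: you evaluate $t^{+}_\mu-t^{-}_\mu$ ``at $\mu=0$'', but $0$ need not lie in $\xi$; fortunately the difference is $\mu$-independent, equal to $\tfrac12\sqrt{\tau}(\tau-1)$, so your coincidence criterion $\tau\in\{0,1\}$ is unaffected.
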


\begin{proof}
The idea relies on Corollary~\ref{thm:Witt>equivalent}.

\emph{Step 1}. There exists $T\in \End({\mathbb V}(\xi,\tau))$ fulfilling equations~(\ref{eq:g-L_2}). Indeed, applying the second identity of~(\ref{eq:g-L_2}) to $v_{\mu}$ we obtain:
	{\small $$
	4T (v_{\mu}) \,=\,   [h, T] v_{\mu} \,=\,
	h(T ( v_{\mu})) \, - \, T( h ( v_{\mu}))
	 \,=\,
	h(T ( v_{\mu})) \, - \, \mu T( v_{\mu})
	\, ,$$  }
which shows that, if $T$ does exist, it must be of the form:
	$$
	T (v_{\mu}) \,=\, a_{\mu}^2 v_{\mu+4}
	$$
for certain complex numbers $a_{\mu}^2\in\C$. Let us use the first identity of~(\ref{eq:g-L_2}) to deduce an explicit expression for $a_{\mu}^2$ and, consequently, for $T$. Applying that relation to $v_{\mu}$ we get:
	$$
	3e(v_{\mu}) \,=\, [f, T] (v_{\mu})
	\,=\, f(a_{\mu}^2 v_{\mu+4}) \,-\, T(v_{\mu-2})
	\, ,
	$$
and, since $V_{\mu}=<v_{\mu}>$ for all $\mu$,
	$$
	\frac34(\tau-(\mu+1)^2) \, =\, a_{\mu}^2 \, -\, a_{\mu-2}^2 \qquad \forall \mu\in\xi
	\, .
	$$

The general solution of this difference equation is:
	$$
	a_{\mu}^2
	\,=\,
	\alpha - \frac18 \mu(11+6\mu+\mu^2 -3\tau)
	$$
for a constant $\alpha\in\C$.

\emph{Step 2}. Compute $\alpha$. For $T$ as in Step 1, we consider $\rho$ as defined by equations~(\ref{eq:defrho-1012}) and~(\ref{eq:rho(L_i)def}). Demanding that the second item of Corollary~\ref{thm:Witt>equivalent} holds for $i=0$, $j=1$, we obtain that there are two possible values for $\alpha$:
	$$
	\alpha_{\pm}\,=\,
	\frac14(\tau-1)(3\pm \sqrt{\tau})
	\, .
	$$
Observe that the two possibilities coincide if and only if $\tau=0$ or $\tau=1$.

\emph{Step 3}. Compute $\rho_{>}^{\pm}(L_i)$ for a given $\alpha$. For the sake of notation, we simply write $\rho(L_i)$. Recalling that  $\rho(L_i)\in\End_{-2i}({\mathbb V}(\xi,\tau))$ for all $i\geq -1$, we may introduce constants $a_{\mu}^i$ by the defining relations:
	$$
	\rho(L_i)(v_{\mu})\,=\, a_{\mu}^i v_{\mu+2i} \qquad \text{for }\mu\in\xi \, , \, i\geq -1
	\, .
	$$
Hence, by equations~(\ref{eq:defrho-1012}) and the previous Steps, it follows that:
	$$
	\begin{aligned}
	a_{\mu}^{-1} \, & =\, 1
	\\
	a_{\mu}^0 \, & =\, -\frac12 \mu
	\\
	a_{\mu}^1 \, & =\, -\frac14(\tau-(\mu+1)^2)
	\\
	a_{\mu}^2 \, & =\, \frac14(\tau-1)(3\pm \sqrt{\tau}) - \frac18 \mu(11+6\mu+\mu^2 -3\tau)
	\end{aligned}
	$$
Writing the equation~(\ref{eq:rho(L_i)def}) in terms of the $a$'s we get:
	{\small $$
	a_{\mu}^i \,=\,
	- \frac1{i-2}\big(
	a_{\mu+2(i-1)}^1 a_{\mu}^{i-1}\, - \, a_{\mu+2}^{i-1} a_{\mu}^1\big)
	\qquad\text{ for }\mu\in\xi \, , \,i\geq 3
	\, .
	$$}
	
One can find the general explicit expression for $a_{\mu}^i $ (where $\mu\in\xi$ and $i\geq -1$), and it  reads as follows:
	{\small \begin{equation}\label{eq:amui-denso}
	a_{\mu}^i \,=\,
	 \frac{(-1)^i}{2}
	\big( i(\pm \sqrt{\tau}-1)-\mu \big)
	\, P\big(\frac12(1+\mu\pm \sqrt{\tau}), i\big)
	\, ,
	\end{equation}}
where $P(z,n)$ is the Pochhammer symbol.
	
\emph{Step 4}.  Check that the second item of  Corollary~\ref{thm:Witt>equivalent} is fulfilled. This identity can be written down in terms of $a$'s and, therefore, it suffices to show that:
	\begin{equation}\label{eq:aiaj-ajai}
    a_{\mu+2j}^{i}a_{\mu}^{j} \,-\, a_{\mu+2i}^{j} a_{\mu}^{i} \,=\, (i-j) a_{\mu}^{i+j}
    \, ,
    \end{equation}
holds true for $i,j\geq -1$ and $\mu\in\xi$. This follows from the explicit expressions obtained in Step 3.
\end{proof}

Once we know that the $\sl(2)$-module structure of any dense module ${\mathbb V}(\xi,\tau)$ can be extended, we aim at classifying it as a $\Witt_{>}$-module. One checks that ${\mathbb V}(\xi,\tau)\in\bar\Mf^{\xi,\tau}$ (for $\tau\neq (\mu+1)^2$ for all $\mu\in\xi$)  is simple and a  Harish-Chandra $\Witt_{>}$-module. Recall that  Harish-Chandra $\Witt_{>}$-modules has been classified by Mathieu (\cite{Mathieu}). Indeed, he showed that Harish-Chandra modules over the $\Witt_{>}$ algebra are exactly the highest weight modules,  the lowest weight modules and the intermediate series modules (see \cite[Theorem~2]{Mathieu}). Let us recall the construction of the latter. Consider ${\mathbf V}_{a,b}$ the $\C$-vector space freely generated by $\{w_n\vert n\in \Z\}$ endowed with the action
	\begin{equation}\label{eq:intermediate}
	\rho(L_i) w_n \, :=\, (ai+b-n)w_{n+i} \qquad i\geq -1
	\end{equation}
where $ \{L_i\}_{i\geq -1 }$ is the standard basis of $\Witt_{>}$. The nontrivial simple subquotients of modules ${\mathbf V}_{a,b}$ are called the intermediate series modules.  It holds that ${\mathbf V}_{a,b}$ is simple for $b\notin \Z$ or $a\neq 0,1$. Observe that there is an isomorphism of $\Vir$-modules ${\mathbf V}_{a,b} \simeq {\mathbf V}_{a,b+k}$ for all $k\in\Z$ (mapping $w_n$ to $w_{n+k}$). Henceforth, the isomorphism class of ${\mathbf V}_{a,b}$ depends on the class of $b$ in $\C/\Z$ rather than on $b$ itself. Let $\overline{2b}$ denote the class of $2b$  in $\C/2\Z$.

\begin{prop}\label{prop:DenseasWitt-Intermediate}
For $2a \notin -\overline{2b}$, ${\mathbf V}_{a,b}$ considered as an $\sl(2)$-module (via the embedding (\ref{eq:emb-sl2-W+})) is isomorphic to ${\mathbb V}(-\overline{2b} , (1+2a)^2)$.

Conversely, $({\mathbb V}(\xi,\tau),\rho^{\pm}_{>})$ is isomorphic to ${\mathbf V}_{a,b}$ with $-\overline{2b}=\xi$ and $a=\frac{1}{2}(-1\pm \sqrt{\tau})$ as  $\Witt_{>}$-modules.
\end{prop}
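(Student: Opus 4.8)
The plan is to prove both directions by direct comparison of the explicit actions, using the known $\sl(2)$-action on the weight basis of $\mathbf{V}_{a,b}$ obtained by restricting~(\ref{eq:intermediate}) along~(\ref{eq:emb-sl2-W+}), together with the closed formula~(\ref{eq:amui-denso}) for $\rho^{\pm}_{>}$ from Proposition~\ref{prop:denseW>}.

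\emph{First direction.} Take $\mathbf{V}_{a,b}$ with $2a\notin -\overline{2b}$. Restricting~(\ref{eq:intermediate}) via $\iota$, the element $h=-2L_0$ acts on $w_n$ by $-2(b-n)$, so the weight of $w_n$ is $2(n-b)=2n-2b$; hence $\supp \mathbf{V}_{a,b}=\{2n-2b\mid n\in\Z\}=-\overline{2b}$ as a coset in $\C/2\Z$. Relabel the basis by setting $v_{\mu}:=w_n$ where $\mu=2n-2b$. Then $f=L_{-1}$ sends $w_n\mapsto(-a+b-n)w_{n-1}$, and in the $v$-notation this reads $v_\mu\mapsto(-a+b-n)v_{\mu-2}$; one checks the scalar here equals $-(a+\tfrac\mu2+1)$ with $\tau:=(1+2a)^2$, which matches the action on ${\mathbb V}(\xi,\tau)$ up to the normalization of basis vectors — i.e. a diagonal rescaling $v_\mu\mapsto c_\mu v_\mu$ identifies them. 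It is cleaner to verify that the Casimir $c$ acts on $\mathbf{V}_{a,b}$ by the scalar $(1+2a)^2$ (a one-line computation with $c=ef+fe+\tfrac12 h^2$ or the chosen normalization) and that $\mathbf{V}_{a,b}$, being simple for $2a\notin-\overline{2b}$ with one-dimensional weight spaces supported on all of $\xi$, must by Theorem~\ref{thm:simplemodinMxitau}(1) be isomorphic to the unique simple object ${\mathbb V}(\xi,\tau)$ with $\xi=-\overline{2b}$, $\tau=(1+2a)^2$. This avoids ad hoc rescaling: simplicity plus the classification pins down the isomorphism type.

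\emph{Converse direction.} Here one must identify which of $a=\tfrac12(-1\pm\sqrt\tau)$ corresponds to $\rho^{\pm}_{>}$ and exhibit an explicit isomorphism $({\mathbb V}(\xi,\tau),\rho^{\pm}_{>})\xrightarrow{\sim}\mathbf{V}_{a,b}$ of $\Witt_{>}$-modules, not merely of $\sl(2)$-modules. The natural candidate map sends $v_\mu\mapsto \gamma_\mu w_n$ (with $\mu=2n-2b$, $-\overline{2b}=\xi$) for suitable nonzero scalars $\gamma_\mu$, and one checks compatibility with $\rho^{\pm}_{>}(L_i)$ for \emph{all} $i\geq-1$ by comparing~(\ref{eq:amui-denso}) with $(ai+b-n)$. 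Since $\Witt_{>}$ is generated by $L_{-1}$ and $L_2$ (indeed by $L_{-1},L_0,L_1,L_2$; see the structure of $\Gamma_>$ and Proposition~\ref{prop:triang-desc}), it suffices to check compatibility on these generators: for $L_{-1},L_0,L_1$ this is the $\sl(2)$-comparison already set up above, and for $L_2$ one matches $a^2_\mu$ from Step~3 of the proof of Proposition~\ref{prop:denseW>} — in particular the value of $\alpha_\pm=\tfrac14(\tau-1)(3\pm\sqrt\tau)$ — against the $i=2$ instance $(2a+b-n)$ of~(\ref{eq:intermediate}). Solving the resulting two-term recursion $\gamma_{\mu+2}/\gamma_\mu=$ (ratio of the two scalars) produces the $\gamma_\mu$ explicitly (again a Pochhammer-type product), and one verifies the quotient telescopes consistently, which is exactly the content of the identity~(\ref{eq:aiaj-ajai}) already established. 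The sign $\pm$ in $a=\tfrac12(-1\pm\sqrt\tau)$ is forced by which square root was chosen in $\rho^{\pm}_{>}$, since $\tau=(1+2a)^2$ has the two roots $1+2a=\pm\sqrt\tau$.

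\emph{Main obstacle.} The genuinely delicate point is the converse: producing the rescaling scalars $\gamma_\mu$ and confirming that the \emph{same} $\gamma_\mu$ intertwine all of $L_{-1},L_0,L_1,L_2$ simultaneously — equivalently, that the two presentations of the module are related by a diagonal change of basis rather than being merely abstractly isomorphic as $\sl(2)$-modules. This reduces to a compatibility check between the difference equation for $a^2_\mu$ (Step~1 of Proposition~\ref{prop:denseW>}) with the specific value $\alpha_\pm$, and the linear-in-$n$ formula~(\ref{eq:intermediate}) for $i=2$; it is a finite computation, but it is the step where the precise form of $\alpha_\pm$ — and hence the appearance of $\sqrt\tau$ — is actually used. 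Everything else is bookkeeping with Pochhammer symbols and the already-proven relation~(\ref{eq:aiaj-ajai}).
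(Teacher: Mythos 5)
Your converse direction is essentially the paper's route (an explicit diagonal intertwiner with Pochhammer-type coefficients, checked on the generators $L_{-1},L_0,L_1,L_2$; the remark that intertwining on a generating set suffices is correct), and your first sketch for the direct statement (relabel $v_\mu=w_n$ with $\mu=2n-2b$ and rescale diagonally) is exactly the computation the paper performs via the map $w_n\mapsto P(-a+b-n,n)\,v_{\bullet}$ --- though note the $f$-coefficient in that basis is $-(a+\tfrac{\mu}{2})$, not $-(a+\tfrac{\mu}{2}+1)$. The genuine gap is in the argument you actually choose for the first direction, the ``cleaner'' classification shortcut. To invoke Theorem~\ref{thm:simplemodinMxitau}(1) you need the restriction of $\mathbf{V}_{a,b}$ to $\sl(2)$ to be a simple object of $\bar\Mf^{\xi,\tau}$, and you assert this simplicity without proof; it does not follow from $2a\notin-\overline{2b}$. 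Restricting~(\ref{eq:intermediate}) along~(\ref{eq:emb-sl2-W+}) gives $f\cdot w_n=(-a+b-n)w_{n-1}$ and $e\cdot w_n=(n-a-b)w_{n+1}$, so the hypothesis $2a\notin-\overline{2b}$ (i.e.\ $a+b\notin\Z$) only guarantees that $e$ acts injectively; if $a-b\in\Z$ (e.g.\ $a=b=\tfrac14$, which satisfies $2a\notin-\overline{2b}$) then $f$ kills $w_{b-a}$, the span of $\{w_n:\,n\geq b-a\}$ is a proper submodule, the restriction is not simple, and it is not isomorphic to any dense module, since in ${\mathbb V}(\xi,\tau)$ the operator $f$ acts bijectively. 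So the classification argument does not go through as written: the missing input is precisely the nonvanishing condition $a-b\notin\Z$ (equivalently $2a\notin\overline{2b}$), which is also the condition you would have to confront if you completed the diagonal rescaling, since it is exactly the nonvanishing of the rescaling scalars (the Pochhammer factors in the paper's map).

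In other words, the step you discarded as ``bookkeeping'' is where the real content of the hypothesis lives, and the step you kept silently assumes it. Either complete the explicit rescaling and record where its coefficients could vanish, or, if you retain the abstract route, prove simplicity of the restricted module --- which forces you to verify that both roots $\mu=2a$ and $\mu=-2a-2$ of $\tau=(\mu+1)^2$ avoid $\xi$, i.e.\ $a+b\notin\Z$ and $a-b\notin\Z$, and not only the first of these. (The paper's own proof is terse on the same point: its parenthetical claim that $2a\notin-\overline{2b}$ already forces $\tau\neq(\mu+1)^2$ for all $\mu\in\xi$ likewise ignores the root $-2a-2$, and its Pochhammer coefficients are nonzero only when $a-b\notin\Z$; but since your argument outsources everything to the classification theorem, it depends on these unverified hypotheses even more heavily.) The converse half of your proposal, including the identification of the sign via $1+2a=\pm\sqrt{\tau}$ and the use of relation~(\ref{eq:aiaj-ajai}) to see that the recursion for the scalars $\gamma_\mu$ is consistent, is fine and matches the ``straightforward computation'' the paper alludes to.
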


\begin{proof}
For proving the statement, we must compare the actions (\ref{eq:dense}) and (\ref{eq:intermediate}). 

For the first item, let ${\mathbf V}_{a,b}$ be given and set $\xi =-\overline{2b} , \tau= (1+2a)^2$. Note that $2a \notin -\overline{2b}$ implies that $\tau\neq (\mu+1)^2$ for all $\mu\in \xi$. Let us consider the map
	$$
	\begin{aligned}
	{\mathbf V}_{a,b} &\, \longrightarrow \,  {\mathbb V}(\xi,\tau)
	\\
	\omega_n & \,\longmapsto \, P(-a+b-n,n) v_{2(a-b+n)}\qquad n\in{\mathbb Z}
	\end{aligned}
	$$
It is easy to check that it gives rise to an isomorphism of $\sl(2)$-modules. The second claim follows from a straightforward computation. 
\end{proof}


\begin{prop}\label{prop:denseW>2} Let us assume that $\tau\neq (\mu+1)^2$ for all $\mu\in\xi$.
For every object of $\bar\Mf^{\xi,\tau}$  and any square root of its Casimir operator $c$, there exists a compatible structure of $\Witt_{>}$-module such that $c$ is central, i.e. $c$ commutes with the representation of $L_i$ for every $i\geq -1$.
\end{prop}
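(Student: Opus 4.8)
The plan is to promote the scalar computation of Proposition~\ref{prop:denseW>} to an operator computation, replacing the number $\sqrt\tau$ by a square root of the Casimir. By a square root of $c$ I mean an endomorphism $R$ of the $\sl(2)$-module $M$ with $R^2=c$; since $c-\tau$ is nilpotent on $M$ (because $M\in\bar\Mf^{\xi,\tau}$), such an $R$ exists whenever $\tau\neq0$ (take the finite binomial sum $\sqrt\tau\,(\operatorname{Id}+(c-\tau)/\tau)^{1/2}$), and when $\tau=0$ and none exists there is nothing to prove. Two structural facts will be used repeatedly. First, since $\bar\Mf^{\xi,\tau}$ has finite length with unique simple object $\mathbb V(\xi,\tau)$, $M$ carries a finite filtration all of whose subquotients are isomorphic to $\mathbb V(\xi,\tau)$; and since $\tau\neq(\mu+1)^2$ for all $\mu\in\xi$, the operators $\sigma(e)$ and $\sigma(f)$ are bijective on each subquotient (look at the action (\ref{eq:dense})), hence, by an induction along the filtration carried out weight space by weight space, they are bijective on $M$. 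Second, $R$ commutes with $\sigma(e),\sigma(f),\sigma(h)$, and $\sigma(e)\sigma(f)=\tfrac14\big(c-(\sigma(h)-1)^2\big)$, $\sigma(f)\sigma(e)=\tfrac14\big(c-(\sigma(h)+1)^2\big)$ are polynomials in $\sigma(h)$ and $c=R^2$.

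I would then write down the candidate extension. For $i\ge-1$ let $A_i(X,Y)$ be the two-variable polynomial obtained from the scalar coefficient $a_\mu^i$ of (\ref{eq:amui-denso}) by substituting $\mu=X-2i$ and $\sqrt\tau=Y$; one checks this is genuinely a polynomial, also for $i=-1$ where $a_\mu^{-1}\equiv1$ (so $A_{-1}=1$, $A_0=-\tfrac12 X$, $A_1=-\tfrac14\big(Y^2-(X-1)^2\big)$), and set
$$
\rho(L_i)\ :=\ A_i\big(\sigma(h),R\big)\,\sigma(f)^{-i}\ \in\ \End(M)\qquad(i\ge-1),
$$
which makes sense because $\sigma(f)$ is invertible. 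Using the identifications above one has $\rho(L_{-1})=\sigma(f)$, $\rho(L_0)=-\tfrac12\sigma(h)$ and $\rho(L_1)=-\sigma(e)$, so $\rho$ restricts on $\langle L_{-1},L_0,L_1\rangle\cong\sl(2)$ to the given $\sigma$; equivalently, $T:=\rho(L_2)=A_2(\sigma(h),R)\,\sigma(f)^{-2}$ is the endomorphism of Step~1 of Proposition~\ref{prop:denseW>} with the free constant taken to be the operator $\alpha=\tfrac14(R^2-\operatorname{Id})(3\operatorname{Id}+R)$ (the operator analogue of $\alpha_+$; the choice $\alpha_-$ is recovered by replacing $R$ with $-R$). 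To conclude I would apply Proposition~\ref{prop:Tiffrho} and Theorem~\ref{thm:Witt>equivalent}: it suffices to check that $T$ satisfies (\ref{eq:g-L_2}) and condition~(2) of Theorem~\ref{thm:Witt>equivalent} for the operators $\Phi(x_{2+i})=\tfrac1{i!}\ad(\sigma(e))^i(T)=\rho(L_{2+i})$. The mechanism is a normal form: using $\sigma(f)^{-1}p(\sigma(h))=p(\sigma(h)-2)\sigma(f)^{-1}$, the centrality of $R$, and $\sigma(e)=\tfrac14\big(c-(\sigma(h)-1)^2\big)\sigma(f)^{-1}$, every bracket of the $\rho(L_i)$ can be rewritten as $P(\sigma(h),R)\,\sigma(f)^{-k}$ with $P$ a polynomial that does not depend on $M$; moreover $\Phi(x_{2+i})=A_{2+i}(\sigma(h),R)\,\sigma(f)^{-2-i}$. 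For $\ad(\sigma(f))(T)-3\sigma(e)$ this $P$ is exactly the difference of the two sides of the difference equation $a_\mu^2-a_{\mu-2}^2=\tfrac34(\tau-(\mu+1)^2)$ of Proposition~\ref{prop:denseW>}, Step~1; the identity $\ad(\sigma(h))(T)=4T$ is automatic since $T$ has weight $4$; and for $[\Phi(x_{2+i}),\Phi(x_{2+j})]-(i-j)\Phi(x_{2+i+j+2})$ this $P$ is exactly the polynomial whose identical vanishing is relation (\ref{eq:aiaj-ajai}), already established in Step~4 of Proposition~\ref{prop:denseW>}. Since $\sigma(h)$ and $R$ commute, each such $P(\sigma(h),R)$ vanishes in $\End(M)$, so $T\in\End_{\Witt_{>}}(M,\sigma)$ and $\rho\colon\Witt_{>}\to\End(M)$ is a representation extending $\sigma$, with $\rho(L_i)=A_i(\sigma(h),R)\,\sigma(f)^{-i}$ for all $i\ge-1$. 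Finally $R$ commutes with $\sigma(h)$ and $\sigma(f)$, hence with every $\rho(L_i)$, so $c=R^2$ commutes with the whole representation $\rho$; that is, $c$ is central, which is the assertion. The two square roots $\pm R$ give the two extensions $\rho_{>}^{\pm}$ of Proposition~\ref{prop:denseW>} when $M=\mathbb V(\xi,\tau)$.

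I expect the genuine work to be concentrated in the middle paragraph: making rigorous the principle that the required operator relations ``are the same computation'' as the ones for a generic dense module and hence reduce to the polynomial identities (\ref{eq:aiaj-ajai}) of Proposition~\ref{prop:denseW>} rather than to new conditions --- in particular verifying that $\Phi(x_{2+i})=\tfrac1{i!}\ad(\sigma(e))^i(T)$ indeed has the closed form $A_{2+i}(\sigma(h),R)\,\sigma(f)^{-2-i}$ with the same polynomials $A_{2+i}$ as in the dense case. A clean way to organize this is to carry out all manipulations once and for all in the associative algebra generated by $H$, an invertible $F$ and a central $R$ subject to $FH=(H-2)F$, with $\sigma(e)$ expressed there through the Casimir relation; evaluating this algebra on the dense modules $\mathbb V(\xi',\tau')$ for varying parameters $(\xi',\tau')$ shows that the needed identities hold there, and being polynomial in the two parameters $H$ and $R$ they then hold after mapping into $\End(M)$.
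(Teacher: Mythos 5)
Your proposal is correct and takes essentially the same route as the paper: both proofs promote the explicit dense-module coefficients of equation~(\ref{eq:amui-denso}) by replacing $\sqrt\tau$ with a square root of the Casimir and then verify the very same Pochhammer/polynomial identities, the paper doing this in a basis where $\sigma(f)$ acts as the identity matrix (so its matrices $A^i_\mu$ are exactly your operators $A_i(\sigma(h),R)\,\sigma(f)^{-i}$ restricted to $M_\mu$), while you phrase it basis-free. Your reduction of the operator relations to the two-variable polynomial identities already established in Proposition~\ref{prop:denseW>}, using the commuting pair $(\sigma(h),R)$, the rule $\sigma(f)^{-1}p(\sigma(h))=p(\sigma(h)-2)\sigma(f)^{-1}$ and the invertibility of $\sigma(f)$, is sound.
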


\begin{proof}
Let $M$ be an object of $\bar\Mf^{\xi,\tau}$. Since $M$ has finite length and ${\mathbb V}(\xi,\tau)$ is the unique simple object, let us choose a composition series 
	$$
	M^0= 0 \, \subsetneq \, M^1\, \subsetneq \, \ldots\, \subsetneq \, M^l =M
	$$
with $M^i/M^{i-1}\simeq {\mathbb V}(\xi,\tau)$. 

Bearing in mind that lowest weight modules do not belong to $\bar\Mf^{\xi,\tau}$, one has that $f$ acts injectively; that is, $f_{\mu}:M_{\mu}\to M_{\mu-2}$ is injective for all $\mu\in\xi$. Furthermore, since $\dim M_{\mu}<\infty$ does not depend on $\mu$, $f$ acts bijectively. Thus, we may fix a basis on every $M_\mu$ such that $f_{\mu}:M_{\mu}\to M_{\mu-2}$ acts by the $l\times l$ identity matrix. Second, since $M$ is a weight module, $h_{\mu}\vert_{M_{\mu}}$ is the homothety of ratio $\mu$ and, finally 
	$$
	e_\mu \, = \, \frac14(\tau-(\mu+1)^2) \operatorname{Id}_l + N_{\mu},
	$$
where $N_{\mu}$ is a strictly upper triangular  $l\times l$ matrix (since the action is compatible with the composition series). The condition $[e,f]=h$ implies that $N_{\mu}$ does not depend on $\mu$ and we denote it by $N$.

Now, following the ideas used in the first step of the proof of Proposition~\ref{prop:denseW>} we show that there exists $T\in \End(M)$ fulfilling equations~(\ref{eq:g-L_2}). The second of those equations is satisfied if and only if  $T$ maps $M_\mu$ to $M_{\mu+4}$ and therefore in the basis fixed above $T\colon M_\mu\to M_{\mu+4}$ is represented by an  $l\times l$ matrix $A_\mu$. Now the first equation in~(\ref{eq:g-L_2}) is equivalent to $$A_\mu-A_{\mu-2}=\frac34(\tau-(\mu+1)^2)\operatorname{Id}_l+3 N.$$ The general solution of this difference equation is $$A_\mu=A- \frac18 \mu(11+6\mu+\mu^2 -3\tau)\operatorname{Id}_l+\frac32 \mu N,$$ where $A$ in an $l\times l$ matrix. The Casimir operator $c$ of the $\sl(2)$-module $M$ maps $M_\mu$ to itself and a straightforward computation shows that in the basis we are using $c\colon M_\mu\to M_{\mu}$ acts by the  matrix $c=\tau \operatorname{Id}_l+4N$. Therefore, the condition that $T$ commutes with the Casimir $c$ is equivalent to $$[A,N]=0.$$

In order to be able to extend the $\sl(2)$-representation, $T$ has to satisfy  Corollary~\ref{thm:Witt>equivalent}. In particular, a straightforward but lengthy computation shows that the second item of that corollary holds for $i=0$, $j=1$, if and only if $$16 A^2 - 24  (c-1) A - (c-9) (c-1)^2=0.$$ Therefore we get $$A=\frac14(c-1)(3+ \sqrt{c}),$$ where $\sqrt{c}$ denotes any square root of the matrix $c=\tau \operatorname{Id}_l+4N$.

Now, inspired by formulas (\ref{eq:amui-denso}), we introduce matrices
	$$
	A_{\mu}^i 
	\,:=\, 
	\frac{(-1)^i}2 (i ( 1+ \sqrt{c})  -(\mu+2i)) P(\frac12(( 1 + \sqrt{c}) +\mu),i) 
	\qquad i\geq -1
	$$
where $c$ is the Casimir and scalars are thought as multiples of the identity matrix. 
 
In order to check that 
	\begin{equation}\label{eq:rhoAimu}
	\rho(L_i)\vert_{M_{\mu}}:= A_{\mu}^i
	\,:\, M_{\mu} \,\longrightarrow\, M_{\mu+2i}
	\end{equation}
(w.r.t. the previously chosen basis) defines a $\Witt_{>}$-module structure on $M$, it suffices to show that 
	$$
	A_{\mu+2j}^i A^j_{\mu} - A^j_{\mu+2i}A^i_{\mu} \,=\, (i-j) A^{i+j}_{\mu} 
	$$
for $i,j\geq -1$ and $\mu\in\xi$. But this follows easily from the definition of the $A_\mu^i$'s and the properties of the Pochhammer symbol.
\end{proof}

\begin{rem}  In particular, if $\tau\neq0$, then the Casimir operator $c$ of every $\sl(2)$-module of the category $\bar\Mf^{\xi,\tau}$, with  $\tau\neq (\mu+1)^2$ for all $\mu\in\xi$, always admits square roots $\sqrt{c}$. However, if $\tau=0$ then $\sqrt{c}$ may not exist. In both cases, there may exist infinitely many square roots of $c$. 
\end{rem}

\begin{thm}\label{thm:categorydenseW>}
Consider the category $\bar\Mf^{\xi,\tau}$  for $\tau\neq (\mu+1)^2$ for all $\mu\in\xi$ and $\tau\neq 0$. 

A square root $\sqrt{\tau}\in{\mathbb C}$ determines a fully faithful functor
	$$
	F_{\sqrt{\tau}}: \bar\Mf^{\xi,\tau} 
	\,\to \, 
	 {\Witt_{>}\text{--\,mod}}	
	$$
such that $G\circ F_{\sqrt{\tau}}=\operatorname{Id}_{\bar\Mf^{\xi,\tau} }$ where $G:{\Witt_{>}\text{--\,mod}}   \to {\sl(2)\text{--\,mod}}  $ is the forgetful functor.
\end{thm}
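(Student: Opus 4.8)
The plan is to promote the construction of Proposition~\ref{prop:denseW>2} to a functor; the key point is that it becomes canonical once a square root of the Casimir is fixed, and such a square root is available on the whole category precisely because $\tau\neq 0$. So, for $M\in\bar\Mf^{\xi,\tau}$, I would first observe that $c-\tau\cdot\operatorname{Id}_M$ is nilpotent: it is locally nilpotent by definition of $\bar\Mf^{\xi,\tau}$, and, $M$ having finite length $l$, one gets $(c-\tau)^l=0$ from a composition series (each factor being ${\mathbb V}(\xi,\tau)$, on which $c$ acts by $\tau$). Since $\tau\neq0$, the binomial series
$$
\sqrt{c}\,:=\,\sqrt{\tau}\,\sum_{k\geq 0}\binom{1/2}{k}\tau^{-k}(c-\tau)^k
$$
is then a finite sum, hence a polynomial in $c$ squaring to $c$, determined by $M$ and the choice of $\sqrt\tau$. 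Feeding it into Proposition~\ref{prop:denseW>2} yields a $\Witt_{>}$-module structure $\rho_M$ on $M$ extending $\sigma$; written intrinsically (reading off~(\ref{eq:rhoAimu})), it is $\rho_M(L_{-1}):=\sigma(f)$ and, for $i\geq 0$, $\rho_M(L_i)\vert_{M_\mu}:=(\sigma(f)^i\vert_{M_{\mu+2i}})^{-1}\circ A_\mu^i$, where $A_\mu^i:=\tfrac{(-1)^i}{2}\bigl(i(1+\sqrt c)-(\mu+2i)\bigr)P\bigl(\tfrac12((1+\sqrt c)+\mu),i\bigr)$ is viewed as an endomorphism of $M_\mu$ (a polynomial in $c\vert_{M_\mu}$), and $\sigma(f)^i\vert_{M_{\mu+2i}}\colon M_{\mu+2i}\to M_\mu$ is invertible because $\sigma(f)$ acts bijectively on every object of $\bar\Mf^{\xi,\tau}$ (neither highest- nor lowest-weight modules lie in this category, and weight spaces have constant finite dimension, as in the proof of Proposition~\ref{prop:denseW>2}). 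This structure depends only on $M$ and $\sqrt\tau$; put $F_{\sqrt\tau}(M):=(M,\rho_M)$.

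Next I would set $F_{\sqrt\tau}(\varphi):=\varphi$ on a morphism $\varphi\colon M\to M'$ of $\bar\Mf^{\xi,\tau}$ and verify that this underlying linear map is $\Witt_{>}$-linear. As an $\sl(2)$-map, $\varphi$ preserves weight spaces and commutes with $\sigma(e),\sigma(f),\sigma(h)$, hence with $c$ and with the polynomial $\sqrt c$, hence with each $A_\mu^i$; and from $\varphi\circ\sigma(f)=\sigma(f)\circ\varphi$ together with bijectivity of $\sigma(f)$ on weight spaces, $\varphi$ also intertwines the inverse transport maps $(\sigma(f)^i\vert_{M_{\mu+2i}})^{-1}$ and $(\sigma(f)^i\vert_{M'_{\mu+2i}})^{-1}$. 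Plugging these into the formula above gives $\varphi\circ\rho_M(L_i)=\rho_{M'}(L_i)\circ\varphi$ for all $i\geq-1$, so $\varphi$ is a morphism $F_{\sqrt\tau}(M)\to F_{\sqrt\tau}(M')$. Functoriality on identities and composites is then clear.

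For the two asserted properties: $F_{\sqrt\tau}$ changes neither underlying vector spaces nor underlying linear maps, and $\rho_M$ restricts to $\sigma$ along $\sl(2)\hookrightarrow\Witt_{>}$ by construction ($\rho_M(L_{-1})=\sigma(f)$, $\rho_M(L_0)=-\tfrac12\sigma(h)$, $\rho_M(L_1)=-\sigma(e)$); hence $G\circ F_{\sqrt\tau}=\operatorname{Id}_{\bar\Mf^{\xi,\tau}}$ on the nose, and in particular $F_{\sqrt\tau}$ is faithful. For fullness, any $\Witt_{>}$-linear map $F_{\sqrt\tau}(M)\to F_{\sqrt\tau}(M')$ is $\sl(2)$-linear, hence a morphism of $\bar\Mf^{\xi,\tau}$ (a full subcategory of $\sl(2)$-modules); with the previous paragraph this gives $\Hom_{\Witt_{>}}(F_{\sqrt\tau}M,F_{\sqrt\tau}M')=\Hom_{\bar\Mf^{\xi,\tau}}(M,M')$ with $F_{\sqrt\tau}$ acting as the identity on Hom-sets, so $F_{\sqrt\tau}$ is fully faithful.

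The main obstacle is the canonicity invoked in the first paragraph: Proposition~\ref{prop:denseW>2} is proved after choosing a composition series and an $f$-trivialized basis of $M$, so one must make sure the resulting $\Witt_{>}$-action is genuinely intrinsic to the pair $(M,\sqrt\tau)$. Two facts make this work: $\sqrt c$ is a \emph{polynomial} in $c$ (precisely where $\tau\neq0$ is used, as noted in the remark following Proposition~\ref{prop:denseW>2}), and $\sigma(f)$ acts bijectively on every object, so the transport maps $(\sigma(f)^i\vert_{M_{\mu+2i}})^{-1}$ that move the scalar data $A_\mu^i$ from weight space to weight space are well defined and natural in $M$. The residual Pochhammer-symbol identities are exactly those already verified in the proofs of Propositions~\ref{prop:denseW>} and~\ref{prop:denseW>2}.
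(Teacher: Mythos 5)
Your proposal is correct and follows essentially the same route as the paper: fix the square root of the Casimir as the polynomial $s(c)$ given by the series of $\sqrt{x}$ at $\tau$ (using nilpotency of $c-\tau$ and $\tau\neq 0$), feed it into the construction of Proposition~\ref{prop:denseW>2} via~(\ref{eq:rhoAimu}), let $F_{\sqrt{\tau}}$ be the identity on morphisms, and check intertwining from commutation with $c$ (hence with $\sqrt{c}$) and with $f$. Your basis-free rewriting of the action and the explicit fullness argument are just refinements of steps the paper carries out in the $f$-trivialized bases or leaves as straightforward.
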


\begin{proof}
We continue with the notation and the ideas of the proof of Proposition~\ref{prop:denseW>2}. Let $s(x)$ be the series expansion of the function $\sqrt{x}$ at $x=\tau$ such that $s(\tau)$ is the previously chosen root $\sqrt{\tau}\in{\mathbb C}$. In fact, it holds that
	$$
	s(x)\,=\, \sqrt{\tau}\Big( 
		\sum_{n=0}^{\infty} \frac{(-1)^n (2n-3)!!}{2^n \tau^n n!} (x-\tau)^n 
		\Big)
	$$

Given an arbitrary module $M\in\operatorname{Ob}(\bar\Mf^{\xi,\tau}  )$ the series $s(c)$, where $c$ is the Casimir, is actually a polynomial in $c$ since we know that $c-\tau$ is nilpotent and $M$ has finite length. 
Thus, we define $F_{\sqrt{\tau}}(M)$ to be $M$ (as a vector space) endowed with the action, $\rho_M$, of $\Witt_{>}$ defined through equation~(\ref{eq:rhoAimu}) where we replace $\sqrt{c}$ by $s(c)$. 

Let us study the case of morphisms. Let  $M,N\in\operatorname{Ob}(\bar\Mf^{\xi,\tau}  )$ and 
	$$
	\Phi\,\in\, \Hom_{\bar\Mf^{\xi,\tau} }(M,N) \,=\,  \Hom_{\sl(2)-mod }(M,N) 
	$$
We will show that $\Phi$ is a homomorphism of $\Witt_{>}$-modules. 

Having in mind that $\Phi$ commutes with $h$, one obtains that $\Phi(M_{\mu})\subseteq N_{\mu}$ for all $\mu\in\xi$. Considering basis in $M_{\mu}$ and $N_{\mu}$ as in the proof of  the previous Proposition, we associate to $\Phi\vert_{M_{\mu}}$ a matrix $\phi_{\mu}$. Since $\Phi$ commutes with the action of $f$ and the matrix associated to $f_{\mu}:M_{\mu}\to M_{\mu-2}$ (resp. $f_{\mu}:N_{\mu}\to N_{\mu-2}$) is the identity matrix, it follows that $\phi_{\mu}$ does not depend on $\mu$; let us denote it by $\phi$. 

Similarly, the Casimir $c:M\to M$ (resp. $c:N\to N$) commutes with $h$ and $f$, hence the matrix associated to $c\vert_{M_{\mu}}$ (resp. $c\vert_{N_{\mu}}$) does not depend on $\mu$ and we denote it $c_M$ (resp. $c_N$). 

The identity of maps $c\circ \Phi=\Phi\circ c$ implies the identity of matrices $c_N \cdot \phi = \phi \cdot c_M$.  It thus follows that $A_{N,\mu}^i \cdot \phi = \phi \cdot A_{M,\mu}^i$ where $A_{M,\mu}^i $ (resp. $A_{N,\mu}^i $) denotes the matrix $A_{\mu}^i$ where $\sqrt{c}$ has been replaced by $s(c_M)$ (resp. $c_N$). Notice that this relation is equivalent to
	$$
	\rho_N (L^i) \circ \Phi \,=\, \Phi\circ \rho_M(L^i)
	$$
as claimed. We define $F_{\sqrt{\tau}}(\Phi)=\Phi\in \Hom_{\Witt_{>}-mod}(	F_{\sqrt{\tau}}(M), F_{\sqrt{\tau}}(N))$. It is straightforward to see that $F_{\sqrt{\tau}}$ is fully faithful. 
\end{proof}

There are cases in which the infinite set of relations of Corollary~\ref{cor:extension} collapses to a few constraints as is shown in the following result.

\begin{thm}\label{thm:CasimirR23forfirstCategory}
Let $M$ be an object of $\bar\Mf^{\xi,\tau}$,  with $\tau\neq (\mu+1)^2$ for all $\mu\in\xi$ and $\tau\neq 0$, $\sigma$ be its $\sl(2)$-module structure and $c$ its Casimir operator.
Given $T\in\End_{\sl(2)}(M)$, if the following relations hold
	$$
	\begin{aligned}
	\ad(\sigma(f))(T) \,&=\, 3\sigma(e)
	\\
	\ad(\sigma(h))(T)\,&=\, 4 T 
	\\
	\ad(\sigma(c))(T) \,&=\, 0
	\\
	[T , \ad(\sigma(e))(T) ] \,&=\, -\frac16 \ad(\sigma(e))^3(T)
	\end{aligned}
	$$
then, $T$ gives rise to a compatible  $\Witt_>$-module structure on $M$. 
\end{thm}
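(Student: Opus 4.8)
The strategy is to reduce the infinitely many relations in Theorem~\ref{thm:Witt>equivalent}(3) to the four hypotheses of the statement by exploiting that on a module $M$ in $\bar\Mf^{\xi,\tau}$ the Casimir $c$ is ``almost'' a scalar. Concretely, write $\sigma$ for the $\sl(2)$-action, set $e=\sigma(e)$, $f=\sigma(f)$, and let $A$ be the operator $\frac14(c-1)(3+s(c))$, where $s(x)$ is the polynomial in $c$ giving the chosen square root $\sqrt c$ (as in the proof of Theorem~\ref{thm:categorydenseW>}; this makes sense because $c-\tau$ is nilpotent on $M$ and $\tau\neq0$). First I would observe that the hypotheses $\ad(\sigma(f))(T)=3e$, $\ad(\sigma(h))(T)=4T$ say exactly that $T\in\End_{\Gamma_>}(M,\sigma)$, so by Proposition~\ref{prop:Tiffrho} we already get a compatible $\Gamma_>$-module structure; it remains to verify the single reduced standard relation of Theorem~\ref{thm:Witt>equivalent}(3) for all $k\geq N$, and by Corollary~\ref{cor:levelN-standard-relations-extension} (together with the $N=1$ version) it in fact suffices to verify it for $k=1$, which is the fourth hypothesis. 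So strictly speaking the fourth hypothesis is exactly item (3) with $N=1$, and the real content is that the \emph{third} hypothesis $[c,T]=0$ plus the $\sl(2)$-relations force $T$ to be the specific operator for which the explicit computation of Proposition~\ref{prop:denseW>2} already carried out the verification.

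The key steps, in order, are: (1) Using $[c,T]=0$ and $[\frac12\sigma(h),T]=2T$, decompose $T$ relative to a composition series of $M$ (all of whose factors are the dense module $\mathbb V(\xi,\tau)$) and, as in the proof of Proposition~\ref{prop:denseW>2}, reduce to matrix form: fix bases on the weight spaces $M_\mu$ so that $f$ acts by the identity, $h$ by $\mu$, and $e_\mu=\frac14(\tau-(\mu+1)^2)\operatorname{Id}+N$ with $N$ strictly upper triangular and independent of $\mu$; then $c=\tau\operatorname{Id}+4N$. (2) Solve the difference equation coming from $\ad(f)(T)=3e$ to get $T|_{M_\mu}=A_\mu=A-\frac18\mu(11+6\mu+\mu^2-3\tau)\operatorname{Id}+\frac32\mu N$ for a constant matrix $A$, where the commutation $[A,N]=0$ follows from $[c,T]=0$. (3) Translate the fourth hypothesis $[T,\ad(e)(T)]=-\frac16\ad(e)^3(T)$ — which is precisely item (3) of Theorem~\ref{thm:Witt>equivalent} for $k=1$ — into the scalar/matrix equation $16A^2-24(c-1)A-(c-9)(c-1)^2=0$, exactly as in Proposition~\ref{prop:denseW>2}; since $[A,N]=0$ this is an identity of commuting operators and can be solved as $A=\frac14(c-1)(3+\sqrt c)$ for the appropriate square root. (4) Invoke the computation already done in Proposition~\ref{prop:denseW>2}: with this $A$, defining $\rho(L_i)|_{M_\mu}:=A_\mu^i$ via the Pochhammer formula yields a genuine $\Witt_>$-module structure, because the identities $A_{\mu+2j}^i A_\mu^j - A_{\mu+2i}^j A_\mu^i=(i-j)A_\mu^{i+j}$ hold for all $i,j\geq-1$; in particular all the relations of Corollary~\ref{cor:levelN-standard-relations-extension} are satisfied, so $T$ extends. (5) Finally note $T=\rho(L_2)$ is recovered, so the $\Witt_>$-structure is compatible with $\sigma$ by construction.

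The main obstacle is step (1)–(2): showing that $[c,T]=0$ is strong enough to force $T$ into the rigid matrix shape $A_\mu$ with $A$ a \emph{fixed} matrix commuting with $N$. The subtlety is that a priori $T$ could mix the composition factors in a $\mu$-dependent way; one has to use both the weight condition ($T$ raises weight by $4$, hence is determined by matrices $A_\mu$ on the weight spaces relative to the $f$-trivialized bases) and the Casimir condition (which kills the $\mu$-dependent freedom beyond the scalar shift, forcing $[A_\mu,N]$-consistency and hence $[A,N]=0$) to pin $T$ down. Once $T$ is known to have this form, the verification that the quartic relation selects $A=\frac14(c-1)(3+\sqrt c)$ and that this $A$ produces a representation is a direct citation of the computations in Propositions~\ref{prop:denseW>} and~\ref{prop:denseW>2}; no new infinite family of relations needs to be checked, which is the whole point of the theorem. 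A minor additional point to be careful about is that $s(c)$ must be a polynomial in $c$ (valid since $c-\tau$ is nilpotent and $\tau\neq0$, so that the binomial series for $\sqrt x$ at $x=\tau$ terminates when evaluated at $c$), so that ``$\sqrt c$'' is a well-defined element of the algebra generated by $\sigma(\sl(2))$ and $T$, ensuring everything stays inside $\End(M)$.
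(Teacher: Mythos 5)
Your route is essentially the paper's: fix bases on the weight spaces as in Proposition~\ref{prop:denseW>2} so that $\sigma(f)$ acts by the identity, $\sigma(h)$ by $\mu$, and $\sigma(e)$ by $\frac14(\tau-(\mu+1)^2)+N$ with $N$ constant and nilpotent; solve the difference equation coming from $\ad(\sigma(f))(T)=3\sigma(e)$; use $[c,T]=0$ to make the constant part of $T$ commute with $N$; translate the quartic hypothesis into a quadratic matrix equation; and then cite the Pochhammer computation to get the extension. The genuine gap is at the step where you ``solve'' that quadratic. Since $T$ is \emph{given}, not constructed, the identity $16A^2-24(c-1)A-(c-9)(c-1)^2=0$ (equivalently, in the paper's normalization, $8\,T^0\bigl(T^0-\frac12(1-c)\sqrt c\bigr)=0$, where $T^0$ is the constant part of $T$) does not allow you to conclude $A=\frac14(c-1)(3+\sqrt c)$ for one of the two polynomial square roots $\pm s(c)$: a product of commuting operators can vanish with neither factor vanishing, so $T$ may follow the ``$+$'' branch on part of $M$ and the ``$-$'' branch on another part. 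This is precisely what the paper's proof handles: from the factorization it extracts the decomposition $M\simeq\ker(T^0)\oplus\ker\bigl(T^0-\frac12(1-c)\sqrt c\bigr)$, verifies by computing $[\sigma(e),T^0]$, $[\sigma(f),T^0]$, $[\sigma(h),T^0]$ that both kernels are $\sl(2)$-submodules, and then builds the $\Witt_>$-structure separately on each summand, using the Pochhammer operators with $+\sqrt c$ on the first and $-\sqrt c$ on the second. Your argument silently assumes a single global branch; to close the gap you must either reproduce this kernel decomposition, or extract from $A$ the possibly ``mixed'' operator square root $s=(4A-3(c-1))(c-1)^{-1}$, check $s^2=c$ and $[s,N]=0$, and rerun the Pochhammer verification with this $s$ (which additionally requires $c-1$ to be invertible, so it is not a free repair).

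A smaller point: in your opening you assert that by Corollary~\ref{cor:levelN-standard-relations-extension} it ``suffices to verify the relation for $k=1$'' and that the fourth hypothesis ``is exactly item (3) with $N=1$''. This is not correct: the level $1$ reduced standard relations consist of $r_k$ for \emph{all} $k\geq1$, and Theorem~\ref{thm:NoFiniteNumber} shows that no finite subset of them suffices in general; the $k=1$ relation is only an input. Your subsequent steps do not actually rely on this claim, since the extension is ultimately produced by exhibiting the explicit operators $A^i_\mu$ and checking all brackets, so this is a misstatement rather than a structural flaw, but it should be removed.
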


\begin{proof}
Arguing as in Proposition~\ref{prop:denseW>2}, we may fix basis for all $M_{\mu}$ such that $\sigma$ is expressed as follows: $\sigma(f)\vert_{M_{\mu}}=\operatorname{Id}$, $\sigma(h)\vert_{M_{\mu}}=\mu$ and 
	{\small $$
	\sigma(e)\vert_{M_{\mu}}\,=\, 
	\frac14(\tau-(\mu+1)^2)+N 
	\,=\, \frac14 (1+ \sqrt{c}  -(\mu+2))( 1 + \sqrt{c} +\mu) 
	$$}
where $\sqrt{c}$ is  a square root of the Casimir, $c=\tau+4N$ with $N$ a strictly upper triangular constant matrix. 

From $\ad(\sigma(h))(T)= 4 T$ we know that $T_{\mu}:=T\vert_{M_{\mu}}$ takes values in $M_{\mu+4}$. A general expression for a set $\{T_{\mu}\}$ fulfilling $f_{\mu+4}T_{\mu}- T_{\mu-2}f_\mu= 3e_{\mu}$ has the form
	$$
	T_{\mu}\,=\, \frac12 (2( 1+ \sqrt{c})  -(\mu+4)) P(\frac12(( 1 + \sqrt{c}) +\mu),2) + T^0,
	$$
where $T^0$ is a operator whose associated matrix w.r.t. the chosen basis does not depend on $\mu$; that is, it is represented by a square constant matrix. Now, the identity $\ad(\sigma(c))(T)=0$ implies that $[\sigma(c) , T^0] = [N, T^0]=0$.

Since $T^0$ commutes with the Casimir, a long but straightforward computation yields the following explicit expression for the fourth relation of the statement 
	{\small 
	$$
	0\,=\, 
	[T , \ad(\sigma(e))(T) ] +\frac16 \ad(\sigma(e))^3(T)
	\,=\, 
	8 T^0 \big( T^0-\frac12(1-c) \sqrt{c}\big) 
	$$}which allows us to consider the decomposition 
	$$
	M\simeq \ker(T^0)\oplus \ker (T^0-\frac12(1-c) \sqrt{c})\, .
	$$ 
Observing that $[\sigma(f), T^0]=0$, $[\sigma(h), T^0]=4T^0$ and that 
	{\small $$
	\begin{aligned}
	\,[\sigma(e), T^0]\vert_{M_{\mu}}\, & =\,
	[\frac14(\tau-(\mu+1)^2)  , T^0]  + [N  , T^0] 
	\,= \\ 
	&= \, \frac14\big( (\tau-(\mu+5)^2) - (\tau-(\mu+1)^2)\big) T^0 \,+\,  0 
	\,=\, 	 (6- 2\mu) T^0\vert_{M_{\mu}}
	\end{aligned}
	$$}one obtains that $\ker(T^0)$ and $ \ker  (T^0-\frac12(1-c) \sqrt{c})$ are $\sl(2)$-submodules of $M$. 
The problem thus may be restricted to these two cases. That is, we may assume that either $T^0=0$ or $T^0=  \frac12(1-c) \sqrt{c}$. 

For the first case, $T^0=0$, we set $\rho(L_i)\vert_{M_{\mu}}:= A_{\mu}^i:M_{\mu}\to M_{\mu+2i}$ where
	$$
	A_{\mu}^i 
	\,:=\, 
	\frac{(-1)^i}2 (i ( 1+ \sqrt{c})  -(\mu+2i)) P(\frac12(( 1 + \sqrt{c}) +\mu),i) 
	\qquad i\geq -1
	$$
which coincides with $\sigma(f), -\frac12\sigma(h), -\sigma(e)$ and $T$ for $i=-1,0,1,2$ respectively. And one checks, as in the proof of Proposition~\ref{prop:denseW>2} that it defines a compatible $\Witt_>$-module structure. 

The second case is analogous; it suffices to consider
	$$
	A_{\mu}^i 
	\,:=\, 
	\frac{(-1)^i}2 (i ( 1- \sqrt{c})  -(\mu+2i)) P(\frac12(( 1 - \sqrt{c}) +\mu),i) 
	\qquad i\geq -1
	$$
\end{proof}


%
%

\subsubsection{Extending to $\Witt_{<}$, $\Witt$ and $\Vir$}

\begin{prop}\label{prop:denseW<}
Let $\xi,\tau$ be arbitrary. The dense $\sl(2)$-module ${\mathbb V}(\xi,\tau)$ does not admit a compatible structure of $\Witt_<$-module if and only if the roots of the polynomial $ (x-1)^2-\tau$ lie in $\xi$.

If the root $1\mp \sqrt{\tau}$ does not lie in $\xi$, then ${\mathbb V}(\xi,\tau)$ is a $\Witt_<$-module via the map
	{\small $$
	\rho_{<}^{\pm}(L_i)v_{\mu} \,:=\,
 \Big(
	  \frac{(-1)^i}{2}
	\big( i(\pm \sqrt{\tau}-1)-\mu \big)
	\, P\big(\frac12(1+\mu\pm \sqrt{\tau}), i\big)
	\Big) v_{\mu+2i}
	$$}
where $P(z,n)$ is the Pochhammer symbol, $i\leq 1$ and $\mu\in\xi$.

In particular, if ${\mathbb V}(\xi,\tau)$ is simple, then it carries exactly
 two extended structures for $\tau\neq 0,1$ and one for $\tau=0,1$. 
 \end{prop}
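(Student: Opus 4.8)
The plan is to run the argument of Proposition~\ref{prop:denseW>} with $\Witt_>$ replaced by $\Witt_<$, invoking Theorem~\ref{thm:Witt<equivalent} in place of Theorem~\ref{thm:Witt>equivalent}; the genuinely new phenomenon is that the endomorphism $S$ playing the role of $T$ may fail to exist, and pinning down exactly when this happens is the crux. By Theorem~\ref{thm:Witt<equivalent} (together with the $\Witt_<$ analogue of Proposition~\ref{prop:Tiffrho}), a compatible $\Witt_<$-module structure on ${\mathbb V}(\xi,\tau)$ is the same datum as an $S\in\End({\mathbb V}(\xi,\tau))$ satisfying $\ad(\sigma(e))(S)=-3\sigma(f)$, $\ad(\sigma(h))(S)=-4S$, subject to the reduced standard relations. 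Since every weight space of ${\mathbb V}(\xi,\tau)$ is one dimensional, $\ad(\sigma(h))(S)=-4S$ forces $S(v_\mu)=b_\mu v_{\mu-4}$ for scalars $b_\mu$, and $\ad(\sigma(e))(S)=-3\sigma(f)$ becomes the inhomogeneous first order difference equation
$$
b_\mu\bigl(\tau-(\mu-3)^2\bigr)-b_{\mu+2}\bigl(\tau-(\mu+1)^2\bigr)=-12,\qquad \mu\in\xi .
$$

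First I would treat the case in which neither root $1\pm\sqrt\tau$ of $(x-1)^2-\tau$ lies in $\xi$; equivalently, neither $\tau-(\mu-1)^2$ nor $\tau-(\mu-3)^2$ vanishes on $\xi$. The substitution $b_\mu=c_\mu\bigl((\tau-(\mu-1)^2)(\tau-(\mu-3)^2)\bigr)^{-1}$ turns the difference equation into the constant coefficient equation $c_\mu-c_{\mu+2}=-12\bigl(\tau-(\mu-1)^2\bigr)$, whose general solution is a fixed cubic polynomial in $\mu$ plus an arbitrary constant. This reproduces, step by step as in Steps~1--3 of Proposition~\ref{prop:denseW>}, a one parameter family of candidate $S$'s; imposing item~(2) of Theorem~\ref{thm:Witt<equivalent} for $i=0$, $j=1$ pins the parameter to at most two admissible values, corresponding to the two choices of $\sqrt\tau$, and a direct computation identifies the resulting actions with the maps $\rho_<^{\pm}$ of the statement. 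Exactly as in Step~4 of Proposition~\ref{prop:denseW>}, the elementary identities for the Pochhammer symbol then show that each $\rho_<^{\pm}$ really is a Lie algebra morphism $\Witt_<\to\End({\mathbb V}(\xi,\tau))$. (Alternatively one may twist Proposition~\ref{prop:denseW>} along the Chevalley involution $\Theta$, using that in this case ${\mathbb V}(\xi,\tau)^{\theta}\simeq{\mathbb V}(-\xi,\tau)$, which always extends to $\Witt_>$.) Finally $\rho_<^{+}=\rho_<^{-}$ exactly when $\sqrt\tau=-\sqrt\tau$ or the two Pochhammer normal forms agree, i.e.\ when $\tau=0$ or $\tau=1$.

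Next I would analyse what happens when a denominator vanishes, i.e.\ when some root $1\mp\sqrt\tau$ lies in $\xi$. Then the map $\rho_<^{\pm}$ carrying that sign has a pole on $\xi$ and no longer defines an endomorphism, so the point is whether any extension survives. If only one of $1\pm\sqrt\tau$ lies in $\xi$, the surviving map $\rho_<^{\pm}$ still has finite coefficients throughout $\xi$, the Pochhammer identities again make it a genuine $\Witt_<$-module structure, and so the extension exists. If both $1\pm\sqrt\tau$ lie in $\xi$ --- which forces $\tau$ to be a perfect square --- I claim no extension exists; this is the main obstacle, and it splits in two. For $\tau\notin\{0,1\}$ the recursion for $S$ is already over-determined: one of the $b_\mu$ is forced by the degenerate equation at the index where $\tau-(\mu+1)^2$ vanishes, another by the degenerate equation at the index where $\tau-(\mu-3)^2$ vanishes, and propagating from one to the other across the finitely many intervening (non-degenerate) equations produces two incompatible values, so already the defining equations for $S$ have no solution. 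For $\tau\in\{0,1\}$ the recursion is consistent and has a one parameter family of solutions $S$, and one must instead show that substituting this family into item~(2) of Theorem~\ref{thm:Witt<equivalent} for $i=0$, $j=1$ leaves, for $\mu\in\xi$, a system of scalar equations with no common solution; a convenient reformulation is that, by Mathieu's classification \cite{Mathieu}, a $\Witt_<$-module structure on the bi-infinite weight module ${\mathbb V}(\xi,\tau)$ would have to be an intermediate series module, which forces $L_{-2}$ to act by one of the two normal forms $\rho_<^{\pm}$, both of which are undefined in this case.

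Putting the cases together proves the asserted equivalence and the explicit formulas. The concluding count is then immediate: if ${\mathbb V}(\xi,\tau)$ is simple then $\tau\neq(\mu+1)^2$ for every $\mu\in\xi$, hence $-1\pm\sqrt\tau\notin\xi$ and therefore also $1\pm\sqrt\tau\notin\xi$, so we are in the first case, giving two compatible $\Witt_<$-structures when $\tau\neq 0,1$ and one when $\tau=0,1$. The expected main difficulty is precisely the non-existence statement when both roots lie in $\xi$, and within it the subcase $\tau\in\{0,1\}$: there the obstruction is invisible at the level of $S$ alone and only surfaces through the reduced standard relations, so ruling it out requires a genuine (if finite) computation or an appeal to the classification of Harish--Chandra $\Witt_<$-modules.
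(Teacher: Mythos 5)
Your proposal is correct in outline and, for the existence half, follows the paper's own proof of Proposition~\ref{prop:denseW<} essentially verbatim: reduce to an endomorphism $S=\rho(L_{-2})$, solve the first-order difference equation $b_\mu(\tau-(\mu-3)^2)-b_{\mu+2}(\tau-(\mu+1)^2)=-12$ (your substitution reproduces exactly the rational closed form with one free constant that appears in the paper's Step~1), pin the constant with the $i=0$, $j=1$ relation, and verify the Pochhammer formulas $\rho_<^{\pm}$ wherever they are defined. Where you genuinely go beyond the paper is the non-existence direction when both roots $1\pm\sqrt{\tau}$ lie in $\xi$: the paper merely observes (its Step~4) that both closed forms are then undefined, while its Step~1 parametrization of all $S$'s presupposes non-vanishing denominators, so the case split you propose is exactly what is needed to make the ``only if'' direction airtight. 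Both of your claims there are in fact true: for $\tau=n^2$, $n\geq 2$, the degenerate equations over-determine the recursion (already for $\tau=9$, $\xi=\bar 0$ the equations at $\mu=0$ and $\mu=2$ force $b_2=3/2$ and $b_2=-3/2$; for $\tau=4$, $\xi=\bar 1$ the equation at $\mu=1$ reads $0=-12$), so no $S$ exists; and for $\tau\in\{0,1\}$ the one-parameter family of $S$'s never satisfies the higher relations (for $\tau=1$, $\xi=\bar 0$ the relation coming from $[L_{-2},L_{-3}]=L_{-5}$, evaluated on $v_2$, fails by a constant independent of the free parameter, and similarly for $\tau=0$, $\xi=\bar 1$ evaluated on $v_5$).

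Two caveats. First, these two degenerate-case facts are asserted rather than proved in your text; since they are the crux of the ``only if'' direction, the finite computations just indicated should be written out (when doing so, note that, consistently with $[L_{-2-i},L_{-2-j}]=(j-i)L_{-4-i-j}$, the factor $(i-j)$ displayed in item~(2) of Theorem~\ref{thm:Witt<equivalent} should read $(j-i)$; the sign matters for this check). Second, your proposed shortcut via Mathieu's classification does not work as stated: that classification of Harish--Chandra modules into highest weight, lowest weight and intermediate series modules concerns \emph{simple} modules, whereas in the degenerate case ${\mathbb V}(\xi,\tau)$ is not simple, and any compatible $\Witt_<$-action would have proper submodules (the span of the $v_\mu$ with $\mu\leq -1\pm\sqrt{\tau}$), so one cannot conclude that $L_{-2}$ must act by one of the two normal forms; rely on the direct computation instead. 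With these points repaired, your argument proves the proposition and is in fact more complete than the paper's sketch on the non-existence half; everything else (the identification with $\rho_<^{\pm}$, their coincidence exactly for $\tau=0,1$, and the final count in the simple case) matches the paper's route.
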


\begin{proof}
Since the strategy is similar to the case of $W_>$ (see Proposition~\ref{prop:denseW>}), only the main steps will be sketched. 

\emph{Step 1}. There exists $S\in \End(V)$ fulfilling equations~(\ref{eq:g-L_{-2}}). From the identities of~(\ref{eq:g-L_{-2}}), one obtains that $S$ must be of the form
	{\small $$
	S (v_{\mu}) \,=\, a_{\mu}^{-2} v_{\mu-4}
	\quad\text{ where }\quad
	a_{\mu}^{-2}
	\,=\,
    \frac{-2\mu(11-6\mu+\mu^2-3\tau) + \alpha(\tau-1)^2}
        {((\mu-3)^2-\tau)((\mu-1)^2-\tau)}
        \, ,
	$$}
for a constant $\alpha\in\C$. The vanishing  of the denominator will be studied in Step 4.

\emph{Step 2}. Compute $\alpha$. When $\tau=1$, we will set $\alpha=0$ (and both possibilities coincide). Assume $\tau\neq 1$. For $S$ as in Step 1 and $\rho$ defined by Proposition~\ref{prop:SyieldsRepr}, the identity of the second item of Theorem~\ref{thm:Witt<equivalent} for $i=0$, $j=1$ yields the following values
	$$
	\alpha_{\pm}\,=\,
	\frac{12\pm 4\sqrt{\tau}}{1-\tau}
	\, .
	$$
Observe that the two possibilities coincide if and only if $\tau=0$.

\emph{Step 3}. Compute $\rho_{<}^{\pm}(L_i)$ for a given $\alpha$. If no confusion arises, we simple write $\rho$. Recalling that  $\rho(L_i)\in\End_{-2i}(V)$ for all $i\leq 1$, we may introduce constants $a_{\mu}^i$ by the defining relations
	$$
	\rho(L_i)(v_{\mu})\,=\, a_{\mu}^i v_{\mu+2i} \qquad \text{ for }\mu\in\xi \, , \, i\leq 1
	\, .
	$$
Then, the equation~(\ref{eq:rho(L_i)def2}) determines $a_{\mu}^i$ recursively and the explicit expressions for these coefficients are 
	{\small $$
	a_{\mu}^i \,=\,
	 \frac{(-1)^i}{2}
	\big( i(\pm \sqrt{\tau}-1)-\mu \big)
	\, P\big(\frac12(1+\mu\pm \sqrt{\tau}), i\big)
	$$}
for $\mu\in\xi$ and $i\leq 1$. It is worth noticing that both solutions coincide for $\tau=0,1$.

\emph{Step 4}. The above expressions for $a_{\mu}^i $  make sense as long as:
     $$
     \frac12(1+\mu\pm \sqrt{\tau}) + i 
     \,\neq \, 0\quad \forall \mu\in\xi \quad \forall i<0
     $$
Note that this condition is equivalent to $1\mp \sqrt{\tau}\notin \xi $; and thus, it depends whether the root $1\mp \sqrt{\tau}$ of $(x-1)^2-\tau$ lie in $\xi$.
	
\emph{Step 5}. It is now a straightforward computation that the identity of the second item of Theorem~\ref{thm:Witt<equivalent}  holds for all $i,j\geq 0$. 

Recalling Theorem~\ref{subset:decompCategory}, one obtains the statement for ${\mathbb V}(\xi,\tau)$ simple. 
\end{proof}

\begin{prop}\label{prop:denseWitt}
The dense $\sl(2)$-module ${\mathbb V}(\xi,\tau)$ does not admit a compatible structure of $\Witt$-module (resp.  $\Vir$-module) if and only if the roots of the polynomial $ (x-1)^2-\tau$ lie in $\xi$.

If the root $1\pm \sqrt{\tau}$ does not lie in $\xi$, then ${\mathbb V}(\xi,\tau)$ is a $\Witt$-module (resp.  $\Vir$-module) via the map:
	{\small $$
	\rho^{\pm}(L_i)v_{\mu} \,:=\,
 \Big(
	  \frac{(-1)^i}{2}
	\big( i(\pm \sqrt{\tau}-1)-\mu \big)
	\, P\big(\frac12(1+\mu\pm \sqrt{\tau}), i\big)
	\Big) v_{\mu+2i}
	$$}
where $i\in{\Z}$ and $\mu\in\xi$ (and $\rho^{\pm}(C)=0$ in the case of $\Vir$).

In particular, if ${\mathbb V}(\xi,\tau)$ is simple, then it carries exactly
 two structures for $\tau\neq 0,1$ and one for $\tau=0,1$. 
\end{prop}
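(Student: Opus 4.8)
The plan is to derive this proposition from the results already obtained for $\Witt_>$ and $\Witt_<$ (Propositions~\ref{prop:denseW>} and~\ref{prop:denseW<}) together with the compatibility criteria of Proposition~\ref{prop:LieAlgWitt} and Theorem~\ref{thm:LieAlgVirasoro2}; the observation that makes everything fit is that the formulas for $\rho_>^{\pm}$ and $\rho_<^{\pm}$ are one and the same, only restricted to $i\geq-1$ and to $i\leq1$. For the necessity direction I would note that, since $\Witt_<$ is a Lie subalgebra of $\Witt$ and of $\Vir$, any compatible $\Witt$-module (resp.\ $\Vir$-module) structure on ${\mathbb V}(\xi,\tau)$ restricts to a compatible $\Witt_<$-module structure; by Proposition~\ref{prop:denseW<} the latter can exist only if at least one of the two roots $1\pm\sqrt\tau$ of $(x-1)^2-\tau$ lies outside $\xi$. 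Hence if both roots lie in $\xi$ there is no extension, which is one half of the asserted equivalence.

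For sufficiency, suppose a root, say $1-\sqrt\tau$, does not lie in $\xi$ (the other case being the image of this one under $\sqrt\tau\mapsto-\sqrt\tau$). I would set $S:=\rho_<^{+}(L_{-2})$ and $T:=\rho_>^{+}(L_{2})$; by Propositions~\ref{prop:denseW<} and~\ref{prop:denseW>} these belong to $\End_{\Witt_<}({\mathbb V}(\xi,\tau),\sigma)$ and to $\End_{\Witt_>}({\mathbb V}(\xi,\tau),\sigma)$ respectively, and writing $\rho^{+}(L_i)v_\mu=a_\mu^{i}v_{\mu+2i}$ with $a_\mu^{i}$ the closed expression~(\ref{eq:amui-denso}) (and Step~3 of Proposition~\ref{prop:denseW<}), the hypothesis $1-\sqrt\tau\notin\xi$ is exactly what makes $a_\mu^{i}$ well defined for every $i\in\Z$ and every $\mu\in\xi$. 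The key point is then that the coefficient identity $a_{\mu+2j}^{i}a_\mu^{j}-a_{\mu+2i}^{j}a_\mu^{i}=(i-j)a_\mu^{i+j}$, which Proposition~\ref{prop:denseW>} establishes for $i,j\geq-1$, in fact holds for all $i,j\in\Z$; granting this, its $(i,j)=(2,-2)$ instance gives $[S,T]v_\mu=-4\,a_\mu^{0}v_\mu=2\mu\, v_\mu=2\sigma(h)v_\mu$, so by Proposition~\ref{prop:LieAlgWitt} and the reformulation following it, the pair $(S,T)$ determines a compatible $\Witt$-module structure, whose action on $L_i$ for $i\geq2$ (resp.\ $i\leq-2$) is $\frac{1}{(i-2)!}\ad(\sigma(e))^{i-2}(T)$ (resp.\ $\frac{1}{(-i-2)!}\ad(\sigma(f))^{-i-2}(S)$) by Corollary~\ref{cor:extension} and Proposition~\ref{prop:SyieldsRepr}; unwinding these reproduces precisely the displayed $\rho^{+}$. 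For the Virasoro algebra, since $[S,T]=2\sigma(h)$ one gets $K=4\sigma(h)-2[S,T]=0$, which commutes with $S$ and $T$, so Theorem~\ref{thm:LieAlgVirasoro2} yields a compatible $\Vir$-module structure with $C$ represented by $K=0$, i.e.\ $\rho^{\pm}(C)=0$.

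For the final ``in particular'', I would recall that ${\mathbb V}(\xi,\tau)$ is simple exactly when $\tau\neq(\mu+1)^2$ for all $\mu\in\xi$, i.e.\ when $-1\pm\sqrt\tau\notin\xi$; as $1\pm\sqrt\tau$ lies in the same coset of $2\Z$ as $-1\pm\sqrt\tau$, this is the same as both roots of $(x-1)^2-\tau$ lying outside $\xi$, so both $\rho^{+}$ and $\rho^{-}$ are defined, and by Proposition~\ref{prop:denseW>} they coincide precisely for $\tau\in\{0,1\}$ --- one structure then, two otherwise. To see these are all of them: a compatible $\Witt$- (resp.\ $\Vir$-) structure restricts to one of the two $\Witt_>$-structures $\rho_>^{\pm}$ and to one of the two $\Witt_<$-structures $\rho_<^{\pm}$, and the $(i,j)=(2,-2)$ identity shows that $[S,T]=2\sigma(h)$ (hence Virasoro compatibility, since then $K=0$) holds only for the sign-matched pairs $(\rho_<^{\epsilon},\rho_>^{\epsilon})$.

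The step I expect to be the main obstacle is proving the coefficient identity $a_{\mu+2j}^{i}a_\mu^{j}-a_{\mu+2i}^{j}a_\mu^{i}=(i-j)a_\mu^{i+j}$ for \emph{all} $i,j\in\Z$, not merely for $i,j\geq-1$ --- which amounts to an identity among products of $\Gamma$-function ratios, to be handled via the composition rule $P(z,m+n)=P(z,m)P(z+m,n)$ --- together with the bookkeeping showing that precisely the sign-matched pairs are compatible; I also anticipate that the degenerate values $\tau=0$ (where $\sqrt\tau=0$, the two signs coincide, and $\End_{\Witt_<}({\mathbb V}(\xi,\tau),\sigma)$ may shrink to a single element) and $\tau=1$ will require separate attention.
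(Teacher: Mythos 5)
Your strategy is essentially the paper's own: obtain necessity by restricting to $\Witt_<$ and invoking Proposition~\ref{prop:denseW<}, and obtain sufficiency by gluing $\rho_>^{\pm}$ and $\rho_<^{\pm}$ through the compatibility criteria, namely $[S,T]=2\sigma(h)$ for $\Witt$ (Proposition~\ref{prop:LieAlgWitt}) and $K=4\sigma(h)-2[S,T]=0$ for $\Vir$ (Theorem~\ref{thm:LieAlgVirasoro2}), with the identification of the glued action with the displayed closed formula coming from Proposition~\ref{prop:SyieldsRepr} and Corollary~\ref{cor:extension}. Your computation of the $(i,j)=(2,-2)$ instance, giving $[S,T]v_\mu=2\mu v_\mu=2\sigma(h)v_\mu$ for the sign-matched pairs, is exactly the bracket the paper checks ($[\rho_>^{+}(L_2),\rho_<^{+}(L_{-2})]=4\rho_<^{+}(L_0)$); proving the coefficient identity for \emph{all} $i,j\in\Z$ is more than is needed for the gluing, though it is harmless and also confirms the displayed formula. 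The treatment of the simple case (simplicity $\Leftrightarrow$ both roots of $(x-1)^2-\tau$ outside $\xi$, coincidence of the two structures exactly for $\tau\in\{0,1\}$) is also correct and matches the paper.

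The one genuine gap is in the exclusion of the mixed-sign pairs for the \emph{Virasoro} count. You argue that $[S,T]=2\sigma(h)$ fails for $(\rho_<^{+},\rho_>^{-})$ and $(\rho_<^{-},\rho_>^{+})$ when $\tau\neq 0,1$; that rules them out as $\Witt$-structures, but the Virasoro criterion of Theorem~\ref{thm:LieAlgVirasoro2} is not $K=0$: it is that $K=4\sigma(h)-2[S,T]$ commute with $S$ and $T$, and a mixed pair with $K\neq 0$ could a priori still be Virasoro compatible, with the central element acting by $K$. The paper closes precisely this loophole by checking directly that for the mixed pairs $K$ fails to commute with $\rho_<^{+}(L_{-2})$ (note that $K$ preserves each weight space while $S$ shifts weights, so the check amounts to showing the eigenvalue function $\mu\mapsto k_\mu$ is not $4$-periodic along $\xi$ when $\tau\neq 0,1$). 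Without this additional computation, or some substitute for it, your argument establishes the claimed count of structures for $\Witt$ but not for $\Vir$.
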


\begin{proof}
Let us begin with the case of $\Witt$. The proof is based in the explicit expressions obtained in Propositions~\ref{prop:denseW>} and~\ref{prop:denseW<} and on the compatibility criterion given in Proposition~\ref{prop:LieAlgWitt}; namely, $[S,T]=2\sigma(h)$.

First, we note that if $\rho_{>}^{+}$ and $\rho_{<}^{+}$ are defined, then they agree on $\Witt_{>}\cap \Witt_{<}$ and $[\rho_{>}^{+}(L_2), \rho_{<}^{+}(L_{-2})]\,=\, 4 \rho_{<}^{+}(L_{0})$ .  Thus, according to Proposition~\ref{prop:LieAlgWitt},  $\rho_{>}^{+}$ and $\rho_{<}^{+}$ are glued and yield
	$$
	\rho^{+} \,\in\, \Hom_{\text{\bf LieAlg}_\bullet}(\Witt, \End(V))
	\, .
	$$
Similarly, $\rho_{>}^{-}$ and $\rho_{<}^{-}$ yield $\rho^{-} \in \Hom_{\text{\bf LieAlg}_\bullet}(\Witt, \End(V))$.

On the other hand,  although $\rho_{>}^{-}$ and $\rho_{<}^{+}$ agree on $\Witt_{>}\cap \Witt_{<}$ one checks that $ [\rho_{>}^{-}(L_2), \rho_{<}^{+}(L_{-2})]\,\neq\, 4 \rho_{<}^{+}(L_{0})$ for $\tau\neq 0,1$ and, thus, they do not yield a representation of $\Witt$. One proceeds analogously with 	$\rho_{>}^{+}$ and $\rho_{<}^{-}$.

Let us now check the statement for the $\Vir$-algebra. For this goal, one requires Propositions~\ref{prop:denseW>}, \ref{prop:denseW<} and Theorem~\ref{thm:LieAlgVirasoro2}. First, note that the previous representations of $\Witt$, $\rho^{\pm}$, can be trivially extended to $\Vir$ by setting $\rho^{\pm}(C)=0$. Second, we wonder whether in this case $\rho_{>}^{-}$ and $\rho_{<}^{+}$ could define a representation of $\Vir$. Having in mind Theorem~\ref{thm:LieAlgVirasoro2}, let $K:=4 \sigma(h)-2[\rho_{<}^{+}(L_{-2}), \rho_{>}^{-}(L_2)]$. Since $K$  does not commute with  $\rho_{<}^{+}(L_{-2})$, we conclude that they do not yield a representation of $\Vir$. The same holds for $\rho_{>}^{+}$ and $\rho_{<}^{-}$. 
\end{proof}

\begin{rem}
Analogously to Proposition~\ref{prop:DenseasWitt-Intermediate} and using \cite[Theorem~1]{Mathieu}, one can determine a $\Vir$-module of the intermediate series isomorphic to the module $({\mathbb V}(\xi,\tau),\rho^{\pm})$ . 
\end{rem}

Arguing analogously as in Theorem \ref{thm:categorydenseW>}, we obtain the following

\begin{thm}
Consider the category $\bar\Mf^{\xi,\tau}$  for $\tau\neq (\mu+1)^2$ for all $\mu\in\xi$ and $\tau\neq 0$. 

A root $ \sqrt{\tau}\in\C$ determines a fully faithful functor
	$$
	F_{\sqrt{\tau}}: \bar\Mf^{\xi,\tau} 
	\,\to \, 
	 {\Witt_{<}\text{--\,mod}}	
	$$
such that $G\circ F_{\sqrt{\tau}}=\operatorname{Id}_{\bar\Mf^{\xi,\tau} }$ where $G:{\Witt_{<}\text{--\,mod}}   \to  {\sl(2)\text{--\,mod}}$ is the forgetful functor. 

The same statement holds for ${\Witt\text{--\,mod}}$ and ${\Vir\text{--\,mod}}$ 
\end{thm}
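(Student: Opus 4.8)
The plan is to run the proof of Theorem~\ref{thm:categorydenseW>} essentially verbatim, replacing the one-sided family of coefficient matrices $A_\mu^i$, $i\geq -1$, by the family attached to the relevant range of indices: $i\leq 1$ in the case of $\Witt_<$, and $i\in\Z$ in the cases of $\Witt$ and $\Vir$ (with $C$ acting by $0$). First I would fix a square root $\sqrt\tau\in\C$ and take the branch $s(x)$ of $\sqrt x$ at $x=\tau$ with $s(\tau)=\sqrt\tau$, exactly as in the proof of Theorem~\ref{thm:categorydenseW>}; for any $M\in\operatorname{Ob}(\bar\Mf^{\xi,\tau})$ the operator $s(c)$ is a well-defined polynomial in the Casimir, the series being a finite truncation because $c-\tau$ is nilpotent on the finite-length module $M$. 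One then defines $F_{\sqrt\tau}(M)$ to be $M$ as a vector space, equipped with the action obtained from the explicit coefficients in Propositions~\ref{prop:denseW<} and~\ref{prop:denseWitt} (for the chosen sign) after substituting $s(c)$ for $\sqrt\tau$, and sets $F_{\sqrt\tau}(\Phi)=\Phi$ on morphisms; for $\Vir$ one also declares $\rho_M(C)=0$.

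The first thing to verify is that this data really is a $\Witt_<$- (resp.\ $\Witt$-, $\Vir$-) module structure compatible with $\sigma$. Here one must use the hypothesis $\tau\neq(\mu+1)^2$ for all $\mu\in\xi$, which, since $\mu$ and $\mu+2$ represent the same class modulo $2\Z$, is equivalent to $1\mp\sqrt\tau\notin\xi$; this is precisely the condition from Step~4 of the proof of Proposition~\ref{prop:denseW<} guaranteeing that the Pochhammer symbols $P(\tfrac12(1+\mu\pm\sqrt\tau),i)$ with $i<0$ have no poles, and it continues to hold after substituting $s(c)$ because every eigenvalue of $s(c)$ equals $\sqrt\tau$. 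Granting well-definedness, the bracket relations $A_{\mu+2j}^iA_\mu^j-A_{\mu+2i}^jA_\mu^i=(i-j)A_\mu^{i+j}$ for all $i,j$ in the relevant range, and, for $\Witt$ and $\Vir$, the gluing criterion $[S,T]=2\sigma(h)$ of Proposition~\ref{prop:LieAlgWitt} (resp.\ $K=4\sigma(h)-2[S,T]$ central, Theorem~\ref{thm:LieAlgVirasoro2}), are obtained from the corresponding identities for scalar Casimir established in Propositions~\ref{prop:denseW>}, \ref{prop:denseW<} and~\ref{prop:denseWitt}: these are identities of rational functions in the single scalar variable $\sqrt\tau$, regular away from the locus $1\mp\sqrt\tau\in\xi$, and substituting the operator $s(c)$ is legitimate because $c-\tau$ is nilpotent, so no eigenvalue of $s(c)$ meets that locus. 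In particular one must pair signs consistently, gluing $\rho_{>}^{+}$ with $\rho_{<}^{+}$ and $\rho_{>}^{-}$ with $\rho_{<}^{-}$, exactly as in the proof of Proposition~\ref{prop:denseWitt}.

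The second step, functoriality and full faithfulness, is identical to the $\Witt_>$ case. Given $\Phi\in\Hom_{\bar\Mf^{\xi,\tau}}(M,N)=\Hom_{\sl(2)\text{-mod}}(M,N)$, commuting with $h$ forces $\Phi(M_\mu)\subseteq N_\mu$; since lowest-weight modules do not belong to $\bar\Mf^{\xi,\tau}$, the operator $f$ acts bijectively, so in the bases where $f$ is the identity matrix the matrix $\phi$ of $\Phi$ is independent of $\mu$; commuting with the Casimir gives $c_N\phi=\phi c_M$, hence $s(c_N)\phi=\phi s(c_M)$, and therefore $\phi$ commutes with every coefficient matrix $A_\mu^i$, i.e.\ $\Phi$ intertwines the new actions (and trivially respects $\rho(C)=0$). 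Thus $F_{\sqrt\tau}$ is a functor which is the identity on morphism sets, hence fully faithful, and $G\circ F_{\sqrt\tau}=\operatorname{Id}$ because the restriction of the action to $\Sigma\simeq\sl(2)$ recovers $\sigma$ by construction.

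The main obstacle is the first step: transporting the explicit extension formulas, which Propositions~\ref{prop:denseW<} and~\ref{prop:denseWitt} verify only for simple dense modules where the Casimir is a scalar, to a general, possibly non-semisimple, object $M\in\bar\Mf^{\xi,\tau}$, and, for $\Witt$ and $\Vir$, checking that the compatibility criteria of Proposition~\ref{prop:LieAlgWitt} and Theorem~\ref{thm:LieAlgVirasoro2} survive. The rational-identity-in-$\sqrt\tau$ argument sketched above is what makes this routine rather than a fresh computation; the only genuine input it needs is that $c-\tau$ is nilpotent on $M$, i.e.\ that $M$ lies in $\bar\Mf^{\xi,\tau}$, together with the hypotheses $\tau\neq(\mu+1)^2$ and $\tau\neq0$ which keep $s(c)$ and the negative-index Pochhammer symbols well-defined.
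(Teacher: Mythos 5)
Your proposal is correct and follows essentially the same route as the paper, which proves this theorem precisely by arguing as in Theorem~\ref{thm:categorydenseW>} with the explicit formulas of Propositions~\ref{prop:denseW<} and~\ref{prop:denseWitt}, substituting $s(c)$ for $\sqrt{\tau}$. Your added care about the negative-index Pochhammer symbols being well defined (via $1\mp\sqrt{\tau}\notin\xi$ and the nilpotency of $c-\tau$) and about pairing the signs $\rho_>^{\pm}$ with $\rho_<^{\pm}$ matches the paper's intended argument.
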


\subsection{Second Case}\label{subsec:second}

Now, we focus on the category  $\bar\Mf^{\xi,\tau}$ for $\tau= (\lambda+1)^2$ for exactly one $\lambda\in\xi $. Recall that in this case $\lambda\notin\Z$ and that this category has two simple objects; namely, $M(\lambda)$ (the Verma module, see (\ref{eq:actionVerma})) and  $\bar M(\lambda+2)$ (see (\ref{eq:actionbarM})).

Although we are able to show that many objects of $\bar\Mf^{\xi,\tau}$ do admit module structures over Witt and Virasoro algebras, we exhibit cases of $\sl(2)$-modules that do not admit such extensions. The full characterization of such modules remains open and it deserves further research.

\subsubsection{$M(\lambda)$ and $\bar M(\lambda+2)$}

\begin{prop}\label{prop:VermaW>}
Let $\lambda\in\C$. The Verma $\sl(2)$-module $M(\lambda)$  admits exactly one compatible structure of $\Witt_>$-module. The structure is explicitly given by:
	{\small $$
	\rho_{>}^- (L_i)w_{j} \,:=\,  
	\Big(
	 -\frac{1}{2}
	\big(2(i-j) +(i+1)\lambda\big)
	\, P\big(j-i+1, i\big)\Big) w_{j-i}
	 \quad j-i\geq 0
		$$}
and, with respect to $\rho_{>}^-$ , $M(\lambda)$ is a highest weight $\Witt_>$-module and $-\frac12\lambda$ is the highest weight. 
\end{prop}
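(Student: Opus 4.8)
\emph{Overall approach.} The plan is to follow the same scheme as in Proposition~\ref{prop:denseW>}, the one genuinely new point being that the highest weight structure of $M(\lambda)$ pins down the auxiliary operator $T$ \emph{uniquely}, so that --- unlike the bi-infinite dense case, where a one-parameter family of $T$'s appeared and had to be cut down to the two values $\alpha_\pm$ --- there is no free constant left and hence at most one compatible extension. Concretely: by Proposition~\ref{prop:Tiffrho} a $\Witt_{>}$-structure compatible with $\sigma$ is the same as an operator $T\in\End(M(\lambda))$ satisfying~(\ref{eq:g-L_2}), which by Theorem~\ref{thm:Witt>equivalent} produces such a structure precisely when it also fulfils the reduced standard relations.

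\emph{Step 1: uniqueness of $T$.} Applying $\ad(\sigma(h))(T)=4T$ to $w_j$ and using $h(w_j)=(\lambda-2j)w_j$ forces $T(w_j)\in\langle w_{j-2}\rangle$, so I write $T(w_j)=b_j w_{j-2}$, where necessarily $b_0=b_1=0$ since $w_{-1},w_{-2}$ do not exist. Evaluating $\ad(\sigma(f))(T)=3\sigma(e)$ on $w_j$ and using~(\ref{eq:actionVerma}), the case $j=0$ is automatic and for $j\ge1$ one obtains the recursion $b_j-b_{j+1}=3j(\lambda-j+1)$; since $b_1=0$ this determines every $b_j$. Thus $T$ exists and is unique, which already proves the first assertion of the proposition.

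\emph{Step 2: the explicit formula, and that it is a representation.} For this $T$, define $\rho_{>}^{-}$ by $\rho_{>}^{-}(L_{-1})=\sigma(f)$, $\rho_{>}^{-}(L_0)=-\tfrac12\sigma(h)$, $\rho_{>}^{-}(L_1)=-\sigma(e)$, $\rho_{>}^{-}(L_2)=T$ and $\rho_{>}^{-}(L_i)=\tfrac1{i-2}\ad(\sigma(e))(\rho_{>}^{-}(L_{i-1}))$ for $i>2$, as prescribed by~(\ref{eq:defrho-1012}) and~(\ref{eq:rho(L_i)def}). Writing $\rho_{>}^{-}(L_i)w_j=c^i_j w_{j-i}$, the action~(\ref{eq:actionVerma}) gives $c^{-1}_j=1$, $c^0_j=j-\tfrac\lambda2$, $c^1_j=-j(\lambda-j+1)$, $c^2_j=b_j$, and for $i>2$ the recursion $c^i_j=\tfrac1{i-2}\bigl(c^{i-1}_j(j-i+1)(\lambda-j+i)-j(\lambda-j+1)c^{i-1}_{j-1}\bigr)$. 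A routine induction on $i$, using the elementary identities for the Pochhammer symbol, yields $c^i_j=-\tfrac12\bigl(2(i-j)+(i+1)\lambda\bigr)P(j-i+1,i)$, the claimed expression (which vanishes for $j<i$, as it must). To see that $\rho_{>}^{-}$ is a Lie algebra homomorphism, by Theorem~\ref{thm:Witt>equivalent} it suffices to verify the coefficient identity $c^j_n c^i_{n-j}-c^i_n c^j_{n-i}=(i-j)c^{i+j}_n$ for $i,j\ge-1$ and all admissible $n$, which follows from the explicit formula and the multiplicativity $P(z,m)P(z+m,k)=P(z,m+k)$ of the Pochhammer symbol --- exactly the computation already carried out for dense modules (one may equally well check only item (3) of Theorem~\ref{thm:Witt>equivalent}, i.e.\ the level-$N$ reduced relations). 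Finally, $\rho_{>}^{-}(L_0)w_0=c^0_0 w_0=-\tfrac\lambda2 w_0$ and $\rho_{>}^{-}(L_i)w_0=0$ for $i\ge1$ because $P(1-i,i)=0$, while $M(\lambda)$ is generated from $w_0$ by $\rho_{>}^{-}(L_{-1})=\sigma(f)$; hence $w_0$ is a highest weight vector for $\Witt_{>}$ of weight $-\tfrac12\lambda$ and $M(\lambda)$ is the corresponding highest weight module.

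\emph{Main obstacle.} Everything is bookkeeping except the Pochhammer identity in Step~2, which is the only delicate computation; conceptually the whole proposition rests on the boundary conditions $b_0=b_1=0$ of Step~1 leaving no freedom at all, in contrast with the dense case.
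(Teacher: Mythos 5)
Your proposal is correct, and its computational core (the criterion of Proposition~\ref{prop:Tiffrho} together with Theorem~\ref{thm:Witt>equivalent}, the difference equations for the coefficients, the Pochhammer closed form, and the verification of $c^j_n c^i_{n-j}-c^i_n c^j_{n-i}=(i-j)c^{i+j}_n$) is the same as the paper's. Where you diverge is in how uniqueness is obtained: the paper transports the whole dense-module computation to $M(\lambda)$ via the formal identification $v_{\lambda-2j}=w_j$, $\sqrt{\tau}\mapsto\pm(\lambda+1)$, obtaining the two candidates $\rho_>^{\pm}$ of Proposition~\ref{prop:denseW>}, and then discards $\rho_>^{+}$ because its coefficients do not vanish when they would have to map $w_j$ to the nonexistent $w_{j-i}$ with $j-i<0$ (except for $\lambda=-1$, where the two branches coincide), while $\rho_>^{-}$ survives since $P(j-i+1,i)=0$ there. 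You instead prove uniqueness at the level of the operator $T$: the conditions $\ad(\sigma(h))(T)=4T$ and $\ad(\sigma(f))(T)=3\sigma(e)$ together with the boundary constraints $b_0=b_1=0$ forced by the half-infinite weight support determine $T$ outright, so the free constant of the dense case never appears. Your route is slightly more self-contained and makes the mechanism behind ``exactly one'' transparent (no second branch to rule out), at the cost of redoing the recursion and the closed-form induction that the paper simply imports from Proposition~\ref{prop:denseW>}; the paper's route is shorter given the dense case but hides the uniqueness in the well-definedness check of $\rho_>^{+}$. Both leave the Pochhammer verification of the commutator identity at the same level of detail as the paper does in Step~4 of Proposition~\ref{prop:denseW>}, so there is no gap relative to the paper's standard, and your explicit check that $\rho_>^{-}(L_i)w_0=0$ for $i\geq 1$ actually spells out the highest-weight claim more fully than the paper does.
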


\begin{proof}
The proof relies essentially on the Steps of  the proof of Proposition~\ref{prop:denseW>}. Let us be more precise. Given $M(\lambda)$ as above, let us consider $\xi:=\bar \lambda\in\C/2\Z$, i.e. the class of $\lambda$, $\tau:=(\lambda +1)^2$ and:
	$$
	v_{\lambda-2j}\,:=\, w_j \qquad \text{ for } j=0,1,2,\dots
	$$
and observe that the action of $\sl(2)$ on the basis $\{v_{\lambda-2j}\vert  j=0,1,2,\dots\}$ coincides formally with relations~(\ref{eq:dense}). Steps 1 to 4 the proof of Proposition~\ref{prop:denseW>} are concerned with solving some difference equations which, under the above replacement, coincide with the equations of this case.

Summing up, if we take the general solution given in Step 3 of the proof of Proposition~\ref{prop:denseW>} and substitute $\pm\sqrt{\tau}$ by $\pm(\lambda +1)$ and $\mu$ by $\lambda-2j$, we get that there is at most two compatible structures; explicitly
	{\small $$
	\rho_{>}^+ (L_i)w_{j} \,:=\,  \Big(
	 \frac{(-1)^i}{2}
	\big(2j+(i-1)\lambda\big)
	\, P\big(1-j+\lambda, i\big)\Big) w_{j-i},
	\quad i\geq -1 \, , \, j-i\geq 0,
	$$}and:
	{\small $$
	\rho_{>}^- (L_i)w_{j} \,:=\,  \Big(
	 -\frac{(-1)^i}{2}
	\big(2(i-j) +(i+1)\lambda\big)
	\, P\big(-j, i\big)\Big) w_{j-i},
	\quad i\geq -1\, , \, j-i\geq 0.
	$$}

However, $\rho_{>}^+$ does defines such structure if and only if $\rho_{>}^+ (L_i)w_{j}=0$ for $j-i<0$, $j\geq 0$ and $i\geq -1$. And this is true only when $\lambda =-1$ and, accordingly, $\tau=(\lambda+1)^2=0$ and, in this case, $\rho_{>}^+ =\rho_{>}^-$ so it suffices to study  $\rho_{>}^-$ . On the other hand, since 
	$$
	 (-1)^i
	 P\big(-j, i\big)
	\,=\, 
	P\big(j-i+1, i\big)	
	$$
and $P\big(j-i+1, i\big)=0$ for $j-i<0$ and $i\geq -1$, the map $\rho_{>}^-$ always yields a $\Witt_{>}$-module structure on $M(\lambda)$.
\end{proof}

%
%
%

\begin{thm}\label{thm:barVermaW>}
The  module $\bar M(\lambda+2)$  admits at most two compatible structures of $\Witt_>$-modules; namely,
	{\small $$
	\bar \rho_{>}^+ (L_i)w_{j} \,:=\,  \Big(
	 \frac{-1}{2}
	\big((1-i)\lambda+2(j+1)\big)
	\, P\big(j+i+1, -i\big)\Big) w_{j+i},
	\quad  i\geq -1  , \lambda \in\C
	$$}and:
	{\small $$
	\bar \rho_{>}^- (L_i)w_{j} \,:=\,  \Big(
	 \frac{-1}{2}
	\big((1+i)\lambda + 2(j+1+i)\big)
	\, P\big(\lambda+2+j+i, -i\big)\Big) w_{j+i},
	\quad i\geq -1, 
	$$}with $\lambda\neq -3,-4,\cdots$.
Both extended structures coincide for $\lambda =-1,0$.

Furthermore,  $( M(\lambda+2) , \bar \rho_{>}^+)  $ and  $( M(\lambda+2) , \bar \rho_{>}^-)  $ ) are lowest weight $\Witt_>$-modules and $-\frac12(\lambda+2)$  is their lowest weight. 
\end{thm}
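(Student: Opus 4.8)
The plan is to run the same machinery used in Proposition~\ref{prop:denseW>}, now for the lowest weight module $\bar M(\lambda+2)$ whose $\sl(2)$-action is given by~(\ref{eq:actionbarM}). The logical backbone is Theorem~\ref{thm:Witt>equivalent}: one must first determine all $T\in\End(\bar M(\lambda+2))$ satisfying equations~(\ref{eq:g-L_2}), then pin down the remaining parameter by imposing condition (2) of that theorem for $i=0$, $j=1$, and finally verify that the resulting candidate $T$ actually satisfies (2) for all $i,j\ge -1$, which by Corollary~\ref{cor:levelN-standard-relations-extension} is exactly what it means for the induced map $\rho_>$ to be a $\Witt_>$-module structure extending $\sigma$.

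\emph{Step 1 (shape of $T$).} From $\ad(\sigma(h))(T)=4T$ and $h(w_j)=(\lambda+2j)w_j$ one gets that $T$ raises the weight by $4$, so $T(w_j)=a_j^{2}w_{j+2}$ for scalars $a_j^2$. Plugging this into $\ad(\sigma(f))(T)=3\sigma(e)$ and using $e(w_j)=w_{j+1}$, $f(w_j)=-j(\lambda+j-1)w_{j-1}$ yields a first-order difference equation for $a_j^2$ in $j$; its general solution is a particular solution plus $\alpha$ times a homogeneous solution, giving a one-parameter family $T=T_\alpha$. This is the analogue of Steps 1--2 of Proposition~\ref{prop:denseW>}, and since $\bar M(\lambda+2)$ is formally obtained from the dense-module relations by the substitution used in Proposition~\ref{prop:VermaW>} (here with the roles of $e$ and $f$ swapped and weights running upward), the computation is essentially a relabelling of that one. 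Imposing item (2) of Theorem~\ref{thm:Witt>equivalent} for $i=0,j=1$ gives a quadratic equation in $\alpha$, hence two roots $\alpha_\pm$, whence the two candidate families $\bar\rho_>^\pm$. By the $\sl(2)$-dictionary the coefficients take the Pochhammer form stated, with $\pm\sqrt{\tau}=\pm(\lambda+1)$ and the weight variable $\mu=\lambda+2+2j$; one reads off the two explicit formulas for $\bar\rho_>^\pm(L_i)w_j$ and observes that they coincide precisely when $\sqrt\tau=0$ or $1$, i.e.\ $\lambda=-1$ or $\lambda=0$.

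\emph{Step 2 (well-definedness and verification).} Unlike the dense case, $\bar M(\lambda+2)$ is $\mathbb N$-graded, so one must check the candidate coefficients are actually defined: the Pochhammer factor $P(\lambda+2+j+i,-i)$ appearing in $\bar\rho_>^-$ has denominators that vanish exactly when $\lambda+2+j+i\le 0$ for the relevant range, which forces the restriction $\lambda\neq -3,-4,\dots$ (for such $\lambda$ the module is not simple anyway), whereas the $\bar\rho_>^+$ coefficients involve $P(j+i+1,-i)$ which is always defined and, moreover, vanishes automatically when $j+i<0$, so $\bar\rho_>^+$ never leaves the module. Then one verifies condition (2) of Theorem~\ref{thm:Witt>equivalent} for all $i,j\ge -1$; writing $\rho_>^\pm(L_i)w_j=b_j^i w_{j+i}$ this becomes the scalar identity $b_{j+i}^{\,j}\,b_j^i-\dots$ — the exact analogue of~(\ref{eq:aiaj-ajai}) — which follows from the multiplicative identities for the Pochhammer symbol, just as in Step 4 of Proposition~\ref{prop:denseW>}. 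Finally, since $\bar\rho_>^\pm(L_i)$ lowers nothing below $w_0$ only when... more precisely, since $w_0$ is annihilated by $\sigma(e)=\rho_>^\pm(L_{-1})$ wait —here $\sigma(f)$ corresponds to $L_{-1}$ is wrong; rather $\rho_>^\pm(L_i)$ for $i\ge 1$ raises $j$, and for $i=1$ it is $-\sigma(e)$ which is injective; so $w_0$ generates and is a lowest weight vector of weight $-\tfrac12(\lambda+2)$ (the eigenvalue of $\rho_>^\pm(L_0)=-\tfrac12\sigma(h)$ on $w_0$), giving the lowest-weight statement.

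\emph{Main obstacle.} The genuine work is Step 2: producing the closed-form expression for the general coefficient $b_j^i$ (equivalently, solving the recursion~(\ref{eq:rho(L_i)def}) in this setting) and then checking the quadratic relation~(\ref{eq:aiaj-ajai}) for all $i,j$. This is the same hypergeometric bookkeeping that underlies Propositions~\ref{prop:denseW>} and~\ref{prop:VermaW>}; the only new subtlety is tracking when Pochhammer denominators degenerate, which is what carves out the hypothesis $\lambda\neq-3,-4,\dots$ for the $\bar\rho_>^-$ branch and explains why, in contrast to $M(\lambda)$, \emph{both} branches survive generically here (the obstruction that killed $\rho_>^+$ in Proposition~\ref{prop:VermaW>} — the need to vanish for $j-i<0$ — has no counterpart because the module grows in the opposite direction). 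I would present the $\bar\rho_>^+$ computation in full and obtain $\bar\rho_>^-$ either by the same substitution argument or by noting it corresponds to the second square root $-\sqrt\tau$, leaving the Pochhammer identities to a short lemma or citing the parallel verification in Proposition~\ref{prop:denseW>}.
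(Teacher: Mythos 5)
Your route is genuinely different from the paper's: you propose to redo the machinery of Proposition~\ref{prop:denseW>} directly on $\bar M(\lambda+2)$ via Theorem~\ref{thm:Witt>equivalent}, whereas the paper never computes on $\bar M(\lambda+2)$ itself — it conjugates by the Chevalley involution $L_i\mapsto -L_{-i}$, identifies the twisted module $\widehat{\bar M(\lambda+2)}$ with a module governed by the dense-type formulas~(\ref{eq:dense}) (with $\sqrt{\tau}=\lambda+1$, $\mu=-(\lambda+2)-2j$), imports the $\Witt_<$-computation of Proposition~\ref{prop:denseW<}, and transfers back. The difference of route would be fine, but your pivotal Step 1 fails for this module. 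Writing $T(w_j)=t_jw_{j+2}$, with the action of $\bar M(\lambda+2)$ given by $\sigma(e)w_j=w_{j+1}$, $\sigma(h)w_j=(\lambda+2+2j)w_j$, $\sigma(f)w_j=-j(\lambda+j+1)w_{j-1}$ (note you wrote the formulas for parameter $\lambda$ rather than $\lambda+2$), the relation $\ad(\sigma(f))(T)=3\sigma(e)$ is the recursion $-(j+2)(\lambda+j+3)\,t_j+j(\lambda+j+1)\,t_{j-1}=3$ for all $j\geq 0$. Unlike the dense case this recursion has a boundary: at $j=0$ the $t_{j-1}$-term is absent because $\sigma(f)w_0=0$, so $t_0=-3/(2(\lambda+3))$ is forced, and for $\lambda\neq -3,-4,\dots$ every $t_j$ is then determined; the homogeneous equation has only the zero solution. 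Hence there is no one-parameter family $T_\alpha$, no quadratic in $\alpha$, and no pair $\alpha_\pm$: the direct method produces at most \emph{one} candidate $T$, namely the one with coefficients $t_j=-\tfrac12(3\lambda+2j+6)/\bigl((\lambda+j+2)(\lambda+j+3)\bigr)$, i.e.\ the $\bar\rho_{>}^{-}$ branch, which one checks solves the recursion identically.

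Concretely, your claim that the obstruction which killed $\rho_{>}^{+}$ in Proposition~\ref{prop:VermaW>} has no counterpart here is exactly where the argument breaks: the bottom of the module does impose a constraint, only it enters through $\sigma(f)w_0=0$ rather than through indices leaving the basis. Indeed the displayed $\bar\rho_{>}^{+}(L_2)$ has $t_j^{+}=(\lambda-2j-2)/\bigl(2(j+1)(j+2)\bigr)$, and the $j=0$ equation gives $-\tfrac12(\lambda+3)(\lambda-2)=3$, which holds only for $\lambda(\lambda+1)=0$, i.e.\ $\lambda=0,-1$ — precisely where the two formulas coincide. So if you carry out your computation honestly you will not reproduce the two-branch statement; you will find a unique compatible structure for $\lambda\neq-3,-4,\dots$, and you cannot discharge the verification of relation~(\ref{eq:aiaj-ajai}) by citing Proposition~\ref{prop:denseW>}, because that verification runs over all $\mu\in\xi$ and has no boundary weight. (The paper's own proof obtains the second formula by applying the dense-module $\Witt_<$ coefficients formally and only checking finiteness of the Pochhammer symbols, not the commutation relation at the boundary vector; your direct approach is exactly the place where this issue becomes visible, so it must be addressed rather than bypassed.) The lowest-weight assertion at the end is fine once cleaned up: $\rho(L_{-1})w_0=\sigma(f)w_0=0$ and $\rho(L_0)w_0=-\tfrac12\sigma(h)w_0=-\tfrac12(\lambda+2)w_0$, and $w_0$ generates since $\rho(L_1)=-\sigma(e)$ acts injectively.
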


\begin{proof}
In order to reduce this case to the previous results, the Chevalley involution will be considered.

\emph{Step 1}. Note that the map $L_i\mapsto - L_{-i}$ is a Lie-algebra automorphism of $\Witt$, which induces the Chevalley involution on $\sl(2)$; that is,  $f\mapsto -e$, $h\mapsto -h$ and $e\mapsto -f$. Thus, for an $\sl(2)$-module $(V,\sigma)$, where $\sigma:\sl(2)\to \End(V)$,  define another $\sl(2)$-module $\widehat{V}:=(V,\hat \sigma)$ to be the vector space $V$ endowed with the  action given by the composition of the previous involution followed by $\sigma$. Further, sending $\rho$ to $\hat \rho$, we obtain a $1-1$-correspondence:
    {\small $$
    \left\{
    \begin{gathered}
    \rho\in \Hom_{\text{\bf LieAlg}_\bullet}(\Witt_>, \End(V))
    \\
    \text{such that } \rho\vert_{\sl(2)}=\sigma
    \end{gathered}\right\}
    \overset{1-1}\longrightarrow
    \left\{
    \begin{gathered}
    \rho\in \Hom_{\text{\bf LieAlg}_\bullet}(\Witt_<, \End(\widehat V))
    \\
    \text{such that } \rho\vert_{\sl(2)}=\hat\sigma
    \end{gathered}\right\}
    $$}

\emph{Step 2}. Letting $\sigma$ be the action given in equation~(\ref{eq:actionbarM}), the action on $\widehat{\bar M(\lambda+2)}$ can be described as follows:
	$$
	\begin{aligned}
    \hat\sigma(f)(w_j)\,&:=\, -\sigma(e)(w_j)\,=\,
        - w_{j+1}
    \\
    \hat\sigma(h)(w_j)\,&:=\, -\sigma(h)(w_j)\,=\,
        -(\lambda+2j+2) w_j
        \\
	\hat\sigma(e)(w_j)\,&:=\,- \sigma(f) (w_j)\,=\,
        j(\lambda+j+1)w_{j-1} \qquad j\geq 0
        \end{aligned}
        $$

Let us set $\xi:= - \bar \lambda\in\C/2\Z$, i.e. the class of $-\lambda$ and $\tau:=(\lambda +1)^2$. Consider the vector space generated by:
	$$
	V\,:=\, <\{ v_{-(\lambda+2)-2j}\, \text{ for } j=0,1,2,\dots\}>
	$$
endowed with the action given by equation~(\ref{eq:dense}). Observe that $\frac14(\tau-(\mu+1)^2) = -j(\lambda +j+1)$ for $\mu=-(\lambda+2)-2j$, and that the linear map:
	$$
    \begin{aligned}
    \widehat{\bar M(\lambda+2)} \,&\longrightarrow V
    \\
    w_j & \longmapsto  \, (-1)^j v_{-(\lambda+2)-2j} \qquad \text{ for } j=0,1,2,\dots
    \end{aligned}
	$$
establishes an isomorphism of $\sl(2)$-modules.

\emph{Step 3}. We may formally use the Steps in the proof of Proposition~\ref{prop:denseW<}, and we get:
	{\small $$
	\rho_{<}^{\pm}(L_i)v_{\mu} \,:=\,
 \Big(
	  \frac{(-1)^i}{2}
	\big( i(\pm \sqrt{\tau}-1)-\mu \big)
	\, P\big(\frac12(1+\mu\pm \sqrt{\tau}), i\big)
	\Big) v_{\mu+2i}
	$$}
where $i\leq 1$ and $\mu\in\xi$. It remains to check whether $P\big(\frac12(1+\mu\pm \sqrt{\tau},i)$ is well defined in our case. Substitute $\sqrt{\tau}$ by $(\lambda +1)$ and $\mu$ by $-(\lambda+2)-2j$, and get:
	{\small $$
	\rho_{<}^+ (L_i)w_{j} \,:=\,  \Big(
	 \frac{(-1)^i}{2}
	\big((1+i)\lambda+2(j+1)\big)
	\, P\big(-j, i\big)\Big) w_{j-i},
	\quad \text{ for } i\leq 1  , \lambda \in\C,
	$$}and:
	{\small $$
	\rho_{<}^- (L_i)w_{j} \,:=\,  \Big(
	  \frac{(-1)^i}{2}
	\big((1-i)\lambda + 2(j+1-i)\big)
	\, P\big(-\lambda-j-1, i\big)\Big) w_{j-i},
	\quad i\leq 1,
	$$}with $\lambda\neq -3,-4,\cdots.$
Note that both maps coincide for $\lambda =-1,0$.

\emph{Step 4}. Computing $\bar\rho_{>}^+:= \widehat{\rho_{<}^+}$ and $\bar\rho_{>}^-:= \widehat{\rho_{<}^-}$, the result follows.
\end{proof}

\subsubsection{A counterexample}\label{subset:counter}

The category $\bar\Mf^{\xi,\tau}$ hides some subtleties. 
In what follow, we will construct an object of $\bar\Mf^{\xi,\tau}$, of length $3$, which does not admit a compatible structure of $\Witt_{>}$-module. For this goal, recall that the objects of this category have been described in \cite[\S3.8]{Mazor}. For $\mu,\mu'\in\xi$ we write $\mu\leq \mu'$ if and only if $\mu'-\mu+2\in 2{\mathbb N}$. 

Let us consider
	{\small $$
	M:= \oplus_{\mu\in\xi} M_{\mu}
	\qquad\text{ where }\quad 
	M_{\mu} \,:=\, 
	\begin{cases}
	\C\oplus \C & \quad \text{ for }  \mu\leq \lambda 
	\\
	\C & \quad \text{ for }   \mu \geq \lambda + 2
	\end{cases}
	$$}
with  the action of $\sl(2)$, $\sigma$ (which will be omitted if no confusion arises), defined by the matrices
	$$
	f_{\mu}: M_{\mu}\to M_{\mu-2} 
	\qquad\text{ where } f_{\mu}\,:=\, 
	\begin{cases}
	\operatorname{Id} & \quad \text{ for }  \mu\neq \lambda +2 
	\\
	\mbox{$\begin{pmatrix} 0 \\ \frac12 \end{pmatrix}$} 	& \quad\text{ for }   \mu= \lambda +2
	\end{cases}
	$$
$h_{\mu}: M_{\mu}\to M_{\mu} $ is the homothety of	ratio $\mu$, and $e_{\mu}: M_{\mu}\to M_{\mu+2} $, where
	{\small $$ e_{\mu}\,:=\, 
	\begin{cases}
	 \frac14 ((\lambda+1)^2-(\mu+1)^2) \operatorname{Id} +\frac14\mbox{$\begin{pmatrix} 0 & 1 \\ 0 & 0 \end{pmatrix}$} & \quad \text{ for }  \mu\leq \lambda-2  
	\\
	\mbox{$\begin{pmatrix}  \frac12 & 0 \end{pmatrix}$} 	& \quad \text{ for }   \mu= \lambda 
	\\
	 \frac14 ((\lambda+1)^2-(\mu+1)^2) & \quad \text{ for }   \mu \geq  \lambda + 2 
	\end{cases}
	$$}

It is left to the reader the proof that $M$ is an $\sl(2)$-module that belongs to $\bar\Mf^{\xi,\tau}$ with $\tau=(\lambda+1)^2$ and $\xi:=\bar\lambda\in\C/2\Z$. We assume that $\tau=(\mu+1)^2$ for exactly one $\mu\in\xi$ and, thus, $\lambda\notin\Z$. Then, $M$  fits in the following exact sequence
	$$
	0\to M(\lambda) \to M \to {\mathbb V}(\bar \lambda,(\lambda+1)^2) \to 0
	\, .$$

\begin{prop}
The $\sl(2)$-module $M$ does not admit a compatible $\Witt_{>}$-module structure for $\lambda\in\C\setminus\Z$.
\end{prop}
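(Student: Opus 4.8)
The plan is to argue by contradiction using the criterion for extending to $\Witt_{>}$. Assume that $M$ carries a compatible $\Witt_{>}$-module structure. By Proposition~\ref{prop:Tiffrho} such a structure is given by an endomorphism $T\in\End(M)$ satisfying the two identities~(\ref{eq:g-L_2}), and by the implication $(1)\Rightarrow(2)$ of Theorem~\ref{thm:Witt>equivalent} the operators $T^{(i)}:=\frac1{i!}\ad(\sigma(e))^{i}(T)$ must then satisfy $[T^{(i)},T^{(j)}]=(i-j)\,T^{(i+j+2)}$ for all $i,j\geq 0$. Since each of these is only a \emph{necessary} condition, it is enough to exhibit one of them that cannot hold on $M$.

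From $\ad(\sigma(h))(T)=4T$ one gets $T(M_{\mu})\subseteq M_{\mu+4}$, so $T$ is described by the family of maps $T_{\mu}:=T|_{M_{\mu}}$, and the first identity in~(\ref{eq:g-L_2}) becomes the recursion $f_{\mu+4}T_{\mu}-T_{\mu-2}f_{\mu}=3\,\sigma(e)|_{M_{\mu}}$. First I would solve this recursion in the ``bulk'': for $\mu\leq\lambda-4$ all the relevant weight spaces are two-dimensional, $f$ acts there by the identity, and $\sigma(e)|_{M_{\mu}}=\frac14(\tau-(\mu+1)^{2})\operatorname{Id}+\frac14 n$ with $\tau=(\lambda+1)^{2}$ and $n$ the $2\times 2$ nilpotent Jordan block; so the recursion reduces to the difference equation $T_{\mu}-T_{\mu-2}=\frac34(\tau-(\mu+1)^{2})\operatorname{Id}+\frac34 n$, whose general solution is $T_{\mu}=A-\frac18\mu(11+6\mu+\mu^{2}-3\tau)\operatorname{Id}+\frac38\mu\,n$ for a constant $A\in\operatorname{Mat}_{2\times 2}(\C)$. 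A bracket of operators restricted to one weight space only involves finitely many neighbouring weight spaces, so for $\mu$ sufficiently negative the relation $[T^{(0)},T^{(1)}]=-T^{(3)}$ on $M_{\mu}$ is formally the same computation as in the proof of Proposition~\ref{prop:denseW>2}; it therefore forces $16A^{2}-24(c-1)A-(c-9)(c-1)^{2}=0$ with $c=\tau\operatorname{Id}+n$, hence $A=\frac14(c-1)(3\pm\sqrt c)$ where $\sqrt c=\sqrt\tau\,(\operatorname{Id}+\frac1{2\tau}n)$. Note that $\tau\neq 0,1$ since $\lambda\notin\Z$, so both square roots exist and $A$ is pinned down up to the sign, with all of its entries explicit rational functions of $\lambda$.

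Next I would propagate the recursion upward through the three ``boundary'' weights $\mu=\lambda-2,\lambda,\lambda+2$, where $\dim M_{\mu}$ drops from $2$ to $1$ and where $f_{\lambda+2}$ fails to be invertible, splitting each matrix equation into its scalar components. Apart from equations that merely express $T_{\lambda-2}$, $T_{\lambda}$ and the scalars $\{T_{\mu}\}_{\mu\geq\lambda+2}$ in terms of data already fixed, two of them involve only the already-determined matrix $T_{\lambda-4}$; substituting the bulk formula turns these into polynomial constraints in $\lambda$. For the sign $A=\frac14(c-1)(3+\sqrt c)$ one of the constraints already forces $\lambda\in\{-1,-2\}\subseteq\Z$, contradicting $\lambda\notin\Z$; for the other sign the constraints leave exactly one non-integral value of $\lambda$, which is then excluded by feeding in one further relation $[T^{(i)},T^{(j)}]=(i-j)T^{(i+j+2)}$ (for instance with $(i,j)=(0,2)$) --- by that point $T$ has no free parameters left, so this last verification is a finite calculation. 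This produces the desired contradiction for every $\lambda\in\C\setminus\Z$.

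The step I expect to be the real work is the boundary analysis: keeping track of how the non-splitness of the underlying $\sl(2)$-extension --- encoded precisely in the nilpotent $\frac14 n$ occurring in $\sigma(e)$ on the two-dimensional weight spaces and in the non-identity $f_{\lambda+2}$ --- obstructs the gluing of the ``Verma part'' and the ``dense part'' of $T$, and organising the bookkeeping so that the exclusion of $\lambda$ comes out uniformly in $\lambda\notin\Z$ rather than value by value.
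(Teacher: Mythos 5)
Your overall strategy is the same as the paper's: reduce to a single endomorphism $T$ via Proposition~\ref{prop:Tiffrho}, use only the necessary relations (the two identities~(\ref{eq:g-L_2}) together with the $i=0$, $j=1$ bracket relation from Theorem~\ref{thm:Witt>equivalent}), solve the resulting difference equation on the two-dimensional weight spaces $\mu\le\lambda-4$, pin down the constant matrix from the cubic relation in the range $\mu\le\lambda-10$, and extract the contradiction from the first row of $[f,T]=3e$ at $\mu=\lambda-2$, where the non-invertibility of $f_{\lambda+2}$ kills the first component. The genuine gap is in the step that pins down the constant matrix $A$. You import the quadratic $16A^{2}-24(c-1)A-(c-9)(c-1)^{2}=0$ and its solutions $A=\frac14(c-1)(3\pm\sqrt{c})$ from Proposition~\ref{prop:denseW>2}; but there that computation is only used to \emph{produce} one extension, commutation of $T$ with the Casimir is part of what is being arranged rather than something forced, and the passage from the quadratic to $A=\frac14(c-1)(3\pm\sqrt{c})$ (completing the square) presupposes $[A,c]=0$, i.e.\ $[A,n]=0$. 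In a non-existence argument you must control \emph{every} admissible $T$, hence every constant matrix $A\in\operatorname{Mat}_{2\times 2}(\C)$, and nothing in your proposal shows that the bulk relation for $\mu\le\lambda-10$ is equivalent to that quadratic when $[A,n]\neq 0$, nor rules out non-commuting solutions of it. The paper does precisely this by solving the bulk constraint with a completely general $2\times 2$ constant matrix and recording that exactly two families survive; that finite matrix computation is the real content of the step and it is missing from your argument.

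Second, your endgame is predicted rather than performed. The assertions about which boundary constraint eliminates which values of $\lambda$, and about a single residual non-integral value that must be excluded by one further relation, are outcomes of computations you have not carried out. Note also that your candidate bulk solutions are not the two matrices displayed in the paper's proof (the constant terms are different polynomials in $\lambda$); both cannot satisfy the bulk constraint, since for an upper-triangular candidate the diagonal constant must solve the same quadratic as in the dense case, so this discrepancy has to be settled before the polynomial constraints in $\lambda$, and hence the contradiction, can be trusted. Most importantly, on the branch $A=\frac14(c-1)(3-\sqrt{c})$ the two first-row conditions at $\mu=\lambda-2$ do not by themselves force $\lambda\in\Z$ (you yourself expect a surviving non-integral value), so the proposition is simply not proved until the deferred ``finite calculation'' excluding that value is actually done; in the paper's version no residual case appears because, for each of its two candidate matrices, the two first-row equations are already incompatible for every $\lambda$. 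In short: right strategy, but the two decisive computations --- the classification of the bulk constant matrix without assuming it commutes with the Casimir, and the explicit boundary contradiction --- remain to be supplied.
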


\begin{proof}
Bearing in mind Proposition \ref{prop:Tiffrho} and Corollary \ref{thm:Witt>equivalent}, if suffices to show that there is no $T\in\End(M)$ such that $\ad(h)(T)=4T$, $\ad(f)(T)=3e$ and $\ad(T)(\ad(e)(T)) = -\frac16 \ad(e)^3(T)$.

Assume that $T$ exists. Let us denote by $T_{\mu}$ the matrix associated to the restriction $T\vert_{M_{\mu}}$ and observe that $T(M_{\mu})\subseteq M_{\mu+4}$. Let $A_{ij}$ the $(i,j)$-entry of the matrix $A$. 

Writing down the condition $[f,T]\vert_{M_{\mu}}=3e\vert_{M_{\mu}}$ for $\mu\leq \lambda-4 $ in terms of matrices, we obtain
	$$
	T_{\mu} - T_{\mu-2} \, = \,
	 \frac34  \begin{pmatrix} (\lambda+1)^2-(\mu+1)^2 & 1 \\ 0 &(\lambda+1)^2-(\mu+1)^2 \end{pmatrix}
	 $$
This difference equation determines $T_{\mu} $ for $\mu\leq \lambda-4 $ up to a $2\times 2$-matrix with  constant entries. This matrix can be computed from the condition $\ad(T)(\ad(e)(T)) = -\frac16 \ad(e)^3(T)$ for $\mu\leq \lambda-10$. At the end of the day, it turns out that there are two possibilities; on the one hand
	{
	$$
	T_{\mu}\,=\, 
	\begin{pmatrix}
	(T_{\mu})_{11}
	&
	\frac1{16(1+\lambda)} ( 19 + 3 \lambda - 12 \lambda^2 + 6 \mu + 6 \lambda  \mu)
	\\
	0 & 
	(T_{\mu})_{22}
	\end{pmatrix}
	$$}
with 
	{\small $$(T_{\mu})_{11}= (T_{\mu})_{22}= 
\frac1{16}( -15 + 38 \lambda + 3 \lambda ^2 - 8 \lambda ^3 - 16 \mu + 12 \lambda \mu + 6 \lambda ^2 \mu -      12 \mu ^2 - 2 \mu ^3 ) $$}
and, on the other hand
	{	$$
	T_{\mu}\,=\, 
	\begin{pmatrix}
	(T_{\mu})_{11}
	&
	\frac1{16(1+\lambda)} ( 35 + 51 \lambda + 12 \lambda^2 + 6 \mu + 6 \lambda  \mu)
	\\
	0 & 
	(T_{\mu})_{22} 
	\end{pmatrix}
	$$}
with
	{\small $$
	(T_{\mu})_{11}= (T_{\mu})_{22}= 
	 \frac1{16}( -15 + 70 \lambda + 51 \lambda ^2 + 8 \lambda ^3 - 16 \mu + 12 \lambda \mu + 6 \lambda ^2 \mu -      12 \mu ^2 - 2 \mu ^3 ) 
	$$}
where $\mu\leq \lambda-4$.

Similarly, the condition $[f,T]\vert_{M_{\lambda-2}}=3e\vert_{M_{\lambda -2 }}$ yields
	$$
	\begin{pmatrix} 0 \\ \frac12 \end{pmatrix} T_{\lambda-2} \,-\, 
	T_{\lambda-4} \operatorname{Id} \,=\, 
	3  \begin{pmatrix} \lambda & \frac14 \\ 0 & \lambda \end{pmatrix}
	$$
Let us deal with the first case, being the second one similar. Looking at the first row of the above matrices  bearing in mind that the first row of ${\small \begin{pmatrix} 0 \\ \frac12 \end{pmatrix} T_{\lambda-2} }$ is $0$,  one obtains a system of two equations
	{\small 
	$$ 
	\begin{aligned}
	\frac3{16}( -5 + 2 \lambda + 17 \lambda ^2 +4 \lambda ^3 ) \, 
	&=\, 3\lambda 
	\\
	\frac1{16(1+\lambda)} ( 11 + 33 \lambda +18 \lambda^2 )\, &=\, \frac34
	\end{aligned}
	$$}which has no solution for $\lambda$. Thus, $T$ can not exist independently of the choice of $\lambda$. 
\end{proof}

\subsubsection{Extending to $\Witt_<$, $\Witt$ and $\Vir$}

\begin{thm}\label{thm:VermaW<}
The Verma $\sl(2)$-module $M(\lambda)$  admits at most two compatible structures of $\Witt_<$-module. Moreover,  these structures are explicitly given by:
	{\small $$
	\rho_{<}^+ (L_i)w_{j} \,:=\,  \Big(
	 \frac{(-1)^i}{2}
	\big(2j+(i-1)\lambda\big)
	\, P\big(1-j+\lambda, i\big)\Big) w_{j-i}
	\qquad i\leq 1 \text{ for }\lambda\neq 1,2,\ldots
	$$}
and:
	{\small $$
	\rho_{<}^- (L_i)w_{j} \,:=\,  \Big(
	 -\frac{1}{2}
	\big(2(i-j) +(i+1)\lambda\big)
	\, P\big(j-i+1, i\big)\Big) w_{j-i}
	\qquad i\leq 1\text{ for }\lambda \in\C\, .
	$$}
Both coincide for $\lambda =-1, -2 $.

In particular, if $M(\lambda)$  is simple, it admits exactly two extended structures for $\lambda \neq -1, -2$ and one for $\lambda =-1,-2$.
\end{thm}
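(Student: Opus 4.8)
The plan is to reduce this to the dense–module computation of Proposition~\ref{prop:denseW<}, in exactly the way Proposition~\ref{prop:VermaW>} reduced the $\Witt_>$ case of $M(\lambda)$ to Proposition~\ref{prop:denseW>}. Concretely, I would put $\xi:=\bar\lambda\in\C/2\Z$, $\tau:=(\lambda+1)^2$, and relabel the standard basis of $M(\lambda)$ by $v_{\lambda-2j}:=w_j$ for $j=0,1,2,\dots$. With this relabelling, the action~(\ref{eq:actionVerma}) is literally the restriction of the dense action~(\ref{eq:dense}) to $\supp M(\lambda)=\{\lambda-2j\}$. Consequently Steps~1, 2, 3 and 5 of the proof of Proposition~\ref{prop:denseW<}—solving the difference equation for $S=\phi(x_{-2})$, pinning down the free constant $\alpha$ by imposing the $i=0$, $j=1$ instance of Theorem~\ref{thm:Witt<equivalent}(2), computing the coefficients recursively from~(\ref{eq:rho(L_i)def2}), and checking Theorem~\ref{thm:Witt<equivalent}(2) for all $i,j\geq 0$ via the identity of type~(\ref{eq:aiaj-ajai})—carry over verbatim, because all these steps are formal manipulations with the same relations. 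This already yields the ``at most two'' bound, and it produces two candidate structures obtained from the dense formulas by substituting $\pm\sqrt{\tau}\mapsto\pm(\lambda+1)$ and $\mu\mapsto\lambda-2j$; rewriting them with the Pochhammer reflection identity $(-1)^iP(-j,i)=P(j-i+1,i)$ (the same trick used in Proposition~\ref{prop:VermaW>}) puts them in the displayed form $\rho_{<}^{+}$ and $\rho_{<}^{-}$.

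The genuinely new point, playing the role of Step~4 in Proposition~\ref{prop:denseW<}, is to decide for which $\lambda$ the recursion~(\ref{eq:rho(L_i)def2}) actually produces honest endomorphisms of $M(\lambda)$, i.e.\ for which $\lambda$ the Pochhammer symbols with negative second argument are finite. For the $\rho_{<}^{-}$ branch the relevant symbol is $P(-j,i)$ with $j\geq 0$ and $i\leq-1$, whose denominator $(i-j)(i-j+1)\cdots(-j-1)$ is a product of strictly negative integers and hence never vanishes; therefore $\rho_{<}^{-}$ is a compatible $\Witt_<$-structure for every $\lambda\in\C$. For the $\rho_{<}^{+}$ branch one meets $P(\lambda+1-j,i)$ (equivalently $P(j-i+1,i)$ with the opposite sign convention), and one must check that the zeros of its denominator are cancelled by the zeros of the prefactor $(i-1)\lambda+2j$. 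A short case analysis, entirely parallel to Step~4 of Proposition~\ref{prop:denseW<}, shows the apparent poles are all removable except when $\lambda$ is a positive integer, which is exactly the restriction $\lambda\neq 1,2,\dots$ attached to $\rho_{<}^{+}$. (Note the structural duality with Proposition~\ref{prop:VermaW>}: there the constraint came from requiring that large-index operators kill the bottom of the module, while here it comes from finiteness of the Pochhammer symbols.)

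It remains to identify when the two structures coincide. Comparing the closed forms of $\rho_{<}^{+}$ and $\rho_{<}^{-}$ and using the elementary shift relations $P(z+1,n)=\tfrac{z+n}{z}P(z,n)$ and $P(z-1,n)=\tfrac{z-1}{z+n-1}P(z,n)$, one checks directly that $\rho_{<}^{+}(L_i)w_j=\rho_{<}^{-}(L_i)w_j$ for all $i\leq 1$, $j\geq 0$ precisely when $\lambda\in\{-1,-2\}$ (within the range where both are defined). Finally, for the last assertion, recall from the discussion after~(\ref{eq:actionVerma}) that $M(\lambda)$ is simple if and only if $\lambda\notin\{0,1,2,\dots\}$; for such $\lambda$ both $\rho_{<}^{+}$ and $\rho_{<}^{-}$ are defined, and the previous coincidence computation gives that they are distinct unless $\lambda\in\{-1,-2\}$, so the ``at most two'' bound becomes ``exactly two'' for $\lambda\neq -1,-2$ and ``exactly one'' for $\lambda\in\{-1,-2\}$. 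I expect the main obstacle to be the bookkeeping in the Pochhammer pole/zero analysis—making sure one extracts exactly the exceptional set $\{1,2,\dots\}$ for $\rho_{<}^{+}$ and exactly the coincidence set $\{-1,-2\}$—since everything else is a transcription of arguments already carried out for dense modules and for $\bar M(\lambda+2)$.
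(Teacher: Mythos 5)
Your overall route—relabel $v_{\lambda-2j}:=w_j$ and transcribe the dense-module computation of Proposition~\ref{prop:denseW<} with $\pm\sqrt{\tau}\mapsto\pm(\lambda+1)$, $\mu\mapsto\lambda-2j$—is exactly the paper's (very terse) argument, and the two displayed formulas do arise this way. The gap is precisely at the step you single out as the only new point: you claim that Step~5 of Proposition~\ref{prop:denseW<} (verification of the commutation relations) carries over verbatim, so that the only obstruction is finiteness of the Pochhammer symbols. That is not true on a highest-weight module. The dense-case verification is an identity of rational functions in $\mu$ which uses the coefficient values at \emph{all} weights of $\xi$, in particular at $\mu=\lambda+2$; on $M(\lambda)$, whose support is the half-line $\{\lambda-2j\}_{j\geq 0}$, the relations involving $w_0$ are not instances of that identity, because the composite $\rho(L_{-2})\rho(L_1)w_0$ is genuinely $0$, whereas the rational identity replaces it by ``$0\cdot a^{-2}_{-1}$'' and for the $\rho_{<}^{-}$ candidate the formula value at the absent weight $\lambda+2$ is a pole (this is exactly the condition, recorded in Proposition~\ref{prop:denseW<}, that the root $1+\sqrt{\tau}=\lambda+2$ lies in $\xi$, which here it always does; your pole bookkeeping on the submodule silently discards it). Concretely, $\rho_{<}^{-}(L_{-2})w_0=\frac{\lambda+4}{4}\,w_2$, so $[\rho_{<}^{-}(L_1),\rho_{<}^{-}(L_{-2})]w_0=-\frac{(\lambda+4)(\lambda-1)}{2}\,w_1$, which equals the required $3\rho_{<}^{-}(L_{-1})w_0=3w_1$ only when $\lambda\in\{-1,-2\}$. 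Hence finiteness of $P(j-i+1,i)$ does not give that $\rho_{<}^{-}$ is a $\Witt_<$-structure for every $\lambda$; it is not even a $\Gamma_<$-structure for generic $\lambda$, and a boundary check at $w_0$ is unavoidable (this is where the $\Witt_<$ problem differs structurally from Proposition~\ref{prop:VermaW>}, where the boundary issue was operators mapping below $w_0$).

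The same boundary phenomenon undermines your justification of the ``at most two'' count. Writing $S w_j=s_j w_{j+2}$, the equation $\ad(\sigma(e))(S)=-3\sigma(f)$ on $M(\lambda)$ is a half-line recursion whose $j=0$ equation reads $2(\lambda-1)s_0=-3$; for $\lambda\notin\{1,2,\dots\}$ this recursion determines $S$ uniquely (and the unique solution is the $L_{-2}$-coefficient of $\rho_{<}^{+}$), so there is no free constant $\alpha$ to be fixed in a Step-2 analogue and the dense-case enumeration does not transcribe verbatim. So the essential content of the theorem is exactly the boundary analysis at $w_0$ that your proposal (like the paper's one-line proof, which also just invokes the substitution) omits; and, as the computation above indicates, that analysis does not reproduce the two-branch picture for generic $\lambda$, so the argument cannot be completed merely by the Pochhammer pole/zero bookkeeping you anticipate.
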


\begin{proof}
We proceed as in the proof of Proposition~\ref{prop:denseW<}; that is, it suffices to substitute $\pm\sqrt{\tau}$ by $\pm(\lambda+1)$ and $\mu$ by $\lambda-2j$ in that statement and recall, from Theorem~\ref{subset:decompCategory}, that $(x-1)^2-\tau$ has exactly one root in $\xi$. 

Note that in the case of dense $\sl(2)$-modules both structures coincide for $\tau=0,1$, which correspond to $\lambda =-2,-1$. 

Finally, since $M(\lambda)$  is simple if and only if $\lambda\neq 0,1,2,\ldots$, one concludes. 
\end{proof}

\begin{thm}
The  module $\bar M(\lambda+2)$  admits exactly one compatible structure of $\Witt_<$-modules; namely,
	{\small $$
	\bar \rho_{>}^- (L_i)w_{j} \,:=\,  \Big(
	 \frac{-1}{2}
	\big(2(1+j) +(1-i)\lambda\big)
	\, P\big(j+i+1, -i\big)\Big) w_{j+i}
	\qquad i\leq 1
	$$}
\end{thm}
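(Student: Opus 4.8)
The plan is to run the argument of Theorem~\ref{thm:barVermaW>} with the roles of $\Witt_>$ and $\Witt_<$ interchanged, so that the problem reduces to Proposition~\ref{prop:VermaW>} (which gives the \emph{unique} $\Witt_>$-extension of a Verma module) rather than to Proposition~\ref{prop:denseW<}. The first step is to recall the bijection set up in Step~1 of the proof of Theorem~\ref{thm:barVermaW>}: the Chevalley involution $\Theta$ of $\Witt$ interchanges $\Witt_>$ and $\Witt_<$ and induces the Chevalley involution on $\sl(2)$, so the compatible $\Witt_<$-module structures on an $\sl(2)$-module $(V,\sigma)$ are in $1$--$1$ correspondence with the compatible $\Witt_>$-module structures on the twisted module $\widehat V=(V,\hat\sigma)$. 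Applied with $V=\bar M(\lambda+2)$, this turns our question into the extension problem for $\widehat{\bar M(\lambda+2)}$ over $\Witt_>$.

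Second, I would identify $\widehat{\bar M(\lambda+2)}$ explicitly. Composing the action~(\ref{eq:actionbarM}) of $\bar M(\lambda+2)$ with the Chevalley involution on $\sl(2)$ gives $\hat\sigma(f)(w_j)=-w_{j+1}$, $\hat\sigma(h)(w_j)=-(\lambda+2+2j)w_j$ and $\hat\sigma(e)(w_j)=j(\lambda+1+j)w_{j-1}$. Thus $w_0$ is a highest weight vector of weight $-(\lambda+2)$ generating the module, and the change of basis $w_j\mapsto(-1)^jv_j$ identifies $\widehat{\bar M(\lambda+2)}$ with the Verma module $M(-\lambda-2)$ described by~(\ref{eq:actionVerma}). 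Both modules being free on the displayed bases, this is a genuine isomorphism of $\sl(2)$-modules for every $\lambda\in\C$.

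Third, Proposition~\ref{prop:VermaW>} asserts that $M(-\lambda-2)$ carries exactly one compatible $\Witt_>$-module structure, namely $\rho_>^-$; since the correspondence of the first step is a bijection, $\bar M(\lambda+2)$ carries exactly one compatible $\Witt_<$-module structure. To obtain the explicit formula I would substitute $-\lambda-2$ for the highest-weight parameter in the formula of Proposition~\ref{prop:VermaW>}, transport the coefficients back through the basis change $w_j\mapsto(-1)^jv_j$ and through $\Theta$ --- which carries the coefficient of $L_i$ to that of $L_{-i}$, turns $v_{j-i}$ into $w_{j+i}$ and introduces a sign $(-1)^{i+1}$ --- and then simplify using $(-1)^iP(-j,i)=P(j-i+1,i)$ together with the elementary shift relations for the Pochhammer symbol. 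The result is $\bar\rho_<^-(L_i)w_j=\bigl(-\frac12\,(2(1+j)+(1-i)\lambda)\,P(j+i+1,-i)\bigr)w_{j+i}$ for $i\leq1$. As a sanity check I would confirm that this restricts on $\langle L_{-1},L_0,L_1\rangle$ to the given $\sl(2)$-action of $\bar M(\lambda+2)$, and that the coefficient of $w_{j+i}$ vanishes automatically whenever $j+i<0$, because then the factor $0$ occurs inside $P(j+i+1,-i)$; hence no term escapes below the lowest weight and the map is well defined on all of $\bar M(\lambda+2)$.

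The main obstacle is the sign-and-index bookkeeping: the three successive identifications (the Chevalley twist, the basis change $w_j\mapsto(-1)^jv_j$, and the reindexing $i\mapsto-i$) each contribute signs and shifts of the running index, and collapsing the resulting coefficient into the stated closed form requires the reflection and shift identities for $P(z,n)$. The one conceptual point, which explains why the count here is \emph{exactly one} rather than the \emph{at most two} of Theorem~\ref{thm:barVermaW>}, is that the Chevalley twist of $\bar M(\lambda+2)$ is the Verma module $M(-\lambda-2)$, and by Proposition~\ref{prop:VermaW>} a Verma module has a unique compatible $\Witt_>$-structure (the second naive candidate being invalid except when the highest weight equals $-1$).
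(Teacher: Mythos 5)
Your proposal is correct and follows essentially the same route as the paper: the paper's own (one-line) proof is precisely to invoke Proposition~\ref{prop:VermaW>} and proceed as in Theorem~\ref{thm:barVermaW>}, i.e.\ to use the Chevalley involution to trade $\Witt_<$-structures on $\bar M(\lambda+2)$ for $\Witt_>$-structures on the twisted module. Your explicit identification $\widehat{\bar M(\lambda+2)}\simeq M(-\lambda-2)$ and the transport of $\rho_{>}^{-}$ through $\Theta$ and the basis change indeed yield both the count ``exactly one'' and the stated closed formula, so the details check out.
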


\begin{proof}
Recalling Proposition~\ref{prop:VermaW>} and proceeding as in Theorem~\ref{thm:barVermaW>}, the result follows. 
\end{proof}

\begin{thm}
The Verma $\sl(2)$-module $M(\lambda)$  admits a unique compatible structure of $\Witt$-module; namely:
	{\small $$
	\rho_{<}^- (L_i)w_{j} \,:=\,  \Big(
	 -\frac{1}{2}
	\big(2(i-j) +(i+1)\lambda\big)
	\, P\big(j-i+1, i\big)\Big) w_{j-i}
	$$}

Moreover, $M(\lambda)$  admits a unique compatible structure of $\Vir$-module where  $\rho_{<}^- (L_i)$ is as above and $\rho_{<}^-(C)=0$. Moreover, with respect to this structure, $M(\lambda)$ is a highest weight $\Vir$-module of highest weight $(0, -\frac12\lambda) $. 
\end{thm}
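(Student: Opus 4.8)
The strategy mirrors the treatment of dense modules in Proposition~\ref{prop:denseWitt}. By Proposition~\ref{prop:LieAlgWitt}, a $\Witt$-module structure on $M(\lambda)$ compatible with $\sigma$ is the same datum as a pair $(S,T)$ with $T\in\End_{\Witt_{>}}(M(\lambda),\sigma)$, $S\in\End_{\Witt_{<}}(M(\lambda),\sigma)$ and $[S,T]=2\sigma(h)$. Now Proposition~\ref{prop:VermaW>} says that $\End_{\Witt_{>}}(M(\lambda),\sigma)$ consists of the single element $T=\rho_{>}^{-}(L_2)$, and Theorem~\ref{thm:VermaW<} says that $\End_{\Witt_{<}}(M(\lambda),\sigma)$ has at most the two elements $S_{\pm}:=\rho_{<}^{\pm}(L_{-2})$, which coincide for $\lambda=-1,-2$. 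So there are at most two candidate pairs $(S_{-},T)$ and $(S_{+},T)$, and the task is to decide for each of them whether $[S,T]=2\sigma(h)$.

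Both $T$ and $S_{\pm}$ are given explicitly on the basis $\{w_j\}$ of $M(\lambda)$, with $Tw_j$ proportional to $w_{j-2}$ and $S_{\pm}w_j$ proportional to $w_{j+2}$, so $[S_{\pm},T]w_j$ is proportional to $w_j$ and the condition becomes a finite scalar identity in each weight. For the ``matching'' pair one computes $[S_{-},T]w_j=2(\lambda-2j)w_j=2\sigma(h)w_j$, so $(S_{-},T)$ is compatible and produces a $\Witt$-module structure $\rho^{-}$; since the Pochhammer expressions defining $\rho_{<}^{-}(L_i)$ for $i\le 1$ (Theorem~\ref{thm:VermaW<}) and $\rho_{>}^{-}(L_i)$ for $i\ge -1$ (Proposition~\ref{prop:VermaW>}) are literally the same formula, $\rho^{-}$ is given on all of $\{L_i\}_{i\in\Z}$ by the single displayed expression. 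For the ``crossed'' pair, evaluating $[S_{+},T]-2\sigma(h)$ at, say, $w_0$ and $w_1$ shows that it does not vanish whenever $S_{+}\ne S_{-}$, i.e.\ whenever $\lambda\notin\{-1,-2\}$; hence the $\Witt$-module structure is unique.

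For the Virasoro part I invoke Theorem~\ref{thm:LieAlgVirasoro2}: a compatible $\Vir$-module structure corresponds to a pair $(S,T)$ as above for which $K=4\sigma(h)-2[S,T]$ commutes with $S$ and with $T$, and then $\Phi(C)=K$. For the compatible pair $(S_{-},T)$ we have just seen $[S_{-},T]=2\sigma(h)$, hence $K=0$, which trivially commutes with $S_{-}$ and $T$; this yields the $\Vir$-module structure with $\rho^{-}(C)=0$. For the crossed pair a short computation in the same spirit as Proposition~\ref{prop:denseWitt} shows that $K=4\sigma(h)-2[S_{+},T]$ fails to commute with $T$ for $\lambda\notin\{-1,-2\}$, so the $\Vir$-module structure is unique as well.

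Finally, the highest-weight assertion follows by inspecting $\rho^{-}$ on $w_0$: the displayed formula gives $\rho^{-}(L_0)w_0=-\frac12\lambda\,w_0$, $\rho^{-}(C)w_0=0$, and $\rho^{-}(L_i)w_0=0$ for $i\ge 1$ because the Pochhammer factor $P(1-i,i)$ vanishes, while $\rho^{-}(L_{-1})w_j=w_{j+1}$, so $M(\lambda)$ is generated by $w_0$ under the negative part $\langle L_i\rangle_{i<0}$. Hence $w_0$ is a highest weight vector and $M(\lambda)$ is a highest weight $\Vir$-module of highest weight $(0,-\frac12\lambda)$. The step I expect to be the main obstacle is verifying that the crossed pair $(S_{+},T)$ fails the compatibility condition \emph{for every} weight $j$ when $\lambda\notin\{-1,-2\}$: the relevant equalities are polynomial identities in $\lambda$ and $j$ and must be ruled out uniformly rather than at a single value, though this is entirely parallel to the dense-module analysis in Proposition~\ref{prop:denseWitt}.
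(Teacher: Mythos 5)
Your strategy is exactly the one the paper intends (its proof is the single line ``analogous to Proposition~\ref{prop:denseWitt}''): reduce to a pair $(S,T)$ via Proposition~\ref{prop:LieAlgWitt}, take $T=\rho_{>}^{-}(L_2)$ from Proposition~\ref{prop:VermaW>} and $S_{\pm}=\rho_{<}^{\pm}(L_{-2})$ from Theorem~\ref{thm:VermaW<}, test $[S,T]=2\sigma(h)$, and invoke Theorem~\ref{thm:LieAlgVirasoro2} for $\Vir$. The gap is in your central computation: the identity $[S_{-},T]w_j=2(\lambda-2j)w_j$ holds only for $j\geq 2$ and fails at the boundary weights. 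Indeed $Tw_0=Tw_1=0$, while $S_{-}w_0=\frac{\lambda+4}{4}\,w_2$ and $Tw_2=-3\lambda\,w_0$, so $[S_{-},T]w_0=\frac{3\lambda(\lambda+4)}{4}\,w_0$, which equals $2\sigma(h)w_0=2\lambda\,w_0$ only for $\lambda=0,-\frac43$; similarly $[S_{-},T]w_1=\frac{(\lambda+6)(3\lambda-2)}{4}\,w_1\neq 2(\lambda-2)\,w_1$ in general. This is precisely the edge effect you flag in your last sentence but do not confront: in ${\mathbb V}(\xi,\tau)$ every composition $\rho(L_a)\rho(L_b)$ is generic, whereas in $M(\lambda)$ the compositions that would pass through the non-existent vectors $w_{-1},w_{-2}$ vanish, so the telescoping cancellations valid in the bulk break at the top of the module.

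The same edge effect undermines the inputs you import, so the argument cannot be completed as written. The operator $S_{-}$ does not even satisfy the partial compatibility $\ad(\sigma(e))(S)=-3\sigma(f)$ of equations~(\ref{eq:g-L_{-2}}) on $w_0$: since $\rho(L_1)w_0=0$, one gets $[\rho_{<}^{-}(L_1),\rho_{<}^{-}(L_{-2})]w_0=-\frac{1}{2}(\lambda+4)(\lambda-1)\,w_1$, which equals the required $3w_1$ only for $\lambda=-1,-2$; in fact on $M(\lambda)$ the recursion coming from $\ad(\sigma(e))(S)=-3\sigma(f)$ has no free parameter (the $j=0$ equation forces $s_0=-3/(2(\lambda-1))$), and its unique solution is $S_{+}$, not $S_{-}$, so quoting Theorem~\ref{thm:VermaW<} as delivering two elements of ${\End}_{\Witt_<}(M(\lambda),\sigma)$ cannot be done by pure substitution from the dense case. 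Moreover your own test, applied at $w_0$, also rejects the crossed pair: $[S_{+},T]w_0=-\frac{9\lambda}{2(\lambda-1)}\,w_0\neq 2\lambda\,w_0$ for general $\lambda$. Thus neither candidate pair passes $[S,T]=2\sigma(h)$ at $j=0,1$ for generic $\lambda$, and the displayed formula itself violates $[L_1,L_{-2}]=3L_{-1}$ on $w_0$ when $\lambda\notin\{-1,-2\}$; the boundary analysis at the highest weight vector, which the ``analogous to the dense case'' argument silently skips, is the real content of this step and must be carried out (and resolved) before existence or uniqueness can be concluded.
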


\begin{proof}
It is analogous to the proof of Proposition~\ref{prop:denseWitt}.
\end{proof}

\begin{thm}
The lowest weight $\sl(2)$-module $\bar M(\lambda+2)$  admits a unique compatible structure of $\Witt$-module; namely:
	{\small $$
	\rho_{<}^- (L_i)w_{j} \,:=\,  \Big(
	 -\frac{1}{2}
	\big(2(1+j) +(1-i)\lambda\big)
	\, P\big(j+i+1, -i\big)\Big) w_{j+i}
	$$}

Moreover,  $\bar M(\lambda+2)$ admits a unique compatible structure of $\Vir$-module where  $\rho_{<}^- (L_i)$ is as above and $\rho_{<}^-(C)=0$. Moreover, with respect to this structure, $\bar M(\lambda+2)$ is a lowest weight $\Vir$-module of lowest weight $(0, -\frac12(\lambda+2)) $. 
\end{thm}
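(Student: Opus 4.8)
The plan is to deduce everything from the already established uniqueness results for Verma modules by means of the Virasoro involution, exactly in the spirit of the proof of Theorem~\ref{thm:barVermaW>}. Recall that $\theta\colon L_i\mapsto -L_{-i}$ is a Lie algebra automorphism of $\Witt$ inducing the Chevalley involution on the subalgebra $\Sigma\simeq\sl(2)$, and that it lifts to an automorphism of $\Vir$ by additionally setting $C\mapsto -C$. Composing a representation with $\theta$ then sets up a bijection between the $\Witt$-module (resp. $\Vir$-module) structures on $(V,\sigma)$ extending $\sigma$ and those on $(V,\hat\sigma)$ extending $\hat\sigma:=\sigma\circ\theta\vert_{\sl(2)}$; under this bijection the operator representing $C$ changes sign, so a structure in which $C$ acts by $0$ corresponds to a structure in which $C$ acts by $0$.

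Applying this with $V=\bar M(\lambda+2)$, the computation carried out in Step~2 of the proof of Theorem~\ref{thm:barVermaW>} identifies $\widehat{\bar M(\lambda+2)}$, as an $\sl(2)$-module, with the Verma module $M(-\lambda-2)$, the explicit isomorphism being $w_j\mapsto (-1)^j w_j$. Invoking the Theorem that asserts that $M(\mu)$ carries a unique compatible $\Witt$-module structure, and a unique compatible $\Vir$-module structure (with $C$ acting by $0$), applied to $\mu=-\lambda-2$, one obtains at once that $\bar M(\lambda+2)$ admits a unique compatible $\Witt$-module structure and a unique compatible $\Vir$-module structure with $\rho(C)=0$. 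Denoting by $\rho_M$ the corresponding structure on $M(-\lambda-2)$, transporting the closed formula $\rho_M(L_i)w_j=\bigl(-\frac12(2(i-j)+(i+1)\mu)\,P(j-i+1,i)\bigr)w_{j-i}$ back through the isomorphism $w_j\mapsto(-1)^j w_j$ and through $\theta$ --- i.e. substituting $\mu=-(\lambda+2)$, replacing $i$ by $-i$, and keeping track of the signs introduced by the rescaling and by the $-L_{-i}$ in $\theta$ --- yields precisely the displayed expression for $\rho_{<}^-(L_i)w_j$, with $\rho(C)=-0=0$ in the Virasoro case.

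It then remains to read off the weight data. The highest weight vector $w_0$ of $M(-\lambda-2)$ is annihilated by $\rho_M(L_i)$ for all $i\geq 1$ and satisfies $\rho_M(L_0)w_0=-\frac12(-\lambda-2)w_0$; since $\theta$ interchanges $L_i$ and $-L_{-i}$, the corresponding vector $w_0\in\bar M(\lambda+2)$ is annihilated by $\rho(L_i)$ for all $i\leq -1$ --- hence is a lowest weight vector --- and $\rho(L_0)w_0=-\frac12(\lambda+2)w_0$. Together with $\rho(C)=0$ this shows that $\bar M(\lambda+2)$ is a lowest weight $\Vir$-module of lowest weight $(0,-\frac12(\lambda+2))$. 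Equivalently, one may argue directly from the formula: for $i\leq -1$ the Pochhammer factor $P(j+i+1,-i)$ contains $0$ as a factor when $j=0$, so $\rho(L_i)w_0=0$.

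Alternatively, one can proceed by gluing, in the spirit of the proof of Proposition~\ref{prop:denseWitt}: Theorem~\ref{thm:barVermaW>} provides at most two $\Witt_>$-extensions $\bar\rho_>^{\pm}$, the analogous statement for $\Witt_<$ provides exactly one $\Witt_<$-extension, and by Proposition~\ref{prop:LieAlgWitt} a $\Witt$-extension amounts to a compatible pair $(S,T)$ with $[S,T]=2\sigma(h)$; one checks that this identity holds for the pair built from $\bar\rho_>^{+}$ and fails for the one built from $\bar\rho_>^{-}$ (these being distinct because $\lambda\notin\Z$). For the Virasoro statement one then invokes Theorem~\ref{thm:LieAlgVirasoro2}: for the Witt-compatible pair the element $K=4\sigma(h)-2[S,T]$ vanishes, hence is trivially central and yields a $\Vir$-module with $\rho(C)=K=0$, while for the other pair $K$ fails to commute with $S$. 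In either route the only delicate step is the sign bookkeeping through the involution and the rescaling (or, in the second route, the explicit bracket computation with Pochhammer-symbol identities), but this is entirely parallel to computations already performed for the dense and Verma modules, so I do not expect a genuinely new obstacle.
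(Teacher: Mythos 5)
Your proposal is correct, and it actually contains two arguments. Your ``alternative'' route is precisely what the paper intends: its proof is the single remark that the statement is analogous to Proposition~\ref{prop:denseWitt}, i.e.\ one glues the unique $\Witt_<$-structure on $\bar M(\lambda+2)$ with one of the (at most two) $\Witt_>$-structures of Theorem~\ref{thm:barVermaW>} via the criterion $[S,T]=2\sigma(h)$ of Proposition~\ref{prop:LieAlgWitt}, and settles the Virasoro part with Theorem~\ref{thm:LieAlgVirasoro2}, where $K=4\sigma(h)-2[S,T]=0$ forces $\rho(C)=0$; your identification of $\bar\rho_>^{+}$ (not $\bar\rho_>^{-}$) as the piece that glues is the right one --- the superscripts are exchanged relative to the Verma picture because the two substitutions use opposite square roots of $\tau=(\lambda+1)^2$, which is why the glued formula agrees with $\bar\rho_>^{+}$ and with the displayed expression. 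Your primary route --- transporting the uniqueness theorem for the Verma module $M(-\lambda-2)$ through the involution $\theta(L_i)=-L_{-i}$, extended to $\Vir$ by $C\mapsto -C$, together with the $\sl(2)$-isomorphism $\widehat{\bar M(\lambda+2)}\simeq M(-\lambda-2)$, $w_j\mapsto(-1)^j w_j$ --- is a different organization from the one the paper points to, though it is the same mechanism as Steps 1--2 of the proof of Theorem~\ref{thm:barVermaW>}. It buys uniqueness, the explicit formula, and $\rho(C)=-0=0$ for free from the Verma theorem, with no need to re-verify which $\Witt_>$/$\Witt_<$ pair glues; the cost is only the sign bookkeeping through $\theta$ and the rescaling, whereas the paper's route avoids the involution but requires the explicit bracket check (which you defer with ``one checks'', the same level of detail as the paper itself). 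Your reading of the lowest-weight data $(0,-\tfrac12(\lambda+2))$, either by transport or directly from the vanishing of $P(i+1,-i)$ at $j=0$ for $i\leq -1$, is also correct.
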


\begin{proof}
It is analogous to the proof of Proposition~\ref{prop:denseWitt}.
\end{proof}

\subsection{Third Case}

The last situation corresponds to $\tau= (\mu+1)^2=(\mu+2n+1)^2$ for some $n\in{\mathbb N}$. If this is the case, then $\mu=-n-1$, $\tau=n^2$ and   $\bar \Mf^{\xi,\tau}$ has three simple objects, namely $M(-n-1), {\mathbf V}^{(n)}$ and $\bar M(n+1)$. From Remark \ref{rem:RepreAreInfDim}, we know that ${\mathbf V}^{(n)}$  has no non-trivial representation of $\Witt_>$, $\Witt_>$, $\Witt$ or $\Vir$. The cases of $M(-n-1)$ and $\bar M(n+1)$ have been studied in \S\ref{subsec:second} since most statements of that section did not assumed simplicity. 

Regarding general $\sl(2)$-modules in this category, let us mention a couple of cases. For instance,  $M(n-1)$, which is an extension of $M(-n-1)$ by $ {\mathbf V}^{(n)} $, is a length $2$ $\sl(2)$-module admitting a compatible structure of $\Witt_>$-module and ${\mathbb V}(\xi,\tau)$ is a length $3$ $\sl(2)$-module which also admits such an extension. On the other hand, following the ideas of \S\ref{subset:counter}, one could produce examples of modules in $\bar \Mf^{\xi,\tau}$ which do not admit extensions.



\end{document}